\setlist{nosep}
\theoremstyle{plain}
\newtheorem{theorem}{Theorem}
\newtheorem{lemma}{Lemma}
\newtheorem{proposition}{Proposition}
\newtheorem{corollary}{Corollary}[theorem]
\newtheorem{remark}{Remark}
\newcommand{\aref}[1]{\hyperref[#1]{Appendix~\ref*{#1}}}
\DeclareMathOperator*{\argmin}{argmin}
\renewcommand{\P}{\mathbb{P}}
\newcommand{\norm}[1]{\left\lVert #1 \right\rVert}
\newcommand{\snorm}[1]{\lVert #1 \rVert}
\newcommand{\R}{\mathbb{R}}
\newcommand{\E}{E}
\newcommand{\cH}{\mathcal{H}}
\newcommand{\cM}{\mathcal{M}}
\newcommand{\cS}{\mathcal{S}}
\newcommand{\cN}{\mathcal{N}}
\newcommand{\cU}{\mathcal{U}}
\newcommand{\one}{\mathbf{1}}
\newcommand{\Expect}[1]{\E\left[#1\right]}
\newcommand{\given}{\mbox{ }\vert\mbox{ }}
\renewcommand{\hat}{\widehat}
\renewcommand{\top}{\mathsf{T}}
\DeclareMathOperator*{\trace}{tr}
\DeclareMathOperator*{\Cov}{Cov}
\DeclareMathOperator*{\diag}{diag}
\renewcommand{\tilde}{\widetilde}
\newcommand{\half}{\nicefrac{1}{2}}
\newcommand*{\iid}{%
    \@ifnextchar{.}%
        {i.i.d.}%
        {i.i.d.\@\xspace}%
}
\newcommand{\Pnd}{P_{\cN(\breve{D})}}
\newcommand{\KL}[2]{\mathrm{KL}\left(#1\; \Vert\; #2\right)}
\newcommand{\KLbar}[2]{\overline{\mathrm{KL}}\left(#1\; \Vert\; #2\right)}
\newcommand{\lap}{\mathrm{Lap}}
\newcommand{\kl}{\mathrm{KL}}
\newcommand{\ktvset}{T}
\newcommand{\ktfmat}{D}
\begin{document}

\title{Exponential Family Trend
  Filtering on Lattices}
\author{Veeranjaneyulu Sadhanala\\
Google Research\\ New York, NY, USA \and
Robert Bassett\\
Naval Postgraduate School\\ Monterey, CA, USA \and
James Sharpnack\\
Amazon AWS\\ Santa Clara, CA, USA \and
Daniel J. McDonald\\
University of British Columbia\\
Vancouver, BC Canada}% \email{daniel@stat.ubc.ca}}

\maketitle

\begin{abstract}
  Trend filtering is a modern approach to nonparametric regression that
  is more adaptive to local smoothness than splines or similar basis
  procedures. Existing analyses of trend filtering focus on estimating
  a function corrupted by homoskedastic Gaussian noise, but our work extends this
  technique to general exponential family distributions. This extension
  is motivated by the need to study massive, gridded climate data
  derived from polar-orbiting satellites. We present algorithms tailored
  to large problems, theoretical results for general exponential family likelihoods, and
  principled methods for tuning parameter selection without excess
  computation.
\end{abstract}

\section{Introduction}
\label{sec:introduction}

Modeling data using exponential family distributions on the vertices of a graph
is a standard task in statistics and artificial intelligence. Examples include
satellite images or photographs, traffic or mobility patterns, communications
networks, spatiotemporal data, and many others. Suppose we observe $y_i \in \R$
for $i=1,\ldots,n$ on the nodes of a graph and assume that they independently
follow a natural exponential family with density of the form
\begin{equation}
  \label{eq:exp-fam}
  p(y_i \given \theta_i^*) = h(y_i)\exp\left\{y_i\theta_i^*
    - \psi(\theta_i^*)\right\},
\end{equation}
for functions $h: \R \rightarrow [0,\infty)$ and $\psi:
\Theta\rightarrow\R$ and natural parameter $\theta^*_i \in \Theta.$ The maximum
likelihood estimator for $\theta^*$ is easily shown to be $\psi^{\prime -1}(y)$
where we apply the function component wise. Unfortunately, this estimator fails
to respect the known graphical structure, and therefore has high estimation risk
(e.g., $\E\snorm{\psi^{\prime -1}(y) -\theta^*}^2_2 \propto n$ for the Gaussian 
family). In this paper, we imagine
that the natural parameter vector $\theta^* \in \Theta^n \subseteq \R^n$ is smooth on
the graph in a total variation sense described below.
%This is a natural extension of Graph Trend Filtering, 
%\cite{WangSharpnack2016}, to exponential families.
We study methods to filter (estimate) the true parameter vector $\theta^*$, given
observations $y\in \R^n$ subject to this structure.

As an example, \autoref{fig:canada-temp} shows estimates for the instantaneous
variance (imagining $y_i$ is a member of the Gamma family) of the temperature
for New Year's Day 2010 over a grid for Canada using maximum likelihood and a few
configurations of the main family of estimators we investigate. The smoothness
imposed by the grid of neighbouring locations leads to predictable patterns in
the estimate that follow topographical features like mountain ranges and bodies
of water. We will revisit this example in more detail in
\autoref{sec:experiments}. Before describing our methodology more
carefully, we define notation.

\begin{figure}[t!]
  \centering
  \includegraphics[width=.9\textwidth]{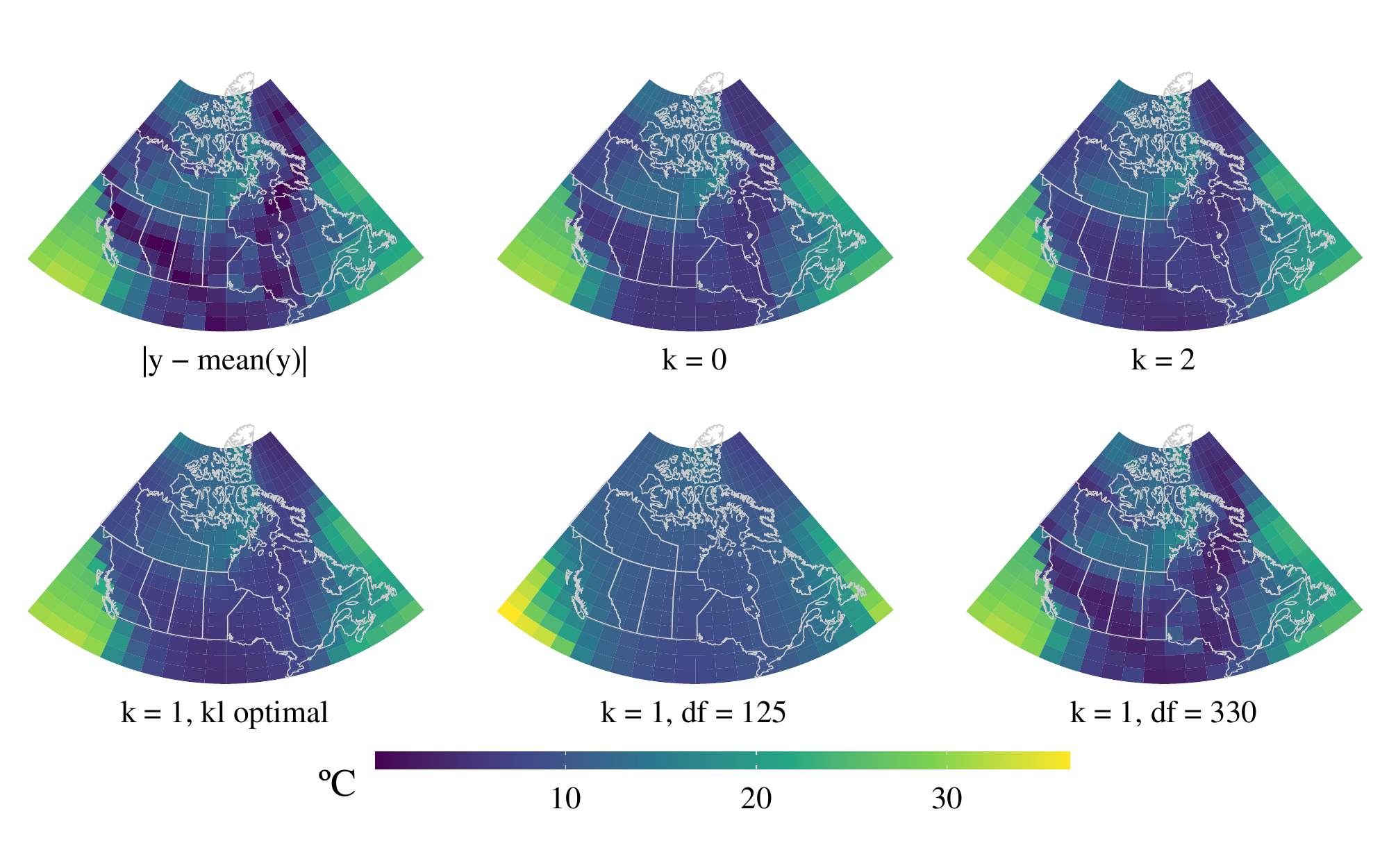}
  \caption{Estimates of the instantaneous temperature variance for 1 January
    2010 over Canada. The top row shows the absolute centered data,
    0th-order trend 
    filter, and 2nd-order trend filter, in the latter 2 cases, with
    reasonable values of the tuning parameter. The bottom row shows the
    1st-order 
    trend filter for different tuning parameters, with the left most map,
    labeled ``optimal'', 
    corresponding to the estimate when the degrees-of-freedom is chosen by
    minimizing an unbiased risk estimate.} 
  \label{fig:canada-temp}
\end{figure}

\paragraph{Notation.}

Throughout this paper, we will focus on lattice graphs in $d$ dimensions, though
we note that our main theoretical results can be extended to arbitrary graphs
with appropriate conditions on the graph-Laplacian. We define a 
 {\em graph difference operator}  $D$ that is crucial for 
defining our estimators. In one dimension, on a chain graph, 
the difference operator \smash{$D_{n, 1}^{(1)}$} is defined by 
\begin{equation}
  (D_{n, 1}^{(1)} \theta)_i = \theta_{i+1} - \theta_{i} \text{ for all }
  i\in [n-1],\ \theta \in \R^n, 
\end{equation}
where $n>1.$ We use the notation $[m]$ to denote the set 
\smash{$\{1, 2, \dots, m\}$} for positive integers $m$.
The \smash{$(k+1)^{\textrm{th}}$} order (forward) difference matrix 
\smash{$D = D_{n,1}^{(k+1)} \in \R^{(n-k-1)\times n}$}
is defined with the recurrence relation
\begin{equation}
  D_{n, 1}^{(k+1)} = D_{n-k, 1}^{(1)} D_{n, 1}^{(k)} \text{ for } k>0,\ n>k.
\end{equation}
For example, the 
\smash{$3^{\textrm{rd}}$}-order differences look like:
\smash{$(D_{n,1}^{(3)}\theta)_i = -\theta_{i+3} +
  3\theta_{i+2}-3\theta_{i+1}+\theta_i.$}
For a general graph, let $D^{(1)}$ denote its incidence matrix.
In \smash{$d>1$} dimensions, we focus on lattice graphs with a length of $N$ on 
each 
side and with a total number of vertices \smash{$n = N^d$}.
In our estimators, unless otherwise specified, we penalize the variation of 
signals only along axis-parallel directions. 
%In two dimensional grid graphs, we apply a univariate difference operator 
%\smash{$D_{N, 1}^{(k_1+1)}$} on all rows and another univariate operator 
%\smash{$D_{N, 1}^{(k_2+1)}$} on all columns.
%Concretely, the two dimensional difference operator of order $(k_1 +1, k_2 +2)$
%is defined using Kronecker products of identity and 
%univariate difference operators 
%\smash{$D_{N, 1}^{(k_1+1)},\ D_{N, 1}^{(k_2+1)}$}. 
%For an $N \times N$ grid, for $k_1,\ k_2 \ge 0$ 
%% and $N\ge k_1, k_2$,
%\begin{equation}
%  D_{n, 2}^{(k_1+1, k_2+1)} =
%  \left[
%    \begin{array}{c}
%      D_{N, 1}^{(k_1+1)} \otimes I_{N} \\ 
%      I_{N} \otimes D_{N, 1}^{(k_2+1)} 
%    \end{array}
%  \right].
%\end{equation}
%More generally, 
For $d$-dimensional grids, let $(k+1)$ denote the $d$-vector
$(k_1+1,\ldots,k_d+1)$, and define
\begin{equation}
  \label{eq:kron_tf_penalty2}
  D_{n,d}^{(k+1)} = \left[
    \begin{array}{c}
      D_{N,1}^{(k_1+1)} \otimes I_N \otimes \cdots \otimes I_N \\ 
      I_N \otimes D_{N,1}^{(k_2+1)} \otimes \cdots \otimes I_N \\ 
      \vdots \\
      I_N \otimes I_N \otimes \cdots \otimes D_{N,1}^{(k_d+1)}  
    \end{array}
  \right]
\end{equation}
where the Kronecker products consist of $d$ terms each, one term for each 
dimension.
%In the following, we suppress the subscripts and
%superscripts of the difference operator when they are clear from the context.

Define \smash{$\snorm{\cdot}_2$} to be the usual Euclidean norm and 
\smash{$\snorm{\cdot}_n =
n^{-\half}\snorm{\cdot}_2$} to be the empirical norm. We will similarly
denote other \smash{$\ell_p$}-norms with an appropriate subscript. When there 
is no
chance of confusion, we will assume that the \smash{$\psi$} function in 
\eqref{eq:exp-fam} applies
component-wise. We use \smash{$a \otimes b$} to denote the Kronecker product of 
vectors
\smash{$a$} and \smash{$b$}, \smash{$a\odot b$} to denote the elementwise 
product, and \smash{$\langle 
a, b
\rangle = a^\top b$} to be the dot product. When clear, we
will use \smash{$f',\ f''$} to denote componentwise first and second 
derivatives of
the function \smash{$f$}. We use \smash{$\vee/\wedge$} for maximum/minimum 
respectively and
\smash{$(x)_+ = x\vee 0$; while $\one\{A\}$}
is the indicator of the event $A$, taking the value one if true and zero
otherwise. We use \smash{$a_n\lesssim b_n$} to mean \smash{$a_n \leq cb_n$} 
eventually for some
constant \smash{$c>0$}, \smash{$a_n =\Omega(b_n)$} to mean that \smash{$a_n 
\geq cb_n$} eventually, and
\smash{$Y_n = O_\P(1)$} to mean that the sequence of random 
variables is bounded in 
probability eventually. We will also use \smash{$Y_n = \tilde{O}_\P(1)$} 
to mean that \smash{$Y_n = O_\P(\log^c(n))$} for some $c > 0$.
Finally, for the graph difference operator, we will write the singular value
decomposition (SVD) of 
\smash{$D=U\Sigma V^\top \in \R^{m\times n}$} where \smash{$U\in 
\R^{m\times m}$}, \smash{$\Sigma \in \R^{m\times n}$} and \smash{$V\in \R^{n\times n}$}, 
and we
write the null-space of \smash{$D$} as \smash{$\cN = \cN(D)$}.

\subsection{Estimators}

We consider two canonical estimators. The first
filters the natural parameter $\theta^*$ based on maximizing the likelihood
while the second filters the mean $\beta^*\coloneqq\psi'(\theta^*)$ directly.
This distinction is important with respect to the nature of the expected
smoothness. If
we were to consider the data without regard for the graphical structure, then
there is a direct correspondence between these two: the MLE for $\beta^*$ is
given by applying $\psi'$ to the MLE for $\theta^*$. Furthermore, this
equivalence holds trivially for estimating the mean of a Gaussian because
$\beta^* = \theta^*$. However, any requirement for smoothness over the graph
destroys this relation for general exponential families.

\paragraph{Penalized MLE.}
We minimize negative log-likelihood with a smoothness imposing penalty:
\begin{equation}
  \label{eq:mle1}
  \hat\theta = \argmin_\theta \frac{1}{n} \sum_{i=1}^n- y_i\theta_i + 
\psi(\theta_i) + \lambda\norm{D\theta}_1.
\end{equation}
Here $\lambda$ is a parameter for balancing fidelity to any anticipated
smoothness over the graph, as encoded by $D$, with fit to the data $y$. Taking
$\lambda \rightarrow 0$ will result in the minimum occurring at $\hat\theta =
\psi^{\prime -1}(y)$ 
while letting $\lambda\rightarrow\infty$ gives the Kullback-Leibler projection
of $y$ on to $\mathcal{N}(D).$

By the likelihood principle, $\hat\theta$ is the natural estimator to use when 
we expect
that $\theta^*$ is smooth with respect to the graph. However, as we will 
demonstrate,
this estimator can have high excess estimation risk when $\psi''(\theta^*)$
approaches $0$. In \autoref{sec:mle_null_space_penalty} we will argue that this issue 
can be addressed by adding a 
penalty on the null-space component of $\theta$.
Specifically, the MLE with TF and null space penalty is 
\begin{equation}
	\label{eq:estimator}
	\hat\theta = \argmin_{\theta} \; \frac{1}{n}\sum_{i=1}^n
	-y_i \theta_i + \psi(\theta_i) +  \lambda_1 \| D\theta\|_1 +
	\lambda_2 \| P_\cN \theta\|_2
\end{equation}
where $\lambda_1, \lambda_2 \geq 0$ are regularization parameters and $P_\cN$ is
the projection operator on to $\cN(D)$.

\paragraph{Mean Trend Filter.}
When the expected smoothness is in the mean rather than the natural parameter, 
it may be more appropriate to penalize the roughness in mean directly.
For such a scenario,
we consider the trend filtering estimator:
\begin{equation}
\label{eq:lsq}
  \hat \beta = \argmin_\beta  \; \frac{1}{2n} \norm{ y - \beta }_2^2 +
  \lambda \norm{D\beta}_1.
\end{equation}
As before, $\lambda$ balances data fidelity with smoothness, but here, the
interpretation as $\lambda\rightarrow\infty$ is more straightforward. In this
case, the
minimum occurs at the orthogonal projection onto the null space of $D$:
$\hat\beta = (I - D^\top(DD^\top)^{-1}D)y$.
This estimator was proposed in \cite{steidl2006splines}, \cite{KimKoh2009} and
statistically analyzed in \cite{tibshirani2014adaptive},
\cite{WangSharpnack2016} and others. We provide a thorough overview of previous
work on mean trend filtering in a later section.

To understand the nature of the penalty in the above formulations,
it is clearly important to understand its null space.  \cite{SadhanalaWang2017}
showed 
that the null space of $D$ consists of Kronecker products of polynomials. We
give a  
generalized version of their Lemma 1 here.
\begin{lemma}
  A basis for the null space of $D$ is given by the family of polynomials
  \[
   \left\{p(x) = x_1^{a_1} \otimes x_2^{a_2} \otimes \cdots \otimes x_d^{a_d} :
     a_j \in \{0,\ldots,k_j\}\right\}
  \]
  where $x_j$ are the coordinates of the observations along the
  $j^{th}$ dimension. The dimension of the null
  space is $\mathrm{nullity}(D) = \prod_{j=1}^d (k_j+1)$.
\end{lemma}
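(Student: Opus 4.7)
The plan is to reduce the $d$-dimensional claim to a statement about intersections of tensor subspaces and then induct on $d$. First I would invoke the standard one-dimensional fact that $\cN(D_{N,1}^{(k_j+1)}) = \mathrm{span}\{v^{(0)}_j, v^{(1)}_j, \ldots, v^{(k_j)}_j\}$, where $v^{(a)}_j = (1^{a}, 2^{a}, \ldots, N^{a})^\top$: a sequence whose $(k_j{+}1)$-st forward differences vanish is exactly the restriction to $\{1,\ldots,N\}$ of a polynomial of degree at most $k_j$. This can be seen by checking that these $k_j+1$ vectors are linearly independent (Vandermonde) and that this count matches $N$ minus the rank of $D_{N,1}^{(k_j+1)}\in\R^{(N-k_j-1)\times N}$.

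Next, write $D = D_{n,d}^{(k+1)}$ as a vertical stack of blocks $B_j = I_N \otimes \cdots \otimes D_{N,1}^{(k_j+1)} \otimes \cdots \otimes I_N$, so that $\cN(D) = \bigcap_{j=1}^d \cN(B_j)$. Identifying $\theta\in\R^n$ with a $d$-way tensor $T$ on $[N]^d$, the condition $\theta\in\cN(B_j)$ is equivalent to saying that for every fixing of the remaining $d-1$ indices, the one-dimensional slice of $T$ along coordinate $j$ lies in $\cN(D_{N,1}^{(k_j+1)})$, i.e.\ is a polynomial of degree at most $k_j$ in that coordinate. The key structural claim I would then establish is
\[
\bigcap_{j=1}^d \cN(B_j) \;=\; \cN(D_{N,1}^{(k_1+1)}) \otimes \cN(D_{N,1}^{(k_2+1)}) \otimes \cdots \otimes \cN(D_{N,1}^{(k_d+1)}),
\]
where the right-hand side denotes the tensor product of subspaces.

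The main obstacle is the inductive step of the above claim. Given $T$ satisfying all $d$ axiswise polynomial conditions, the $d=1$ base case lets me write $T(x_1, x_2, \ldots, x_d) = \sum_{a_1=0}^{k_1} x_1^{a_1}\, f_{a_1}(x_2, \ldots, x_d)$ for some functions $f_{a_1}$ on $[N]^{d-1}$. Evaluating at $k_1+1$ distinct values $x_1 = 1, 2, \ldots, k_1+1$ gives a Vandermonde system whose inverse expresses each $f_{a_1}$ as a fixed linear combination of the slices $T(\ell, \cdot, \ldots, \cdot)$. Each such slice still satisfies the axiswise polynomial property in dimensions $2, \ldots, d$, so by the inductive hypothesis each $f_{a_1}$ is a multivariate polynomial with degree at most $k_j$ in $x_j$ for $j\geq 2$. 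Hence $T$ is a linear combination of products $x_1^{a_1} x_2^{a_2}\cdots x_d^{a_d}$ with $a_j\le k_j$, which is precisely an element of the tensor product subspace. The reverse inclusion is immediate since $B_j$ applied to a rank-one tensor factors as $v_1\otimes\cdots\otimes D_{N,1}^{(k_j+1)} v_j\otimes\cdots\otimes v_d$. Linear independence of the resulting $\prod_j(k_j+1)$ Kronecker products $v^{(a_1)}_1\otimes\cdots\otimes v^{(a_d)}_d$ follows because tensor products of linearly independent vectors are linearly independent, giving both the stated basis and the nullity count.
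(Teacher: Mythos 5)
Your proof is correct. Note that the paper itself supplies no proof of this lemma---it defers to Lemma~1 of \cite{SadhanalaWang2017} and merely states the generalization---so there is no in-paper argument to compare against; your write-up is essentially the standard argument underlying that reference: reduce $\cN(D)=\bigcap_j\cN(B_j)$ to axiswise polynomial conditions, identify the intersection with the tensor product of the one-dimensional null spaces, and count dimensions. The Vandermonde-inversion step in your inductive argument (expressing each coefficient function $f_{a_1}$ as a fixed linear combination of slices $T(\ell,\cdot)$, each of which inherits the axiswise conditions in the remaining coordinates) is a clean way to close the induction. The only point you gloss over is that $\mathrm{nullity}\bigl(D_{N,1}^{(k_j+1)}\bigr)$ is exactly $k_j+1$ rather than merely at least $k_j+1$; this needs either full row rank of the $(N-k_j-1)\times N$ difference matrix or the observation that the recurrence $\Delta^{k_j+1}\theta=0$ determines $\theta$ from its first $k_j+1$ entries. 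Both are immediate, so this is not a genuine gap.
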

Therefore, writing $P$ as the matrix formed by the evaluations of this 
collection of polynomials over the grid,
we can  
also write the Euclidean projection onto the null space of $D$ as $P_\cN := P(P^\top
P)^{-1}P^\top$.  
When applied to certain kinds of data (for example the satellite temperature
data) it may be useful to imagine that some dimensions of the grid ``wrap'' like
a cylinder. If the grid wraps along some dimension $j \in [d]$,
then $a_j = 0$ regardless of $k_j$ and the contribution to the nullity for
dimension $j$ is as if $k_j=0$.

Characterizing the null space tells us the sorts of vectors $\theta^*$ that have $\snorm{
D \theta^* }_1 = 0$, but it does not say anything about vectors with bounded trend filtering
penalty. 
Consider the $\ell_0$ penalty instead, $\snorm{ D \theta }_0$, for $k_1 =
\cdots=k_d = k$. This is small when there are few changepoints, which are the 
indices $j_1, \ldots, j_M$ at which the $k^{\textrm{th}}$ derivative is non-zero, $(D
\theta^*)_{j_1,\ldots,j_M} \ne 0$. 
Because the $\ell_1$ penalty tends to produce sparse vectors with small $\snorm{D \hat
\theta }_0$, the reconstructed signals are piecewise polynomials with a few
changepoints that are automatically selected. 
The result is that trend filtering produces estimators that are locally
adaptive, which means that the reconstructed signal is not oversmooth in regions
of high signal variability (in $\theta^*$) and not undersmooth in regions of low
variability. 
In short the filter does not have one fixed resolution or bandwidth, but adapts
the resolution to the observed signal. 
For a more complete explanation of this phenomenon, see 
\cite{WangSharpnack2016,Bassett2019fused}. To simplify the theoretical exposition below,
we will assume that $k_1=\cdots=k_d=k$, but our results are
easily modified for other situations.

%To this end, define $\alpha = (k+1)/d$ where $k$ is the same in all dimensions.
%Previous work in this context \citep[e.g.,][]{sadhanala2017higher} defines the ``canonical scaling'' for such problems to be $\snorm{D\theta^*}_1\lesssim n^{1-\alpha}$. 
%Bounded piecewise polynomials with `well-behaved' changepoints satisfy the canonical scaling.
%For example, let $k_1,\ldots, k_d = 0$ and $\theta^*_{i_1,\ldots,i_d} = \indicator (i_1 > N/2)$ be piecewise constant, then $\| D \theta^* \|_1 \lesssim N^{d-1} = n^{1-1/d}$ and $\|\theta_*\|_\infty \lesssim 1$.
%As another example, let $k_1, \ldots, k_d = 1$ and $\theta^*_{i_1,\ldots,i_d} = N^{-1} \cdot (i_1 - N/2) \cdot \indicator (i_1 > N/2)$ be piecewise linear, then $\| D \theta^* \|_1 \lesssim N^{-1} \cdot N^{d-1} = n^{1 - 2/d}$ and $\| \theta^* \|_\infty \lesssim 1$.
%In these examples, the 
%In the examples above, the number of such changepoints scales like $N^{d-1}$.
%If in contrast, the number of such changepoints scaled like $N^d = n$ then we would have a difficult time satisfying the canonical scaling without making the problem trivial.
%The $L_1$ penalty, $\| D \theta \|_1$, encourages the estimator to recover such well-behaved piecewise polynomials, because it is a convex relaxation of the $\| D \theta\|_0$ which equals the number of such changepoints.

\subsection{Properties of exponential families}
\label{sec:prop-expon-famil}

In this section, we review properties of exponential families, many of which
will play a key role in our theoretical development. Considering the univariate
random variable $Y$ with density of the form in
\eqref{eq:exp-fam}, we define the
domain $\Theta = \left\{ \theta\in \R : \psi(\theta)<\infty\right\}$ 
%and the
%invertible set $\Theta_I = \left\{\theta \in \R : 0<
%  m=\psi''(\theta) < \infty\right\}\subset \Theta_D$. 
and assume that
$\Theta$ has a non-empty interior.  
Recall that the mean and variance of the distributions $p(\cdot \given \theta)$
are $\psi'(\theta)$ and $\psi''(\theta)$ respectively, for natural parameter
$\theta\in\Theta$. Therefore, $\epsilon\coloneqq Y - \psi'(\theta^*)$ has mean
zero and
% $Y$ has
a simple expression for its moment generating function (MGF)
%\begin{equation}
%   \label{eq:exp-fam-mgf}
%  \Expect{e^{sY}} =  \exp\left\{\psi(\theta^* + s) - \psi(\theta^*)\right\} 
% \text{ for } s \in \R.
% \end{equation}
% Therefore, the MGF of $\epsilon$ is
\begin{equation}
  \label{eq:exp-fam-mgf-centered}
  \Expect{e^{s\epsilon}} =  \exp
  \left\{\psi(\theta^* + s) - \psi(\theta^*)-s \psi'(\theta^*)\right\}
\end{equation}
for $s$ in a neighborhood of 0. Furthermore,
$\psi$ is convex and all its derivatives exist for all $\theta\in 
\Theta$ (see~\citealt{Brown1986}).

We say that a random variable \smash{$X$} with mean \smash{$0$} is 
\emph{sub-exponential} if 
there are
non-negative parameters \smash{$\nu,\ b$} such that 
$$\Expect{\exp\{t X\}} \leq \exp\{\nu^{2} t^{2} / 2\} \quad
\text{ for all }\quad |t| < 1 / b.$$ 
For shorthand, we also say $X$ is SE\smash{$(\nu^2, b)$}. We
can show that random variables following exponential family distributions are 
sub-exponential in this sense.

\begin{lemma}
\label{lem:sub-exponential}
Fix $\theta^{*}$ in the $\mathrm{interior}(\Theta)$, and let $Y$ be from a 
univariate
exponential family with parameter $\theta^{*}$. Then for any $\delta > 0$, $Y - \psi'(\theta^*)$ is
sub-exponential with some parameters $\nu$ and $b$ depending on $\theta^*$ and $\delta$.
Specifically, $\nu$ is related to the variance by 
\smash{$\nu^2 = 
\psi''(\theta^*) + \delta$}. 
%(Recall that \smash{$\psi''(\theta^*)$} is the variance of \smash{$Y$}.)
%Furthermore, the log partition function $\psi$ is locally smooth, the variance is related to $\nu$ by $\psi''(\theta^*) + \delta$ and there 
%exists
%$b(\delta)$ satisfying the following for each fixed $\delta$ 
%  $$\frac{1}{2} \left( \psi''(\theta^{*})  - \delta\right) t^2 \le \psi(\theta^*
%  + t ) - \psi(\theta^*) \le \frac{1}{2} \left(\psi''(\theta^{*}) +
%  \delta\right) t^2, \quad \forall |t| <\frac{1}{b} .$$ \attn{Do the b's coincide?}
\end{lemma}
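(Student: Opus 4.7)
The starting point is the centered moment generating function given in \eqref{eq:exp-fam-mgf-centered}, namely
\[
\Expect{\exp(s\epsilon)} = \exp\bigl\{\psi(\theta^*+s) - \psi(\theta^*) - s\psi'(\theta^*)\bigr\}.
\]
To match the sub-exponential definition, I need to bound the exponent by $\nu^2 s^2/2$ for all $s$ in some symmetric interval around $0$. The plan is to Taylor expand $\psi$ around $\theta^*$ to second order with the Lagrange remainder: since $\theta^* \in \mathrm{interior}(\Theta)$ and $\psi$ is $C^\infty$ on that interior (by the cited property from Brown 1986), there is some open ball $B(\theta^*, r) \subset \Theta$ on which $\psi''$ is continuous, and for $|s| \le r$ one may write
\[
\psi(\theta^*+s) - \psi(\theta^*) - s\psi'(\theta^*) = \tfrac{s^2}{2}\,\psi''(\theta^* + \xi_s s)
\]
for some $\xi_s \in [0,1]$ depending on $s$.

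Next I would use continuity of $\psi''$ at $\theta^*$ to absorb the remainder into the stated $\delta$ slack. Given $\delta > 0$, by continuity there exists $r_\delta \in (0, r]$ such that $|\psi''(\theta) - \psi''(\theta^*)| \le \delta$ whenever $|\theta - \theta^*| \le r_\delta$. Setting $b = 1/r_\delta$, any $|s| < 1/b = r_\delta$ forces $\theta^* + \xi_s s$ into $B(\theta^*, r_\delta)$, so
\[
\psi''(\theta^* + \xi_s s) \le \psi''(\theta^*) + \delta = \nu^2.
\]
Plugging this into the MGF identity yields $\Expect{\exp(s\epsilon)} \le \exp\{\nu^2 s^2/2\}$ for all $|s| < 1/b$, which is precisely the SE$(\nu^2, b)$ condition.

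The only subtlety is ensuring the Taylor expansion is valid, i.e., that $\theta^* + s$ remains inside $\Theta$ so that $\psi$ and all its derivatives are defined along the segment from $\theta^*$ to $\theta^* + s$; this is handled automatically by restricting to $|s| < r_\delta$ inside the open ball around $\theta^*$. The constants $r_\delta$, and hence $b$, depend on $\theta^*$ and $\delta$ through the local modulus of continuity of $\psi''$, matching the dependency stated in the lemma. No other nontrivial step is needed; the essential content of the result is simply that on any exponential family, smoothness and convexity of $\psi$ plus openness of $\Theta$ upgrades the exact quadratic MGF at $\theta^*$ to a uniform sub-exponential bound in a shrinking neighborhood.
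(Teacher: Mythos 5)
Your proof is correct and follows essentially the same route as the paper's: both start from the centered MGF identity, Taylor-expand $\psi$ about $\theta^*$, and use the local behavior of the second-order term (you via the Lagrange remainder and continuity of $\psi''$, the paper via a remainder $R(\theta^*,s)\to 0$) to absorb the error into the $\delta$ slack, choosing $b$ large enough that $\theta^*+s$ stays in $\Theta$. No gaps.
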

\autoref{tab:orlicz_subexp_params} gives the log-partition function
$\psi(\theta)$ and sub-exponential parameters for 
Poisson,
exponential, and chi-squared families. These calculations and the proof of
\autoref{lem:sub-exponential} are in
\autoref{sec:app-preliminary}.  In each of the examples in
\autoref{tab:orlicz_subexp_params}, \smash{$\nu^2$} is selected to be a 
multiple of the
variance, but these are not the only choices of \smash{$(\nu,\ b)$} that would 
constitute
valid sub-exponential parameters. 
\autoref{lem:sub-exponential} is not surprising given the form of 
the MGF, but seems not to be well-known. Related results can be seen
in \cite{Brown1986} or \cite{KakadeShamir2010}. Note that many exponential
families have tails which decay faster (e.g., Gaussian or Binomial
distributions), but all exponential families have sub-exponential tails.

Finally, we note that in all of these examples (Poisson, exponential, 
chi-square) the variance, and hence the curvature of $\psi(\theta^*)$ depends 
on $\theta^*$, resulting in heteroskedasticity.
This is one of the main complications of the exponential family setting that we consider in this paper.
Along with the heavy-tailed residuals, this setting is a major departure from 
the sub-Gaussian homoskedastic setting of most prior works.

\begin{table}
	\caption{\label{tab:orlicz_subexp_params} Sub-exponential parameters 
		for some exponential family distributions}
	\centering
	\begin{tabular}{lccc}
		\toprule
		Distribution & $\psi(\theta)$ 
		& $\nu^2, b$\\
		\midrule
		Poisson (mean$=\mu$) & $e^\theta$ 
		& $2\mu$, \; $0.55$ \\ 
		Exponential (mean$=\mu$) & $-\log(-\theta)$ 
		& $4\mu^2\log\frac{4}{e} $,\; $2\mu$ \\
		$\chi^2_k$ (mean$=k$) & $\log \big(\Gamma(\theta+1) 2^{\theta+1}\big)$ 
		& $4k, 4$ \\
		\bottomrule
	\end{tabular}
\end{table}

% With Orlicz-1 norms
%\begin{table}
%  \caption{\label{tab:orlicz_subexp_params}Orlicz-1 norms and
%    sub-exponential parameters for some exponential family distributions}
%  \centering
%  \begin{tabular}{@{}lccc@{}}
%    \toprule
%    Distribution & $\psi(\theta)$ & $\|X-\mu\|_{\phi_1}$ & $\nu^2, b$\\
%    \midrule
%    Poisson (mean$=\mu$) & $e^\theta$ & $\le \left(\log( 1 + \log 
%(2)/{\mu}) \right)^{-1}$ & $2\mu$, $0.55$ \\ 
%    Exponential (mean$=\mu$) & $-\log(-\theta)$ & $\approx 1.5 \mu$ & 
%$4\mu^2\log\frac{4}{e} $, $2\mu$ \\
%    $\chi^2_k$ (mean$=k$) & $\log \big(\Gamma(\theta+1) 2^{\theta+1}\big)$ & 
%$\leq \max\{4,\ \sqrt{2k/\ln 2}\}$ & $4k$, $4$ \\
%    \bottomrule
%  \end{tabular}
%\end{table}

\paragraph{KL divergence.}

The Kullback-Leibler (KL) divergence between exponential distributions of the
same family has a simple algebraic form in terms of $\psi$; see
\cite{wainwright2008graphical}. 
 The KL divergence with parameter vectors $\theta_0$
and $\theta_1 \in \R^n$ is 
\begin{equation}
  \KL{\theta_0}{\theta_1} := \int p(y \given \theta_0)
  \log\frac{p(y \given \theta_0)}{p(y \given \theta_1)}
  dy.\label{eq:kl-def}
\end{equation}
% Note that for product distributions $p$, we have
% \begin{equation}
%   \KL{\theta_0}{\theta_1} = \sum_{i=1}^n \KL{\theta_{0i}}
%   {\theta_{1i}} = \sum_{i=1}^n \int p_i(y_i \given \theta_{0i})
%   \log\frac{p_i(y_i \given \theta_{0i})}{p_i(y_i \given \theta_{1i})}
%   dy.\label{eq:kl-sum}
% \end{equation}
In the asymptotic setting with $n\rightarrow\infty$, it makes more
sense to examine the average divergence per coordinate. Thus we define
%\begin{equation}
%  \label{eq:avg-kl}
$  \KLbar{\theta_0}{\theta_1} := \frac{1}{n}\KL{\theta_0}{\theta_1}.$
%\end{equation}
For an exponential family as in \eqref{eq:exp-fam}, the KL
divergence is the Bregman divergence of $\psi$
\begin{equation}
  \label{eq:exp-fam-kl}
  \KL{\theta_0}{\theta_1} = \psi(\theta_1) - \psi(\theta_0) -
  (\theta_1-\theta_0)^\top \psi'(\theta_0).
\end{equation}

\subsection{Summary of our contributions}

%\paragraph{Heavier tails.} 
Most of the existing work on trend filtering referenced above assumes
sub-Gaussian noise, that is,  
$$
y_i = \beta_i + \epsilon_i,
$$
for $i\in [n]$ where $\epsilon_i$ is mean-zero and sub-Gaussian with common variance $\sigma^2$.
For general exponential families of the form in  \eqref{eq:exp-fam}, 
$y_i - \E y_i$ has heavier than sub-Gaussian tails.
%---for example, Poisson, Exponential
%and $\chi^2$ families have such tails. 
Furthermore, for general exponential
families, the variance, as well as higher moments, are tied to the mean parameter.
Therefore, consideration of heteroskedasticity is a
necessary and fundamental component of our analysis. 

Direct analysis for specific exponential families, such as Poisson
\citep{Bassett2019fused} are rare.
\Citet{vandeGeer2020logistic} analyses a penalized MLE for the logistic family.
However, the logistic family has sub-Gaussian tails and uniformly bounded
variance which allows key parts of the
analysis, such as the Dudley entropy integral bound, to work. In other words, the theoretical
approach there cannot generalize to arbitrary exponential families.

Our results here
apply to the entire exponential family. However, due to this generality, the results are
necessarily weaker than could potentially be achieved under additional, 
more stringent conditions (such as by assuming Gaussian or logistic
distributions, or requiring additional bounds on higher moments).  

%\paragraph{$\psi$ may not be strongly convex.} 
A key ingredient in previous analyses in the sub-Gaussian setting is that the Bregman
divergence  
$\psi(\hat\theta) - \psi(\theta^*) - (\hat\theta 
- \theta^*)^\top \psi'(\theta^*),$ 
can be lower bounded by a multiple of $ \| \hat\theta - \theta^* \|_2^2$, 
because $\psi$ is strongly convex. 
However, for general exponential families, $\psi$ is not strongly convex, even if 
$\snorm{D\theta^*}_1$ is well-controlled, unless
$\theta^*$ satisfies additional conditions.
Without such assumptions, $\psi''(\theta^*)$ can be arbitrarily small.
If we make the (rather implausible) assumption that both the estimate
$\hat\theta$ and the parameter $\theta^*$ are bounded, 
then we recover this strong convexity in the relevant region where $\hat\theta$
and $\theta^*$ 
lie. In this case, we can apply the same techniques used to analyze the sub-Gaussian case. 
We derive these bounds in \aref{sec:app_mle_constrained}.
However, without such an assumption, analysis requires entirely different
techniques, and we show these results in \autoref{sec:error-bounds-penal}. 

Our main contributions are the following.
\begin{enumerate}
\item We derive error bounds on excess KL-risk for the penalized maximum
  likelihood 
  estimator for general exponential families with subexponential noise
  (\autoref{sec:theory_mle}). 
  We argue that there is a need to constrain the component of the
  natural parameter vector that falls in the null space of $D$ as in equation
  \eqref{eq:estimator}. 
  
\item We delineate two types of heteroskedasticity that are relevant under
  general assumptions: strong heteroskedasticity and mild heteroskedasticity. We
  show how our general KL-bounds behave under these regimes and how the
  heteroskedasticity interacts with the smoothness constraints and the
  dimensionality of the problem.

\item For $k=0$, we show that the mean trend filter and the MLE with penalty are
  equivalent estimators, and hence, results for the mean trend filter apply immediately in
  this special case (though under different smoothness assumptions;
  \autoref{sec:theory_mean_tf}).   

\item We show that the mean trend filter nearly achieves the minimax optimal rate
  under squared error loss for mildly heteroskedastic data and 
  all smoothness levels $k$ and lattice dimensions $d$
  (\autoref{sec:theory_mean_tf}). This result in fact holds 
  for general sub-exponential noise $\epsilon$, not just for the exponential
  families we consider in the paper. We incur an additional $\log n$ factor in
  the error bound for sub-exponential noise.  It is specific to distributions
  where the mean parameter has bounded trend filtering penalty.
  
\item We give an algorithm for solving all of these cases for arbitrary likelihood,
  smoothness levels, and dimension, with the goal of operating on large data
  (\autoref{sec:algor-impl}). 

\item We give a simple estimator for the out-of-sample prediction risk (at the
    original grid locations) to enable tuning parameter selection without
    requiring complicated forms of cross validation or other re-estimation
    procedures (\autoref{sec:tuning-param-select}). 
\end{enumerate}

It is important to note that the results for MLE trend filtering and mean trend
filtering are not directly comparable because they make different assumptions.
The former constrains the natural parameter, while the latter constrains the
mean parameter.
These only coincide in the Gaussian case.
We present empirical results demonstrating our 
methods on synthetic and real datasets in \autoref{sec:experiments}.
We conclude with a discussion of the results. The remainder of this
section gives a concise overview of our theoretical contributions and a thorough
discussion of related work.

\subsection{Overview of theoretical contributions}
\label{sec:theory-overview}

% We aim to provide, in this manuscript, a complete characterization of two
% different estimators (the penalized MLE and the mean trend filter) across all
% single parameter exponential family distributions, smoothness levels $k$, and
% dimensions $d$. 

To better fix the context for our results, we provide here a concise description
of these in the simplest cases (more precise statements are in
Sections \ref{sec:theory_mle} and \ref{sec:theory_mean_tf}). 
Define $\alpha = (k+1)/d$, and define the ``canonical scaling'' as $\| D
\theta^*\|_1 \lesssim n^{1 - \alpha}$. 
The canonical scaling is called such because it holds for evaluations of 
H\"older functions---functions where the $k$th order partial derivatives are
Lipschitz continuous---at the grid locations.
Under the canonical scaling, it is shown \citep{sadhanala2021multivariate}
that for Gaussian data and $\ell_2$
loss, the minimax rate over this class is given by
\begin{equation}
  \label{eq:minimax-rate-gaussian}
  \textrm{MSE}(\Theta) =
  \begin{cases}\Omega(n^{-\alpha}) & \alpha \leq 1/2,\\
    \Omega\big(n^{-\frac{2\alpha}{2\alpha + 1}}\big) & \alpha > 1/2,\end{cases}
\end{equation}
where $\textrm{MSE}(\Theta) =
\inf_{\hat\theta}\sup_{\theta\in\Theta}\frac{1}{n}E{\snorm{\hat\theta -
    \theta}_2^2}$. 
Furthermore, the mean trend filter is rate optimal up to logarithmic
factors in the Gaussian case.

Because, for Gaussian data, $\KL{\theta_0}{\theta_1} \propto
\snorm{\theta_0 - \theta_1}_2^2$, the above immediately provides a lower bound for
$\overline{\mathrm{KL}}(\Theta) =
\inf_{\hat\theta}\sup_{\theta\in\Theta}\frac{1}{n}\KL{\theta}{\hat{\theta}}$
across all exponential families. We show that, under additional
boundedness conditions on $\Theta$ and similar constraints on the estimator,
the MLE trend filter in equation \eqref{eq:mle1} achieves this rate up
to additional logarithmic factors. 
The case of the MLE trend filter without
the artificial boundedness constraint described above is more complicated
(\autoref{sec:error-bounds-penal}). With the additional penalty on the 
null space in Equation \eqref{eq:estimator}, an addition we prove necessary for consistency, we
can achieve the minimax rate for $\alpha \leq  
1/2$. For $\alpha > 1/2$, the upper bound is weaker than for Gaussian noise: we
can show only that \smash{$\mathrm{KL}(\theta^* \Vert \hat\theta) = 
O_\P(n^{-\half})$}. 
While we are
able to show consistency in this setting, 
we suspect that this bound is loose.

We also show that, under homoskedastic subexponential noise, the mean trend filter
achieves the minimax rate up to additional logarithmic factors. The homoskedasticity
condition can be relaxed, and this is examined in \autoref{sec:theory_mean_tf}.
%However, in this case, we also show that the lower bound from the Gaussian case
%may be too optimistic. Specifically, for Poisson data in the simple case with 
%$k=0$,
%$d=1$ ($\alpha = 1$),  
%$\overline{\textrm{KL}}(\Theta)$ may be much larger than $\Omega(n^{-1/2})$ 
%without
%additional assumptions. 
We consolidate these results in Tables \ref{tab:theory-mltf} and
\ref{tab:theory-mtf}.

\begin{table}
  \caption{\label{tab:theory-mltf}Overview of theoretical results
    for the
    Penalized MLE under canonical 
    scaling. Logarithmic factors are ignored with $\tilde O$ notation, and
     additional details are
    described in \autoref{sec:theory_mle}.}
  %\centering
  \begin{tabular}{@{}lcccc@{}}
    \toprule
    Conditions & Regime & Lower bound & Upper bound & Literature\\
    \midrule
    \multirow{2}*{Gaussian} & $\alpha \leq 1/2$ & $\Omega(n^{-\alpha})$ & 
    $\tilde O_\P(n^{-\alpha})$ & 
    \multirow{2}*{\citet{sadhanala2021multivariate}} \\ 
               & $\alpha > 1/2$ & $\Omega(n^{-\frac{2\alpha}{2\alpha + 1}})$ & 
               $\tilde O_\P(n^{-\frac{2\alpha}{2\alpha + 1}})$ &  \\
    \midrule
    % Upper bd: \autoref{prop:mle_constrained}
    % Lower bd: ???
    Exponential family & $\alpha \leq 1/2$ & $\Omega(n^{-\alpha})$ & 
    $\tilde O_\P(n^{-\alpha})$ & \multirow{2}*{\autoref{prop:mle_constrained}} 
    \\
    (bounded)           & $\alpha > 1/2$ & 
    $\Omega(n^{-\frac{2\alpha}{2\alpha + 1}})$ & 
    $\tilde O_\P(n^{-\frac{2\alpha}{2\alpha + 1}})$ & \\
    \midrule
    % Upper bd: \autoref{cor:MLE_Grids}
    % Lower bd: ???
    Exponential family & $\alpha \leq 1/2$ & $\Omega(n^{-\alpha})$ & 
    $\tilde O_\P(n^{-\alpha})$ & \multirow{2}*{\autoref{cor:MLE_Grids}} \\
    (null-space penalty)  & $\alpha > 1/2$ &  
    $\Omega(n^{-\frac{2\alpha}{2\alpha + 1}})$ & $\tilde O_\P(n^{-\half})$ & \\  
    \bottomrule
  \end{tabular}
  
\end{table}

\begin{table}
  \caption{\label{tab:theory-mtf}
    Overview of theoretical results for the Mean Trend Filter under canonical
    scaling. Logarithmic factors are ignored with $\tilde O$ notation, and
    additional details are
    described in \autoref{sec:theory_mean_tf}.}
  \centering
  \begin{tabular}{@{}lcccc@{}}
    \toprule
    Conditions & Regime & Lower bound & Upper bound & Literature\\
    \bottomrule
    \multirow{2}*{Gaussian} & $\alpha \leq 1/2$ & $\Omega(n^{-\alpha})$ & 
    $\tilde O_\P(n^{-\alpha})$ & 
    \multirow{2}*{\citet{sadhanala2021multivariate}} \\
               & $\alpha > 1/2$ & $\Omega(n^{-\frac{2\alpha}{2\alpha + 1}})$ & 
               $\tilde O_\P(n^{-\frac{2\alpha}{2\alpha + 1}})$ &  \\
    \midrule
    % Upper bd: \autoref{cor:lsq_homosked}.
    % Lower bd: ???
    Sub-exponential noise & $\alpha \leq 1/2$ & $\Omega(n^{-\alpha})$ & 
    $\tilde O_\P(n^{-\alpha})$ & \multirow{2}*{\autoref{cor:lsq_homosked}} \\
    (mild heteroskedasticity)  & $\alpha > 1/2$ & 
    $\Omega(n^{-\frac{2\alpha}{2\alpha + 1}})$ & 
    $\tilde O_\P(n^{-\frac{2\alpha}{2\alpha + 1}})$ & \\
    \midrule
    Sub-exponential noise & $d = 2,\ k= 0$ & $\Omega(1)$ & not consistent & 
    \autoref{prop:lower_bd_2dgrids} \\  
    \bottomrule
  \end{tabular}
\end{table}

\subsection{Related work}

Much is known about trend filtering in one dimension (1d). The trend filtering method
in \eqref{eq:lsq} was proposed in \citet{steidl2006splines,KimKoh2009} for 1d problems. 
\cite{tibshirani2014adaptive} connected trend filtering to
locally adaptive regression splines, proposed in \cite{mammen1997locally}, and
analyzed its statistical properties. \cite{tibshirani2020divided} gives an
in-depth background of the key ideas that make trend filtering and related
methods work.
\cite{johnson2013dynamic,KimKoh2009,RamdasTibshirani2016} propose
methods to solve the convex optimization problem in 1d trend filtering.
Trend filtering with $k=0$, or total variation (TV) regularization, is an important technique 
for denoising images (two dimensions). TV methodology and computation was studied in
\citet{rudin1992nonlinear,tibshirani2005sparsity,condat2012direct,barbero2014modular}.
%\cite{baby2021optimal} recently proposed an algorithm for estimating bounded variation sequences in an online setting and showed that it is minimax optimal.
%The one dimensional analysis benefits from the fact that the indices have a natural ordering.
%However, in higher dimensions this is no longer the case, and consequently the analysis is much harder.
Trend filtering over general graphs was first proposed in \cite{WangSharpnack2016}, and subsequently,
other variants of trend filtering have been studied, for example
depth-first search TV regularization \citep{padilla2018dfs}, kNN TV denoising \citep{madrid2020adaptive}, 
quantile trend filtering \citep{padilla2021risk}, and sequential TV denoising \citep{baby2021optimal}.
These methods use squared error loss, with the exception of \cite{padilla2021risk}, and so are not 
necessarily suitable for general exponential families.

General exponential family distributions have a long history in statistics.
% , at
% least from
% \cite{koopman1936ondistributions,pitman1936sufficient,darmois1935surles}.
\citet{Brown1986} is a definitive treatment for studying the properties of
exponential families while \cite{mccullagh1989generalized} covers the details of
generalized linear models.
Direct analysis of trend filtering in this setting is more rare than for
Gaussian loss. \Citet{vandeGeer2020logistic} derived error bounds for estimating
Bernoulli family parameters with bounded variation in 1d. In contrast to most
other results, the theory applies without assuming boundedness of the estimated natural parameter.
\cite{KhodadadiMcDonald2018} examine computational approaches for variance
estimation on spatiotemporal grids. \cite{KakadeShamir2010} discuss strong
convexity of general exponential families and use the results to analyze $\ell_1$
penalized maximum likelihood. \cite{VaiterDeledalle2017} examine the geometry of
penalized generalized linear models and derive important results for
general regularizers that we use for specialized risk estimation in
\autoref{sec:algor-impl}. \cite{Bassett2019fused} provides a bound on the
Hellinger error for total variation denoising for the estimation of densities
over edge segments in a general graph. Our results here are the first to analyze
trend filtering over lattice graphs for general exponential families.

An important distinction exists between two varieties of theoretical results for
trend filtering examined in the literature: (1) nearly parametric rates under
sparsity assumptions with $\| D \theta^*\|_0$ bounded; and (2) non-parametric
rates for signals with bounded trend filtering norm $\| D \theta^*\|_1$. In
general, these bounds are difficult to compare because they hold under different
conditions, and either bound can be tighter for specific signals.
\cite{rinaldo2009properties,harchaoui2010multiple,lin2017sharp,
  guntuboyina2017adaptive,ortelli2019prediction} give more general and tighter
error bounds when the true signal is sparse (bounded $L_0$ norm).
Throughout this work, we will focus on establishing non-parametric rates 
with trend filtering norm bounds.

\cite{mammen1997locally} provide one of the earliest theoretical results on 1d trend filtering.
In higher dimensions and on general graphs, researchers have typically confined their theory to special cases---e.g., specific
dimensions, graph structure, and trend filtering order.
\cite{hutter2016optimal,sadhanala2016total} derive error bounds for total
variation denoising (trend filtering with $k=0$) on lattice graphs.
\cite{chatterjee2021newrisk,ortelli2020adaptive} show stronger error bounds when
the signal has axis-parallel patches. \cite{SadhanalaWang2017,
  sadhanala2021multivariate}, extend the analysis to higher-order trend
filtering on lattice graphs of arbitrary dimension.
All of the aforementioned works study squared error loss with sub-Gaussian noise.
\citet{WangSharpnack2016} analyze error bounds for graph trend filtering for specific cases
(lattice graphs with a specific trend filtering order). In that work, the
``eigenvector incoherence'' technique is developed as a tool to analyze the
mean squared error of any graph trend filtering problem.
In this work, we adapt this technique to work with general exponential families.

\section{Penalized MLE}
\label{sec:theory_mle}

In this section, we provide general results for trend filtering on $d$-dimensional
lattice graphs with exponential family observations. As mentioned above, general
exponential families have two interesting features. First, the distributions can
be more heavy tailed than Gaussians, and as we have seen, they are generally
sub-exponential. This is reflected in rates that are typically worse than in the
Gaussian case. Second, the variance (as well as the sub-exponential parameters
$\nu,\ b$) is a function of the natural parameter, which results in
heteroskedasticity. We find that our bounds rely heavily on the ``level'' of
this heteroskedasticity. However, this reliance is most salient with respect to
two asymptotic regimes.

We say {\em mild heteroskedasticity} occurs when both subexponential parameters,
$\nu,\ b$, are bounded as $n$ increases.   Henceforth, let $\nu,\ b$ denote the
vectors $(\nu_i), (b_i)$ for $i \in [n]$ where these are the sub-exponential
parameters of centered $Y$.  That is, if there exists an $\omega$ such 
that $\| \nu \|_\infty, \| b\|_\infty \le \omega$ for all $n$, we say that the problem is
only mildly heteroskedastic. Analysis in this case turns out to be largely
similar to the standard homoskedastic setting. We say that {\em strong
  heteroskedasticity} occurs whenever it is not {\em mild}, however, typically
we can measure the strength via $\|\nu\|_\infty / \|\nu \|_n$. When this is
close to $1$, there is little variation of $\nu$ across coordinates. 
However,
when $\|\nu\|_\infty / \|\nu\|_n$ is close to $\sqrt{n}$, only a few coordinates
dominate. Importantly, smoothness of $\theta^*$ (such as a bound on 
\smash{$\norm{D\theta^*}_1$}) does not generally have
any implications for the level of heteroskedasticity, and furthermore, it is not
generally possible to determine the level from data. Thus, considering both
situations is necessary for a complete understanding.

Much of the difficulty for both estimation and theoretical analysis in the
exponential family setting is that the negative log-likelihood is not strongly convex in
general. 
If we assume that \smash{$\psi''(\theta^*_i) > 1/K$} for all $i$, then we can 
add this
constraint to \eqref{eq:mle1} which will ensure strong convexity.
We provide an analysis of this approach in
\aref{sec:app_mle_constrained}, which is tight in the Gaussian case up to
logarithm factors, see, for example, \cite{sadhanala2021multivariate}. 
Similar results were already derived in the literature, for example, in \cite{prasad2020robust}.
As we will see, however, bounding the curvature in this way excludes important cases, and
cannot be verified from data.
Nonetheless, this assumption has a long history in statistics. For example, the standard approach
to proving estimation consistency in low-dimensional generalized linear models is much the
same~\citep{mccullagh1989generalized}. 

\subsection{Additional penalty on the null space component of 
\smash{$\theta$}}
\label{sec:mle_null_space_penalty}

The boundedness constraint discussed above
is not desirable for at least two reasons. The first is that it is difficult to
calibrate the constraint using data. The second is that strong convexity is an
indirect way to get control of the nullspace of $D$, which is what we actually
need. We now argue why this is the case.

Let the empirical and population risks at a parameter $\theta$ be
\begin{align}
	\label{eq:R_defn}
	R_n(\theta) &= \frac{1}{n} \sum_{i=1}^n \psi(\theta_i) - y_i \theta_i,&\textrm{and}&&
	R(\theta) &= \frac{1}{n} \sum_{i=1}^n \psi(\theta_i) - E[Y_i] \theta_i,
\end{align}
respectively, and note that
%\begin{equation}
%	\label{eq:kl-loss}
	$\KLbar{\theta_0}{\theta_1} = R(\theta_1) - R(\theta_0).$
%\end{equation}
% Again, the MLE in \eqref{eq:mle} is given by
% \begin{equation*}
% 	\hat\theta = \argmin_\theta R_n(\theta) + \lambda \|D\theta\|_1.
% \end{equation*}
For Gaussian data, minimization of the empirical risk, the
$\snorm{D\theta}_1$ constraint, and strong convexity of the likelihood together control the
discrepancy between the empirical risk and the population risk. The reason
is that strong convexity controls behaviour of $\hat\theta$ in the nullspace of
$D$. But outside this setting, we no longer have strong convexity, and
unfortunately, the penalty alone does not give sufficient control.
% We typically minimize empirical risk because we think of it as a good proxy for 
% \smash{$R(\theta)$}.
The result is that, for non-Gaussian data,
$
\sup_{\theta \in \Theta} |R_n(\theta) - R(\theta)| 
$
can become arbitrarily large with high probability,
% is  small with high probability where $\Theta$ is the set of parameters we are 
% working with. However, this intuition fails
even in simple settings, despite bounds on $\snorm{D\theta}_1$.
Suppose \smash{$\Theta  = \{ \theta : \|D\theta\|_1 \le 1\}$} where \smash{$D = 
D_{n, 1}^{(0)}$}.

\begin{remark}[Degenerate Poisson example]
	\label{prop:pois_degen}
	
	Consider the Poisson family, with true parameter $\theta^*_n = -2\log n \one$
	for any $n\geq 1.$  
	The probability that all $y_i$'s are $0$ is $e^{-\nicefrac{1}{n}}$. 
	On this event (where $y =0 \one$), for any $\lambda \geq 0$,
	$\inf_{\theta} \sum_{i=1}^n e^{\theta_i} -y_i\theta_i+ \lambda \|D\theta\|_1 
	= 
	\inf_{\theta} \sum_{i=1}^n e^{\theta_i} + \lambda \|D\theta\|_1 = 0$
	because
	$\lim_{c\rightarrow -\infty} \sum_{i=1}^n e^{c} + \lambda \|Dc \one \|_1  = 
	0$.
	Furthermore,  observe that as $c\rightarrow -\infty$,
	\[
	R(c\one) \rightarrow \infty \text{ even though } R_n(c\one) \rightarrow 0.
	\]
  Notice that in this example, $\psi''(\theta^*_i) = n^{-2}$, so the strong 
  convexity bound is diminishing with $n$.
\end{remark}

% Therefore,  minimizing empirical risk with only $\|D\theta\|_1$ penalty is not a
% good surrogate to minimizing population risk in this scenario. 
One can observe similar behaviour for the logistic family.
Consider $\theta_n^* = -2\log n \mathbf{1}$ and verify that all $y_i$'s are $0$ 
with probability $\big(1 + n^{-2} \big)^{-n} \approx e^{-\nicefrac{1}{n}}.$ The MLE 
with 
only the $\|D\theta\|_1$ penalty behaves similarly to the Poisson example 
described above.
%
%\attn{Do we want this paragraph?} This phenomenon is not restricted to the 
%high-dimensional setting presented 
%here and can be easily demonstrated in the univariate setting.
%Consider a single Poisson random variable, $X$ with intensity $e^\theta$ and 
%natural parameter $\theta$.
%Then the MLE for $\theta$ is $\hat \theta := \log X$, which is $-\infty$ with 
%a 
%non-zero probability, hence has no mean or variance.
%This unfortunate behaviour can be mitigated by  reqularizing the problem 
%with an an additional ridge penalty (
%\smash{$- \ell(\theta; X) + \lambda \theta^2$} where \smash{$\lambda >0$}).
%The solution to this problem satisfies
%\smash{$e^{\hat \theta} = X - 2\lambda\hat\theta$}
%which is finite for all \smash{$X$}.
%Hence, in this case regularization of the MLE is prudent.

While artificially imposing strong convexity addresses this issue, it
is both more direct and results in a more tractable estimator to 
constrain the
component of \smash{$\theta$} in the null space of \smash{$D$}.
With this additional constraint, we can show the following risk bound. 
The proof is in \aref{sec:app-uniform_risk_null_penalty}.
\begin{proposition}
	\label{prop:uniform_risk_null_penalty}
	Let $\Theta = \{ \theta \in \R^n : \|P_\cN \theta \|_n \le a_n,\ \|D\theta\|_1
	\le c_n n^{1-\alpha}\}$ where  
	$\cN ={\mathrm{null}}(D)$  and $\alpha = (k+1)/d.$
	Suppose $\epsilon_i$ is zero mean
	sub-exponential with parameters $(\nu_i^2,\ b_i)$ for $i\in [n]$.  
	Assume $\|\nu\|_\infty ,\ \| b \|_\infty \leq  c$ where $c$ is a constant. Then
	\begin{equation}
		\label{eq:uniform_risk}
		\sup_{\theta \in \Theta} |R_n(\theta) - R(\theta)| =  
		O_\P \left( \frac{a_n\log n}{ \sqrt{n}}+ \frac{c_n\gamma\log n}
    {n^{\alpha\wedge\half}}\right) 
	\end{equation}
	where $\gamma = \log^{\half} n$ if $2\alpha=1$ and $1$ otherwise.
\end{proposition}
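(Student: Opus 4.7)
Since $E[Y_i] = \psi'(\theta_i^*)$ and $\epsilon_i = y_i - E[Y_i]$, the two risks in \eqref{eq:R_defn} satisfy $R_n(\theta) - R(\theta) = -\epsilon^\top \theta/n$, so the task reduces to bounding $\sup_{\theta\in\Theta}|\epsilon^\top \theta|/n$. My approach is to exploit the orthogonal decomposition $\R^n = \cN \oplus \row(D)$: write $\theta = \theta_\cN + \theta_\perp$ with $\theta_\cN = P_\cN \theta$ and $\theta_\perp = (I-P_\cN)\theta \in \row(D)$, and treat the two pieces separately. The constraint $\snorm{P_\cN\theta}_n \le a_n$ governs $\theta_\cN$ and $\snorm{D\theta}_1 \le c_n n^{1-\alpha}$ governs $\theta_\perp$; these produce the two summands in~\eqref{eq:uniform_risk}.

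\textbf{Null-space piece.} The null space has constant dimension $r = \prod_{j=1}^d (k_j+1)$. Cauchy--Schwarz gives
\begin{equation*}
\tfrac{1}{n}|\epsilon^\top \theta_\cN| \le \tfrac{1}{n}\snorm{P_\cN\epsilon}_2\snorm{\theta_\cN}_2 \le \tfrac{a_n}{\sqrt n}\snorm{P_\cN\epsilon}_2.
\end{equation*}
Fixing an orthonormal basis $u_1,\dots,u_r$ of $\cN$, each $u_l^\top\epsilon$ is a weighted sum of independent sub-exponentials with Bernstein parameters controlled by $\snorm{\nu}_\infty \vee \snorm{b}_\infty \le c$. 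Applying Bernstein's inequality for $\snorm{u_l^\top\epsilon}$ with deviation $\log n$ and union-bounding over $l\in[r]$ yields $\snorm{P_\cN\epsilon}_2 = O_\P(\log n)$, producing the first summand.

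\textbf{Row-space piece.} Because $\theta_\perp \in \row(D)$, $\theta_\perp = D^+ D\theta_\perp = D^+ D\theta$, and H\"older's inequality gives
\begin{equation*}
\tfrac{1}{n}|\epsilon^\top \theta_\perp| = \tfrac{1}{n}\bigl|\bigl((D^+)^\top\epsilon\bigr)^\top D\theta\bigr| \le \tfrac{\snorm{D\theta}_1}{n}\snorm{(D^+)^\top\epsilon}_\infty \le \tfrac{c_n}{n^\alpha}\snorm{(D^+)^\top\epsilon}_\infty.
\end{equation*}
Using the compact SVD $D = U\Sigma V^\top$, the $i$-th entry of $(D^+)^\top\epsilon$ is a linear combination of the $\epsilon_j$ with coefficient vector $D^+e_i$, having Bernstein parameters controlled by $\snorm{D^+e_i}_2^2 = \sum_l U_{i,l}^2/\sigma_l^2$ and $\snorm{D^+e_i}_\infty$. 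Bernstein plus a union bound over the $m = O(n)$ entries gives an $\ell_\infty$ bound. Substituting the known singular-value scaling of the Kronecker-structured lattice TF operator from \cite{SadhanalaWang2017}, together with the eigenvector incoherence technique of \cite{WangSharpnack2016}, yields $\max_i\snorm{D^+e_i}_2 \lesssim n^{(1/2-\alpha)_+}$ with an additional $\log^{1/2}n$ factor at $2\alpha = 1$ arising from the harmonic sum $\sum_l 1/l$ in the singular-value tail. This matches the factor $\gamma$ in the statement, and together with the $c_n/n^\alpha$ prefactor, gives the second summand.

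\textbf{Main obstacle.} The substantive step is the $\ell_\infty$ control of $(D^+)^\top\epsilon$. This is where the mild-heteroskedasticity assumption $\snorm{\nu}_\infty \vee \snorm{b}_\infty \le c$ is actually used---it is precisely what keeps the Bernstein tail parameters uniform in $n$ with only an additional $\log n$ inflation---and where the transition between the regimes $\alpha \le 1/2$ and $\alpha > 1/2$, with the boundary correction $\log^{1/2}n$, enters through the spectral properties of the lattice TF operator. I would import the required spectral and incoherence bounds from the referenced works rather than re-derive them here.
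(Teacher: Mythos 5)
Your proposal follows essentially the same route as the paper: the paper's proof reduces to bounding $\tfrac{1}{n}|\epsilon^\top\theta|$ and then invokes its empirical-process lemma (\autoref{lem:emp_proc_combined}), which performs exactly your null-space/row-space split with Cauchy--Schwarz on $P_\cN\epsilon$, H\"older on $(D^\dagger)^\top\epsilon$ against $\|D\theta\|_1$, sub-exponential Bernstein tails with union bounds, and the incoherence-plus-eigenvalue-scaling bounds (\autoref{lem:kron_tf_evals}) that you propose to import. One slip: the spectral bound should read $\max_i\snorm{D^+e_i}_2 \lesssim n^{(\alpha-\half)_+}$ (up to the boundary log factor), not $n^{(\half-\alpha)_+}$; only with the former does $\tfrac{c_n}{n^{\alpha}}\cdot n^{(\alpha-\half)_+}$ collapse to the claimed $c_n n^{-(\alpha\wedge\half)}$.
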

For the above example of degenerate Poisson, we can set $a_n = 2 \log n,\ c_n =
0$ to see that the right hand side converges to 0 as $n \rightarrow \infty$. 
This motivates us to penalize the null space component of $\theta$ in the MLE
and use the estimator defined in \eqref{eq:estimator} rather than that in
\eqref{eq:mle1}. 
In the following, we call this estimator \eqref{eq:estimator}, the MLE and define $\alpha=(k+1)/d$.
The minimizer in the optimization problem is unique because $\psi$ is strictly
convex. 
%Note that $R(\theta) - R(\theta^*)= {\rm KL}(\theta^*||\theta)$.

\subsection{Error bounds for penalized MLE}
\label{sec:error-bounds-penal}

Generally, there are three degrees of freedom when stating results: (1) the 
trend 
filtering order $k$, (2) the dimension $d$, and (3) the exponential family and  
resulting sub-exponential parameters ($\nu,\ b$).
There is a natural trade-off between generality and interpretability of the 
results presented here, so we will prefer to present specific interpretable 
results as corollaries.

We introduce some additional notation to state our results.
Let $\rho_\ell,\ \ell \in [N]$ be the eigenvalues of $D_1^\top D_1$ where 
$D_1 = D_{N, 1}^{(k+1)},\ N = n^{1/d}$.
Abbreviate $D = D_{n,d}^{(k+1)}$ and let  ${\xi_{i}^2: i = (i_1, \dots, i_d) 
\in [N]^d}$ be the eigenvalues of
$D^\top D$. Due to the Kronecker-sum structure of $D^\top D$, we have
$
\xi_i^2 = \sum_{j=1}^d \rho_{i_j}.
$
Let $\kappa = (k+1)^d$ denote the nullity of $D^\top D$.
%For $\ell \in [n]$, let
%$V_{[\ell]}$ be the matrix formed by the first $\ell$ columns of $V$
%corresponding to the $\ell$ smallest eigenvalues of $D^\top D$ including zeros.
%Define the projection matrix $P_{[\ell]} = V_{[\ell]} V_{[\ell]}^\top$. 
A nonzero vector $x \in \R^n$ is said to be {\em incoherent} with a constant 
$\mu\ge 1$
if $\|x\|_\infty/\|x\|_n \leq \mu.$ Note that, for arbitrary nonzero
$x\in \R^n,$ $\|x\|_\infty/\|x\|_n \in [1,\ \sqrt{n}].$ 
For $J \subset [N]^d$ containing $[k+1]^d$, define
%the truncated $p$-norm of the inverse singular values
\begin{equation}
	\label{eq:Lell}
	L_{J,p} = \left( \frac{\mu^2}{n} \sum_{i \in [N]^d \setminus J}^n 
	\frac{1}{\xi_{i}^p} \right)^{1/p}
\end{equation}
where $\mu$ is the constant with which the left singular vectors of \smash{$D$} 
are incoherent.
%We also denote \smash{$L_{[\ell], p}$} with \smash{$L_{\ell, p}$} for brevity, 
%where 
%\smash{$\kappa \leq \ell \leq n, p\ge 1.$}
We can derive the following error bound on the excess risk of the estimator in 
\eqref{eq:estimator}.
\begin{theorem}
	\label{thm:ex_risk_bd}
	Let \smash{$y_i = \beta^*_i + \epsilon_i$} where \smash{$\epsilon_i$} is zero 
	mean
	sub-exponential with parameters \smash{$(\nu_i^2, b_i)$} for \smash{$i\in 
	[n]$}.  
	Let \smash{$L$} be as defined in \eqref{eq:Lell}.
	For \smash{$t\geq 1$}, abbreviate
	$A_n = 2t\mu \sqrt{\nicefrac{\kappa}{n}} \big( \| \nu \|_2 \vee 
	\|b\|_\infty\big),$
	$B_n = 2t\left( 
	\min\left\{ \|\nu\|_\infty  L_{\kappa,2},\ 
	\|\nu\|_2 L_{\kappa,1} \right\}
	\vee  \| b \|_\infty L_{\kappa,1}\right)
	$ 
	where \smash{$L_{\kappa, p} = L_{[k+1]^d, p}$} for $p\ge 1$.
	Let $\hat\theta$ be our estimate in \eqref{eq:estimator} with parameters 
	\smash{$\lambda_2 = 2A_n/n$} and 
	\smash{$\lambda_1 = 2B_n/n.$}
	Then, with probability at least \smash{$1-4nd e^{-t}$},
	\begin{align}
    \KLbar{\theta^*}{\hat\theta}
    % R(\hat\theta) - R(\theta^*)
    &\leq \frac{3}{n} \big( A_n \|P_\cN  
		\theta^*\|_2 + B_n \|D\theta^*\|_1 \big), \quad \text{and}\\
		A_n \|P_\cN \hat\theta\|_2 +  B_n \|D\hat\theta\|_1 &\leq 3 \big( 
		A_n \|P_\cN\theta^*\|_2 + B_n \|D\theta^*\|_1 \big).
	\end{align}
\end{theorem}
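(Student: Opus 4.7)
The plan is to follow the standard two-part template for penalized $M$-estimators: derive a basic inequality from optimality of $\hat\theta$, handle the noise cross-term by dual decomposition along $\cN(D)$ and its orthogonal complement, then control the two resulting data-dependent functionals by concentration tailored to subexponential noise and the SVD of $D$. By definition of $\hat\theta$ in \eqref{eq:estimator},
\[
R_n(\hat\theta) + \lambda_1\|D\hat\theta\|_1 + \lambda_2\|P_\cN\hat\theta\|_2 \le R_n(\theta^*) + \lambda_1\|D\theta^*\|_1 + \lambda_2\|P_\cN\theta^*\|_2.
\]
Substituting $R_n(\theta) - R(\theta) = -\tfrac{1}{n}\epsilon^\top\theta$ and noting $\KLbar{\theta^*}{\hat\theta} = R(\hat\theta) - R(\theta^*)$, this rearranges to
\[
\KLbar{\theta^*}{\hat\theta} \le \tfrac{1}{n}\epsilon^\top(\hat\theta-\theta^*) + \lambda_1(\|D\theta^*\|_1 - \|D\hat\theta\|_1) + \lambda_2(\|P_\cN\theta^*\|_2 - \|P_\cN\hat\theta\|_2).
\]
For the noise term, I split $\hat\theta-\theta^* = P_\cN(\hat\theta-\theta^*) + P_\cN^\perp(\hat\theta-\theta^*)$. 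Cauchy--Schwarz gives $|\epsilon^\top P_\cN(\hat\theta-\theta^*)| \le \|P_\cN\epsilon\|_2(\|P_\cN\hat\theta\|_2 + \|P_\cN\theta^*\|_2)$, and writing $P_\cN^\perp = D^+ D$ followed by H\"older gives $|\epsilon^\top P_\cN^\perp(\hat\theta-\theta^*)| \le \|(D^+)^\top\epsilon\|_\infty(\|D\hat\theta\|_1 + \|D\theta^*\|_1)$.

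The second step is to control $\|P_\cN\epsilon\|_2$ and $\|(D^+)^\top\epsilon\|_\infty$ with high probability. Let $D = U\Sigma V^\top$ be the SVD. Using an orthonormal basis $\{u_j\}_{j=1}^\kappa$ of $\cN(D)$, each $u_j^\top\epsilon$ is a sum of independent subexponentials whose effective parameters satisfy $\nu_{\mathrm{eff}}^2 \le \mu^2\|\nu\|_2^2/n$ and $b_{\mathrm{eff}} \le \mu\|b\|_\infty/\sqrt{n}$ by incoherence $|u_{j,i}| \le \mu/\sqrt{n}$. Bernstein's inequality plus a union bound over $j \in [\kappa]$, combined with $\|P_\cN\epsilon\|_2 \le \sqrt{\kappa}\max_j|u_j^\top\epsilon|$, gives $\|P_\cN\epsilon\|_2 \le A_n/2$ except with probability $\le 2\kappa e^{-t}$. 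For the row-space part, the $j$-th entry of $(D^+)^\top\epsilon$ can be written as $q^\top\epsilon$ with $q_k = \sum_{i:\xi_i>0} U_{j,i} V_{k,i}/\xi_i$. Orthonormality of $\{v_i\}$ and incoherence of $U$ give $\|q\|_2^2 = \sum_i U_{j,i}^2/\xi_i^2 \le L_{\kappa,2}^2$, while incoherence of both $U$ and $V$ yields $\|q\|_\infty \le L_{\kappa,1}$. The two competing subexponential variance bounds $\nu_{\mathrm{eff}}^2 \le \|\nu\|_\infty^2 \|q\|_2^2$ and $\nu_{\mathrm{eff}}^2 \le \|\nu\|_2^2 \|q\|_\infty^2$ produce the min inside $B_n$, and $b_{\mathrm{eff}} \le \|b\|_\infty L_{\kappa,1}$. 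A Bernstein bound and union bound over the at most $nd$ entries of $(D^+)^\top\epsilon$ then give $\|(D^+)^\top\epsilon\|_\infty \le B_n/2$ except with probability $\le 2nd\, e^{-t}$.

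On the intersection of these events (total failure $\le 4nd\, e^{-t}$), I plug into the basic inequality and set $\lambda_1 = 2B_n/n$, $\lambda_2 = 2A_n/n$. The penalties are chosen to be exactly twice the noise contributions so that the coefficients of $\|D\hat\theta\|_1$ and $\|P_\cN\hat\theta\|_2$ on the right-hand side become $-B_n/n$ and $-A_n/n$, while the $\theta^*$-terms combine to give
\[
\KLbar{\theta^*}{\hat\theta} + \tfrac{1}{n}\bigl(A_n\|P_\cN\hat\theta\|_2 + B_n\|D\hat\theta\|_1\bigr) \le \tfrac{3}{n}\bigl(A_n\|P_\cN\theta^*\|_2 + B_n\|D\theta^*\|_1\bigr).
\]
Dropping either the KL term or the $\hat\theta$-norm terms on the left yields each of the two displayed conclusions.

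The hard part is the concentration bound on $\|(D^+)^\top\epsilon\|_\infty$, where the interaction between heteroskedasticity ($\nu,b$ vectors) and the singular-value spectrum of $D$ is most delicate. Incoherence of the left singular vectors plays two distinct roles here---controlling $\|q\|_2$ via the quadratic spectral sum and controlling $\|q\|_\infty$ via the linear spectral sum---and the min structure in $B_n$ is precisely what will later let the corollaries in \autoref{sec:error-bounds-penal} specialize cleanly to both the mild and strong heteroskedasticity regimes. The rest is algebraic bookkeeping with the chosen tuning parameters.
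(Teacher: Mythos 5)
Your proposal is correct and follows essentially the same route as the paper: the basic inequality from optimality, the decomposition of $\epsilon^\top(\hat\theta-\theta^*)$ along $\cN(D)$ and its complement via $D^+D$, incoherence-based subexponential concentration for $\|P_\cN\epsilon\|_2$ and $\|(D^+)^\top\epsilon\|_\infty$ (the paper's Lemmas on the empirical process and on $g_j = (I-P_J)D^\dagger e_j$), and then the standard ``$\lambda$ equals twice the noise level'' bookkeeping plus nonnegativity of both the KL term and the penalty term to split the final display into the two stated conclusions. The only wobble is your claim that the concentration events give $A_n/2$ and $B_n/2$ while also asserting the $\hat\theta$-coefficients become $-A_n/n$ and $-B_n/n$; with the paper's constants the events give $A_n$ and $B_n$ exactly, which is what produces the factor $3$ as displayed, but either normalization yields the theorem.
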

\noindent
See the proof in \aref{sec:app_ex_risk_bd}.
For regular grids,
\autoref{lem:kron_tf_evals} (in \aref{sec:eigenvalue-bounds}) controls the magnitude of \smash{$L_{\kappa, 1}, 
L_{\kappa,2}$} and hence 
the bounds in
\autoref{thm:ex_risk_bd}. 
Applying the lemma to the expression for $B_n$ in \autoref{thm:ex_risk_bd}, we 
get the following
corollary 
for regular grids.
\begin{corollary}
	\label{cor:MLE_Grids}
	Assume canonical scaling $\|D\theta^*\|_1 \lesssim n^{1-\alpha},$ and $\|P_\cN 
	\theta^*\|_n \lesssim 1$. Then for $t\ge 1$,
	\begin{equation}
    \overline{\mathrm{KL}}\big( \theta^*  \Vert \hat\theta \big) = O_\P(r_n 
    \log n), 
    \;\; 
    \text{with}\;\;
    r_n = 
		\begin{cases}
			(\| \nu \|_\infty + \| b \|_\infty ) n^{-\alpha}
			& \alpha < \half\\
			\| b \|_\infty  n^{-\alpha} \gamma_1 + \min\{ \| \nu \|_\infty n^{-\frac 
			12} 
			\gamma_2,\ \| \nu \|_2 n^{-\alpha} \gamma_1\}
			& \alpha \in [\half, 1]\\
			\frac{1}{n} \big( \| \nu\|_2 + \|b\|_\infty\big)
			& \alpha > 1
		\end{cases}
	\end{equation}
  %$c$ is a constant that does not depend on $n$ and
	and $\gamma_{p} = (\log n)^{1/p} \mathbf{1}(p\alpha=1)$ for $p\ge 1$.
\end{corollary}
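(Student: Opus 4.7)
The plan is to derive this as a direct consequence of \autoref{thm:ex_risk_bd} by plugging in the canonical scaling together with the eigenvalue bounds on $L_{\kappa,1}$ and $L_{\kappa,2}$ referenced in \autoref{lem:kron_tf_evals}, then performing a three-way case analysis on $\alpha$.

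First, I would apply \autoref{thm:ex_risk_bd} directly. Under the canonical scaling hypotheses, $\|P_\cN \theta^*\|_2 = \sqrt{n}\,\|P_\cN \theta^*\|_n \lesssim \sqrt{n}$ and $\|D\theta^*\|_1 \lesssim n^{1-\alpha}$, so the bound reads
\begin{equation}
\KLbar{\theta^*}{\hat\theta} \lesssim \frac{A_n}{\sqrt{n}} + \frac{B_n}{n^{\alpha}}.
\end{equation}
Since $\kappa = (k+1)^d = O(1)$, the first term becomes $A_n/\sqrt{n} \lesssim t\,(\|\nu\|_2 \vee \|b\|_\infty)/n$. This already matches the advertised rate in the regime $\alpha > 1$ and is dominated elsewhere, so the real work is in handling $B_n n^{-\alpha}$.

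Second, I would import \autoref{lem:kron_tf_evals} to control the tail sums $L_{\kappa, p}$. Using the Kronecker-sum identity $\xi_i^2 = \sum_j \rho_{i_j}$ and the standard spectral estimate $\rho_\ell \asymp (\ell/N)^{2(k+1)}$ for the $(k+1)$-th order difference operator on a chain, one obtains by a Riemann-sum approximation
\begin{equation}
L_{\kappa, p}^p \;\asymp\; \mu^2 \int_{[1/N,1]^d} \frac{du}{\bigl(\sum_j u_j^{2(k+1)}\bigr)^{p/2}}.
\end{equation}
A polar-type change of variable shows this integral converges iff $p(k+1)<d$, i.e.\ iff $p\alpha<1$, with logarithmic divergence at the threshold $p\alpha=1$. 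Consequently $L_{\kappa,2}\asymp n^{(\alpha-1/2)\vee 0}\gamma_2$ and $L_{\kappa,1}\asymp n^{(\alpha-1)\vee 0}\gamma_1$, where $\gamma_p = (\log n)^{1/p}$ when $p\alpha = 1$ and $1$ otherwise.

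Third, I would split into three regimes and substitute these bounds into the definition of $B_n$. For $\alpha<\tfrac{1}{2}$, both $L_{\kappa,1},L_{\kappa,2}\asymp 1$, and since $\|\nu\|_\infty\le \|\nu\|_2$, the minimum in $B_n$ equals $\|\nu\|_\infty$, giving $B_n n^{-\alpha} \lesssim t(\|\nu\|_\infty \vee \|b\|_\infty)n^{-\alpha}$. For $\alpha \in [\tfrac12,1]$, $L_{\kappa,1}\asymp \gamma_1$ and $L_{\kappa,2}\asymp n^{\alpha-1/2}\gamma_2$, yielding
\begin{equation}
B_n n^{-\alpha} \lesssim t\Bigl(\min\bigl\{\|\nu\|_\infty n^{-1/2}\gamma_2,\;\|\nu\|_2 n^{-\alpha}\gamma_1\bigr\} \vee \|b\|_\infty n^{-\alpha}\gamma_1\Bigr),
\end{equation}
which is the stated middle case. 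For $\alpha>1$, $L_{\kappa,1}\asymp n^{\alpha-1}$ and $L_{\kappa,2}\asymp n^{\alpha-1/2}$; a quick check using $\|\nu\|_2\le\sqrt n\,\|\nu\|_\infty$ shows $\|\nu\|_2 n^{-1} \le \|\nu\|_\infty n^{-1/2}$, so the minimum is attained by the $L_{\kappa,1}$ branch, producing $B_n n^{-\alpha} \lesssim t(\|\nu\|_2 \vee \|b\|_\infty)/n$, which matches the $A_n/\sqrt n$ contribution.

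Finally, to convert the high-probability statement of \autoref{thm:ex_risk_bd} into an $O_\P$-bound with the extra $\log n$ factor, I would take $t=C\log n$ so that $4nd\,e^{-t}\to 0$. The main obstacle I anticipate is bookkeeping at the boundary exponents $\alpha\in\{\tfrac{1}{2},1\}$: these are precisely where the eigenvalue integrals transition between convergent and divergent, producing the $\gamma_p$ indicator log factors, and one must verify that the min/max structure inside $B_n$ collapses correctly in each regime so that the stated $r_n$ is recovered without overcounting logarithms.
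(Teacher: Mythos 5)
Your proposal is correct and follows essentially the same route as the paper, which obtains the corollary by substituting the bounds of \autoref{lem:kron_tf_evals} for $L_{\kappa,1}$ and $L_{\kappa,2}$ into the expression for $B_n$ in \autoref{thm:ex_risk_bd}, using the canonical scaling, and setting $t \asymp \log n$. Your case analysis (including the observations that $A_n/\sqrt{n}$ is dominated except when $\alpha>1$ and that the min/max structure collapses as stated at the boundary exponents) matches the intended argument.
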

For Gaussian errors with \smash{$\nu = \sigma \one,\ b = 0$}, we 
recover  
optimal rates  in the case 
$\alpha \leq \nicefrac 12$ up to logarithmic factors (see for example 
\citealt{sadhanala2021multivariate}). 
However, we get suboptimal rates when 
\smash{$\alpha > \nicefrac 12$}.

\subsection{Penalized MLE in special cases}

We now illustrate \autoref{cor:MLE_Grids} in a few special cases to provide intuition.
%\paragraph{Example 0.}
%Consider Exponential family with a small constant natural parameter 
%\smash{$\theta = -a\one$} for \smash{$0< a \ll 1$}.
%Then \smash{$\nu = (c/a) \one$} and \smash{$b = (c/a) \one$} for some constant 
%$c$. 
%The best estimator in this high noise scenario is the mean. 
%The excess is risk is  $O_\P(\frac 1n)$. The above corollary says it 
%is $\frac{1}{a\sqrt{n}}$. (Plugin $k=0, d=1, \alpha=1$ in the corollary.)
As above, we focus on grid graphs with Poisson and Exponential distributions,
and we assume that these are all of width $N$ and dimension 
$d$, so that $n = N^d$.
Recall that for natural parameter $\theta^*$, the Poisson distribution has mean
$\beta^* =  \exp(\theta^*)$, while the Exponential distribution has mean $\beta^* = 
-1/\theta^*$.
For the Poisson distribution, an additive change in $\theta^*$ results in a 
multiplicative change in the mean, and $\nu^2 = 2 \beta^*$, which can easily 
result in strong heteroskedasticity.
Only in special cases does a constraint on $\| D \theta^* \|_1$ result in a 
bound on $\nu^2$, and generally, $\| \nu \|_\infty$ will depend on the signal 
in question.
%For many choices of $d$ and $k$, we can construct signals with arbitrarily 
%large heteroskedasticity with bounded trend filtering norm, and we can only 
%make guarantees for specific signals.

The first result is an example of weak heteroskedasticity, where the natural
parameter is uniformly bounded.

\begin{corollary}
\label{cor:poisson_weak_hetero}

Consider the Poisson distribution where the natural parameter vector 
$\theta^*$ satisfies $\| \theta^*\|_\infty = O(1)$.
Let $k=1$ and assume that $\theta^*$ satisfies the canonical scaling, $\| D \theta^*
\|_1 = O(n^{1-\nicefrac{2}{d}})$.
Then, we have the following rate bound for penalized MLE trend filtering.

\begin{equation}
  \KLbar{\theta^*}{\hat\theta}
  % R(\hat \theta) - R(\theta^*)
  = O_\P(r_n \log n), \text{ \quad \quad where\quad\quad } r_n =
	\begin{cases}
	  n^{-\half}, & d=1\\
    n^{-\half} \log n, & d = 2\\
    n^{-\half}, & d = 3\\
    n^{-\half} \log^{\half} n, & d = 4\\
		n^{-\nicefrac{2}{d}}, & d> 4.
	\end{cases}
\end{equation}
\end{corollary}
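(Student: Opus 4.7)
The plan is to apply \autoref{cor:MLE_Grids} with the Poisson-specific subexponential parameters from \autoref{tab:orlicz_subexp_params} and carry out a short case analysis over $d$. Since $\nu_i^2 = 2 e^{\theta^*_i}$ and $b_i = 0.55$ for the Poisson family, the assumption $\|\theta^*\|_\infty = O(1)$ yields $\|\nu\|_\infty = O(1)$, $\|b\|_\infty = O(1)$, and $\|\nu\|_2 = O(\sqrt{n})$. The same boundedness gives $\|P_\cN \theta^*\|_n \le \|\theta^*\|_\infty = O(1)$, so both hypotheses of \autoref{cor:MLE_Grids} are in force and the canonical scaling is part of the assumption. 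With $k = 1$ we have $\alpha = 2/d$, so $d = 1$ falls in the $\alpha > 1$ branch, $d \in \{2, 3\}$ in the $\alpha \in [\half, 1]$ branch (with $d = 2$ at the boundary $\alpha = 1$), $d = 4$ at the boundary $\alpha = \half$, and $d > 4$ in the $\alpha < \half$ branch.

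The remaining work is essentially substitution. The $d = 1$ case uses the third branch and gives $r_n = n^{-1}(\|\nu\|_2 + \|b\|_\infty) = O(n^{-\half})$. For $d > 4$, the first branch gives $r_n = (\|\nu\|_\infty + \|b\|_\infty) n^{-\alpha} = O(n^{-2/d})$. The cases $d \in \{2, 3, 4\}$ all sit in the middle branch, where the inner minimum compares $\|\nu\|_\infty n^{-\half} \gamma_2 = O(n^{-\half} \gamma_2)$ against $\|\nu\|_2 n^{-\alpha} \gamma_1 = O(n^{\half - \alpha} \gamma_1)$. Since $\alpha \le 1$ in these cases, the first argument is always at least as small (up to boundary log factors), so the middle branch reduces to $O(\|b\|_\infty n^{-\alpha} \gamma_1 + n^{-\half} \gamma_2)$. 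Substituting $\alpha \in \{1, 2/3, \half\}$ together with the boundary factors $\gamma_1 = \log n$ at $d = 2$ and $\gamma_2 = \log^{\half} n$ at $d = 4$, with $\gamma_1 = \gamma_2 = 1$ at $d = 3$, produces the stated rates.

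The main, and only, point requiring care is correctly handling the boundary indicators $\mathbf{1}(p\alpha = 1)$ in the definition of $\gamma_p$, so that the $\log n$ appears at $d = 2$ and the $\log^{\half} n$ at $d = 4$ but nowhere else, and verifying that the $\|\nu\|_\infty n^{-\half} \gamma_2$ term rather than the $\|\nu\|_2 n^{-\alpha} \gamma_1$ term achieves the minimum in the middle branch. There is no real technical obstacle: \autoref{cor:MLE_Grids} does all the heavy lifting, and the argument reduces to a straightforward tabulation over the five cases.
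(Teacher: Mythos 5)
Your proposal is correct and follows essentially the same route as the paper's own proof: it invokes \autoref{cor:MLE_Grids}, uses the Poisson table entries to get $\|\nu\|_\infty,\|b\|_\infty=O(1)$ and $\|\nu\|_2=O(\sqrt{n})$, and tabulates over the branches determined by $\alpha=2/d$ with the boundary indicators $\gamma_1,\gamma_2$. The only difference is cosmetic: you additionally verify $\|P_\cN\theta^*\|_n\lesssim 1$ explicitly (a hypothesis the paper leaves implicit), and your handling of the middle branch actually yields the slightly sharper $O(n^{-\half})$ at $d=2$, which still implies the stated $O(n^{-\half}\log n)$.
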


A simple example of such a signal is $\theta^*_i = \frac{2}{N}\sum_{j=1}^d |i_j
- N/2|$, where $i = (i_1,\ldots,i_d)\in [N]^d$. For a proof,
see \aref{sec:proof_mle_bound_coro}.

The next example demonstrates \autoref{cor:MLE_Grids} under strong
heteroskedasticity. 

\begin{corollary}
\label{cor:MLE_strong_hetero}
Consider any exponential family on a $d$-dimensional grid ($d > 1$) with a
natural parameter that satisfies $\| \nu \|_\infty$, $\| b \|_\infty = O(n^c)$ and
$\| \nu \|_2 = O(n^c)$ for some $c > 0$, and the canonical scaling for $k=0$. 
Then 
\begin{equation}
  \KLbar{\theta^*}{\hat\theta}
  % R(\hat \theta) - R(\theta^*)
  = O_\P(r_n \log n), \text{ \quad \quad where\quad\quad } r_n =
  \begin{cases}
    n^{c-\half}, & d = 1\\
    n^{c-\half} \log^{\half} n, & d = 2\\
    n^{c-\nicefrac{1}{d}}, & d> 2.
  \end{cases}
\end{equation}
\end{corollary}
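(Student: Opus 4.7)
The plan is to obtain \autoref{cor:MLE_strong_hetero} as a direct specialization of \autoref{cor:MLE_Grids} with $k=0$, so that $\alpha = (k+1)/d = 1/d$, combined with a case analysis on $d$.

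First, I would verify the hypotheses of \autoref{cor:MLE_Grids}. The canonical scaling $\|D\theta^*\|_1 \lesssim n^{1-1/d}$ is part of the standing assumption. The companion condition $\|P_\cN \theta^*\|_n \lesssim 1$ required there is mild because the null space of $D = D_{n,d}^{(1)}$ is one-dimensional (spanned by the constant vector), so it amounts only to normalizing the overall level of $\theta^*$ and may be treated as implicit in the canonical scaling hypothesis (absorbing a centering if needed).

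Then I would split into cases according to $d$. For $d > 2$, we have $\alpha = 1/d < 1/2$, so the first branch of \autoref{cor:MLE_Grids} applies and yields
\[
r_n = (\|\nu\|_\infty + \|b\|_\infty)\, n^{-1/d} = O(n^{c-1/d}),
\]
which is the claimed rate. For $d = 2$, $\alpha = 1/2$ sits at the boundary of the second branch, where the indicator $\gamma_p = (\log n)^{1/p}\,\mathbf{1}(p\alpha = 1)$ activates $\gamma_2 = \log^{1/2} n$ while $\gamma_1 = 1$. Substituting the assumed growth rates into
\[
r_n = \|b\|_\infty\, n^{-1/2}\gamma_1 + \min\bigl\{\|\nu\|_\infty\, n^{-1/2}\gamma_2,\ \|\nu\|_2\, n^{-1/2}\gamma_1\bigr\}
\]
and exploiting $\|\nu\|_2 = O(n^c)$ to resolve the $\min$ produces the stated rate for $d = 2$.

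There is no substantive obstacle: the entire derivation is bookkeeping of how the assumed sub-exponential growth rates interact with the regime structure of \autoref{cor:MLE_Grids}. The only subtlety is at the boundary $d = 2$, where the $\gamma_2$ indicator introduces the extra $\log^{1/2} n$ factor, and where the separate assumption $\|\nu\|_2 = O(n^c)$—as opposed to the weaker bound $\|\nu\|_2 \le \sqrt{n}\,\|\nu\|_\infty = O(n^{c+1/2})$—is precisely what allows the $\min$ to be controlled by the $\|\nu\|_2$ branch and avoids inflating the rate.
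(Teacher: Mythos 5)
Your proposal matches the paper's proof: both obtain the result by specializing \autoref{cor:MLE_Grids} at $k=0$ (so $\alpha=1/d$) and reading off the rates case by case, with the $\gamma_2=\log^{1/2}n$ factor activating only at the boundary $d=2$; the paper additionally treats $d=1$ and exhibits an explicit Exponential-family signal satisfying the hypotheses, neither of which is needed for the stated bound. One small quibble: at $d=2$ the $\|\nu\|_2$-branch of the min actually gives $O(n^{c-1/2})$ with no logarithm, so the stated $\log^{1/2}n$ factor comes from the $\|\nu\|_\infty$-branch rather than from ``exploiting $\|\nu\|_2=O(n^c)$'' as you suggest --- but since either branch upper-bounds the min, the claimed rate holds regardless.
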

An example of a signal satisfying these conditions is the Exponential
distribution with $\theta^*_i = -n^{-c} \one\{i=0\} - n^{1-\nicefrac{1}{d}} \one\{i \neq
0\}$. The proof is in \aref{sec:proof_mle_bound_coro}.

In this case, $\| \nu \|_\infty$ is 
diverging, and so we have strong heteroskedasticity.
The level of heteroskedasticity, parameterized by $c$, determines the rate of 
convergence and for $c > \nicefrac{1}{d}$ we cannot guarantee convergence.
%We did not consider the case $d=1$ because we cannot have $\| \nu \|_\infty$ 
%diverging and still retain the canonical scaling of $\| D \theta^* \|_1 
%\lesssim 1$.

%END EXAMPLE

% \attn{Veeru: update this remark XXX}
% The heteroskedasticity can be arbitrarily extreme because we can make 
% coordinates of $\theta^*$ close to $0$ and other coordinates fixed at $1$.
% There are, however, some tolerable levels of heteroskedasticity and in general
% $c < 1/d$ results in consistency. 

%\subsection{Equivalence of penalized MLE and mean trend filter for $k=0$}
%\label{sec:equiv-penal-mle}

\section{Error bounds for the Mean Trend Filter}
\label{sec:theory_mean_tf}

When \smash{$k=0$}, remarkably, it turns out that the penalized MLE in
\eqref{eq:mle1} is  
equivalent to the mean trend filtering estimator \eqref{eq:lsq}.
 In fact, this equivalence between the two 
estimators holds over arbitrary graphs, not just
grids.
\begin{theorem}
	\label{thm:mle_ls_k0}
	Suppose $k=0$ and let $D$ be the graph incidence matrix. Then, the penalized
  MLE $\hat\theta$ in \eqref{eq:mle1} and the least squares  
	estimator $\hat\beta$ in \eqref{eq:lsq} satisfy $\hat \beta = \psi' (\hat\theta)$.
\end{theorem}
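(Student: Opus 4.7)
The plan is to compare the KKT/subdifferential optimality conditions of the two problems and to exploit the monotonicity of $\psi'$ in order to transfer a subgradient certificate from one problem to the other. Both programs are strictly convex (the mean TF objective has a strictly convex quadratic loss, and the MLE objective is strictly convex because $\psi$ is strictly convex on the interior of $\Theta$), so the minimizers $\hat\theta$ and $\hat\beta$ are unique, and it suffices to exhibit a single candidate that satisfies the optimality conditions of the mean TF problem.

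First I would write the optimality condition for \eqref{eq:mle1}: there exists a subgradient $s \in \partial \|Dx\|_1 \big|_{x=\hat\theta} \subseteq \R^m$ (where $m$ is the number of edges of the underlying graph) such that
\begin{equation}
\psi'(\hat\theta) - y + n\lambda D^\top s = 0.
\end{equation}
Recall that since $D$ is the graph incidence matrix, $\|Dx\|_1 = \sum_{(i,j) \in E} |x_i - x_j|$, and its subdifferential at a point $x$ consists of all vectors $s \in \R^m$ such that, for each edge $e=(i,j)$, $s_e = \mathrm{sign}(x_i - x_j)$ when $x_i \neq x_j$, and $s_e \in [-1,1]$ when $x_i = x_j$.

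The key observation is that $\psi'$ is strictly increasing on the interior of $\Theta$ (because $\psi'' > 0$), so for any edge $(i,j)$ we have $\hat\theta_i = \hat\theta_j \iff \psi'(\hat\theta_i) = \psi'(\hat\theta_j)$, and when they are unequal the signs of $\hat\theta_i - \hat\theta_j$ and $\psi'(\hat\theta_i) - \psi'(\hat\theta_j)$ coincide. Consequently,
\begin{equation}
\partial \|Dx\|_1 \big|_{x=\hat\theta} \;=\; \partial \|Dx\|_1 \big|_{x=\psi'(\hat\theta)},
\end{equation}
so the very same $s$ above belongs to $\partial \|Dx\|_1$ at the point $\psi'(\hat\theta)$.

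Finally, define the candidate $\tilde\beta := \psi'(\hat\theta)$. Substituting into the MLE optimality equation yields $\tilde\beta - y + n\lambda D^\top s = 0$, which is exactly the KKT condition for \eqref{eq:lsq} with subgradient $s \in \partial\|Dx\|_1|_{x=\tilde\beta}$. Hence $\tilde\beta$ is a minimizer of the mean TF objective, and by uniqueness $\hat\beta = \tilde\beta = \psi'(\hat\theta)$, which proves the theorem. The only real subtlety is the subdifferential-coincidence step, and this step is precisely where the $k=0$ (incidence matrix) structure is used; for $k \geq 1$ the entries of $Dx$ are linear combinations of several $x_i$'s rather than simple differences along edges, and the monotonicity of $\psi'$ no longer suffices to equate the two subdifferentials.
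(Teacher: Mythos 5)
Your proposal is correct and follows essentially the same route as the paper's proof: write the KKT conditions for both problems, use the strict monotonicity of $\psi'$ to conclude that $\mathrm{sgn}(D\psi'(\hat\theta)) = \mathrm{sgn}(D\hat\theta)$ and hence that the subdifferentials of the penalty coincide at $\hat\theta$ and $\psi'(\hat\theta)$, and then invoke uniqueness of the strictly convex least-squares problem. Your edge-by-edge description of the subdifferential and your closing remark on why the argument breaks for $k\ge 1$ are slightly more explicit than the paper's, but the argument is the same.
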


The proof is in \autoref{sec:mle_ls_k0_pf}.
Therefore, in the case \smash{$k=0$},
the penalized MLE can be solved quickly by solving the equivalent mean trend 
filter problem.

For \smash{$k\geq 1$}, equivalence between the two estimators need not hold in 
general, with the exception of the mean parameterized Gaussian family, where it holds
trivially. The remainder of this section will focus on the general case.
For the estimator in \eqref{eq:lsq}, we derive the following error bound.

\begin{theorem}
\label{thm:lsq_rates}
Let $y_i = \beta^*_i + \epsilon_i$ where $\epsilon_i$ is zero mean
sub-exponential with parameters $(\nu_i^2,\ b_i)$ for $i\in [n]$.  
Let $J \subset [N]^d$ and $L$ be as defined in \eqref{eq:Lell}.
For $t\geq 1$, abbreviate
\smash{$A_n = 2t\mu \sqrt{\nicefrac{|J|}{n}} \big( \| \nu \|_2 \vee 
\|b\|_\infty\big),$}
\smash{
$B_n = 2t\left( 
	\min\left\{ \|\nu\|_\infty  L_{J,2},\ 
					  \|\nu\|_2 L_{J,1} \right\}
	\vee  \| b \|_\infty L_{J,1}\right).
 $}
For any $J \subset [N^d]$ containing $[k+1]^d$, the estimator \eqref{eq:lsq} 
with
$\lambda =  B_n / n$, satisfies 
\begin{equation}
\frac{1}{n}\snorm{ \hat\beta - \beta^* }_2^2 
\leq \frac{4A_n^2}{n} +
\frac{8B_n}{n} \norm{ D \beta^*}_1
\end{equation}
with probability at least $1-4nd e^{-t}$ for $t\geq 1$. 
\end{theorem}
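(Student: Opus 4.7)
The plan is to combine the standard penalized-regression basic inequality with an SVD decomposition that separates $\epsilon$ into a low-frequency (index set $J$) and bulk ($J^c$) part, then apply Bernstein-type concentration for sub-exponential random variables to each piece. First I would write the optimality of $\hat\beta$ as
\begin{equation}
\tfrac{1}{2n}\snorm{y-\hat\beta}_2^2 + \lambda\snorm{D\hat\beta}_1 \le \tfrac{1}{2n}\snorm{y - \beta^*}_2^2 + \lambda \snorm{D\beta^*}_1,
\end{equation}
and, setting $\delta = \hat\beta - \beta^*$ and using $y - \beta^* = \epsilon$, rearrange to the basic inequality
\begin{equation}
\tfrac{1}{2n}\snorm{\delta}_2^2 \le \tfrac{1}{n}\langle \epsilon, \delta\rangle + \lambda\bigl(\snorm{D\beta^*}_1 - \snorm{D\hat\beta}_1\bigr).
\end{equation}
Everything then comes down to controlling $\langle\epsilon,\delta\rangle$ with high probability.

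Next I would use the SVD $D = U\Sigma V^\top$, denote right singular vectors $v_i$ with singular values $\xi_i$ indexed by $i\in[N]^d$, and split
\begin{equation}
\langle\epsilon,\delta\rangle = \sum_{i\in J}(v_i^\top\epsilon)(v_i^\top\delta) + \sum_{i\in J^c}(v_i^\top\epsilon)(v_i^\top\delta).
\end{equation}
For $i\in J^c$ (where $\xi_i>0$) I would use $v_i^\top\delta = \xi_i^{-1} u_i^\top D\delta$ to rewrite the second sum as $\langle g, D\delta\rangle$ with $g := \sum_{i\in J^c}\xi_i^{-1}(v_i^\top\epsilon)\,u_i$, and bound it by $\snorm{g}_\infty\snorm{D\delta}_1$. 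For the low-frequency piece I would apply Cauchy--Schwarz: $\sum_{i\in J}(v_i^\top\epsilon)(v_i^\top\delta) \le \snorm{P_J\epsilon}_2 \snorm{\delta}_2$, where $P_J$ projects on the span of $\{v_i:i\in J\}$.

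The main obstacle is getting the right sub-exponential concentration for $\snorm{g}_\infty$. Each coordinate $g_j = \sum_k \alpha_{jk}\epsilon_k$ with $\alpha_{jk} = \sum_{i\in J^c}\xi_i^{-1} v_{ik} U_{ji}$ is a linear combination of independent sub-exponentials, hence sub-exponential with parameters $\bigl(\sum_k \alpha_{jk}^2 \nu_k^2,\,\max_k |\alpha_{jk}|\,\snorm{b}_\infty\bigr)$. Bounding these uniformly in $j$ is exactly where the incoherence constant $\mu$ on the left singular vectors enters (giving $|U_{ji}|\le \mu/\sqrt{n}$) and where the quantities $L_{J,1}$ and $L_{J,2}$ appear via Parseval-type sums over $\xi_i^{-2}$ and $\xi_i^{-1}$. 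The two bounds inside the $\min$ in $B_n$ arise from applying either $\sum_k \alpha_{jk}^2 \nu_k^2 \le \snorm{\nu}_\infty^2 \snorm{\alpha_j}_2^2$ or $\snorm{\nu}_2^2\snorm{\alpha_j}_\infty^2$; the second part of the $\vee$ handles the $b$-term. A Bernstein tail bound with parameter $t$, followed by a union bound over the $m \le nd$ rows of $D$ and over the complementary event for $\snorm{P_J\epsilon}_2 \le A_n$, produces the probability $1-4nd\,e^{-t}$.

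Finally, on the good event I would plug back in to obtain
\begin{equation}
\tfrac{1}{2n}\snorm{\delta}_2^2 \le \tfrac{A_n}{n}\snorm{\delta}_2 + \tfrac{B_n}{n}\snorm{D\delta}_1 + \lambda\bigl(\snorm{D\beta^*}_1 - \snorm{D\hat\beta}_1\bigr).
\end{equation}
Choosing $\lambda = B_n/n$ and using the triangle inequality $\snorm{D\delta}_1 \le \snorm{D\hat\beta}_1 + \snorm{D\beta^*}_1$ cancels the $\snorm{D\hat\beta}_1$ contribution and leaves $\tfrac{2B_n}{n}\snorm{D\beta^*}_1$. The remaining linear-in-$\snorm{\delta}_2$ term is absorbed by AM--GM: $\tfrac{A_n}{n}\snorm{\delta}_2 \le \tfrac{A_n^2}{n} + \tfrac{1}{4n}\snorm{\delta}_2^2$, which, after subtracting, yields $\tfrac{1}{4n}\snorm{\delta}_2^2 \le \tfrac{A_n^2}{n} + \tfrac{2B_n}{n}\snorm{D\beta^*}_1$, giving the stated rate $\tfrac{1}{n}\snorm{\hat\beta-\beta^*}_2^2 \le \tfrac{4A_n^2}{n} + \tfrac{8B_n}{n}\snorm{D\beta^*}_1$.
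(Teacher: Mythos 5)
Your proposal is correct and follows essentially the same route as the paper: the basic inequality, the split of $\langle \epsilon,\delta\rangle$ into the $P_J$ component (bounded by Cauchy--Schwarz and sub-exponential concentration with incoherence) and the complement rewritten through $D^\dagger$ (bounded by $\snorm{\cdot}_\infty\snorm{D\delta}_1$ with $L_{J,1},L_{J,2}$ controlling the coefficients), followed by the triangle inequality and Young's inequality to absorb the linear term. The paper packages the empirical-process bound as a separate lemma (\autoref{lem:emp_proc_combined} with \autoref{lem:gj_bounds}), but the content is identical to what you describe.
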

The set of indices $J$ can be chosen to minimize the bound.
The proof is in \aref{sec:lsq_rates_proof} and follows an approach similar to
that in 
\cite{WangSharpnack2016}.
Tail bounds on sums of sub-Gaussian variables in their results are replaced 
with those on sums 
of sub-exponential variables. This results in additional $\log n$ factors
 in the error bound compared to the sub-Gaussian setting.

The proof technique for  \autoref{thm:lsq_rates} relies on the properties of
$D$. A potential
alternative route to get error rates is via bounding the empirical process $
\frac{1}{n}\epsilon^\top (\hat\theta - \theta^*)$ with the Dudley entropy integral.
However, the empirical process in our case is not sub-Gaussian and we could only
derive a trivial upper bound in this way.  
This should not be entirely surprising however, because the entropy method was
also used in \cite{WangSharpnack2016} in the sub-Gaussian 
noise setting, and it also failed to give a tight characterization in that
context.

%\autoref{thm:lsq_rates} is true for $\ell \in [\kappa, n]$. Consider the case
%$\ell = n.$ Then $B=0$ because  $L_{\ell, p} = 0$ for $p=1,2.$
%\autoref{thm:lsq_rates} lets us choose $\lambda = B/n = 0$, in which case, the 
%estimator 
%is $\hat \beta = y.$ 
%Then \autoref{thm:lsq_rates} says that,
%\begin{equation}
%\label{eq:id_bound}
%\frac{1}{n}\snorm{ \hat\beta - \beta^* }_2^2 
%\leq \frac{16 }{n} \mu^2 t^2 \big( \|\nu\|_2^2 \vee \|b\|_\infty^2 \big)
%\end{equation}
%holds with at least a probability of $1-2(m+n) e^{-t}.$
%In fact, for the estimator $\hat \beta = y,$ the MSE can be bounded more 
%directly.
%The MSE is given by
%\begin{align}
%\frac{1}{n}\norm{ \hat\beta - \beta^* }_2^2 
%= \frac{1}{n} \| \epsilon \|_2^2 
%\end{align}

\subsection{Error bounds with canonical scaling}

We simplify this bound in some special cases.
Assuming that $\nu,\ b$ are uniformly bounded, 
%and minimizing 
%the upper bound with respect to 
%$\ell$ over $[\kappa,n]$
we get the following result for regular grids.
Denote \smash{$\gamma_p = \log^{\nicefrac{1}{p}} (n)$} if \smash{$p\alpha=1$} and 
\smash{$1$} otherwise.
\begin{corollary}
\label{cor:lsq_homosked}
Assume $\|\nu\|_\infty \leq \omega,\ \| b \|_\infty \leq \omega$. 
Let $\alpha = (k+1)/d$. 
For $d$-dimensional grids, assume that $\|D\beta^* \|_1 \asymp n^{1-\alpha}$ 
and let $m = 
d(n-n^{1-\nicefrac{1}{d}})$ denote the number of rows in $D.$
Then there is a choice of $\lambda$ such that for $\alpha \le \half$,
\begin{equation}
\frac{1}{n}\| \hat \beta - \beta^* \|_2^2 = 
O_\P \left( \frac{\omega^2 \log^2 n}{n} + \frac{\omega\gamma_2 \log n}{n^\alpha} 
\right)
\end{equation}
and for $\alpha > \half$ and $ n^{-\alpha}  \leq \omega \log n   \lesssim \sqrt 
n$
\begin{equation}
\frac{1}{n}\| \hat \beta - \beta^* \|_2^2 = 
O_\P   \left( \Big(\frac{\omega^2  \log^2 n}{n} \Big) 
^{\frac{2\alpha}{2\alpha +1}} + \frac{\omega\gamma_1 \log n}{n^\alpha} \right).
\end{equation}
\end{corollary}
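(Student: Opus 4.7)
\emph{Proposal.} The plan is to obtain the corollary as a direct specialization of Theorem~\ref{thm:lsq_rates}: (i) simplify $A_n$ and $B_n$ under the uniform sub-exponential bound $\omega$; (ii) invoke the eigenvalue estimates of Lemma~\ref{lem:kron_tf_evals} on regular lattices to control $L_{J,p}$; and (iii) choose the set $J \supseteq [k+1]^d$ in the theorem so as to optimally balance the resulting two terms.

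First I would pick $t = c\log n$ for a sufficiently large constant $c > 1$ so that the probability $1 - 4nde^{-t}$ in Theorem~\ref{thm:lsq_rates} tends to one; this converts the high-probability bound into the $O_\P$ statement the corollary seeks. Under $\|\nu\|_\infty \vee \|b\|_\infty \le \omega$, we have $\|\nu\|_2 \le \omega\sqrt n$, so the theorem's quantities reduce to
$$A_n \lesssim \omega \log n\,\sqrt{|J|}, \qquad B_n \lesssim \omega \log n\,(L_{J,1} \vee L_{J,2}),$$
where I have absorbed $\mu = O(1)$ (the left singular vectors of $D$ on a regular grid are incoherent with a dimensional constant). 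Plugging into the conclusion of Theorem~\ref{thm:lsq_rates} together with the canonical scaling $\|D\beta^*\|_1 \lesssim n^{1-\alpha}$ yields the template bound
$$\frac{1}{n}\|\hat\beta - \beta^*\|_2^2 \lesssim \frac{\omega^2 \log^2 n\,|J|}{n} + \omega \log n\,(L_{J,1} \vee L_{J,2})\,n^{-\alpha}.$$

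For the slow-scaling regime $\alpha \le 1/2$, I would take the minimal admissible set $J = [k+1]^d$ so that $|J| = \kappa = O(1)$ and the first term is $O(\omega^2 \log^2 n / n)$, matching the first summand in the claim. Lemma~\ref{lem:kron_tf_evals}, applied to the Kronecker-sum spectrum $\xi_i^2 = \sum_j \rho_{i_j}$, then controls $L_{\kappa,1} \vee L_{\kappa,2}$ by $\gamma_2$ in this regime, with the logarithmic factor appearing exactly when $p\alpha = 1$. This yields the $\omega \gamma_2 \log n / n^\alpha$ term and completes the $\alpha \le 1/2$ case.

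For the fast-scaling regime $\alpha > 1/2$, enlarging $J$ shrinks $L_{J,p}$ while inflating $|J|$, so I would optimize over $|J|$. Using Lemma~\ref{lem:kron_tf_evals}, $L_{J,2}$ has a known polynomial decay in $|J|$ on regular grids; setting the derivative of the template bound to zero gives the optimal cardinality $|J|^* \asymp n^{1/(2\alpha+1)} (\omega \log n)^{-2/(2\alpha+1)}$ (up to constants), and the two terms balance at the rate $(\omega^2 \log^2 n / n)^{2\alpha/(2\alpha+1)}$. The hypothesis $n^{-\alpha} \le \omega \log n \lesssim \sqrt n$ guarantees $\kappa \le |J|^* \le n$, so the choice is admissible. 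The residual $\omega\gamma_1 \log n / n^\alpha$ contribution comes from the unavoidable piece of $B_n/n \cdot \|D\beta^*\|_1$ that persists regardless of $|J|$ (governed by $L_{J,1}$ when $p\alpha = 1$). I expect the main obstacle to be the careful bookkeeping of the $\gamma_p$ logarithmic corrections on the knife-edge case $p\alpha = 1$, together with verifying that the optimizing $|J|^*$ produces the claimed exponents once the eigenvalue bounds of Lemma~\ref{lem:kron_tf_evals} are plugged in; the remaining algebra is routine.
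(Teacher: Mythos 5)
Your proposal is correct and follows essentially the same route as the paper's proof: specialize Theorem~\ref{thm:lsq_rates} with $t=\log n$, bound $L_{J,p}$ via Lemma~\ref{lem:kron_tf_evals}, take $J=[k+1]^d$ when $\alpha\le\half$, and for $\alpha>\half$ balance the two terms by enlarging $J$ to a ball of radius $r$ with $|J|\asymp r^d \asymp n^{1/(2\alpha+1)}(\omega\log n)^{-2/(2\alpha+1)}$, which is exactly the paper's choice, with the hypothesis $n^{-\alpha}\le\omega\log n\lesssim\sqrt n$ guaranteeing admissibility. The only detail you defer --- checking that for $\alpha>1$ the $L_{J,1}$ contribution $(n/r^d)^{\alpha-1}$ evaluated at the optimal $r$ is dominated by the balanced term (which the paper verifies using $\omega\log n\ge n^{-\alpha}$) --- is indeed routine algebra as you anticipate.
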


The proof is in \aref{sec:lsq_homosked_proof}. This corollary does not
discuss the case where \smash{$\alpha > \half$} and \smash{$\omega \log n$} is  
outside of \smash{$[n^{-\alpha}, \sqrt{n}]$}. In that case, when the noise is 
high 
(\smash{$\omega \log n \gtrsim \sqrt{n}$}),
the polynomial projection estimator  
\smash{$\hat\beta = P_\cN y$}
gives the tightest bound, and, when the noise is  low (
\smash{$\omega \log n < n^{-\alpha}$}),
the identity estimator gives the tightest bound.

The following corollary examines this result for some special cases.

%Throughout, we will consider grid graphs with Poisson and Exponential 
%distributions.
%Our grid graphs are all of width $N$ and dimension $d$, so that $n = N^d$.
%For natural parameter $\theta^*$, the Poisson distribution has mean $\beta^* = 
%\exp(\theta^*)$, while the Exponential distribution has mean $\beta^* = 
%-1/\theta^*$.
%For the Poisson distribution an additive change in $\theta^*$ results in a 
%multiplicative change in the mean, and $\nu^2 = 2 \beta^*$, which can easily 
%result in strong heteroskedasticity.
%Only in special cases does a constraint on $\| D \theta^* \|_1$ result in a 
%bound on $\nu^2$, and generally, $\| \nu \|_\infty$ will depend on the signal 
%in question.
%For many choices of $d$ and $k$, we can construct signals with arbitrarily 
%large heteroskedasticity with bounded trend filtering norm, and we can only 
%make guarantees for specific signals.

\begin{corollary}
\label{cor:mean_tf_weak_hetero}
Consider the Poisson and Exponential families on a $d$-dimensional grid ($d >
1$) where the mean parameter is constrained. 
Specifically, suppose that $\| \beta^* \|_\infty = O(1)$ such that the canonical
scaling holds with $k=1$. 
Then mean trend filter satisfies
\[
  \frac 1n \|\hat \beta - \beta^*\|_2^2 = O_\P(r_n)\;\; \text{where}\;\; r_n =
  \begin{cases}
      (n/\log^2 n)^{-4/(4+d)},& d=1,\ 2,\ 3\\
      n^{-\half} \log^{\nicefrac{3}{2}} n,  & d = 4\\
      n^{-\nicefrac{2}{d}} \log n, & d > 4.\\
  \end{cases}
\]
\end{corollary}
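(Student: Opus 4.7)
The plan is to invoke \autoref{cor:lsq_homosked} with $\alpha = (k+1)/d = 2/d$ and then perform a short case analysis on $d$; essentially all the work is verifying the uniform sub-exponential bound and simplifying the resulting expressions in each regime.

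First I would check that $\|\nu\|_\infty$ and $\|b\|_\infty$ are uniformly bounded. From \autoref{tab:orlicz_subexp_params}, the Poisson family has $\nu_i^2 = 2\beta^*_i$ and $b_i = 0.55$, while the Exponential family has $\nu_i^2 = 4(\beta^*_i)^2 \log(4/e)$ and $b_i = 2\beta^*_i$. Since $\|\beta^*\|_\infty = O(1)$ by hypothesis, in both cases there is a constant $\omega$ with $\|\nu\|_\infty \vee \|b\|_\infty \leq \omega$. Combined with the assumed canonical scaling on $D\beta^*$ at $k=1$, this places us squarely in the setting of \autoref{cor:lsq_homosked}.

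Substituting $\alpha = 2/d$ into the two regimes of that corollary gives each of the stated rates. For $d \in \{1,2,3\}$ we have $\alpha > 1/2$ and the side condition $n^{-\alpha} \leq \omega \log n \lesssim \sqrt n$ is eventually met since $\omega = O(1)$; the leading term $(\omega^2 \log^2 n/n)^{2\alpha/(2\alpha+1)}$ simplifies via $2\alpha/(2\alpha+1) = 4/(4+d)$ to exactly $(n/\log^2 n)^{-4/(4+d)}$, and the exponent comparison $4/(4+d) < 2/d$ (equivalent to $d<4$) confirms that the residual term $\omega \gamma_1 \log n/n^\alpha$ is of smaller order. For $d=4$, $\alpha = 1/2$ sits on the boundary, so the first bound applies with $p\alpha = 1$ and $\gamma_2 = \sqrt{\log n}$, collapsing to $O_\P(\log^{3/2} n/\sqrt n)$ after noting that $\omega^2 \log^2 n/n$ is of smaller order. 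For $d>4$, $\alpha < 1/2$ forces $\gamma_2 = 1$ in the first bound, whose dominant summand is $\omega \log n/n^{2/d}$ (since $2/d < 1$), yielding the stated rate.

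The only real subtlety is the dominance bookkeeping in each regime: identifying $d=4$ as the exact transition point between the smooth ($\alpha \leq 1/2$) and rough ($\alpha > 1/2$) regimes of \autoref{cor:lsq_homosked}, and verifying in each case which of the two terms in the bound wins. Beyond this the corollary is a direct unpacking of \autoref{cor:lsq_homosked} under the boundedness of $\beta^*$, so there is no serious obstacle.
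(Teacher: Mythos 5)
Your proposal is correct and follows essentially the same route as the paper: verify from \autoref{tab:orlicz_subexp_params} that boundedness of $\beta^*$ gives uniformly bounded sub-exponential parameters, then specialize \autoref{cor:lsq_homosked} with $\alpha=2/d$ and check term dominance in the three regimes $d<4$, $d=4$, $d>4$. The exponent arithmetic ($2\alpha/(2\alpha+1)=4/(4+d)$, and $4/(4+d)<2/d$ exactly when $d<4$) matches the paper's calculation.
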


This result matches with rates in the 
homoskedastic Gaussian case up to logarithmic factors, shown for example in 
\cite{sadhanala2021multivariate}. An example of a signal satisfying the
conditions is a grid graph
with width $N$ and dimension $d$, so that $n = N^d$ and  
$\beta_i^* = \frac dN + \frac 2N \sum_{j=1}^d |i_j - \frac{N}{2}|,$
where $i = (i_1, \dots, i_d) \in [N]^d$.
The proof is given in \aref{sec:lsq_homosked_proof}.

%\attn{ TODO(veeru): Also matches with \cite{brown2010nonparametric} when 
%$d=1$. 
%Assumptions 
%in \cite{brown2010nonparametric} preclude $d>1$.}
%$\hfill \qed$

While the previous result treated the (effectively) homoskedastic case by controlling the
largest components of $\nu$, $b$, the following corollary specializes 
\autoref{thm:lsq_rates} to canonical scaling under strongly heteroskedastic 
noise.

\begin{corollary}
	\label{cor:mtf_grids_hetero2}
	Let \smash{$\sigma =  (\| \nu \|_2\vee \| b \|_\infty) /\sqrt{n}$},
	\smash{$\sigma_\infty = \| \nu \|_\infty\vee \| b \|_\infty$}.
	Suppose \smash{$\|D \beta^* \|_1 \lesssim n^{1-\alpha}$}, and 
  assume $\sigma^2\lesssim n / \log^2 n$ and $\sigma_\infty \lesssim n^\alpha /
  (\gamma_1\gamma_2\log n)$.
  Then, the estimator $\hat\beta$ in \autoref{thm:lsq_rates}
  satisfies 
  \begin{equation}
    \frac{1}{n} \snorm{\hat\beta - \beta^*}_2^2 =
    \begin{cases}
      O_\P\left(\frac{\sigma^2\log^2 n}{n} + 
      \frac{\sigma_\infty\gamma_2\log n}{n^\alpha}
      \right), & \alpha\leq \half\\
      O_\P\left( \frac{\sigma^2\log^2 n}{n} +
      \frac{\min\{\sigma_\infty,\ \sigma\gamma_1n^{1-\alpha}\} \log n}{n^{\half}}\right),
               & 1/2 < \alpha \leq 1, \\
      O_\P\left(\left(\frac{\sigma^2\log^2 
      n}{n}\right)^{\frac{2\alpha-1}{2\alpha}} +
      \frac{\sigma_\infty\log n}{n^\alpha}
      \right), & \alpha > 1.
    \end{cases}
  \end{equation}
\end{corollary}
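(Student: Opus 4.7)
The plan is to instantiate \autoref{thm:lsq_rates} with $t\asymp \log n$ (so the failure probability $4nd\,e^{-t}$ is $o(1)$) and to choose the index set $J\supseteq [k+1]^d$ so as to minimize the resulting bound separately in each regime of $\alpha$. The first step is to translate the theorem's quantities into $\sigma,\sigma_\infty$ by substituting $\|\nu\|_2\vee\|b\|_\infty \leq \sigma\sqrt{n}$ into $A_n$ and $\|\nu\|_\infty \leq \sigma_\infty$, $\|b\|_\infty \leq \sigma_\infty$ into $B_n$, together with the trivial inequalities $\sigma_\infty \leq \sigma\sqrt{n}$ and $L_{J,1}\leq L_{J,2}$ (Cauchy--Schwarz). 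Bounding the outer $\vee$ in $B_n$ by the sum of its two arguments produces the master inequality
\[
\frac{1}{n}\|\hat\beta-\beta^*\|_2^2 \;\lesssim\; \frac{|J|\,\sigma^2\log^2 n}{n} \;+\; \frac{\log n}{n^\alpha}\Bigl(\sigma_\infty L_{J,1}\;+\;\min\{\sigma_\infty L_{J,2},\;\sigma\sqrt{n}\,L_{J,1}\}\Bigr),
\]
which holds with probability $1-o(1)$ and reduces the problem to minimizing the right-hand side over $J$ using the eigenvalue bounds from \autoref{lem:kron_tf_evals} applied to $\xi_i^2 = \sum_j \rho_{i_j}$.

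For $\alpha \leq 1/2$ and for $1/2<\alpha\leq 1$, the minimal choice $J=[k+1]^d$ with $|J|=\kappa=O(1)$ already gives the right rate. In the first case, $|J|\sigma^2\log^2 n/n$ contributes the first summand, while \autoref{lem:kron_tf_evals} gives $L_{\kappa,2}\lesssim \gamma_2$ (of constant order except for a $\sqrt{\log n}$ at $\alpha = 1/2$), so that $\sigma_\infty L_{\kappa,2}/n^\alpha \lesssim \sigma_\infty\gamma_2/n^\alpha$ is the active term in the min; and $L_{\kappa,1}$ is of the same or smaller order, so the stray $\sigma_\infty L_{\kappa,1}/n^\alpha$ term is absorbed. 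For $1/2<\alpha\leq 1$, \autoref{lem:kron_tf_evals} yields $L_{\kappa,2}\lesssim n^{(2\alpha-1)/2}$ and $L_{\kappa,1}\lesssim \gamma_1\,n^{1/2-\alpha}$ (up to constants), producing the two candidates $\sigma_\infty L_{\kappa,2}/n^\alpha \sim \sigma_\infty/n^{1/2}$ and $\sigma\sqrt{n}\,L_{\kappa,1}/n^\alpha \sim \sigma\gamma_1 n^{1-\alpha}/n^{1/2}$, whose minimum is exactly the second summand of the middle-case bound. For $\alpha>1$ the set $J=[k+1]^d$ no longer yields a vanishing $L_{\kappa,1}$, and one has to enlarge $J=\{i:\|i\|_\infty\leq s\}$ so that $|J|=s^d$ is free. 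Applying \autoref{lem:kron_tf_evals} yields $L_{J,1}\lesssim (n/|J|)^{\alpha-1}$, so $\sigma\sqrt{n}\,L_{J,1}/n^\alpha \sim \sigma n^{-1/2}/|J|^{\alpha-1}$; balancing this against $|J|\sigma^2/n$ gives the optimal size $|J|\asymp (n^{1/2}/\sigma)^{1/\alpha}$, at which both terms equal $(\sigma^2/n)^{(2\alpha-1)/(2\alpha)}$, producing the compound rate. The remaining $\sigma_\infty L_{J,1}/n^\alpha$ piece is crudely bounded by $\sigma_\infty\log n/n^\alpha$, which is subordinate and gives the residual in the stated bound.

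The hypotheses $\sigma^2\lesssim n/\log^2 n$ and $\sigma_\infty\lesssim n^\alpha/(\gamma_1\gamma_2\log n)$ are used to ensure respectively that the optimal $|J|$ lies in $[\kappa,n]$ and that the chosen regularization parameter $\lambda = B_n/n$ is in the valid range so that the master inequality remains non-vacuous; both should be checked at the end of each case. The main obstacle is the bookkeeping surrounding \autoref{lem:kron_tf_evals}: deciding in each regime whether $\sum_{i\notin J} 1/\xi_i^p$ is head- or tail-dominated, tracking the boundary logarithmic factors $\gamma_p$ at $p\alpha=1$, and verifying that the candidate not selected inside the min is indeed not smaller than the one chosen. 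A clean writeup will likely handle $\alpha\leq 1/2$ and $1/2<\alpha\leq 1$ in a unified computation with $J=[k+1]^d$ (the three regimes being distinguished only at the last step via the behaviour of $L_{\kappa,1}$ and $L_{\kappa,2}$), and treat $\alpha>1$ separately with the variable-$|J|$ balancing argument.
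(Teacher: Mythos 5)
Your strategy is the same as the paper's: the paper proves this corollary by specializing an intermediate result (\autoref{cor:mtf_grids_hetero} in \aref{sec:lsq_homosked_proof}), whose proof is exactly your argument---start from $\frac1n\|\hat\beta-\beta^*\|_2^2 = O_\P(|J|t^2\sigma^2/n + tB_n/n^\alpha)$ with $t=\log n$, take $J=[k+1]^d$ for $\alpha\le 1$, and for $\alpha>1$ enlarge $J$ and balance $|J|\sigma^2 t^2/n$ against $t\sigma n^{1/2-\alpha}(n/|J|)^{\alpha-1}$, giving $|J|\asymp(n/(\sigma^2t^2))^{1/(2\alpha)}$ and the rate $(\sigma^2\log^2n/n)^{(2\alpha-1)/(2\alpha)}$. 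The regime-by-regime conclusions you reach agree with the paper's.

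Two slips in the write-up, neither fatal. First, for $1/2<\alpha\le 1$ the eigenvalue lemma gives $L_{\kappa,1}\lesssim\gamma_1$ (not $\gamma_1 n^{1/2-\alpha}$ as you state); your subsequent arithmetic, which lands on the candidate $\sigma\gamma_1 n^{1-\alpha}/n^{1/2}$, is consistent with the correct bound $L_{\kappa,1}\lesssim\gamma_1$, so only the intermediate statement is wrong. Second, in the $\alpha>1$ case the claim that $\sigma_\infty L_{J,1}/n^\alpha$ is ``crudely bounded by $\sigma_\infty\log n/n^\alpha$'' is false for the enlarged $J$, since $L_{J,1}\lesssim(n/|J|)^{\alpha-1}\gg 1$ there. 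The correct disposal of the $\|b\|_\infty L_{J,1}$ term is to note $\|b\|_\infty\le\|\nu\|_2\vee\|b\|_\infty=\sigma\sqrt n$, so this term is absorbed into the $\sigma\sqrt n\,L_{J,1}$ term you are already balancing (this is what the paper does in arriving at its display \eqref{eq:mse_simp4}); the $\sigma_\infty\log n/n^\alpha$ summand in the stated bound is then harmless slack rather than something you need to prove dominates that term.
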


This result is most useful under strong heteroskedasticity where $\sigma_\infty
/ \sigma \propto \sqrt{n}$, and slightly stronger rates with weaker
heteroskedasticy can be obtained in the $1/2<\alpha\leq 1$ case
(see \autoref{cor:mtf_grids_hetero} in \aref{sec:lsq_homosked_proof}).
Suppose $\epsilon_i$ in \autoref{thm:lsq_rates} is mean-zero Laplace noise
with standard deviation parameter $\tau_i$ and that $\norm{D\beta^*}_1$ 
satisfies
canonical scaling. For this case, $\nu_i = b_i = c \tau_i$ for a constant $c$
independent of $\beta^*_i$, while $\sigma = c \norm{\tau}_n$ and
$\sigma_\infty = c \norm{\tau}_\infty$ with the natural constraint that
$\sigma_\infty/\sqrt{n} \leq \sigma \leq \sigma_\infty$. For $\alpha < 1$,
the scaling requirement on $\sigma_\infty$ is stronger, meaning that the estimator
can only tolerate heteroskedasticity on the order of $\sigma_\infty / \sigma
\propto n^{\alpha}<\sqrt{n}$. On the other hand, for $\alpha > \half$, the constraint 
on $\sigma$ is stronger, meaning that we can tolerate $\sigma_\infty / \sigma \propto
\sqrt{n}$. The associated rates of convergence will necessarily be much slower than in
the homoskedastic sub-Gaussian case.

Importantly, \autoref{cor:mtf_grids_hetero2} illustrates that without control of
the amount of heteroskedasticity, we cannot guarantee convergence of the
estimator. In other words, while the estimator can tolerate strong
heteroskedasticity as we have defined it here, it cannot tolerate arbitrary
heteroskedasticity. Simply controlling $\norm{D\beta^*}_1$ is not generally
enough to guarantee estimation consistency. In the next section, we make this
precise, illustrating that in certain settings,  there is no estimator that can
achieve consistency 
without additional constraints.

\subsection{Lower bounds for mean trend filtering}
\label{sec:lower-bounds-mean}

We now show that the upper bound in \autoref{cor:lsq_homosked} is minimax 
optimal up to logarithmic factors.
Consider the observation model
\begin{equation}
\label{eq:data_model}
	y_i = \beta_i + \epsilon_i,\ \ i\in [n]
\end{equation}
where $\beta \in \R^n$ is the true signal and $\epsilon_i,\ i\in[n]$ are mean-zero 
noise terms. 
For a set $S \subset \R^n$ denote its minimax risk
\begin{equation}
  \textrm{MSE}(S)
  % R_M(S)
  = \inf_{\hat\beta} \sup_{\beta \in S}  \E\left[ \| \hat \beta - \beta 
\|_n^2 \right] 
\end{equation}
where $\hat\beta$ is measurable in the observations $y \in \R^n.$
Consider the Kronecker total variation (KTV) set 
\begin{equation}
	T_{n,d}^k(C_n) = \left\{ \beta : \|D_{n,d}^{(k+1)} \beta \|_1 \leq C_n \right\},
\end{equation}
for integers \smash{$k\ge 0,\ d\ge 1,\ n\ge (k+1)^d$} and 
\smash{$C_n \ge 0$}.
Let $\lap(\mu, \sigma)$ denote the Laplace distribution centered at $\mu 
\in \R$ and with scale parameter $\sigma > 0$ with
density 
\smash{$p(x) = \frac{1}{2\sigma} e^{-|x - \mu|/\sigma} $} over $\R$.
\begin{proposition}
\label{prop:lowerbd_homosked}
Consider the observation model in \eqref{eq:data_model}
where $\epsilon_i,\ i \in [n]$ are \iid $\lap(0, \sigma)$
for a parameter $\sigma > 0$. 
Then,
\begin{equation}
  \mathrm{MSE}
  % R_M
  \left( \ktvset_{n,d}^k(C_n)\right) 
%\inf_{\hat\beta} \sup_{\beta \in \ktvset_{n,d}^k(C_n) }
% \E \left[\| \hat\beta - \beta \|_n^2\right] 
 =
\Omega \left(
\frac{ \sigma^2}{n}
	+ \frac{\sigma C_n}{n}  \log \left(\frac{\sigma n}{C_n}\right) 
	+ \left( \frac{C_n}{n} \right)^{\frac{2}{2\alpha+1}} 
\left(\sigma^{\frac{4\alpha}{2\alpha+1}} \wedge \sigma^2\right)
\right)
\end{equation}
where the $\Omega$ notation absorbs constants depending only on $k,\ d.$
\end{proposition}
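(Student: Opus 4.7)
The plan is to decompose the claimed lower bound into its three additive pieces and lower bound each one separately using standard minimax techniques (Le Cam's two-point method and Fano's method), tailored to the Laplace noise model. The key ingredient is the explicit KL formula
$$\KL{\lap(\mu_1,\sigma)^{\otimes n}}{\lap(\mu_2,\sigma)^{\otimes n}} = \sum_{i=1}^n \left(\frac{|\mu_{1,i}-\mu_{2,i}|}{\sigma} - 1 + e^{-|\mu_{1,i}-\mu_{2,i}|/\sigma}\right),$$
which behaves like $\|\mu_1-\mu_2\|_2^2/(2\sigma^2)$ when the coordinatewise gap is $\lesssim \sigma$ and like $\|\mu_1-\mu_2\|_1/\sigma$ when the gap is $\gtrsim \sigma$. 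This dichotomy is precisely what produces the $\sigma^{4\alpha/(2\alpha+1)} \wedge \sigma^2$ factor in the last term of the claim.

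\textbf{Parametric term $\sigma^2/n$.} I would apply Le Cam's two-point bound with $\beta^{(0)}=0$ and $\beta^{(1)}=(c\sigma/\sqrt{n})\one$, noting that $\one \in \cN(D)$ so both points lie in $T_{n,d}^k(C_n)$ for any $C_n\geq 0$. The total KL is $O(c^2)$ (quadratic regime), while the separation is $\|\beta^{(0)}-\beta^{(1)}\|_n^2 \asymp c^2\sigma^2/n$; choosing $c$ small but fixed gives the claimed lower bound.

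\textbf{Sparse term $\frac{\sigma C_n}{n}\log(\sigma n/C_n)$.} This resembles the ``hard'' part of the standard TV/compressed-sensing lower bound. I would construct a packing by choosing a base ``bump'' $b\in\R^n$ adapted to the Kronecker structure (a single spike for $k=0$; a piecewise polynomial supported on a thin axis-parallel slab for general $k$) normalized so that $\|Db\|_1 = c_1$ and translating it to $M=\lfloor n/m\rfloor$ disjoint locations. Scaling each translate by an amplitude $\eta$ and summing over an $s$-subset yields signals with $\|D\beta\|_1 \le c_1 s\eta \le C_n$. A Gilbert--Varshamov packing in Hamming distance on subsets of size $s$ gives $\log|\cS|\asymp s\log(M/s)$ codewords pairwise separated by at least $s/2$ spikes, hence $\|\beta-\beta'\|_n^2 \gtrsim s\eta^2 m/n$. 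I would work in the large-gap regime so the KL is linear: $\KL{\cdot}{\cdot}\lesssim s\eta m/\sigma$. Balancing $s\eta m/\sigma \asymp s\log(M/s)$ and $s\eta = C_n/c_1$ gives $\eta \asymp \sigma \log(\sigma n/C_n)$ and the claimed rate $\sigma C_n/n \cdot \log(\sigma n/C_n)$, using Fano's inequality.

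\textbf{Nonparametric term.} For $(C_n/n)^{2/(2\alpha+1)}(\sigma^{4\alpha/(2\alpha+1)}\wedge\sigma^2)$, I would use a standard packing built from translates of a compactly-supported ``bump'' function that is a polynomial of degree $k$ in each coordinate and has bounded $k$th-order differences. On a $d$-dimensional grid partition into blocks of side length $h$ (so $M \asymp (n^{1/d}/h)^d$ blocks), place a signed bump of amplitude $\eta$ in each block; the Kronecker construction ensures $\|D\beta\|_1 \asymp M \eta h^{d-(k+1)} \lesssim C_n$. A Varshamov--Gilbert packing over signs gives $\log|\cS|\gtrsim M$ codewords with $\|\beta-\beta'\|_n^2 \gtrsim \eta^2 h^d/n \cdot M = \eta^2$. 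Bounding KL (quadratic regime when $\eta \lesssim \sigma$, linear otherwise) and applying Fano yields two rates whose minimum is exactly $(C_n/n)^{2/(2\alpha+1)}(\sigma^{4\alpha/(2\alpha+1)}\wedge\sigma^2)$ after optimizing over $h$.

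\textbf{Main obstacle.} The genuinely delicate step is the nonparametric packing construction for $d\geq 2$ and $k\geq 1$, because the Kronecker $D_{n,d}^{(k+1)}$ penalizes along each axis independently and the bumps must simultaneously control $\|D\beta\|_1$ and maintain $\ell_2$-separation across all $d$ dimensions. Handling the switch between the quadratic and linear branches of the Laplace KL (which drives the $\sigma^{4\alpha/(2\alpha+1)}\wedge\sigma^2$ factor) requires care in choosing the bump amplitude $\eta$ relative to $\sigma$ before optimizing $h$; one would solve the two subcases separately and take the maximum lower bound.
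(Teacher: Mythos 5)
Your overall architecture matches the paper's: the same three-way decomposition, the same exploitation of the quadratic-versus-linear dichotomy of the Laplace KL via $g(\delta)=e^{-\delta}+\delta-1$ (the paper's Lemma on $\kl(\lap(\mu_1,\sigma),\lap(\mu_2,\sigma))$), and Fano-type packings for the second and third terms. The differences are in execution. For the parametric term you use a two-point Le Cam argument along $\one\in\cN(D)$, while the paper runs Fano over the full $\kappa$-dimensional null space; both give $\Omega(\sigma^2/n)$. For the sparse term, your spike-translation packing is essentially the paper's argument, except the paper first observes $B_1(C_n/c_k)\subseteq \ktvset_{n,d}^k(C_n)$ (with $c_k$ the maximal column $\ell_1$-norm of $D$) and then invokes a Birg\'e--Massart-style bound over $q$-sparse vectors with amplitude $a=r\wedge\sigma g^{-1}(\tau/6)$, optimizing over the sparsity $q$; your balancing of $\eta\asymp\sigma\log(\sigma n/C_n)$ in the linear KL regime recovers the same rate. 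The substantive divergence is the third term, precisely where you flag your ``main obstacle'': rather than building a degree-$k$ polynomial bump packing and verifying $\snorm{D\beta}_1$ control directly against the Kronecker operator, the paper embeds a discrete H\"older ball, $\cH_{n,d}^k(cC_nn^{\alpha-1})\subset \ktvset_{n,d}^k(C_n)$ (a known result from Sadhanala and Wang, 2017), and then proves a H\"older-class lower bound under Laplace noise using smooth compactly supported bumps and Varshamov--Gilbert, following Tsybakov. This yields the $\sigma^{4\alpha/(2\alpha+1)}$ branch outright, which dominates the stated $\sigma^{4\alpha/(2\alpha+1)}\wedge\sigma^2$, so the minimum never has to be argued separately. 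Your direct construction can be made to work, but the H\"older embedding is the cleaner way to discharge the $\snorm{D\beta}_1$ bookkeeping for general $k$ and $d$; if you pursue your route, the boundary behaviour of piecewise-polynomial bumps under $D_{n,d}^{(k+1)}$ is the step that needs a careful lemma.
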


The first term in the bound is due to the null space of $D.$ 
To derive the second term, 
we embed an \smash{$\ell_1$} ball in \smash{$\ktvset_{n,d}^k (C_n)$} and adapt 
arguments from 
\cite{birge2001gaussian}.
The final term is obtained similarly to \citet[Theorem 4]{SadhanalaWang2017},  
by
embedding a H\"older ball of appropriate size in
\smash{$ \ktvset_{n,d}^k (C_n)$}.
The proof is in \aref{sec:lowerbd_homosked_proof}.
\noindent

Let us compare the lower bound in  \autoref{prop:lowerbd_homosked} with
the upper 
bound in 
\autoref{cor:lsq_homosked}.
The Laplace distribution with scale parameter $\sigma$ is sub-exponential with
parameters  
$\nu = c\sigma,$ $b = c\sigma$ for some constant $c>0$.
Plugging in $C_n = n^{1-\alpha}$ in the lower bound, and $\omega=c\sigma$ in the
upper bound  
stated in \autoref{cor:lsq_homosked},  we can verify that the bounds match 
up to logarithmic factors.

The lower bound in  \autoref{prop:lowerbd_homosked} is for homoskedastic noise. 
When the noise is heteroskedastic, the estimation can be harder, in the sense 
that the minimax risk can be larger.
Specifically, we can show the following lower bound on a TV class of signals 
for the Exponential family.
\begin{proposition}
\label{prop:lower_bd_2dgrids}
Assume $C_n > 1$. Consider the class of signals over a 2d grid
\begin{equation}
\Theta(C_n) 
	= \left\{ \beta \in 
		\R^n : 
	\| \ktfmat_{n,2}^{(1)} \beta \|_1 \leq C_n, \ 
	\|\beta\|_\infty \leq 2C_n
	\right\}
\end{equation}
and the observation model $y_i \sim \mathrm{Exp}(\mathrm{mean} = 
\beta_i)$ for $i\in[n]$. Then
\begin{equation}
  \mathrm{MSE}\big( \Theta(C_n) \big)
  % \inf_{\hat\beta} \sup_{\beta \in \Theta(C_n) }
  % \E \left[\| \hat\beta - \beta \|_n^2\right]
  \;\geq \;
  \frac{3}{256} \frac{C_n^2}{n}.
\end{equation}
\end{proposition}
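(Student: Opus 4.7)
The plan is to apply Le Cam's two-point method to the one-parameter sub-family of constant signals in $\Theta(C_n)$, exploiting the fact that for an $\mathrm{Exp}(\beta)$ observation the variance is $\beta^2$, so at the upper end of the admissible range ($\beta$ of order $C_n$) estimation is intrinsically hard regardless of any smoothness constraint. First I observe that for every $c \in (0, 2C_n]$, the constant vector $c\one$ lies in $\Theta(C_n)$: its trend filtering penalty vanishes and $\snorm{c\one}_\infty = c \leq 2C_n$. Hence the minimax risk over $\Theta(C_n)$ is bounded below by the minimax risk for estimating $c$ in this one-parameter sub-family.

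I would take the two hypotheses $\beta^{(0)} = 2C_n \one$ and $\beta^{(1)} = 2C_n(1-\delta)\one$ with $\delta \in (0, 1/2)$ to be calibrated. Both lie in $\Theta(C_n)$, and the empirical-norm separation is $\snorm{\beta^{(0)} - \beta^{(1)}}_n^2 = 4C_n^2 \delta^2$. Since observations are independent across coordinates, the KL divergence between the two product data distributions is $n$ times the per-coordinate KL, and for exponentials the standard closed form yields
\[
  \KL{\mathrm{Exp}(2C_n)}{\mathrm{Exp}(2C_n(1-\delta))} = \log(1-\delta) + \frac{1}{1-\delta} - 1.
\]
A direct Taylor or convexity argument bounds this by a constant multiple of $\delta^2$ on $\delta \in (0, 1/2]$, so the total KL is $O(n\delta^2)$.

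Calibrating $\delta$ of order $1/\sqrt{n}$ makes the total KL at most a small absolute constant; Pinsker's inequality then bounds $\mathrm{TV}(P_0, P_1)$ by a constant strictly less than one. Combining with the standard two-point Le Cam inequality
\[
  \inf_{\hat\beta} \sup_{\beta \in \{\beta^{(0)},\beta^{(1)}\}} \E \snorm{\hat\beta - \beta}_n^2 \;\geq\; \tfrac{1}{8} \snorm{\beta^{(0)} - \beta^{(1)}}_n^2 \bigl(1 - \mathrm{TV}(P_0, P_1)\bigr)
\]
gives a minimax lower bound of order $C_n^2/n$, since the squared separation is of order $C_n^2 \delta^2 \asymp C_n^2/n$ and the factor $(1 - \mathrm{TV})$ is bounded away from zero.

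The main obstacle is tracking constants carefully to obtain the stated $3/256$. This calls for a quantitatively sharp upper bound on $\log(1-\delta) + (1-\delta)^{-1} - 1$ over a concrete range of $\delta$, together with a joint optimization of $\delta$ balancing the squared separation against $1 - \mathrm{TV}$ in Le Cam's inequality. The hypothesis $C_n > 1$ is used to ensure that the chosen $\delta$ lies in the range where the simple polynomial bound on the KL is valid and the exponential means $2C_n(1-\delta)$ remain strictly positive.
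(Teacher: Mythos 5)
Your proposal is correct, but it takes a genuinely different route from the paper. The paper also uses a two-point Le Cam argument, but with a \emph{spike} pair: two signals equal to $1$ on $n-1$ coordinates and differing only at a single corner vertex, taking the values $1+C_n/4$ and $1+C_n/2$ there. Because the hypotheses differ in one coordinate only, the total variation between the product measures collapses to the total variation between two scalar exponential laws whose means are in a fixed ratio, which is computed exactly to be $1/4$ (no Pinsker, no calibration of a vanishing $\delta$), and the constant $\tfrac{3}{256}=\tfrac14\cdot\tfrac1{16}\cdot\tfrac34$ falls out immediately; the assumption $C_n>1$ and the slack $\snorm{\beta}_\infty\le 2C_n$ are what make the spike hypotheses feasible for the $\snorm{D\beta}_1\le C_n$ constraint at a degree-two corner. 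Your construction instead perturbs all coordinates uniformly, using the constant signals $2C_n\one$ and $2C_n(1-\delta)\one$ with $\delta\asymp n^{-1/2}$, tensorized KL plus Pinsker, and an optimization over $\delta$; a short computation (e.g.\ $\delta=\tfrac{2}{3\sqrt n}$, per-coordinate KL at most $2\delta^2$, hence $\mathrm{TV}\le \tfrac23$) shows the resulting constant comfortably exceeds $3/256$, so the constant-chasing you flag as the main obstacle is routine. What each buys: the paper's spike argument isolates the role of a \emph{single} strongly heteroskedastic coordinate and gives the constant exactly; yours is the classical scalar parametric bound for the exponential scale family and shows the inconsistency already lives in the null space of $D$ (zero trend-filtering penalty), which arguably makes the message about heteroskedasticity even starker. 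One small correction: in your construction the hypothesis $C_n>1$ is not actually needed (the constant signals lie in $\Theta(C_n)$ for any $C_n>0$ and the admissible range of $\delta$ depends on $n$, not $C_n$); it is the paper's spike construction that uses $C_n>1$ to keep its hypotheses inside the class.
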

\noindent
The proof is in \aref{sec:lower_bd_2dgrids_proof}.
With canonical scaling $C_n \asymp n^{1-\alpha} = \sqrt{n}$, this means a lower
bound of $\Omega(1)$. 
In other words, there is no consistent estimator for the class of signals
$\Theta(\sqrt{n})$.  
This result also hints at the difficulty of handling various regimes of noise
parameters $\nu,\ b.$ 

\section{Algorithmic implementation}
\label{sec:algor-impl}

In this section, we discuss our algorithmic implementation, focusing on the
multivariate setting for the MLE trend filter for which there are not currently
generic procedures. 
For the Mean Trend Filter, there are many standard approaches that can apply
immediately since this is a quadratic program. In the one dimensional case with
$k=0$, \citet{KimKoh2009} use 
a Primal-Dual Interior-Point method. \citet{RamdasTibshirani2016} examine a fast
ADMM algorithm for $k>0$. \citet{WangSharpnack2016} develop ADMM and Newton
methods for general graphs and arbitrary $k$. We follow the approach of
\citet{KhodadadiMcDonald2018} for the MLE trend filter 
\eqref{eq:mle1} and use an
algorithm called linearized ADMM. A more complete description is given in
\autoref{sec:app-algor-deets}. First, rewrite Equation \eqref{eq:mle1}
(substituting $x$ for $\theta$) as
\begin{equation}
	\min_{Dx=z} \frac{1}{n}\sum \psi(x_i) - y_i x_i + \lambda \left\lVert z 
	\right\rVert_1.
\end{equation}
This is equivalent to \eqref{eq:mle1} but with additional variables.
The scaled form of the augmented Lagrangian for this problem is
\begin{equation}
	L_{\rho}(x,z,u) = \frac{1}{n}\sum \psi(x_i) - y_i x_i +
	\lambda \left\lVert z \right\rVert_1 + \frac{\rho}{2}\left\lVert Dx - z + 
	u\right\rVert_2^2
	- \frac{\rho}{2}\left\lVert u \right\rVert_2^2.
\end{equation}
\noindent
The scaled ADMM algorithm iteratively solves this problem by minimizing over 
$x$,
then $z$ and then updating $u$ with gradient ascent. However
the $x$ solution involves a matrix inversion due to the quadratic in $Dx$ which
is best avoided when $n$ is large. So we linearize $L_\rho(x,z,u)$ around the
current value $x^o$   resulting in the following update for $x$
\begin{equation}
	\label{eq:linearized-x-update}
	x \leftarrow \argmin_x \frac{1}{n}\sum \psi(x_i) - y_i x_i +
	\rho \left(D^\top D x^o - D^\top z + D^\top u\right)^\top x +
	\frac{\mu}{2}\left\lVert x-x^o \right\rVert_2^2, 
\end{equation}
where $\mu$ is chosen as the largest eigenvalue of $D^\top D$.
To include the null space penalty,
the changes only impact the $x$ update,
and \eqref{eq:linearized-x-update} is adjusted accordingly with a subgradient of
the penalty at $x^o$ (when $P_\cN x^o = 0$, choose the subgradient to be $0$).
% becomes
% $$
% x \leftarrow \argmin_x\frac{1}{n}\sum \psi(x_i) - y_i x_i + \rho \left(D^\top D 
% x^o -
% D^\top z + D^\top u\right)^\top x +\lambda\alpha (BB^\top x^o)^\top x +
% \frac{\mu}{2}\left\lVert x-x^o \right\rVert_2^2. 
% $$

% \subsection{Solving the linearized ADMM}
% \label{sec:solv-line-appr}

\begin{algorithm}[tb!]
  \caption{Linearized ADMM for the MLE trend filter}
  \label{alg:mle-tf}
  \begin{algorithmic}[1]
    \STATE {\bfseries Input:} $y,\ \phi, D,\ \lambda_1>0,\ \lambda_2\ge 0$
    \STATE {\bfseries Set:} $x^o = \phi^{'-1}(y),\ \rho = \lambda_1,\ z = u =
    0,\ \mu = \lambda_{\max}(D^\top D)$
    \WHILE{Not converged}
    \STATE Set $b = y - \rho D^\top (Dx^o - z + u) + \mu x^o + \lambda_2 
    P_{\cN} x^o / \|  P_{\cN} x^o \|_2$
    \STATE Update $x$ by solving $\psi'(x_i) + \mu x_i = b_i$ for $i\in [n]$.
    \STATE Update $z \leftarrow \mathrm{Soft}_{\lambda/\rho}(Dx + u)$ with
    $\mathrm{Soft}_a(v) = \mathrm{sign}(v)(|v| - a)_+$.
    \STATE Update $u\leftarrow u + Dx - z$
    \ENDWHILE
    \RETURN{$z$}
  \end{algorithmic}
\end{algorithm}

The solution for the $z$-update is easily shown to be given by elementwise soft-thresholding,
% $z_i\leftarrow \textrm{sign}(z_i)\left(|z_i| - (Dx-u)_i\right)_+$ where
% $(\cdot)_i$ denotes the $i^{th}$ element of the vector; 
and the $u$-update is 
simply vector addition. Solving the $x$-update is potentially more challenging.
Note that the form of \eqref{eq:linearized-x-update} is the same for each $i$,
so we can solve $n$ one-dimensional problems.  
The KKT stationarity condition requires
$$
	0 =\left(\psi'(x_i) - y_i\right)  + \rho \left( D^\top \left(D x^o -
	z+u\right)\right)_i + \mu( x_i-x_i^o).
$$
% \Longrightarrow  &\quad\psi'(x_i) + \mu x_i =  y_i  - \rho\left( D^\top D x^o - D^\top
  % z+u\right)_i +  \mu x_i^o. 
Therefore, for any negative loglikelihood as given by $\psi$, we want to solve
$
\psi'(x_i) + \mu x_i = b_i,
$
for each $i\in[n]$. For many functions $\psi$, the solution has a closed form. The
binomial distribution with $\psi(x) = \log(1+e^x)$ is an exception, though
standard root finding methods have no difficulties.
To include the nullspace penalty, the $x$ update changes slightly, but the
logic is the same. This procedure is shown in \autoref{alg:mle-tf}. In practice,
we have found the algorithm to converge quickly when initialized from a small value
of $\lambda_1$ (because the solution will be close to the MLE) and then
calculated for an increasing sequence with the solution at smaller
$\lambda_1$ used as a warm start. This is the opposite of most pathwise 
procedures which use a decreasing sequence of $\lambda_1$.

\section{Degrees of freedom and tuning parameter selection}
\label{sec:tuning-param-select}

We describe an unbiased estimator for the KL divergence
between the estimate and the truth for the purposes of tuning parameter
selection. Additional justification and description of its derivation are given
in \autoref{sec:app-comp-deets}.
If \smash{$Y \sim \mbox{N}(\theta^*, \sigma^2)$}, a now common method of risk
estimation makes use of Stein's Lemma.
% \begin{lemma}[Stein's Lemma]
	%   Assume $f(Y)$ is weakly differentiable with essentially
	%   bounded weak partial derivatives on $\R^n$, then
	%   \begin{equation}
		%     \label{eq:8}
		%     \trace \Cov(Y,f(Y)) = \Expect{\left\langle Y,\ f(Y)\right\rangle} = 
		%\sigma^2\Expect{\trace Df(Y) \bigg\vert_y }.
		%   \end{equation}
	% \end{lemma}
The utility of this result comes from examining the decomposition of
the mean squared error of \smash{$\hat\theta(Y)$} as an estimator of 
\smash{$\theta^*$}.
\begin{align}
	\label{eq:9}
	\Expect{\snorm{\theta^*-\hat\theta(Y)}_2^2}
	&= \Expect{\snorm{Y-\hat\theta(Y)}_2^2} -n\sigma^2 + 2
	\trace\Cov(Y,\hat\theta(Y))\\
	&= \Expect{\snorm{Y-\hat\theta(Y)}_2^2} -n\sigma^2 + 2\sigma^2
	\Expect{\trace J\hat\theta(z) \big\vert_Y},
\end{align}
where $J$ denotes the Jacobian.
This characterization motivates the definition of degrees-of-freedom
for linear predictors: $\textrm{df} :=\frac{1}{\sigma^2} \trace
J\hat\theta(z)\big\vert_y$ \citep{Efron1986}, where 
\smash{$\hat\theta(y)=Hy$}. Using
Stein's Lemma, assuming \smash{$\sigma^2$} is known, we have Stein's Unbiased
Risk Estimator
\begin{equation}
	\label{eq:10}
	\mathrm{SURE}(\hat\theta) = \snorm{y-\hat\theta}_2^2 -n\sigma^2 + 
	2\sigma^2\trace\left( J\hat\theta(z) \big\vert_y\right), 
\end{equation}
which satisfies \smash{$\E[\textrm{SURE}(\hat\theta)] =
\E\snorm{\theta^*-\hat\theta(Y)}_2^2$}. 
Note that this is the risk for estimating the \smash{$n$}-dimensional parameter
\smash{$\theta^*$}. This estimator is appropriate for the mean trend filter,
but, for the MLE trend filter, we prefer ``Stein's Unbiased KL'' estimator due to 
\citet{Deledalle2017} that applies to continuous exponential families.
\begin{lemma}[Theorem 4.1 in \citealt{Deledalle2017}]
	\label{lem:sukls}
	Assume $h$ is weakly differentiable and that
	$\hat\theta(Y)$ is weakly differentiable with essentially bounded partial
	derivatives. Then
	\begin{equation}
		\mathrm{SUKL}(\hat\theta) = \Big\langle \hat\theta + \frac{\nabla 
			h(y)}{h(y)},\
		\hat\beta\Big\rangle + \trace\Big( J\hat\beta(z)
		\big\vert_y\Big)
		- \psi(\hat\theta)
	\end{equation}
	is unbiased for \smash{$\E [\mathrm{KL}(\hat\theta(Y)\ \Vert\ \theta^*)] - 
	\psi(\theta^*)$}.
\end{lemma}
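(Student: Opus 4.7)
The plan is to expand the right-hand side of the SUKL definition, expand the KL divergence using the Bregman form of the exponential family, and then close the gap between them using a Stein-type identity that comes from integration by parts against the score $\nabla_y \log p(y \mid \theta^*)$.

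\textbf{Step 1: expand the target.} By the Bregman representation \eqref{eq:exp-fam-kl}, applied coordinatewise and summed,
\begin{equation}
\KL{\hat\theta(Y)}{\theta^*} = \psi(\theta^*) - \psi(\hat\theta(Y)) - (\theta^* - \hat\theta(Y))^\top \psi'(\hat\theta(Y)).
\end{equation}
Writing $\hat\beta = \psi'(\hat\theta)$, taking expectations and subtracting $\psi(\theta^*)$ gives
\begin{equation}
\E\bigl[\KL{\hat\theta(Y)}{\theta^*}\bigr] - \psi(\theta^*) = -\E[\psi(\hat\theta)] + \E[\hat\theta^\top \hat\beta] - \E[\theta^{*\top} \hat\beta].
\end{equation}

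\textbf{Step 2: compare to SUKL.} Taking the expectation of the definition of SUKL and matching term-by-term against the display above, it suffices to prove the identity
\begin{equation}
\E\!\left[\Bigl(\theta^* + \tfrac{\nabla h(Y)}{h(Y)}\Bigr)^{\!\top} \hat\beta(Y) + \trace J\hat\beta(Y)\right] = 0.
\end{equation}

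\textbf{Step 3: recognize the score.} Because $Y$ has density $p(y \mid \theta^*) = \prod_i h(y_i)\exp\{y_i \theta_i^* - \psi(\theta_i^*)\}$, the score with respect to $y$ is
\begin{equation}
\nabla_y \log p(y \mid \theta^*) = \theta^* + \frac{\nabla h(y)}{h(y)},
\end{equation}
where the ratio is interpreted componentwise and the gradient of $h$ is the weak gradient. So the identity to prove becomes the general Stein-type equality
\begin{equation}
\E\bigl[\hat\beta(Y)^\top \nabla_y \log p(Y \mid \theta^*)\bigr] = -\E[\trace J\hat\beta(Y)].
\end{equation}

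\textbf{Step 4: integration by parts.} This is exactly the divergence theorem applied to the vector field $g(y) = \hat\beta(y)\, p(y \mid \theta^*)$: we have $\nabla_y \cdot g = (\trace J\hat\beta)\, p + \hat\beta^\top \nabla_y p$, and integrating the left-hand side over the support yields zero provided the boundary contributions vanish. The weak differentiability of $h$ and of $\hat\theta$ (hence of $\hat\beta$, since $\psi'$ is smooth) together with the essentially bounded Jacobian of $\hat\theta$ make the product $\hat\beta\, p$ weakly differentiable, and the sub-exponential tails of the exponential family established in \autoref{lem:sub-exponential} ensure that $\hat\beta\, p$ decays sufficiently fast at infinity to kill boundary terms. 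This is precisely the setting covered by Deledalle's Theorem~4.1; the regularity hypotheses of the lemma are stated to guarantee exactly these integrability and decay conditions.

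\textbf{Main obstacle.} The algebra of Steps 1--3 is routine once the Bregman form is in hand; the real content is the integration-by-parts justification in Step 4. One must be careful because the ``density'' $p(y \mid \theta^*)$ may be discrete (e.g., Poisson), in which case Deledalle's argument replaces the continuous divergence theorem with an Abel-summation analog. In the continuous case treated by the lemma, the boundedness hypothesis on the partial derivatives of $\hat\theta$ ensures that $\hat\beta(y)\, p(y \mid \theta^*)$ lies in a weighted Sobolev space on which the divergence theorem applies with vanishing boundary integral, completing the proof.
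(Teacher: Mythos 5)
Your proof is correct and follows essentially the same route as the paper: expand $\mathrm{KL}(\hat\theta\,\Vert\,\theta^*)$ via the Bregman form \eqref{eq:exp-fam-kl}, isolate the single troublesome term $\E[\theta^{*\top}\hat\beta(Y)]$, and dispose of it with a Stein-type identity. The one place you diverge is that the paper treats that identity as a black box: it invokes the Generalized Stein Lemma of Eldar (\autoref{lem:g-stein-lemma} in the appendix), which states precisely $\E[\theta^{*\top} f(Y)] = -\E[\langle \nabla h(Y)/h(Y), f(Y)\rangle + \trace Jf(Y)]$, applied with $f = \hat\beta$, and stops there (the lemma itself being a citation of Deledalle's Theorem~4.1). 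You instead unwrap that lemma into an explicit score-function/integration-by-parts argument, which makes the mechanism transparent and makes clear why the weak-differentiability and bounded-derivative hypotheses are needed. Two small points of care: the hypothesis as stated bounds the partial derivatives of $\hat\theta$, whereas the trace term involves $J\hat\beta = \diag(\psi''(\hat\theta))J\hat\theta$, so passing from one to the other needs $\psi''(\hat\theta)$ controlled (you note $\psi'$ is smooth, which is the right idea but not quite a bound); and the appeal to \autoref{lem:sub-exponential} for the vanishing of boundary terms is a little loose --- what actually kills the boundary integral is the at-most-linear growth of $\hat\beta$ (from the bounded Jacobian) against the exponential decay of the density. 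Neither issue affects the substance, and both are absorbed by the citation to Deledalle in any case.
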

Because \smash{$\psi(\theta^*)$} does not depend on \smash{$\hat\theta$}, we 
can ignore 
it for 
the
purposes of choosing \smash{$\lambda_1,\ \lambda_2$} in the MLE trend filter.
To evaluate \smash{$\mathrm{SUKL}(\hat\theta)$} we need an expression for 
\smash{$J\hat\beta(y)$}.
This is given in the following result (the proof is deferred to \autoref{sec:app-comp-deets}).
\begin{theorem}
	\label{thm:simple-divergence}
	For the MLE trend filter, the
	divergence of $\hat\beta(y)$, defined to be the trace of the Jacobian of
	$y\mapsto \hat\beta(y)$, written as $\trace\left( J \hat\beta(y) \right)$,
	is given by 
	\begin{equation}
		\label{eq:2}
		\trace\left(J \hat\beta(y)\right) = \trace\left(
		\diag\left( \psi''(\hat\theta) \right)\Pnd
		\left(\Pnd\diag\left(\psi''(\hat\theta)\right) \Pnd
		+ \lambda_2 P_\cN \right)^\dagger \Pnd\right),
	\end{equation}
	where $\Pnd$
	% \begin{equation}
	% 	\label{eq:3}
	% 	\Pnd = I_n - \breve{D}^\top (\breve{D} \breve{D}^\top)^\dagger \breve{D}
	% \end{equation}
	is the projection onto the null-space of \smash{$\breve{D}$}, and
  $\breve{D}$ contains the rows of $D$ such that  $D\hat\theta = 0$.
\end{theorem}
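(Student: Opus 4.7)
The approach is to apply implicit differentiation to the KKT optimality conditions of the program \eqref{eq:estimator}, in the style of \cite{VaiterDeledalle2017}, solve for the Jacobian $\partial\hat\theta/\partial y$, and then compose with the mean map $\hat\beta=\psi'(\hat\theta)$. First I would write the stationarity condition
\[
\psi'(\hat\theta) - y + n\lambda_1\, D^\top s + n\lambda_2\, g \;=\; 0,
\]
where $s$ is a subgradient of $\|\cdot\|_1$ at $D\hat\theta$ and $g$ is a subgradient of $\|P_\cN\cdot\|_2$ at $\hat\theta$. Split the rows of $D$ into active rows $D_A$, where $(D\hat\theta)_i\neq 0$ so that $s_i$ equals a deterministic sign, and inactive rows $\breve D$, where $(D\hat\theta)_i=0$. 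Because the TV penalty is polyhedral and $\psi$ is strictly convex and smooth, standard arguments show that for $y$ outside a Lebesgue-null set the active set is locally constant, so any admissible perturbation preserves $\breve D\hat\theta=0$, whence $d\hat\theta\in\cN(\breve D)$, i.e., $\Pnd\,d\hat\theta=d\hat\theta$.

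Next I would project the stationarity condition onto $\cN(\breve D)$ by applying $\Pnd$. The inactive contribution disappears because $\Pnd\breve D^\top = 0$, while $s_A$ is locally constant and drops out upon differentiating. Differentiating the projected identity with respect to $y$ yields
\[
\Pnd\,\diag(\psi''(\hat\theta))\,\Pnd\,d\hat\theta + n\lambda_2\,\Pnd\,dg \;=\; \Pnd\,dy.
\]
The Jacobian of the null-space subgradient, once projected onto $\cN(\breve D)$ and normalized, contributes the clean operator $\lambda_2 P_\cN$, exploiting the nesting $\cN(D)\subseteq\cN(\breve D)$ which implies $\Pnd P_\cN = P_\cN\Pnd = P_\cN$. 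This produces the linear system $M\,d\hat\theta = \Pnd\,dy$ with $M:=\Pnd\,\diag(\psi''(\hat\theta))\,\Pnd + \lambda_2\,P_\cN$. Strict convexity of $\psi$ implies $\diag(\psi''(\hat\theta))\succ 0$; a spectral argument on the symmetric PSD matrix $M$ then gives $\mathrm{range}(M)=\cN(\breve D)$ and $\mathrm{null}(M)=\mathrm{range}(\breve D^\top)$, so $M^\dagger = \Pnd M^\dagger \Pnd$ and $d\hat\theta = M^\dagger\,\Pnd\,dy$. Composing with $d\hat\beta=\diag(\psi''(\hat\theta))\,d\hat\theta$ and taking the trace yields the claimed formula.

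The main obstacle is the rigorous treatment of the two non-smooth penalties. Certifying that the active set of the TV term is locally constant, so that implicit differentiation is legitimate, follows from standard polyhedral-regularizer arguments away from a measure-zero set of $y$. More subtle is justifying the linearization of the subgradient of $\|P_\cN\cdot\|_2$ that produces exactly $\lambda_2 P_\cN$: one must handle the degenerate point $P_\cN\hat\theta=0$ separately, consistent with the zero-subgradient selection used in \autoref{alg:mle-tf}, so that the identity $\Pnd P_\cN\Pnd = P_\cN$ can be applied unambiguously inside the Jacobian computation. Once these two technical points are settled, the remainder of the proof is elementary algebra on the pseudoinverse of $M$ and on the projector identities above.
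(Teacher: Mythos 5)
Your proposal is correct in substance but takes a genuinely different route from the paper: the paper's entire proof is a one-line application of Theorem~2 of \citet{VaiterDeledalle2017}, identifying the tangent model subspace with $\cN(\breve{D})$ (so the restriction operator is $\Pnd$), the Hessian of the data-fidelity term with $\diag(\psi''(\hat\theta))$, and the Riemannian Hessian of the penalty with $\lambda_2 P_\cN$, after which the cited theorem produces \eqref{eq:2} directly. You instead re-derive the content of that black box from scratch: implicit differentiation of the KKT stationarity condition, local constancy of the active set of the TV term for almost every $y$, the resulting inclusion $d\hat\theta \in \cN(\breve{D})$, projection by $\Pnd$ to kill the inactive rows, and a pseudoinverse computation. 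This is essentially the machinery hidden inside the cited theorem, so your argument is self-contained and makes visible exactly where each structural fact is used, at the cost of having to justify the "generic $y$" and active-set arguments yourself. Two points to tighten if you pursue this route: first, your claim that linearizing the subgradient of $\theta \mapsto \|P_\cN\theta\|_2$ "contributes the clean operator $\lambda_2 P_\cN$" is not literally the Hessian of the Euclidean norm, which at $P_\cN\hat\theta \neq 0$ is $\|P_\cN\hat\theta\|_2^{-1}\left(P_\cN - w w^\top\right)$ with $w = P_\cN\hat\theta / \|P_\cN\hat\theta\|_2$; the paper makes the same identification when it sets $\nabla^2_{\mathcal{M}} J = \lambda_2 P_\cN$ in invoking the cited theorem, so your gloss is faithful to the paper's own treatment, but a fully rigorous standalone derivation would either carry the scaling and rank-one correction through or explain why they may be dropped. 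Second, your displayed linearized system carries a factor $n\lambda_2$ on the penalty term while your definition of $M$ uses $\lambda_2$; this normalization should be made consistent with the $1/n$ scaling of the likelihood in \eqref{eq:estimator}.
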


Unfortunately, estimating the risk in this manner is not known to be possible
for general discrete exponential 
families, though a few specific cases are possible. One such is the Poisson distribution. The
following result more closely resembles an empirical derivative of 
\smash{$\hat\beta$}
rather than the theoretical expression for \smash{$J\hat\beta(y)$} used in the 
previous
results. 

\begin{lemma}[Theorem 4.2 in \citealt{Deledalle2017}]
	Assume \smash{$Y$} is Poisson and that
	\smash{$\hat\theta(y)$} is weakly differentiable with essentially bounded 
	partial
	derivatives. Then
	\begin{equation}
		\mathrm{PUKL}(\hat\theta) = \snorm{\hat\beta}_1 - \langle  y,\
		\log \hat\beta_{\downarrow}(y) \rangle, 
	\end{equation}
	is unbiased for \smash{$\E[\mathrm{KL}(\theta^*\ \Vert\ \hat\theta(Y))] -
    z(\theta^*)$}  
	where
	\smash{$[\hat\beta_{\downarrow}(y)]_i = [\hat\beta(y-e_i)]_i$}, where 
	\smash{$e_i$} 
	is the
	\smash{$i^{th}$} standard basis vector, and \smash{$z$} is a known
	function of the true parameter.
\end{lemma}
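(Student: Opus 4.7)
The plan is to directly compute $\E[\mathrm{KL}(\theta^* \Vert \hat\theta(Y))]$ and show it equals $\E[\mathrm{PUKL}(\hat\theta)]$ up to a term $z(\theta^*)$ that depends only on the true parameter. I would first write out the KL for the Poisson family in the mean parameterization:
\begin{equation}
\mathrm{KL}(\theta^* \Vert \hat\theta)
= \sum_{i=1}^n \big[ \beta^*_i \log \beta^*_i - \beta^*_i - \beta^*_i \log \hat\beta_i + \hat\beta_i \big]
= z(\theta^*) + \snorm{\hat\beta}_1 - \langle \beta^*, \log \hat\beta \rangle,
\end{equation}
where $z(\theta^*) = \sum_i (\beta^*_i \log \beta^*_i - \beta^*_i)$ collects the terms that are functions of $\theta^*$ alone. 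After taking expectations, the only term involving both $\beta^*$ and $\hat\beta(Y)$ is $\E[\langle \beta^*, \log \hat\beta(Y)\rangle]$, so reducing this cross term to something observable is the crux of the argument.

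The key tool is the discrete Stein-type identity for the Poisson distribution. For $Y_i \sim \mathrm{Poisson}(\beta^*_i)$ and any function $g$ for which the relevant sums converge absolutely, a one-line calculation using $k \cdot e^{-\beta}\beta^k/k! = \beta \cdot e^{-\beta}\beta^{k-1}/(k-1)!$ gives $\beta^*_i \E[g(Y)] = \E[Y_i g(Y - e_i)]$, where $e_i$ is the $i^{\mathrm{th}}$ standard basis vector. Applying this coordinatewise with $g(Y) = \log \hat\beta_i(Y)$ yields
\begin{equation}
\E\big[\beta^*_i \log \hat\beta_i(Y)\big] = \E\big[Y_i \log [\hat\beta(Y-e_i)]_i\big] = \E\big[Y_i \log \hat\beta_{\downarrow,i}(Y)\big],
\end{equation}
and summing over $i$ gives $\E[\langle \beta^*, \log \hat\beta(Y)\rangle] = \E[\langle Y, \log \hat\beta_{\downarrow}(Y)\rangle]$. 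Substituting back into the expanded KL expression produces exactly $\E[\mathrm{KL}(\theta^* \Vert \hat\theta(Y))] - z(\theta^*) = \E[\snorm{\hat\beta(Y)}_1 - \langle Y, \log \hat\beta_{\downarrow}(Y)\rangle] = \E[\mathrm{PUKL}(\hat\theta)]$.

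The main subtlety is verifying the regularity needed to apply the coordinate shift identity and to interchange sum and expectation: one needs $\E|Y_i \log \hat\beta_{\downarrow,i}(Y)| < \infty$ and similarly for $\beta^*_i \log \hat\beta_i(Y)$. The weak differentiability with essentially bounded partial derivatives assumption on $\hat\theta(y)$ (together with $\hat\beta = \psi'(\hat\theta)$ being strictly positive under the Poisson model) is what plays the role of a dominated-convergence hypothesis here: it controls how much $\log \hat\beta_i(Y)$ can change under a unit shift of any coordinate, which is exactly what the discrete Stein identity requires when we replace $Y$ by $Y - e_i$ inside the expectation. Once this integrability is in hand the computation is essentially algebraic, so I would expect the bulk of the technical effort to lie in justifying the shift step rather than in the two-line manipulation of the KL formula itself.
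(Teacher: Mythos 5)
Your proof is correct. Note that the paper itself offers no proof of this lemma --- it is imported verbatim as Theorem 4.2 of the cited reference (Deledalle, 2017) --- so there is no internal argument to compare against; your derivation via the discrete Poisson Stein/Hudson identity $\beta^*_i\,\E[g(Y)] = \E[Y_i\, g(Y-e_i)]$ applied to $g(Y)=\log\hat\beta_i(Y)$ is precisely the standard route taken in that reference, and your expansion of the Poisson KL into $z(\theta^*) + \snorm{\hat\beta}_1 - \langle \beta^*, \log\hat\beta\rangle$ matches the Bregman form \eqref{eq:exp-fam-kl}. Your closing observation is also apt: in the discrete setting the ``weak differentiability'' hypothesis is really a stand-in for the integrability conditions ($\E|Y_i\log\hat\beta_{\downarrow,i}(Y)|<\infty$, etc.) that license the coordinate-shift identity, and that is indeed where the only genuine technical content lies.
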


With these expressions in hand, we can select the tuning parameters 
\smash{$\lambda_1,\
\lambda_2$} with minimal additional computations by minimizing
\smash{$\mathrm{SUKL}(\hat\theta)$} or 
\smash{$\mathrm{PUKL}(\hat\theta)$} as appropriate.

\section{Empirical results}
\label{sec:experiments}

 We demonstrate the performance of both the MLE and the Mean trend
filter estimators in a small 
scale simulation designed to compare the two in challenging settings. We also
examine two applications: modeling hospital admissions by age due to COVID-19
in Davis, California; and describing changes in temperature measurements for the
Northern hemisphere.

\subsection{Simulation study}

We briefly investigate the relative performance of the Mean Trend Filter and the
MLE Trend Filter on a few synthetic examples. Our intention is to push the
limits of both, thereby illustrating that the user should choose between the two
based on whether smoothness is desired in the mean or in the natural parameter.
We focus on one dimension for ease of visualization and $k = 1$. We examine both
the exponential distribution and the Poisson distribution.

To create the true signal, we begin with a v-shaped function on the unit
interval:
\[
f_n(x) = \frac 1n + \left( 1 - \frac 2n \right) \left| x - \frac 12\right|
\]
Evaluating this at $n$ equally-spaced points for any \smash{$n$} gives a signal 
with
\smash{$\snorm{Df_n(x)}_1$} having the canonical scaling of \smash{$\nicefrac{1}{n}$}.

For the exponential distribution, we set either \smash{$\theta^*$} or 
\smash{$\beta^*$} equal to
\smash{$f_n(x)$} and evaluate both the Mean Trend Filter and the MLE Trend 
Filter on
sample data. When \smash{$\theta^*$} is controlled, the mean at \smash{$x = 0.5$} 
approaches
infinity as \smash{$n$} grows, making estimation very challenging. The reverse 
occurs
if \smash{$\beta^*$} is controlled. For the Poisson, because the mapping 
from natural parameter to mean is exponential, controlling one does not 
particularly
challenge the opposite procedure with the above $f_n$. To increase the 
discrepancy, we 
use 
\smash{$g_n(x) = 0.5 - f_n(x) + \log(n)$}. The signal should create more 
discrepancy between the
estimators as \smash{$n$} grows, but results are less dramatic than those in 
the 
exponential case.

\begin{figure}[t]
  \centering
  \includegraphics[width=.9\textwidth]{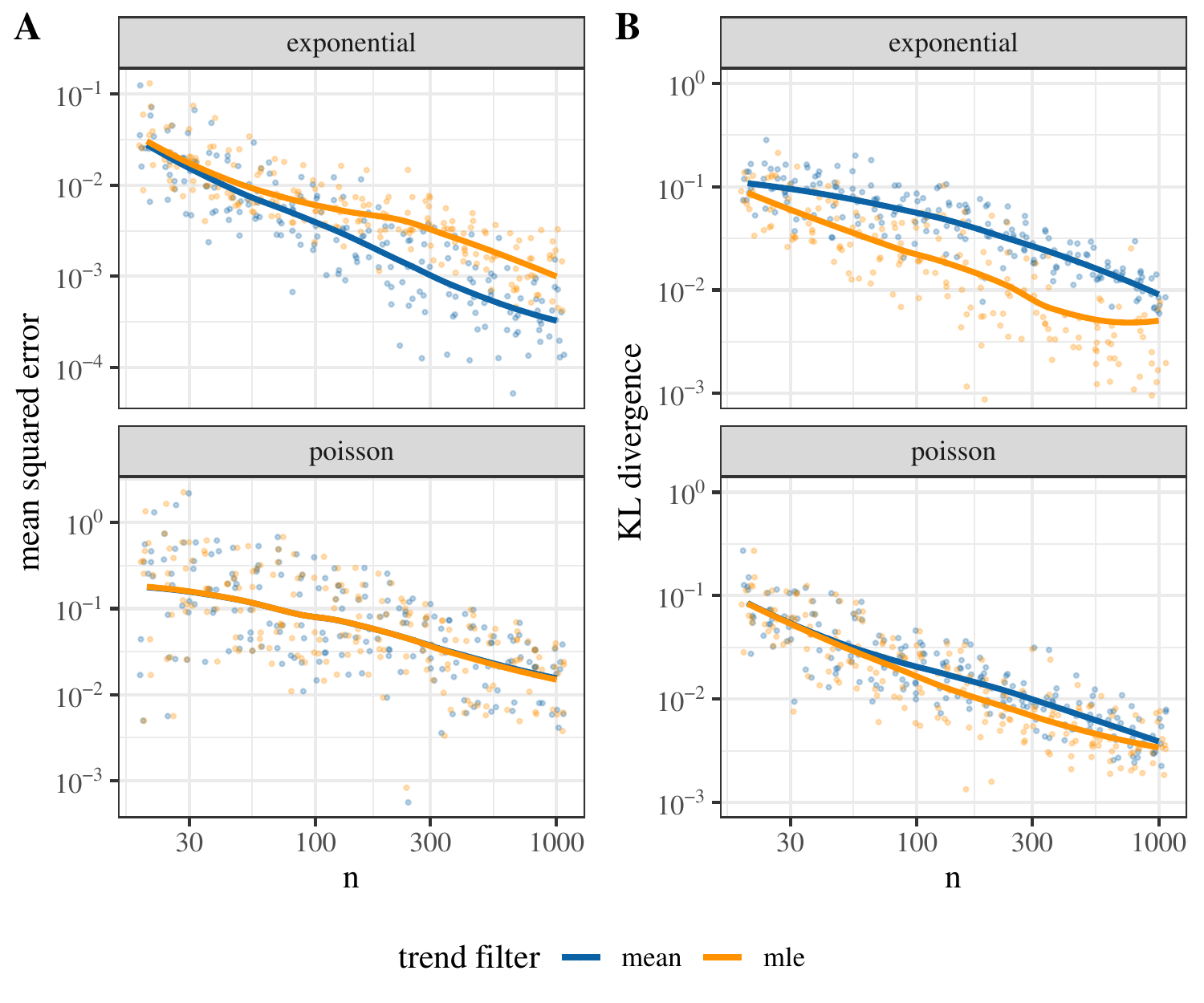}
  \caption{Estimation accuracy for both types of trend filters. The left column
    (panel A) compares the estimators when the mean is smooth. The right column
    (panel B) compares the estimators when the natural parameter is smooth.
    Solid lines show the average error across replications while the points show
    the error for each replication.}
  \label{fig:sim-errors}
\end{figure}

\begin{figure}[t]
  \centering
  \includegraphics[width=.9\textwidth]{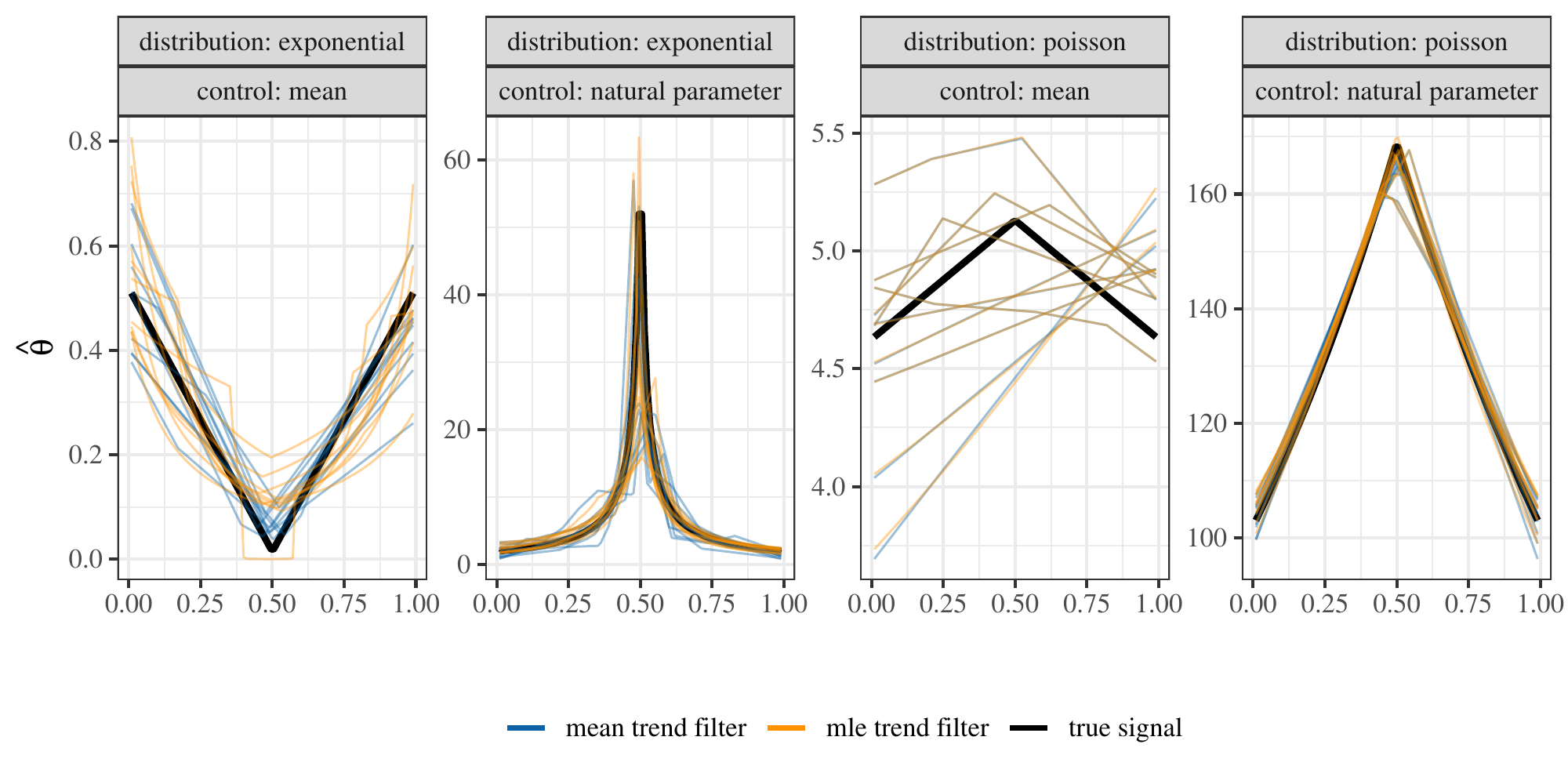}
  \caption{Estimates from both trend filters for the 4 scenarios when 
  \smash{$n=104$}.}
  \label{fig:sim-estimates}
\end{figure}

\autoref{fig:sim-errors} shows estimation accuracy for both trend filters across
four different scenarios. In all cases, we generated data using the signals
described above for 20 values of $n$ ranging from 20 to 1000. The values are
evenly spaced on the logarithmic scale. For each $n$, we repeated the experiment
10 times. The left column (panel A) shows results for both distributions when
the mean is smooth (mean is given by the smooth functions above) and error is 
measured using the mean-squared error
between the estimate and the truth. In the exponential case, the mean trend
filter is slightly more accurate for larger $n$, but the overall error also
decreases with $n$ since the problem is becoming easier. In the Poisson case,
the estimates (and therefore their errors) are nearly the same. The right column
(panel B) shows results when 
the natural parameter is smooth. Here, for both distributions, the MLE trend
filter performs better (as measured by KL divergence), but the difference is
again more pronounced for the exponential distribution.
\autoref{fig:sim-estimates} shows all the estimates for all four scenarios when
$n=104$. In the left two panels, for the exponential distribution, it is clear
that whether the mean or natural parameter is smooth makes a substantial
difference for the 
accuracy of the estimator. For the Poisson case (right two panels), there is
much less discrepancy. In the case that the mean is smooth, both estimators
appear relatively poor, though the MSE remains small in both cases. The reason
is that the mean and the variance are the same, and both nearly constant. The
difficulty is further exacerbated due to the discreteness of the data and only
a small handful of values with non-negligible probability. 
Therefore, this setting is actually quite challenging. For context, on the
typical dataset, the average
absolute difference between observations 
at neighbouring points is about 2.5 compared with a 0.01 change in the signal.

\subsection{Example applications}
\label{sec:example-applications}

We apply our estimators to two real-world datasets for illustrative purposes.
The first examines Poisson trend filtering for estimating the age-time
hospitalization rates due to COVID-19 in the University of California system.
The second estimates the instantaneous temperature variability over the Northern
hemisphere from publicly available observations.

\subsubsection{UC COVID-19 hospitalization data}
\label{sec:hospital-data}

We analyzed the COVID-19 hospitalization rate within five hospitals in the
University of California system: UC Davis, UC Los Angeles, UC Irvine, UC San
Diego, and UC San Francisco. 
The data is based on 4,730 patients, all 18 years old or greater, that were
admitted between February 12, 2020 to January 6, 2021. 
We aggregate the hospitalization counts at the weekly level---there are 48 weeks
in total---and by age (in 15 bins of 5 years each). 
This results in noisy and sparse hospitalization counts at the week-by-age
level with an average count-per-bin of $6.57$.  
The data was obtained from the authors of \cite{nuno2021covid}, where they
perform a more comprehensive analysis. 
It is used under a data use agreement and has not been made available to the
public due to privacy concerns.

We apply $k=1$ trend filtering with the Poisson exponential family in 2
dimensions to COVID-19 hospitalizations.
We tune the $\lambda$ parameter by minimizing $\mathrm{PUKL}(\hat\theta)$.
One can see the results in \autoref{fig:hospital}, where the smoothed version
is on the left. 
Due to the low average count per bin, trends in hospitalization rate are much
more clearly visible after applying trend filtering. 
We have marked the local maxima in the smoothed signal which produces only 4 
points---this would not have been possible in the raw data.

Some broad trends are clearly visible from \autoref{fig:hospital}.
First, we can see two distinct waves for COVID-19 hospitalizations in
summer 2020 and winter 2020--2021.   
Moreover, we can see that the highest hospitalization rates within the summer
2020 wave are among those aged 50--65, while in the winter 2020--2021 wave the
highest rates are both within the 50--65 age range but also the 80$+$ age 
range. 
This suggests that the age distribution is not stationary, and changes with
successive waves. 
This may be due to a number of factors, such as behavioral shifts and holiday 
effects.

\begin{figure}[t!]
  \centering
  \includegraphics[width=.9\textwidth]{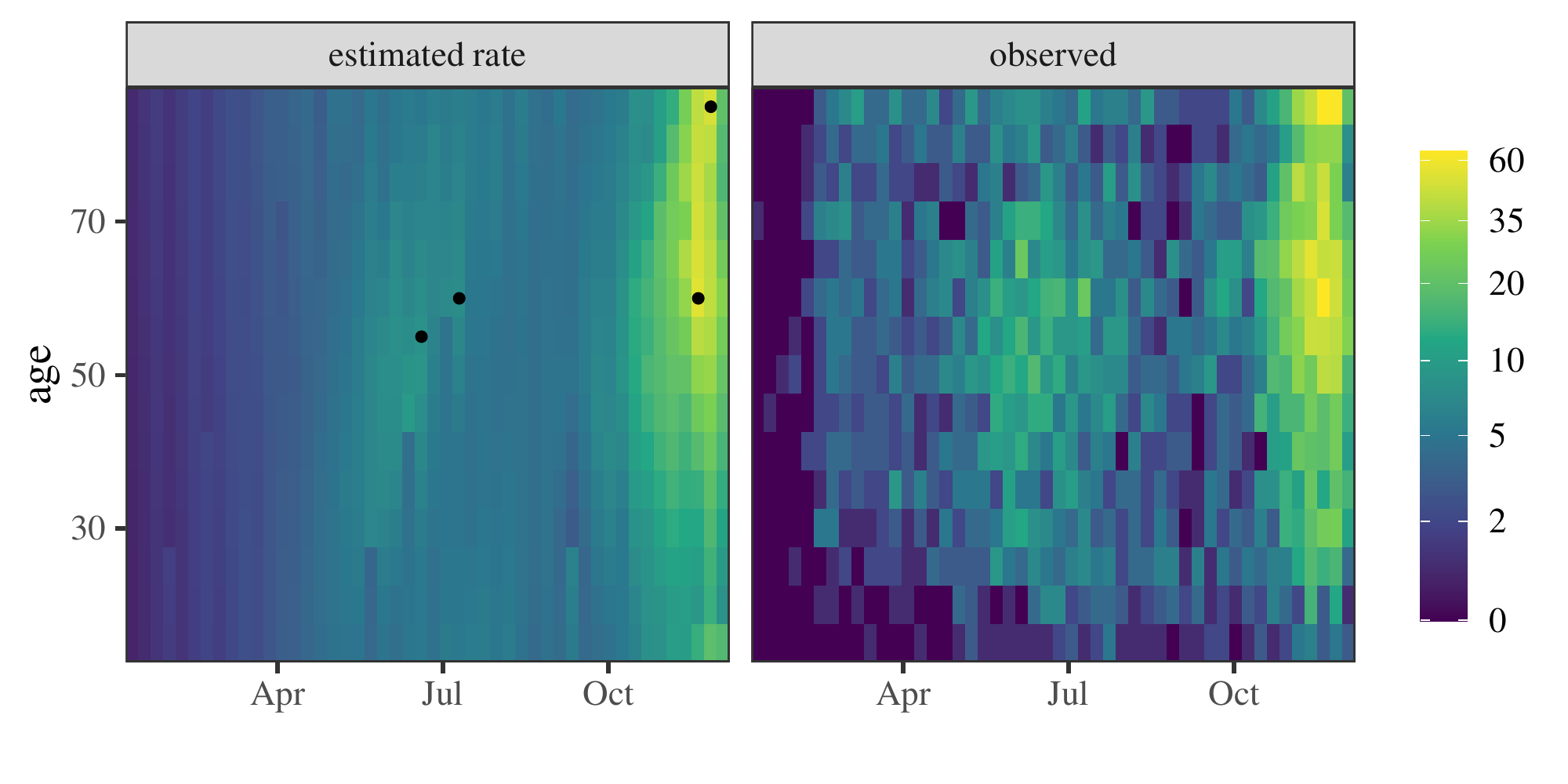}
  \caption{Estimated daily hospitalization rate due to COVID-19 by 5 year age
    group and week in five UC hospitals.  We apply $k=1$ trend filtering with
    the Poisson exponential family (left) to the raw count data (right).}
  \label{fig:hospital}
\end{figure}

\subsubsection{Temperature variability}
\label{sec:temperature-ex}

\begin{figure}
  \centering
  \includegraphics[width=.8\textwidth]{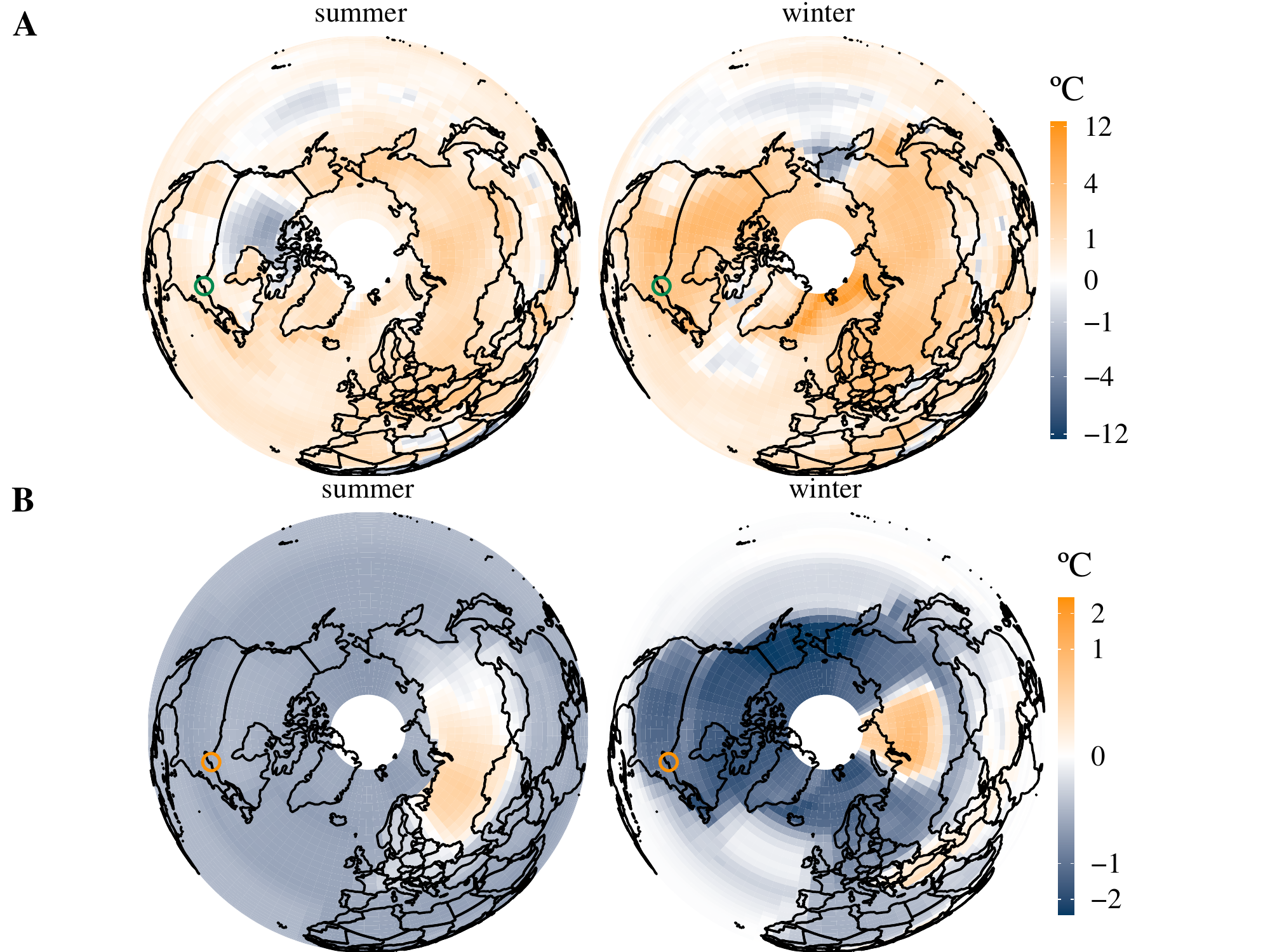}
  \caption{Panel A shows the change in average temperatures observed in the northern
    hemisphere from the 1960s relative to the 2000s in degrees Celsius.
    Panel B shows the change in estimated standard deviation (using the KL trend filter
    with $k=1$ in the temporal dimension and $k=2$ over space) from the 1960s
    relative to the 2000s. Standard deviations were estimated at each
    spatio-temporal grid location before being averaged separately over
    winter/summer months over the appropriate decade.}
  \label{fig:globes}
\end{figure}

\begin{figure}
  \centering
  \includegraphics[width=.9\textwidth]{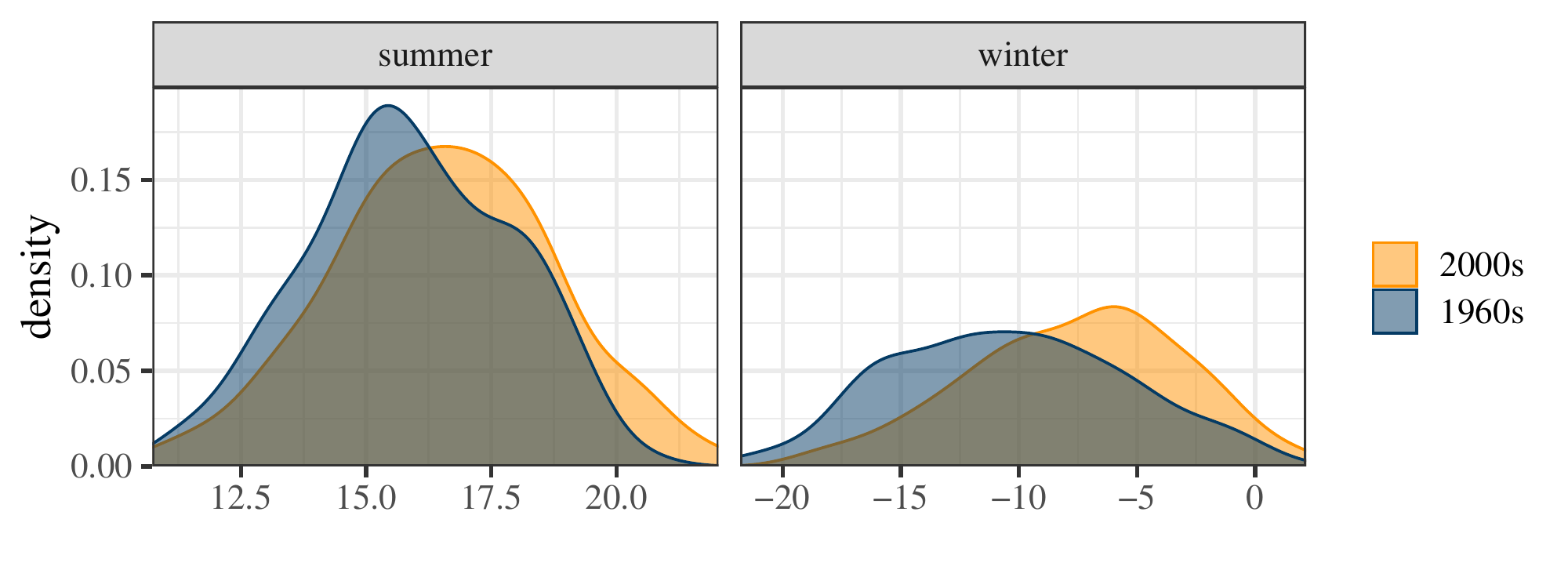}
  \caption{Density estimates for temperatures ($x$-axis, in degrees Celsius) in
    Toronto in the 1960s and 2000s 
    (winter and summer months separately). Consistent with
    \autoref{fig:globes}, the mean
    increases over the period while the standard deviation decreases, resulting
    in the loss of ``colder'' days. This phenomenon is most pronounced in the
    winter.}
  \label{fig:toronto-densities}
\end{figure}

Trends in temperature variability (rather than in mean) have direct
implications for plant and animal life~\citep{huntingford_no_2013},
because changes in variability also impact the probability of extreme weather
events~\citep{VasseurDeLong2014}. 
\citet{hansen_perception_2012} and \citet{huntingford_no_2013}
suggest that adaptation to extremes is more difficult than
to gradual increases in the mean temperature. 
Nevertheless, research examining trends in the volatility of
spatio-temporal climate data is relatively scarce. 
\citet{hansen_perception_2012} studied
changes in 
the standard deviation (SD) of  surface temperatures
at each spatial location relative to that
location's SD over a base period and showed
that these estimates are increasing.
\citet{huntingford_no_2013} took a similar
approach for a different data set. They argued that, while there
is an increase in the SDs from 1958-1970 to 1991-2001, it
is much smaller than found by
\citet{hansen_perception_2012}. \citet{huntingford_no_2013} also computed the
time-evolving global SD from the detrended time-series at each
position and argued that the global SD has been
stable. 
% Examining variance is relevant for a number of reasons. 
% First, instrument bias in the satellites increases over time so
% examining the mean over time conflates that bias with any actual
% change in mean (though the variance is unaffected). 
% Second, extreme weather events (hurricanes, droughts, wildfires in
% California, heatwaves in Europe) may be driven more strongly by increases
% in variance than by increases in mean. 
% Finally, even if the global mean temperature is constant, there may still
% be climate change. In fact, atmospheric physics suggests that,
% across space, average temperatures should not change (extreme cold
% in one location is offset by heat in another). But if swings across
% space are becoming more rapid, then, even with no change in mean global
% temperature over time, increasing variance can lead to
% increases in the prevalence of extreme events. 

The first row in \autoref{fig:globes} shows the change in mean temperature averaged
over the winter and summer months separately in the 1960s relative to the 2000s
using the ERA 20C dataset~\citep{PoliHersbach2016}. It shows strong increases in
average temperatures in both periods over the majority of the hemisphere.
The second row shows the estimated standard deviations from
the 
KL trend filter over the same period. We use $k=1$ in the temporal dimension and
$k=2$ spatially. These estimated SDs are then averaged over the two periods for
summer and winter separately and we plot the difference. There is a slight
decrease in the SD during the summer and a more pronounced pole-ward decrease
during the winter with the exception of Siberia which shows a dramatic increase
over both periods. To further examine the effect of increasing mean and
decreasing standard deviation, we look at the temperature distribution over both
periods for Toronto, Canada (circled on both maps). Clearly, as shown in
\autoref{fig:toronto-densities}, the distributions 
for both summer and winter have shifted toward higher temperatures in 50 years.
But at the same time, especially in winter, the standard deviation has declined.
Thus, there are far fewer cold days (temperatures between $-10^\circ$C and
$-20^\circ$C) in the 2000s than in the 1960s.

\section{Discussion}
We studied estimation error bounds for two estimators with a trend filtering
penalty on grid graphs. One estimator minimizes squared distance from the mean
and the other maximizes log likelihood. The bounds are more involved, compared
to, say, the homoskedastic sub-Gaussian noise case. Such cumbersome bounds are
due 
to the fact there are many more parameters that influence the estimation error.
We illustrated the bounds in several interesting regimes of signals with
heteroskedastic and homoskedastic noise. We analyzed two datasets with our
models showing the applicability of our methodology to real world problems. We
showed that both estimators achieve minimax optimal error rates in some
scenarios, though unfortunately, addressing all cases remains for future work.

Because our analysis examines the entire class of observations corrupted by
subexponential noise, the result is a general bound on the error for all exponential families. But,
this is a large class, and far from the only way to study the estimation error.
More specific analysis in specific cases will likely result in sharper bounds.
For example, \cite{vandeGeer2020logistic} gets sharper rates for the Bernoulli
family and \cite{brown2010nonparametric} examines a set of 6 families where the
variance can be written as a quadratic function of the mean. However, those
analyses are much less comprehensive than ours.

Other possible extensions are ``mixed'' loss and penalties. One could try to
penalize the mean parameter combined with likelihood loss or the opposite.
Preliminary investigations into the first case revealed similar issues as with
the penalty on the natural parameter, namely an inability to control the error
in the null space of $D$. Another natural avenue for future work would note that
all of these 
(the estimators examined here and the mixed versions) have connections to state
space models in 
time series. So the relationship between trend filtering and Kalman-type filters
may yield new theoretical insights and computational algorithms.

\bibliographystyle{rss}
% use this style for final version\bibliographystyle{apalike}
\bibliography{stef.bib,veeru}

\begin{thebibliography}{50}
\expandafter\ifx\csname natexlab\endcsname\relax\def\natexlab#1{#1}\fi
\expandafter\ifx\csname url\endcsname\relax
  \def\url#1{\texttt{#1}}\fi
\expandafter\ifx\csname urlprefix\endcsname\relax\def\urlprefix{URL: }\fi

\bibitem[{Baby and Wang(2021)}]{baby2021optimal}
Baby, D. and Wang, Y.-X. (2021) Optimal dynamic regret in exp-concave online
  learning.
\newblock In \textit{Proceedings of Thirty Fourth Conference on Learning
  Theory} (eds. M.~Belkin and S.~Kpotufe), vol. 134 of \textit{Proceedings of
  Machine Learning Research}, 359--409.

\bibitem[{Barbero and Sra(2018)}]{barbero2014modular}
Barbero, A. and Sra, S. (2018) Modular proximal optimization for
  multidimensional total-variation regularization.
\newblock \textit{Journal of Machine Learning Research}, \textbf{19},
  2232--2313.

\bibitem[{Bassett and Sharpnack(2019)}]{Bassett2019fused}
Bassett, R. and Sharpnack, J. (2019) Fused density estimation: Theory and
  methods.
\newblock \textit{Journal of Royal Statistical Society, Series B}, \textbf{81},
  839--860.

\bibitem[{Birge and Massart(2001)}]{birge2001gaussian}
Birge, L. and Massart, P. (2001) Gaussian model selection.
\newblock \textit{Journal of the European Mathematical Society}, \textbf{3},
  203--268.

\bibitem[{Brown(1986)}]{Brown1986}
Brown, L.~D. (1986) \textit{Fundamentals of statistical exponential families
  with applications in statistical decision theory}, vol.~9 of \textit{Lecture
  Notes-Monograph Series}.
\newblock Institute of Mathematical Statistics.

\bibitem[{Brown et~al.(2010)Brown, Cai and Zhou}]{brown2010nonparametric}
Brown, L.~D., Cai, T.~T. and Zhou, H.~H. (2010) {Nonparametric regression in
  exponential families}.
\newblock \textit{The Annals of Statistics}, \textbf{38}, 2005--2046.

\bibitem[{Chatterjee and Goswami(2021)}]{chatterjee2021newrisk}
Chatterjee, S. and Goswami, S. (2021) New risk bounds for {2D} total variation
  denoising.
\newblock \textit{IEEE Transactions on Information Theory}, \textbf{67},
  4060--4091.

\bibitem[{Condat(2013)}]{condat2012direct}
Condat, L. (2013) A direct algorithm for {1-D} total variation denoising.
\newblock \textit{IEEE Signal Processing Letters}, \textbf{20}, 1054--1057.

\bibitem[{Deledalle(2017)}]{Deledalle2017}
Deledalle, C.-A. (2017) Estimation of {K}ullback-{L}eibler losses for noisy
  recovery problems within the exponential family.
\newblock \textit{Electronic Journal of Statistics}, \textbf{11}, 3141---3164.

\bibitem[{Efron(1986)}]{Efron1986}
Efron, B. (1986) How biased is the apparent error rate of a prediction rule?
\newblock \textit{Journal of the American Statistical Association},
  \textbf{81}, 461--470.

\bibitem[{Eldar(2009)}]{Eldar2009}
Eldar, Y.~C. (2009) Generalized {SURE} for exponential families: Applications
  to regularization.
\newblock \textit{IEEE Transactions on Signal Processing}, \textbf{57},
  471--481.

\bibitem[{van~de Geer(2020)}]{vandeGeer2020logistic}
van~de Geer, S. (2020) Logistic regression with total variation regularization.
\newblock \textit{Transactions of A.\ Razmadze Mathematical Institute},
  \textbf{174}, 217 -- 233.

\bibitem[{Guntuboyina et~al.(2020)Guntuboyina, Lieu, Chatterjee and
  Sen}]{guntuboyina2017adaptive}
Guntuboyina, A., Lieu, D., Chatterjee, S. and Sen, B. (2020) Adaptive risk
  bounds in univariate total variation denoising and trend filtering.
\newblock \textit{Annals of Statistics}, \textbf{48}, 205--229.

\bibitem[{Hansen et~al.(2012)Hansen, Sato and Ruedy}]{hansen_perception_2012}
Hansen, J., Sato, M. and Ruedy, R. (2012) Perception of climate change.
\newblock \textit{Proceedings of the National Academy of Sciences},
  \textbf{109}, E2415--E2423.

\bibitem[{Harchaoui and Levy-Leduc(2010)}]{harchaoui2010multiple}
Harchaoui, Z. and Levy-Leduc, C. (2010) Multiple change-point estimation with a
  total variation penalty.
\newblock \textit{Journal of the American Statistical Association},
  \textbf{105}, 1480--1493.

\bibitem[{Huntingford et~al.(2013)Huntingford, Jones, Livina, Lenton and
  Cox}]{huntingford_no_2013}
Huntingford, C., Jones, P.~D., Livina, V.~N., Lenton, T.~M. and Cox, P.~M.
  (2013) No increase in global temperature variability despite changing
  regional patterns.
\newblock \textit{Nature}, \textbf{500}, 327--330.

\bibitem[{H{\"u}tter and Rigollet(2016)}]{hutter2016optimal}
H{\"u}tter, J.-C. and Rigollet, P. (2016) Optimal rates for total variation
  denoising.
\newblock In \textit{29th Annual Conference on Learning Theory} (eds.
  V.~Feldman, A.~Rakhlin and O.~Shamir), vol.~49 of \textit{Proceedings of
  Machine Learning Research}, 1115--1146.

\bibitem[{Johnson(2013)}]{johnson2013dynamic}
Johnson, N. (2013) A dynamic programming algorithm for the fused lasso and
  {$L_0$}-segmentation.
\newblock \textit{Journal of Computational and Graphical Statistics},
  \textbf{22}, 246--260.

\bibitem[{Kakade et~al.(2010)Kakade, Shamir, Sridharan and
  Tewari}]{KakadeShamir2010}
Kakade, S., Shamir, O., Sridharan, K. and Tewari, A. (2010) Learning
  exponential families in high-dimensions: Strong convexity and sparsity.
\newblock In \textit{Proceedings of the Thirteenth International Conference on
  Artificial Intelligence and Statistics} (eds. Y.~W. Teh and M.~Titterington),
  vol.~9 of \textit{Proceedings of Machine Learning Research}, 381--388.

\bibitem[{Khodadadi and McDonald(2019)}]{KhodadadiMcDonald2018}
Khodadadi, A. and McDonald, D.~J. (2019) Algorithms for estimating trends in
  global temperature volatility.
\newblock In \textit{Proceedings of the 33rd {AAAI} Conference on Artificial
  Intelligence} (eds. P.~V. Hentenryck and Z.-H. Zhou), vol.~33 of
  \textit{Association for the Advancement of Artificial Intelligence},
  614--621.

\bibitem[{Kim et~al.(2009)Kim, Koh, Boyd and Gorinevsky}]{KimKoh2009}
Kim, S.-J., Koh, K., Boyd, S. and Gorinevsky, D. (2009) $\ell_1$ trend
  filtering.
\newblock \textit{SIAM Review}, \textbf{51}, 339--360.

\bibitem[{Lin et~al.(2017)Lin, Sharpnack, Rinaldo and
  Tibshirani}]{lin2017sharp}
Lin, K., Sharpnack, J.~L., Rinaldo, A. and Tibshirani, R.~J. (2017) A sharp
  error analysis for the fused lasso, with application to approximate
  changepoint screening.
\newblock In \textit{Advances in Neural Information Processing Systems} (eds.
  I.~Guyon, U.~V. Luxburg, S.~Bengio, H.~Wallach, R.~Fergus, S.~Vishwanathan
  and R.~Garnett), vol.~30. Curran Associates, Inc.

\bibitem[{Madrid~Padilla and Chatterjee(2021)}]{padilla2021risk}
Madrid~Padilla, O.~H. and Chatterjee, S. (2021) {Risk Bounds for Quantile Trend
  Filtering}.
\newblock \textit{Biometrika}, forthcoming.

\bibitem[{Madrid~Padilla et~al.(2020)Madrid~Padilla, Sharpnack, Chen and
  Witten}]{madrid2020adaptive}
Madrid~Padilla, O.~H., Sharpnack, J., Chen, Y. and Witten, D.~M. (2020)
  Adaptive nonparametric regression with the k-nearest neighbour fused lasso.
\newblock \textit{Biometrika}, \textbf{107}, 293--310.

\bibitem[{Madrid~Padilla et~al.(2018)Madrid~Padilla, Sharpnack, Scott and
  Tibshirani}]{padilla2018dfs}
Madrid~Padilla, O.~H., Sharpnack, J., Scott, J.~G. and Tibshirani, R.~J. (2018)
  The {DFS} fused lasso: {L}inear-time denoising over general graphs.
\newblock \textit{Journal of Machine Learning Research}, \textbf{18}, 1--36.

\bibitem[{Mammen and van~de Geer(1997)}]{mammen1997locally}
Mammen, E. and van~de Geer, S. (1997) Locally adaptive regression splines.
\newblock \textit{Annals of Statistics}, \textbf{25}, 387--413.

\bibitem[{McCullagh and Nelder(1989)}]{mccullagh1989generalized}
McCullagh, P. and Nelder, J.~A. (1989) \textit{Generalized Linear Models}.
\newblock Boca Raton, FL: Chapman and Hall, 2nd edn.

\bibitem[{Meyer(2021)}]{meyer2021alternative}
Meyer, G.~P. (2021) An alternative probabilistic interpretation of the {Huber}
  loss.
\newblock In \textit{Proceedings of the IEEE/CVF Conference on Computer Vision
  and Pattern Recognition (CVPR)}, 5261--5269.

\bibitem[{{Nu\~{n}o} et~al.(2021){Nu\~{n}o}, Garc\`{i}a, Rajasekar, Pinheiro
  and Schmidt}]{nuno2021covid}
{Nu\~{n}o}, M., Garc\`{i}a, Y., Rajasekar, G., Pinheiro, D. and Schmidt, A.~J.
  (2021) {COVID}-19 hospitalizations in five {C}alifornia hospitals: {A}
  retrospective cohort study.
\newblock \textit{BMC Infectious Diseases}, \textbf{21}, 938.

\bibitem[{Ortelli and van~de Geer(2020)}]{ortelli2020adaptive}
Ortelli, F. and van~de Geer, S. (2020) Adaptive rates for total variation image
  denoising.
\newblock \textit{Journal of Machine Learning Research}, \textbf{247}, 1--38.

\bibitem[{Ortelli and van~de Geer(2021)}]{ortelli2019prediction}
--- (2021) {Prediction bounds for higher order total variation regularized
  least squares}.
\newblock \textit{The Annals of Statistics}, \textbf{49}, 2755--2773.

\bibitem[{Poli et~al.(2016)Poli, Hersbach, Dee, Berrisford, Simmons, Vitart,
  Laloyaux, Tan, Peubey, Th{\'e}paut, Tr{\'e}molet, H{\'o}lm, Bonavita, Isaksen
  and Fisher}]{PoliHersbach2016}
Poli, P., Hersbach, H., Dee, D.~P., Berrisford, P., Simmons, A.~J., Vitart, F.,
  Laloyaux, P., Tan, D. G.~H., Peubey, C., Th{\'e}paut, J.-N., Tr{\'e}molet,
  Y., H{\'o}lm, E.~V., Bonavita, M., Isaksen, L. and Fisher, M. (2016)
  {ERA-20C}: {A}n atmospheric reanalysis of the twentieth century.
\newblock \textit{Journal of Climate}, \textbf{29}, 4083--4097.

\bibitem[{Prasad et~al.(2020)Prasad, Suggala, Balakrishnan and
  Ravikumar}]{prasad2020robust}
Prasad, A., Suggala, A.~S., Balakrishnan, S. and Ravikumar, P. (2020) {Robust
  estimation via robust gradient estimation}.
\newblock \textit{Journal of the Royal Statistical Society Series B},
  \textbf{82}, 601--627.

\bibitem[{Ramdas and Tibshirani(2016)}]{RamdasTibshirani2016}
Ramdas, A. and Tibshirani, R.~J. (2016) Fast and flexible {ADMM} algorithms for
  trend filtering.
\newblock \textit{Journal of Computational and Graphical Statistics},
  \textbf{25}, 839--858.

\bibitem[{Rinaldo(2009)}]{rinaldo2009properties}
Rinaldo, A. (2009) Properties and refinements of the fused lasso.
\newblock \textit{Annals of Statistics}, \textbf{37}, 2922--2952.

\bibitem[{Rudin et~al.(1992)Rudin, Osher and Faterni}]{rudin1992nonlinear}
Rudin, L.~I., Osher, S. and Faterni, E. (1992) Nonlinear total variation based
  noise removal algorithms.
\newblock \textit{Physica {D}: Nonlinear Phenomena}, \textbf{60}, 259--268.

\bibitem[{Sadhanala et~al.(2021)Sadhanala, Wang, Hu and
  Tibshirani}]{sadhanala2021multivariate}
Sadhanala, V., Wang, Y.-X., Hu, A. and Tibshirani, R. (2021) Multivariate trend
  filtering on lattice data.
\newblock \urlprefix\url{http://arxiv.org/abs/2112.14758}.

\bibitem[{Sadhanala et~al.(2017)Sadhanala, Wang, Sharpnack and
  Tibshirani}]{SadhanalaWang2017}
Sadhanala, V., Wang, Y.-X., Sharpnack, J.~L. and Tibshirani, R.~J. (2017)
  Higher-order total variation classes on grids: Minimax theory and trend
  filtering methods.
\newblock In \textit{Advances in Neural Information Processing Systems},
  vol.~30, 5800--5810.

\bibitem[{Sadhanala et~al.(2016)Sadhanala, Wang and
  Tibshirani}]{sadhanala2016total}
Sadhanala, V., Wang, Y.-X. and Tibshirani, R.~J. (2016) Total variation classes
  beyond 1d: {Minimax} rates, and the limitations of linear smoothers.
\newblock In \textit{Advances in Neural Information Processing Systems} (eds.
  D.~Lee, M.~Sugiyama, U.~Luxburg, I.~Guyon and R.~Garnett), vol.~29. Curran
  Associates, Inc.

\bibitem[{Steidl et~al.(2006)Steidl, Didas and Neumann}]{steidl2006splines}
Steidl, G., Didas, S. and Neumann, J. (2006) Splines in higher order {TV}
  regularization.
\newblock \textit{International Journal of Computer Vision}, \textbf{70},
  214--255.

\bibitem[{Tibshirani et~al.(2005)Tibshirani, Saunders, Rosset, Zhu and
  Knight}]{tibshirani2005sparsity}
Tibshirani, R., Saunders, M., Rosset, S., Zhu, J. and Knight, K. (2005)
  Sparsity and smoothness via the fused lasso.
\newblock \textit{Journal of the Royal Statistical Society: Series B},
  \textbf{67}, 91--108.

\bibitem[{Tibshirani(2014)}]{tibshirani2014adaptive}
Tibshirani, R.~J. (2014) Adaptive piecewise polynomial estimation via trend
  filtering.
\newblock \textit{Annals of Statistics}, \textbf{42}, 285--323.

\bibitem[{Tibshirani(2022)}]{tibshirani2020divided}
--- (2022) Divided differences, falling factorials, and discrete splines:
  Another look at trend filtering and related problems.
\newblock \textit{Foundations and Trends in Machine Learning}, \textbf{15},
  694--846.

\bibitem[{Tsybakov(2009)}]{tsybakov2009introduction}
Tsybakov, A.~B. (2009) \textit{Introduction to Nonparametric Estimation}.
\newblock Springer.

\bibitem[{Vaiter et~al.(2017)Vaiter, Deledalle, Fadili, Peyr{\'e} and
  Dossal}]{VaiterDeledalle2017}
Vaiter, S., Deledalle, C., Fadili, J., Peyr{\'e}, G. and Dossal, C. (2017) The
  degrees of freedom of partly smooth regularizers.
\newblock \textit{Annals of the Institute of Statistical Mathematics},
  \textbf{69}, 791--832.

\bibitem[{Vasseur et~al.(2014)Vasseur, DeLong, Gilbert, Greig, Harley, McCann,
  Savage, Tunney and O{\textquoteright}Connor}]{VasseurDeLong2014}
Vasseur, D.~A., DeLong, J.~P., Gilbert, B., Greig, H.~S., Harley, C. D.~G.,
  McCann, K.~S., Savage, V., Tunney, T.~D. and O{\textquoteright}Connor, M.~I.
  (2014) Increased temperature variation poses a greater risk to species than
  climate warming.
\newblock \textit{Proceedings of the Royal Society of London B: Biological
  Sciences}, \textbf{281}.

\bibitem[{Vershynin(2018)}]{Vershynin2018}
Vershynin, R. (2018) \textit{High-Dimensional Probability}.
\newblock Cambridge, UK: Cambridge University Press.

\bibitem[{Wainwright(2019)}]{wainwright2019high}
Wainwright, M.~J. (2019) \textit{High-Dimensional Statistics: A Non-Asymptotic
  Viewpoint}.
\newblock Cambridge Series in Statistical and Probabilistic Mathematics.
  Cambridge University Press.

\bibitem[{Wainwright and Jordan(2008)}]{wainwright2008graphical}
Wainwright, M.~J. and Jordan, M.~I. (2008) Graphical models, exponential
  families, and variational inference.
\newblock \textit{Foundations and Trends in Machine Learning}, \textbf{1},
  1--305.

\bibitem[{Wang et~al.(2016)Wang, Sharpnack, Smola and
  Tibshirani}]{WangSharpnack2016}
Wang, Y.-X., Sharpnack, J., Smola, A.~J. and Tibshirani, R.~J. (2016) Trend
  filtering on graphs.
\newblock \textit{Journal of Machine Learning Research}, \textbf{17}, 1--41.

\end{thebibliography}

\newpage
\appendix

\section{Proofs for preliminary results}
\label{sec:app-preliminary}
\subsection{Proof of \autoref{lem:sub-exponential}}
\begin{proof}[Proof of \autoref{lem:sub-exponential}]
Without loss of generality assume $Y$ has mean zero. We have
\begin{align*}
E[e^{s Y}] &= \int e^{s y} h(y) e^{y \theta^{*} - \psi(\theta^{*})} \, dy\\
&= \int h(y) e^{y (s + \theta^{*}) - \psi(s + \theta^{*}) + \psi(s + 
\theta^{*}) - \psi(\theta^{*})} \, dy\\
&= e^{\psi(s + \theta^{*}) - \psi(\theta^{*})} \int h(y) e^{y(s + \theta^{*}) - 
\psi(s + \theta^{*})} \, dy
\end{align*}
Since $\theta^{*} \in \mathrm{interior}(\Theta)$, there is $b$ such that $|s| < 
\frac{1}{b}$ gives $h(y) e^{y(s + \theta^{*}) - \psi(s + \theta^{*})}$ is a 
member of the exponential family and hence integrates to $1$. Therefore the 
above display equals $e^{\psi(s + \theta^{*}) - \psi(\theta^{*})}$.
A Taylor expansion of $\psi(s + \theta^{*}) - \psi(\theta^{*})$ is possible 
because $\theta$ is infinitely differentiable \citep{Brown1986}:
  $$\psi(s + \theta^{*}) - \psi(\theta^{*}) = \nabla \psi(\theta^{*}) s + 
\frac{1}{2} \nabla^{2} \psi(\theta^{*}) s^2 + \frac{1}{2} R(\theta^{*}, s) s^2$$
  where $\lim_{s \to 0} R(\theta^{*}, s) \to 0$.
  Combined with the fact that $E[Y] = \nabla \psi(\theta^{*}) = 0$, we have that
  $$\psi(s + \theta^{*}) - \psi(\theta^{*}) = \frac{1}{2} \left(\nabla^{2} 
\psi(\theta^{*}) +  R(\theta^{*}, s)\right) s^2$$
  Fixing $\delta>0$, we can then choose a $b$, which depends on $\delta$, such 
that $\sup_{|s| < \frac{1}{b}} \left| R(\theta^{*}, s) \right| < \delta$. We 
conclude that there exists a $b$ (where we increase $b$ from our previous 
choice guaranteeing $s + \theta^{*} \in \Theta$ as necessary) such that for all 
$|s| < \frac{1}{b}$
  $$\frac{1}{2} \left(\nabla^{2} \psi(\theta^{*}) - \delta\right) s^{2} \leq 
\psi(s + \theta^{*}) - \psi(\theta^{*}) \leq \frac{1}{2} \left(\nabla^{2} 
\psi(\theta^{*}) + \delta\right) s^{2}.$$
  This gives the second claim of the lemma. Taking $\nu^{2} = \nabla^{2} 
\psi(\theta^{*}) + \delta$ gives
$E[e^{s Y}] \leq e^{\frac{s^{2} \nu^{2}}{2}}$ and proves the result.
\end{proof}
\subsection{Subexponential parameters for some standard distributions}
%Orlicz-1 norm of $X$ is the smallest $\omega>0$ satisfying $\E e^{|X|/\omega} 
%\leq 2$. It is denoted by $\| X \|_{\phi_1}.$
%If $X$ is a Poisson random variable with mean $\mu$, then the Orlicz-1 norm of 
%$X$ is 
%\begin{equation}
%\| X \|_{\phi_1} = \frac{1}{\log \big( 1 + \frac{\log 2}{\mu}\big) }.
%\end{equation}
For a Poission random variable $X$ with mean $\mu$, note that for $s\in \R$,
\begin{equation*}
\E e^{s(X-\mu)} = e^{\mu(e^s-s-1)}.
\end{equation*}
Therefore $ \E e^{s(X-\mu)} \leq e^{\mu s^2}$ for $s$ satisfying $e^s- 1 - s 
\leq  s^2$. Let $s^*$ be the  non-zero solution to $e^x = 1+x+x^2$. Then 
$s^*\approx 1.793$.
From this, we can show that
$$
X-\mu \;\;{\rm is}\;\; {\rm SE}(\nu^2, b) \;\;{\rm with }\;\;  \nu^2=2\mu, 
b=1/s^* \leq 0.55.
$$

\begin{table}
    \caption{\label{tab:orlicz_subexp_params_copy} Sub-exponential parameters 
    for some exponential family distributions}
  \centering
  \begin{tabular}{lccc}
    \toprule
    Distribution & $\psi(\theta)$ 
    & $\nu^2, b$\\
    \midrule
    Poisson (mean$=\mu$) & $e^\theta$ 
    & $2\mu$, \; $0.55$ \\ 
    Exponential (mean$=\mu$) & $-\log(-\theta)$ 
    & $4\mu^2\log\frac{4}{e} $,\; $2\mu$ \\
    $\chi^2_k$ (mean$=k$) & $\log \big(\Gamma(\theta+1) 2^{\theta+1}\big)$ 
     & $4k, 4$ \\
\bottomrule
\end{tabular}
\end{table}

%\begin{table}
%	\caption{\label{tab:orlicz_subexp_params_copy} Sub-exponential parameters 
%		for some exponential family distributions}
%	\centering
%	\begin{tabular}{lccc}
%		\toprule
%		Distribution & $\psi(\theta)$ 
%		& $\nu^2, b$\\
%		\midrule
%		Poisson (mean$=\mu$) & $e^\theta$ 
%		& $2\mu$, \; $0.55$ \\ 
%		Exponential (mean$=\mu$) & $-\log(-\theta)$ 
%		& $4\mu^2\log\frac{4}{e} $,\; $2\mu$ \\
%		$\chi^2_k$ (mean$=k$) & $\log \big(\Gamma(\theta+1) 2^{\theta+1}\big)$ 
%		& $4k, 4$ \\
%		\bottomrule
%	\end{tabular}
%\end{table}

For exponential distribution, we can do a similar calculation to get the results
in \autoref{tab:orlicz_subexp_params_copy}.
For an exponential variable $X$ with mean $\mu$, for $s\in \R,$
\begin{align}
%\E e^{s |X-\mu|} &= \frac{2e^{-1} \mu s}{1-\mu^2 s^2} + \frac{e^{\mu s}}{1+\mu 
%s},\\
\E e^{s (X-\mu)} &=  \frac{e^{-\mu s}}{1-\mu s}.
\end{align}
%To get the Orlicz-1 norm, use the first line in the above display and 
%numerically solve for 
%$x=\mu s < 1$ which makes the right hand side $2$. This gives $1/s \approx 
%1.531 \mu < 1.54 \mu$. 
%From the second line in the display, using elementary calculus 
We can verify 
that $X-\mu$ is sub-exponential with parameters $(\nu^2,b)$ given in 
\autoref{tab:orlicz_subexp_params_copy}. To arrive at these parameters, we set 
$b=2\mu$ and find the $\nu^2$ of
the form $c\mu^2$ for a constant $c$ such that $\E e^{s(X-\mu)} \leq e^{\nu^2 
s^2/2}$ for $|s| \leq 1/b$.
In a similar fashion, one can also verify the sub-exponential 
parameters for the $\chi^2$ distribution specified in the bottom row of the 
table.

\subsection{Some properties of Sub-exponentials}

\subsubsection*{Tail bounds on linear combinations of sub-exponentials}
We use the following exponentially decaying tail bound for sums of 
sub-exponential variables at multiple places in our proofs.
\begin{lemma}
  \label{lem:linear-comb-sub-exp}
  Let $\nu$
  and $c$ be vectors such that $\epsilon_{i}$ is sub-exponential with
  parameters $(\nu_{i}, c_{i})$. Given a matrix $A \in \R^{n\times r}$,
  assume we have $K$ and $H$ such that
  $\sup_{i=1,..,r} \snorm{\nu \odot A_{i}}_2 \leq K$ and $\sup_{i=1,..,r}
  \snorm{c \odot A_{i}}_{\infty} \leq H$, where $A_{1},\ldots,A_{r}$ are
  the columns of $A$. Then
  \begin{equation}
    P\left(\snorm{A^\top \epsilon}_{\infty} \geq t \right) \leq
    \begin{cases} 2 r \exp\left(-\frac{t^2}{2 K^2}\right) &  t < \frac{K^2}{H}\\
      2 r \exp\left(-\frac{t}{H} + \frac{K^2}{2 H^2} \right) & t \geq
      \frac{K^2}{H}
    \end{cases}
  \end{equation}
\end{lemma}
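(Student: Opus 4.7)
The plan is to reduce the problem to a standard Bernstein/Chernoff bound applied to each coordinate of $A^\top \epsilon$ separately, then take a union bound over the $r$ columns. Fix a column $A_j$ and consider the scalar random variable $X_j = A_j^\top \epsilon = \sum_{i=1}^n A_{ij} \epsilon_i$. Since the $\epsilon_i$ are independent and $\epsilon_i$ is SE$(\nu_i^2, c_i)$, the MGF factorizes:
\begin{equation}
\E\bigl[\exp(s X_j)\bigr] = \prod_{i=1}^n \E\bigl[\exp(s A_{ij} \epsilon_i)\bigr] \le \prod_{i=1}^n \exp\bigl(\tfrac{1}{2} \nu_i^2 A_{ij}^2 s^2\bigr) = \exp\bigl(\tfrac{1}{2} \snorm{\nu \odot A_j}_2^2\, s^2\bigr),
\end{equation}
valid provided $|s A_{ij}| < 1/c_i$ for every $i$, i.e., $|s| < 1/\snorm{c \odot A_j}_\infty$. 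By assumption these quantities are controlled by $K^2$ and $H$, so $X_j$ is SE$(K^2, H)$.

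Next I would apply the Chernoff method: for any admissible $s > 0$,
\begin{equation}
P(X_j \ge t) \le \exp(-st)\, \E\bigl[\exp(sX_j)\bigr] \le \exp\bigl(-st + \tfrac{1}{2} K^2 s^2\bigr).
\end{equation}
The unconstrained minimizer of the exponent is $s^\star = t/K^2$. When $t < K^2/H$ this $s^\star$ satisfies $s^\star < 1/H$, so it is admissible and plugging in yields $\exp(-t^2/(2K^2))$. When $t \ge K^2/H$, the admissible range forces me to truncate at $s = 1/H$; substituting $s = 1/H$ into the Chernoff bound gives $\exp(-t/H + K^2/(2H^2))$. Doing the same one-sided bound for $-X_j$ (which is SE$(K^2,H)$ with the same parameters) and combining yields $P(|X_j| \ge t)$ at most twice the above expressions.

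Finally I would union bound over $j = 1, \dots, r$: since $\snorm{A^\top \epsilon}_\infty \ge t$ iff $|X_j| \ge t$ for some $j$,
\begin{equation}
P\bigl(\snorm{A^\top \epsilon}_\infty \ge t\bigr) \le \sum_{j=1}^r P(|X_j| \ge t),
\end{equation}
which multiplies the per-coordinate bound by $r$ and gives exactly the two-regime expression in the lemma statement. I do not expect any significant obstacle here; the only care needed is verifying that the Chernoff bound is applied at an $s$ inside the sub-exponential admissibility window, which is precisely what forces the transition at $t = K^2/H$ and gives rise to the piecewise form of the inequality.
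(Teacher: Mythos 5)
Your proposal is correct and follows essentially the same route as the paper: establish that each $A_j^\top\epsilon$ is sub-exponential with parameters $(\snorm{\nu\odot A_j}_2^2,\ \snorm{c\odot A_j}_\infty)$ via MGF factorization, apply the Chernoff bound with the optimizer $s=t/K^2$ truncated at the admissibility boundary $1/H$, and pick up the factor $2r$ from the two tails and the $r$ columns. The only cosmetic difference is that the paper folds the maximum into the MGF bound before applying Chernoff, whereas you union bound at the probability level; the resulting bound is identical.
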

\noindent
The proof is similar to that of Bernstein inequality from Theorem 2.8.1 in 
\citep{Vershynin2018}.

\begin{proof}[Proof of \autoref{lem:linear-comb-sub-exp}]
  We have
  \begin{align}
    \log \E\left[\exp\left(s \|A^\top
    \epsilon\|_{\infty}\right)\right]
    & = \log \E\left[\exp\left(s
      \max\{|A_{1}^\top\epsilon|,\ldots,|A_{r}^\top\epsilon|\}\right)\right]\\
    \leq \log \E\left[\exp\left(s \sum_{i=1}^{r} |A_{i}^\top
      \epsilon|\right)\right]
    &= \log \E\left[\exp\left(s \sum_{i} |\sum_{j=1}^{n} a_{ij}
      \epsilon_{j}|\right) \right].
  \end{align}

  Note that $A_{i}^{\top} \epsilon$ is mean zero with parameters
  $(\snorm{\nu \otimes A_{i}}_2, \snorm{c \otimes A_{i}}_{\infty})$. This is 
because
  \begin{equation}
    \log \E\left[\exp(s A_{i}^\top \epsilon)\right]
    =\log \E\bigg[\exp\big(s \sum_{j} a_{ij}
        \epsilon_{j}\big)\bigg]
    =\sum_{j} \log \E\left[ \exp(s a_{ij} \epsilon_{j})\right],
  \end{equation}
  by independence of $\epsilon_{j}$.
  When $|s| < \frac{1}{a_{ij} c_{j}}$ for all $j$, which is satisfied when $|s| 
< \frac{1}{\|c \otimes A_{i}\|_{\infty}}$,
  \begin{equation}
    \sum_{j} \log \E\left[ \exp(s a_{ij} \epsilon_{j})\right] \leq
    \sum_{j} \frac{\nu_{j}^2 (s a_{ij})^2}{2}= \frac{\|\nu \otimes
      A_{i}\|_2^2 s^2}{2}.
  \end{equation}

  Therefore, for $|s| < \frac{1}{\sup_{i=1,...,r} \|c \otimes 
A_{i}\|_{\infty}}$,
  \begin{align}
    \log \E\left[ \exp(s \|A^\top \epsilon\|_{\infty})\right]
    &=\log \E\left[ \exp\left(s \max\{|A_{1}^\top \epsilon|,\ldots , 
|A_{r}^\top \epsilon|\}\right) \right]\\
    &\leq \log \sum_{i=1}^{r} \E \left[\exp\left( s | A_{i}^\top 
\epsilon|\right)\right]\\
    &\leq \log \sum_{i} \E\left[\exp\left(s A_{i}^\top \epsilon\right) + 
\exp\left(-s A_{i}^\top \epsilon \right)
    \right]\\
    &\leq \log \left( 2 \sum_{i=1}^{r} \exp\left(\frac{\|\nu \otimes
    A_{i}\|^2 s^2}{2}\right) \right).
  \end{align}
  Therefore, using the Chernoff bound, we have
  \begin{equation}
    P\left(\|A^{\top} \epsilon\|_{\infty} > t\right)
    \leq \exp(-ts)\left(2 \sum_{i=1}^{r} \exp\left(\frac{\|\nu
          \otimes A_{i}\|^2 s^2}{2} \right) \right)
  \end{equation}
  for $|s| < \frac{1}{\sup_{i=1,...,r} \|v_{i} \otimes
    c\|_{\infty}}$, which we minimize in $s$ to get our
  bound. This is intractable, so we  require $\|\nu
  \otimes A_{i}\|_2 \leq K$ and $\|c \otimes A_{i}\|_{\infty} \leq H$
  for all $i \in [r]$.  We then have
  \begin{equation}
    P(\|A^\top \epsilon\|_{\infty} > t) \leq 2 \sum_{i=1}^{r} \exp\left(-ts + 
\frac{\|\nu \otimes A_{i}\|^{2} s^2}{2} \right)
    \leq 2 r \exp\left(-t s + \frac{K^2 s^2}{2}\right).
  \end{equation}
Minimizing in $s$, for $|s | < \frac{1}{H}$, we have $s = t/K^2$ or $1/H$ 
depending on which is smaller. Therefore,
  \begin{equation}
    P\left(\snorm{A^\top \epsilon}_{\infty} \geq t \right) \leq
    \begin{cases} 2 r \exp\left(-\frac{t^2}{2 K^2}\right) &  t < \frac{K^2}{H}\\
      2 r \exp\left(-\frac{t}{H} + \frac{K^2}{2 H^2} \right) & t \geq
      \frac{K^2}{H}
    \end{cases}
  \end{equation}
\end{proof}

We state a few convenient ways of using Bernstein's tail bound inequality on 
linear combinations of sub-exponential random variables. 
Denote the sub-exponential tail bound function
\begin{equation}
\label{eq:subexp_tail}
\phi(t;\nu^2, b) = 2 \exp\bigg( -\frac{1}{2} \min\left\{ \frac{t^2}{\nu^2}, 
\frac{t}{b} \right\} \bigg)
\end{equation}
for \smash{$t\geq 0$} with parameters \smash{$\nu>0, b>0$}.
%If $\sigma_1 \leq \sigma_2$ and $b_1 \leq b_2,$ then
%\begin{align}
%\phi(t; \sigma_1^2,b_1) \leq \phi(t; \sigma_2^2,b_2)\quad \forall t\geq 0.
%\end{align}
Note that
if \smash{$\P(|X|>t) \leq \phi(t;\nu^2, b)$} for all \smash{$t\geq 0$}, then  
\begin{equation}
\label{eq:subexp_tail_highprob_bd}
|X| \leq 2(\nu \vee b) u \quad 
\end{equation}
with probability at least \smash{$1-2e^{-u}$} for \smash{$u\geq 1.$} 
\begin{lemma}
\label{lem:linear-comb-sub-exp-vec}
Let \smash{$\epsilon_i$} be independent, mean-zero, sub-exponential variates 
with 
parameters \smash{$(\nu_i^2, b_i)$} for \smash{$i\in[n]$}. Let 
\smash{$a\in\R^n$} be a 
fixed vector.
Let \smash{$\phi$} be the sub-exponential tail bound function defined in 
\eqref{eq:subexp_tail}.
 Then for \smash{$t\geq 0$},
\begin{align}
\label{eq:linear-comb-sub-exp-general}
\P ( |a^\top\epsilon| > t) 
&\leq \phi \big(t; \sum_{i=1}^n a_i^2 \nu_i^2, \max_{i\in[n]} |a_i| b_i \big)\\
\label{eq:linear-comb-sub-exp-tailbd}
&\leq \phi \big(t; \|a\|_2^2 \|\nu\|_\infty^2, \|a\|_\infty \|b\|_\infty \big)
\end{align}
Also, 
\begin{equation}
\label{eq:linear-comb-sub-exp-tailbd-incoherence}
\P ( |a^\top\epsilon| > t) 
\leq \phi \big(t; \|a\|_\infty^2 \|\nu\|_2^2, \|a\|_\infty \|b\|_\infty \big).
\end{equation}
Further, if $\nu=b$, then for $t\geq 1$, with probability at least $1-2e^{-t}$, 
both the following hold:
\begin{align}
\label{eq:linear-comb-sub-exp-highprob}
|a^\top\epsilon| &\leq 2 \|a\|_2 \|b\|_\infty t \\
\label{eq:linear-comb-sub-exp-highprob-incoherence}
|a^\top\epsilon| &\leq 2 \|a\|_\infty \|b\|_2 t
\end{align}
\end{lemma}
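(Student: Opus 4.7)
The plan is to establish \eqref{eq:linear-comb-sub-exp-general} first via a standard Bernstein-type MGF argument, then derive the two norm-based bounds \eqref{eq:linear-comb-sub-exp-tailbd} and \eqref{eq:linear-comb-sub-exp-tailbd-incoherence} as corollaries by applying H\"older-style inequalities to the parameters of $\phi$, and finally obtain the high-probability statements \eqref{eq:linear-comb-sub-exp-highprob} and \eqref{eq:linear-comb-sub-exp-highprob-incoherence} by inverting the tail bound exactly as in \eqref{eq:subexp_tail_highprob_bd}.

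For step one, I would observe that if $\epsilon_i$ is sub-exponential with parameters $(\nu_i^2, b_i)$, then $a_i \epsilon_i$ is sub-exponential with parameters $(a_i^2 \nu_i^2, |a_i| b_i)$, since $\E\exp(s a_i \epsilon_i) \leq \exp(\nu_i^2 a_i^2 s^2 / 2)$ whenever $|s a_i| < 1/b_i$, i.e., $|s| < 1/(|a_i| b_i)$. By independence, for $|s| < 1/\max_i |a_i| b_i$,
\begin{equation}
\E \exp(s a^\top \epsilon) = \prod_i \E \exp(s a_i \epsilon_i) \leq \exp\Bigl( \tfrac{s^2}{2} \sum_i a_i^2 \nu_i^2 \Bigr),
\end{equation}
so $a^\top \epsilon$ is sub-exponential with parameters $(\sum_i a_i^2 \nu_i^2,\ \max_i |a_i| b_i)$. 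The Chernoff bound on $\P(a^\top \epsilon > t)$ and $\P(-a^\top \epsilon > t)$, each optimized over $s \in [0, 1/\max_i |a_i| b_i)$, yields the two-regime bound $\phi(t; \sum_i a_i^2 \nu_i^2, \max_i |a_i| b_i)$ once the two tails are combined into the absolute value; this is the argument already carried out in the proof of \autoref{lem:linear-comb-sub-exp}, so I would just cite it rather than redo the optimization.

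Step two is purely algebraic: bounding $\sum_i a_i^2 \nu_i^2 \leq \|a\|_2^2 \|\nu\|_\infty^2$ and $\max_i |a_i| b_i \leq \|a\|_\infty \|b\|_\infty$ gives \eqref{eq:linear-comb-sub-exp-tailbd}, while the alternative bound $\sum_i a_i^2 \nu_i^2 \leq \|a\|_\infty^2 \|\nu\|_2^2$ combined with the same bound on the $b$-parameter gives \eqref{eq:linear-comb-sub-exp-tailbd-incoherence}. Both inequalities use only $|a_i| \leq \|a\|_\infty$ together with one of the two obvious factorizations of $a_i^2 \nu_i^2$. Since $\phi(t; \nu^2, b)$ is monotone increasing in $\nu^2$ and $b$, these upper bounds on the parameters translate to upper bounds on $\phi$.

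For step three, specializing to $\nu = b$ so that $\|\nu\|_\infty = \|b\|_\infty$ and $\|\nu\|_2 = \|b\|_2$, I would apply the general implication \eqref{eq:subexp_tail_highprob_bd} with $u = t$. From \eqref{eq:linear-comb-sub-exp-tailbd} the parameters are $(\|a\|_2 \|b\|_\infty, \|a\|_\infty \|b\|_\infty)$ and since $\|a\|_\infty \leq \|a\|_2$, the maximum is $\|a\|_2 \|b\|_\infty$, giving \eqref{eq:linear-comb-sub-exp-highprob}. Similarly, from \eqref{eq:linear-comb-sub-exp-tailbd-incoherence} the relevant pair is $(\|a\|_\infty \|b\|_2, \|a\|_\infty \|b\|_\infty)$, whose maximum is $\|a\|_\infty \|b\|_2$, yielding \eqref{eq:linear-comb-sub-exp-highprob-incoherence}. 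Both high-probability bounds hold on the same event by a union bound over the two tail estimates, which costs only a factor of $2$ in the deviation probability and can be absorbed into the constants. The one subtlety, which I would flag as the main thing to check, is that \eqref{eq:subexp_tail_highprob_bd} as stated requires matching scales between $\nu$ and $b$ parameters; verifying that the regime $t \geq 1$ in the conclusion corresponds to the sub-exponential (rather than sub-Gaussian) tail in $\phi$ is what produces the factor of $2(\nu \vee b) u$ cleanly, and this is essentially where the assumption $\nu = b$ is used.
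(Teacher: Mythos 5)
Your proof follows essentially the same route as the paper's: Bernstein's inequality (the MGF argument you sketch is exactly what the cited Theorem 2.8.1 of Vershynin does) for \eqref{eq:linear-comb-sub-exp-general}, H\"older applied to the first parameter of $\phi$ for \eqref{eq:linear-comb-sub-exp-tailbd} and \eqref{eq:linear-comb-sub-exp-tailbd-incoherence}, and the inversion \eqref{eq:subexp_tail_highprob_bd} for the high-probability statements. The one place you deviate is in the order of the last two operations, and it costs you something the paper does not pay: you apply the inversion to the two already-weakened tail bounds and then take a union bound over the two resulting events, which yields a joint failure probability of $4e^{-t}$, not the $2e^{-t}$ claimed in the lemma; there are no free constants in that probability to absorb the factor. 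The paper instead applies \eqref{eq:subexp_tail_highprob_bd} once to the sharp parameters $(\|a\odot b\|_2^2,\ \|a\odot b\|_\infty)$ from \eqref{eq:linear-comb-sub-exp-general} (using $\nu=b$), obtaining a single event of probability at least $1-2e^{-t}$ on which $|a^\top\epsilon|\leq 2\|a\odot b\|_2\, t$, and then both \eqref{eq:linear-comb-sub-exp-highprob} and \eqref{eq:linear-comb-sub-exp-highprob-incoherence} follow deterministically on that same event from $\|a\odot b\|_2\leq\|a\|_2\|b\|_\infty$ and $\|a\odot b\|_2\leq\|a\|_\infty\|b\|_2$. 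That reordering is the only correction your argument needs.
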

\begin{proof}[Proof of \autoref{lem:linear-comb-sub-exp-vec}]
\eqref{eq:linear-comb-sub-exp-general} follows by applying Bernstein's 
inequality 
from Theorem 2.8.1 in \citep{Vershynin2018}. 
The  inequalities \eqref{eq:linear-comb-sub-exp-tailbd}, 
\eqref{eq:linear-comb-sub-exp-tailbd-incoherence} follow from 
\eqref{eq:linear-comb-sub-exp-general} by applying H\"older's inequality to the 
first parameter in different ways.

From \eqref{eq:subexp_tail_highprob_bd}, observe that  for \smash{$t\geq 1$},
\begin{equation}
|a^\top\epsilon| \leq 2 \big( \| a \odot b \|_2 \vee \| a \odot b \|_\infty 
\big) 
t \leq 2 \| a \odot b \|_2 t
\end{equation}
holds with probability at least \smash{$1-2e^{-t}$}, where \smash{$a\odot b \in 
\R^n$} with 
\smash{$(a\odot b)_i = a_i b_i, i\in[n]$}. By applying H\"older's inequality in 
two 
different ways we get the high probability bounds 
\eqref{eq:linear-comb-sub-exp-highprob} and 
\eqref{eq:linear-comb-sub-exp-highprob-incoherence}.
\end{proof}

\subsubsection*{Tail bound on maximum of sub-exponentials}
\begin{lemma}
\label{lem:max_subexp_tailbd}
Suppose $X_i$ are sub-exponential with parameters $(\omega^2,\omega)$ for 
$i\in[m]$.
Then for $t\geq 1$
\begin{equation}
\P \big( \max_{i\in[m]} |X_i| \leq  2\omega (\log 2m + t) \big) \geq 1-2e^{-t}.
\end{equation}
\end{lemma}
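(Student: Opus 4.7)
The plan is to combine the standard sub-exponential concentration inequality for a single coordinate with a union bound over the $m$ indices. Specifically, from \eqref{eq:subexp_tail}, each $X_i$ satisfies
\begin{equation}
\P(|X_i| > s) \le 2\exp\!\left(-\tfrac{1}{2}\min\!\left\{\tfrac{s^2}{\omega^2},\tfrac{s}{\omega}\right\}\right)
\end{equation}
for $s \ge 0$, since the sub-exponential parameters are $(\omega^2,\omega)$.

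The first step is to set $s = 2\omega(\log 2m + t)$ and observe that for $t \ge 1$ and $m \ge 1$, we have $\log 2m + t \ge 1$, so $s \ge 2\omega \ge \omega$. This places us in the ``heavy tail'' regime of the sub-exponential bound, where $s/\omega \le s^2/\omega^2$, so the minimum in the exponent equals $s/\omega$. Plugging in gives
\begin{equation}
\P(|X_i| > s) \le 2\exp\!\left(-\tfrac{s}{2\omega}\right) = 2\exp(-\log 2m - t) = \tfrac{1}{m} e^{-t}.
\end{equation}

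The second and final step is a simple union bound:
\begin{equation}
\P\!\left(\max_{i\in[m]} |X_i| > s\right) \le \sum_{i=1}^m \P(|X_i| > s) \le m \cdot \tfrac{1}{m}e^{-t} = e^{-t} \le 2e^{-t},
\end{equation}
which, after taking complements, yields the claim.

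There is no real obstacle here; the only subtlety is making sure that the choice $s = 2\omega(\log 2m + t)$ is large enough to fall in the exponential (rather than Gaussian) branch of the sub-exponential tail, which is exactly what the condition $t \ge 1$ guarantees. The factor of $2$ on the right-hand side of the lemma statement is slack and simply absorbs the $e^{-t}$ bound obtained above.
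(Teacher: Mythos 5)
Your proof is correct and follows essentially the same route as the paper: a union bound combined with the exponential branch of the sub-exponential tail bound, activated because $s=2\omega(\log 2m+t)\ge\omega$. The only cosmetic difference is that the paper symmetrizes to $2m$ one-sided events while you union-bound $m$ two-sided tails, which is why you end up with the slightly sharper $e^{-t}$ before absorbing it into $2e^{-t}$.
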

\begin{proof}[Proof of \autoref{lem:max_subexp_tailbd}]
Denote \smash{$X_{m+j} = -X_j$} for \smash{$j\in[m]$}. 
By union bound, for \smash{$u>0$},
\[
\P(\max_{j\in[m]} |X_j| > u) = \P(\max_{j\in[2m]} X_j > u) \leq \sum_{j=1}^{2m} 
\P(X_j > u)
\leq 4m \exp\Big( -\Big\{ \frac{u^2}{2\omega^2} \wedge 
\frac{u}{2\omega}\Big\}\Big).
\]
Set \smash{$u=2\omega (\log 2m + t)$} to get the desired bound.
\end{proof}

\section{Proofs of upper bounds}
\label{sec:app-error-bounds}

\subsection{Proof of \autoref{thm:ex_risk_bd}}
\label{sec:app_ex_risk_bd}
We first state a basic inequality.
\begin{lemma}[Basic inequality]
\label{lem:basic-ineq-2}
%\label{lem:basic_ineq_alpha}
Let \smash{$R$} be as defined in \autoref{sec:mle_null_space_penalty} and let \smash{$\hat\theta$} 
be the estimate 
in \eqref{eq:estimator}. Then,
\[
R(\hat\theta) - R(\theta^*) + \lambda_2 \|P_\cN \hat\theta\|_2 + \lambda_1 
\|D\hat\theta\|_1\leq \frac{1}{n} \epsilon^\top (\hat \theta - 
\theta^*) + \lambda_2 \| P_\cN\theta^*\|_2 + \lambda_1 \| D \theta^* \|_1.
\]
Further, this inequality is true if we replace \smash{$\hat\theta$} with 
\smash{$\hat\theta_t = t 
\hat\theta + (1-t) \theta^*$} for any \smash{$t\in[0,1].$}
\end{lemma}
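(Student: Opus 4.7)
The plan is to carry out the standard ``basic inequality'' derivation used throughout the penalized M-estimation literature, adapted here to the presence of both the $\ell_1$ trend filtering penalty and the null-space penalty. The key observation is the decomposition
\begin{equation}
R(\theta) - R_n(\theta) = \frac{1}{n}\sum_{i=1}^n (y_i - E[Y_i])\theta_i = \frac{1}{n}\epsilon^\top\theta,
\end{equation}
which holds identically in $\theta$ because $R_n$ and $R$ differ only in the linear term.

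First I would invoke the defining property of $\hat\theta$ as the minimizer of
$f(\theta) := R_n(\theta) + \lambda_1\|D\theta\|_1 + \lambda_2\|P_\cN\theta\|_2$
to write $f(\hat\theta) \le f(\theta^*)$, i.e.,
\begin{equation}
R_n(\hat\theta) + \lambda_1\|D\hat\theta\|_1 + \lambda_2\|P_\cN\hat\theta\|_2 \;\le\; R_n(\theta^*) + \lambda_1\|D\theta^*\|_1 + \lambda_2\|P_\cN\theta^*\|_2.
\end{equation}
Next I would substitute $R_n(\theta) = R(\theta) - \frac{1}{n}\epsilon^\top\theta$ on both sides and rearrange, which cancels $\epsilon^\top\theta^*$ against $-\epsilon^\top\theta^*$ on the two sides and leaves exactly the claimed inequality
\begin{equation}
R(\hat\theta) - R(\theta^*) + \lambda_2 \|P_\cN \hat\theta\|_2 + \lambda_1 \|D\hat\theta\|_1 \;\le\; \tfrac{1}{n}\epsilon^\top(\hat\theta - \theta^*) + \lambda_2\|P_\cN\theta^*\|_2 + \lambda_1\|D\theta^*\|_1.
\end{equation}

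For the extension to $\hat\theta_t = t\hat\theta + (1-t)\theta^*$, I would exploit convexity of $f$: since $\psi$ is convex, $R_n$ is convex; and the two penalty terms are convex as compositions of norms with linear maps. Hence
\begin{equation}
f(\hat\theta_t) \;\le\; t f(\hat\theta) + (1-t) f(\theta^*) \;\le\; f(\theta^*),
\end{equation}
where the second inequality uses $f(\hat\theta)\le f(\theta^*)$. Therefore $\hat\theta_t$ satisfies the same basic bound $f(\hat\theta_t)\le f(\theta^*)$ that $\hat\theta$ did, and repeating the substitution $R_n = R - \frac{1}{n}\epsilon^\top(\cdot)$ yields the inequality with $\hat\theta_t$ in place of $\hat\theta$.

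There is no real obstacle here; this is bookkeeping. The only subtle point is recognizing that one does not need $\hat\theta_t$ itself to be a minimizer in order for the bound to hold on the convex segment $\{\hat\theta_t : t\in[0,1]\}$: convexity of the penalized objective plus optimality of $\hat\theta$ at $t=1$ are enough. This extension will be useful later when one needs to localize the argument (e.g., to apply a peeling device or to stay inside a region where $\psi$ has controlled curvature), which is presumably why the authors state the $\hat\theta_t$ version explicitly.
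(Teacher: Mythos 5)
Your proof is correct and follows essentially the same route as the paper: optimality of $\hat\theta$ for the penalized empirical risk, the identity $R_n(\theta)-R(\theta)=-\tfrac{1}{n}\epsilon^\top\theta$, rearrangement, and convexity of the penalized objective for the $\hat\theta_t$ extension. Your version is in fact slightly more explicit than the paper's (which only remarks that the objective is convex), but there is no substantive difference.
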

\begin{proof}[Proof of \autoref{lem:basic-ineq-2}]
Optimality of \smash{$\hat\theta$} and the equality 
\smash{$R_n(\theta) - R(\theta) = -\frac{1}{n}\epsilon^\top\theta$} gives
\begin{align*}
R_n(\hat\theta)  + \lambda_2 \|P_\cN \hat\theta\|_2 + \lambda_1 \|D\hat\theta\|_1 
&\leq   R_n(\theta^*) + \lambda_2 \| P_\cN\theta^*\|_2 + \lambda_1 \| D \theta^* 
\|_1 \\
\Leftrightarrow R(\hat\theta) - \frac{1}{n}\epsilon^\top\hat\theta + \lambda_2 
\|P_\cN \hat\theta\|_2 + \lambda_1 \|D\hat\theta\|_1
 &\leq   R(\theta^*) - \frac{1}{n}\epsilon^\top\hat\theta^*+ \lambda_2 \| 
P_\cN\theta^*\|_2 + \lambda_1 \| D \theta^* \|_1
\end{align*}
This is equivalent to the main statement in the lemma.
The  inequality for \smash{$\hat\theta_t$} follows from the fact that 
\smash{$ \theta \mapsto 
R_n(\theta) + \lambda_1  \|D\theta\|_1 + \lambda_2 \|P_\cN \theta\|_2$} is convex.
\end{proof}

\begin{proof}[Proof of \autoref{thm:ex_risk_bd}]
For brevity, define the shorthand
\begin{equation}
\tau(\theta, \lambda_1,\lambda_2) =   \lambda_1 \|D\theta\|_1 + \lambda_2 \| 
P_{\cN} \theta \|_2
\end{equation}
for $\theta\in \R^n, \lambda_1, \lambda_2 \ge 0$ 
%where $\ell$ is an integer  in $[\kappa,n]$
.
From the basic inequality in \autoref{lem:basic-ineq-2},
\begin{equation}
\label{eq:basic_ineq_ex_risk_bd}
R(\hat\theta) - R(\theta^*) + \tau(\hat\theta,\lambda_1,\lambda_2)
\leq \frac{1}{n} \epsilon^\top (\hat\theta-\theta^*)+ 
\tau(\theta^*,\lambda_1,\lambda_2).
\end{equation}
Applying \autoref{lem:emp_proc_combined} with $J = [k+1]^d$,
\begin{align}
\frac{1}{n}\epsilon^\top (\hat\theta-\theta^*) 
&\leq \frac{A}{n} \| P_\cN(\hat\theta-\theta^*) \|_2 + \frac{B}{n} \| D 
(\hat\theta-\theta^*) \|_1\\
&= \tau(\hat\theta-\theta^*, B/n, A/n)
\end{align}
where 
$A = 2t\mu \sqrt{\frac{\kappa}{n}} \big( \| \nu \|_2 \vee \|b\|_\infty\big),$
$B = 2t\left( 
	\min\left\{ \|\nu\|_\infty  L_{J,2}, 
					  \|\nu\|_2 L_{J,1} \right\}
	\vee  \| b \|_\infty L_{J,1}\right),
 $
for $t\geq 1$, on an event $\Omega(t)$ with probability at least 
$1-2(m+\kappa)e^{-t}.$
Here $m$ is the number of rows of $D$ and $\kappa = (k+1)^d$.
Therefore, on the event $\Omega(t)$,
\begin{align}
R(\hat\theta) - R(\theta^*) + \tau(\hat\theta,\lambda_1,\lambda_2)
&\leq \tau(\hat\theta-\theta^*,B/n, A/n ) + \tau(\theta^*,\lambda_1,\lambda_2)\\
&\leq \tau(\hat\theta,B/n, A/n ) + \tau(\theta^*,B/n, A/n ) + 
\tau(\theta^*,\lambda_1,\lambda_2)
\end{align}
where we used triangle inequality in the second line. 
If we choose \smash{$\lambda_1 \geq 2 B/n, \lambda_2 \geq 2 A/n$},
by linearity of $\tau$ in regularization parameters, we have 
\smash{$\tau(\theta,B/n, A/n )\leq \tau(\theta, \lambda_1/2, 
\lambda_2/2 ) = \frac{1}{2} \tau(\theta, \lambda_1, \lambda_2 )$} for any 
$\theta \in \R^n$. Therefore
\begin{equation}
R(\hat\theta) - R(\theta^*) + \frac{1}{2}\tau(\hat\theta, \lambda_1, \lambda_2)
\leq 
\frac{3}{2}\tau(\theta^*,\lambda_1, \lambda_2)
\end{equation}
As $\theta^*$ minimizes $R$, we should have 
\smash{$R(\hat\theta) \geq R(\theta^*)$}. 
That means, both  the terms \smash{$R(\hat\theta) - R(\theta^*)$} and 
\smash{$\frac{1}{2}\tau(\hat\theta, \lambda_1, \lambda_2)$} are non-negative. 
Therefore, 
\begin{align}
R(\hat\theta) - R(\theta^*) &\leq \frac{3}{2}\tau(\theta^*,\lambda_1,  
\lambda_2) \quad \text{and}\\
\frac{1}{2} \tau(\hat\theta, \lambda_1, \lambda_2) &\leq \frac{3}{2} 
\tau(\theta^*,\lambda_1, \lambda_2).
\end{align}
This completes the proof as these inequalities hold with probability 
\smash{$\P(\Omega(t)) \geq 1 - 2(m+\kappa) e^{-t}.$}
\end{proof}

\subsection{Proofs of Corollaries to \autoref{thm:ex_risk_bd}}
\label{sec:proof_mle_bound_coro}
\begin{proof}[Proof of \autoref{cor:poisson_weak_hetero}.]
	
	We have the following bounds,
	\begin{align}
		&\| \nu \|_2 = O(\sqrt n)\\
		&\| \nu \|_\infty, \|b \|_\infty = O(1)
	\end{align}
	When $d=1$ then $\alpha = 2$, and so
	\[
	\frac 1n (\|\nu\|_2 + \|b\|_\infty) = O(n^{-1/2}).
	\]
	When $d = 2$, $\alpha = 1$ and $\gamma_1 = \log n$, $\gamma_2 = 1$, thus
	\[
	\| b \|_\infty  n^{-\alpha} \gamma_1 + \min\{ \| \nu \|_\infty n^{-1/2} 
	\gamma_2,\ \| \nu \|_2 n^{-\alpha} \gamma_1\} = O(n^{-1/2} \log n).
	\]
	When $d=3$ then $\alpha = 2/3$ and $\gamma_1 = \gamma_2 = 1$,
	\[
	\| b \|_\infty  n^{-\alpha} \gamma_1 + \min\{ \| \nu \|_\infty n^{-1/2} 
	\gamma_2,\ \| \nu \|_2 n^{-\alpha} \gamma_1\} = O(n^{-1/2}).
	\]
	When $d = 4$ then $\alpha = 1/2$ and $\gamma_1 = 1, \gamma_2 = \log^{1/2} n$ 
	and 
	\[
	\| b \|_\infty  n^{-\alpha} \gamma_1 + \min\{ \| \nu \|_\infty n^{-1/2} 
	\gamma_2,\ \| \nu \|_2 n^{-\alpha} \gamma_1\} = O(n^{-1/2} \cdot \log^{1/2} 
	n).
	\]
	Finally, when $d > 4$ then $\alpha = 2/d < 1/2$ and
	\[
	(\|\nu \|_\infty + \|b\|_\infty) n^{-\alpha} = O(n^{-2/d}).
	\]

	Next we show that the example signal satisfies the necessary conditions.

	Consider the Poisson distribution where the natural parameter vector 
	$\theta^*$ is constrained.
	For $i = (i_1, \ldots, i_d) \in [N]^d$, let
	\[
	\theta^*_i = \frac 2N \sum_{j=1}^d |i_j - N/2|. % (i_j - L + h) \one\{ i_j > L -
	% h \} + \left( h - \frac{ht_j}{L-h} - \log
	% ((L-h)/h) \right) \one \{ t_j \le L - h \} 
	\]
	Then the mean vector is
	\[
	\beta^*_i = \prod_{j=1}^d \exp\left(\frac 2N |i_j - N/2|\right). %\left(e^{t_j -
		%L + h} \one\{ t_j > L - h \} + \frac h{L-h}
	%e^{h - \frac{ht_j}{L-h}} \one \{ t_j \le L - h
	%\} \right). 
	\]
	Because the distribution is Poisson, we have $\| b \|_\infty$ is constant while
	$\nu_i^2 = 2 \beta_i^*$ (see \autoref{tab:orlicz_subexp_params}). 
	Thus, $\| \nu \|_\infty = \sqrt{2} e^{d/2}$ which is achieved at $i = 
	(0,\ldots,0)$.
	The canonical scaling holds for $\| D \theta^*\|_1 \lesssim n^{1-\alpha}$ with 
	$k=1$ because there are on the order of $N^{d-1}$ points at which the Laplacian 
	is non-zero and they are on the order of $1/N$. \qedhere
\end{proof}

\begin{proof}[Proof of \autoref{cor:MLE_strong_hetero}]
	For $d=1$ we have that $\alpha = 1$, and $\gamma_1 = \log n$, $\gamma_2 = 1$, 
	thus
	\[
	\| b \|_\infty  n^{-\alpha} \gamma_1 + \min\{ \| \nu \|_\infty n^{-1/2} 
	\gamma_2,\ \| \nu \|_2 n^{-\alpha} \gamma_1\} = O(n^{c-1/2}).
	\]
	For $d = 2$ we have that $\alpha = 1/2$ and $\gamma_1 = 1, \gamma_2 = 
	\log^{1/2} n$ and 
	\[
	\| b \|_\infty  n^{-\alpha} \gamma_1 + \min\{ \| \nu \|_\infty n^{-1/2} 
	\gamma_2,\ \| \nu \|_2 n^{-\alpha} \gamma_1\} = O(n^{-1/2} \cdot \log^{1/2} 
	n).
	\]
	For $d > 2$ we have that $\alpha = 1/d < 1/2$ and $\gamma_1 = \gamma_2 = 1$, 
	thus
	\[
	( \| \nu\|_\infty + \|b\|_\infty ) n^{-\alpha} = O(n^{c - 1/d}).
	\]

	To show that the specified signal satisfies the necessary properties, let
  \smash{$d>1, k=0$} and $c>0$. Consider the Exponential distribution with  
  natural parameter
  \begin{equation} 
    \label{eq:k0thetaex}
    \theta^*_i = -n^{-c} \one\{i=0\} - n^{1-1/d} \one\{i \neq 0\}.
  \end{equation}
  where $i$ indexes the lattice.
  We have that for $k=0$, $\|D \theta^* \|_1 \leq  d (n^{1-1/d} - n^{-c}) \asymp 
  n^{1-\alpha}$,
  so the canonical scaling holds. 
  We apply MLE trend filtering with $k=0$.
  From \autoref{tab:orlicz_subexp_params},
  we have 
  that $\| \nu \|_\infty, \| b \|_\infty \leq 2 n^c$  and $\| \nu \|_2^2 \leq 2 
  (n^{2c} + n^{1/d -1})$. \qedhere
\end{proof}

\subsection{Uniform risk bound with null space penalty}
\label{sec:app-uniform_risk_null_penalty}

\begin{proof}[Proof of \autoref{prop:uniform_risk_null_penalty}]
From the definitions of $R, R_n$, 
\begin{equation*}
|R(\theta) - R_n(\theta)| = \frac{1}{n} | \epsilon^\top \theta |.
\end{equation*}
Applying \autoref{lem:emp_proc_combined} with $J = [k+1]^d$, we get
\begin{equation*}
| \epsilon^\top \theta | \leq  A_n \|P_\cN \theta \|_2 + B_n\| D \theta \|_1 
\end{equation*}
where 
$A_n = 2t\mu \sqrt{\frac{\kappa}{n}} \big( \| \nu \|_2 \vee \|b\|_\infty\big),$
$B_n = 2t\left( 
	\min\left\{ \|\nu\|_\infty  L_{J,2}, 
					  \|\nu\|_2 L_{J,1} \right\}
	\vee  \| b \|_\infty L_{J,1}\right),
 $
with probability at least $1- 4nd e^{-t}$, for $t\ge 1$.
Here $\kappa = (k+1)^d$ and we used the fact that $m < dn$.
By definition of $\Theta$, $\theta$ should satisfy $\|D\theta\|_1 \le c_n 
n^{1-\alpha}$
 and $\|P_\cN \theta\|_n \le a_n$.
Therefore,
\begin{equation}
| \epsilon^\top \theta | \leq 
A_n a_n \sqrt{n} + B_n c_n n^{1-\alpha}
\end{equation}
From the assumptions $\| \nu \|_\infty, \|b\|_\infty \leq c$, we can write 
$A \leq 2 t \mu c \sqrt{\kappa}.$
From \autoref{lem:kron_tf_evals}, for $p\geq 1$,
$
L_{\ell,p}^p \leq c_1 n^{ (p\alpha - 1)_+} (\log n)^{\mathbf{1} \{p\alpha=1\} 
}.
$
This yields the following bound on $B_n:$
\begin{equation}
B_n \leq 2 t c_1 c\gamma n^{(\alpha - \frac{1}{2} )_+}.
\end{equation}
Therefore, with probability at least \smash{$1-4 nd e^{-t}$},
\begin{align*}
\frac{1}{n}| \epsilon^\top \theta | 
&\leq 
c_2 t c \big( a_n n^{-\frac{1}{2}} + 
	c_n \gamma n^{-\alpha} n^{(\alpha - \frac{1}{2} )_+}  \big)\\
&= c_2 t c \big( a_n n^{-\frac{1}{2}} + 
	c_n \gamma n^{-\min\{\alpha, \frac{1}{2}\}}  \big)
\end{align*}
for a constant \smash{$c_2$} depending only on \smash{$k,d$}.
This is sufficient to show the desired bound.
\end{proof}

\subsection{Proof of \autoref{thm:mle_ls_k0}}
\label{sec:mle_ls_k0_pf}
\begin{proof}[Proof of \autoref{thm:mle_ls_k0}]
	Writing the KKT conditions, $\hat\theta$ and $\hat\beta$ are solutions to 
	\eqref{eq:mle1} and \eqref{eq:lsq} iff
	\begin{align}
		\label{eq:kkt_mle}
		\psi'(\hat \theta) - y + n \lambda D^\top S(D\hat\theta) &\ni 0\\
		\label{eq:kkt_ls}
		\hat\beta - y + n \lambda D^\top S(D\hat\beta) &\ni 0
	\end{align}
	where \smash{$S(u)$} is the set of subgradients of \smash{$x \mapsto 
		\|x\|_1$}. 
	\smash{$S(u)$} depends only \smash{$\mathrm{sgn}(u)$}. 
	As \smash{$\psi'$} is a strictly increasing function, for any \smash{$a,b\in 
		\R$}, 
	\smash{$\mathrm{sgn}(\psi'(a)-\psi'(b)) = \mathrm{sgn}(a-b).$}
	Therefore
	\begin{equation}
		\mathrm{sgn}(D\psi'(\hat\theta)) = \mathrm{sgn}(D\hat\theta),
	\end{equation}
	and hence the subgradients \smash{$S(D\psi'(\hat\theta)) = S(D\hat\theta)$}.
	Plugging this in \eqref{eq:kkt_mle}, 
	we see that the KKT conditions for the least squares problem are satisfied by
	\smash{$\psi'(\hat\theta)$} and therefore it is a solution to the least
	squares problem \eqref{eq:lsq}.  
	The solution to the least squares optimization problem \eqref{eq:lsq} is 
	unique
	because the objective is strictly convex. Therefore, by definition of
	\smash{$\hat\beta$}, 
	\smash{$
		\hat\beta = \psi'(\hat\theta).
		$}
\end{proof}

\subsection{Proof of \autoref{thm:lsq_rates}}
\label{sec:lsq_rates_proof}

\begin{proof}[Proof of \autoref{thm:lsq_rates}]
	
	The proof follows the strategy in Theorem 6 in \citet{WangSharpnack2016}. 
	
	Abbreviate $\hat\delta =\hat\beta-\beta^*$.
	From the optimality in the definition of $\hat\beta$, 
	\begin{equation*}
		\frac{1}{2n}\| y - \hat\beta \|_2^2 + \lambda \| D\hat\beta\|_1 
		\leq 
		\frac{1}{2n}\| y - \beta^* \|_2^2 + \lambda \| D\beta^*\|_1 
	\end{equation*}
	Rearranging and substituting $y = \beta^* + \epsilon$,
	\begin{equation*}
		\frac{1}{2n}\|  \hat\beta - \beta^* \|_2^2 
		\leq 
		\frac{1}{n}\epsilon^\top (\hat \beta-\beta^*) + 
		\lambda \| D\beta^*\|_1 - \lambda \| D\hat\beta\|_1.
	\end{equation*}
	Bound the empirical process term on the right hand side using 
	\autoref{lem:emp_proc_combined}. By \autoref{lem:emp_proc_combined}, for 
	$t\geq 
	1$ and $J \subset [N]^d$, 
	the following holds with probability at least $1-2(m+|J|) e^{-t}:$ 
	\begin{equation}
		\frac{1}{2n}\|  \hat\beta - \beta^* \|_2^2 
		\leq 
		\frac{A}{n}  \| P_{J} (\hat\beta-\beta^* )\|_2 + \frac{B}{n}  \| D 
		(\hat\beta-\beta^*) \|_1
		+
		\lambda \| D\beta^*\|_1 - \lambda \| D\hat\beta\|_1
	\end{equation}
	where 
	$A = 2t\mu \sqrt{\frac{|J|}{n}} \big( \| \nu \|_2 \vee \|b\|_\infty\big),$
	$B = 2t\left( 
	\min\left\{ \|\nu\|_\infty  L_{J,2}, 
	\|\nu\|_2 L_{J,1} \right\}
	\vee  \| b \|_\infty L_{J,1}\right).
	$
	Applying Young's inequality on the first term and setting $\lambda \geq 
	\frac{B}{n}$,
	\begin{align}
		\frac{1}{2n}\|  \hat\beta - \beta^* \|_2^2 
		&\leq 
		\frac{1}{4n} \| \hat\beta-\beta^* \|_2^2 + \frac{A^2}{n} +
		\lambda \| D (\hat\beta-\beta^*) \|_1
		+
		\lambda \| D\beta^*\|_1 - \lambda \| D\hat\beta\|_1\\
		&\leq \frac{1}{4n} \| \hat\beta-\beta^* \|_2^2 + \frac{A^2}{n} +
		2\lambda \| D \beta^*\|_1
	\end{align}
	We used triangle inequality on the penalty terms to get the second line. 
	Canceling terms,
	\begin{equation}
		\frac{1}{n}\|  \hat\beta - \beta^* \|_2^2 
		\leq \frac{4A^2}{n} +
		8\lambda \| D \beta^*\|_1.
	\end{equation}
	This bound holds with probability at least 
	\smash{$1-2(m+|J|) e^{-t} \geq 1 - 4nd e^{-t}$}, and so the proof is complete.
\end{proof}

\subsection{Proofs of Corollaries to \autoref{thm:lsq_rates}}
\label{sec:lsq_homosked_proof}
Denote $\sigma^2 = \frac{1}{n} (\| \nu \|_2^2 \vee \|b\|_\infty)$.
	From \autoref{thm:lsq_rates}, for any $J \subset [N]^d$ containing $[k+1]^d$,
	assuming the scaling $\|D\beta^* \|_1 = O( n^{1-\alpha} )$,
\begin{equation}
	\label{eq:lsq_rates_result}
	\frac{1}{n}\snorm{ \hat\beta - \beta^* }_2^2 
	= O_\P \left( \frac{|J| t^2 \sigma^2 }{n} + \frac{t B_n}{n^\alpha}\right)
\end{equation}
where $t=\log n$, 
\begin{equation}
	\label{eq:Bn}
B_n = 2t\left( 
\min\left\{ \|\nu\|_\infty  L_{J,2}, 
\|\nu\|_2 L_{J,1} \right\}
\vee  \| b \|_\infty L_{J,1}\right).
\end{equation}
 Compared to the bound in \autoref{thm:lsq_rates}, additional $\log n$ 
 factors 
 are incurred when translating from the high-probability 
statement to $O_\P$ notation.
$B_n$ can be bound more explicitly by writing down bounds for $L_{J,1}, 
L_{J,2}$
using \autoref{lem:kron_tf_evals}. For $r \in [1, N\sqrt{d}]$, we can write
\begin{equation}
	\label{eq:LJ2}
	L_{J,2}^2 \leq \begin{cases}
		c \mu^2 \gamma_2^2  & \alpha \leq 1/2, J = [k+1]^d\\
		c \mu^2 (n/r^d)^{2\alpha - 1} & \alpha > 1/2, J = \{ i \in  [N]^d : \| 
		(i - k - 2)_+ \|_2 < r\}
	\end{cases}
\end{equation}
and 
\begin{equation}
	\label{eq:LJ1}
	L_{J,1} \leq \begin{cases}
		c \mu^2 \gamma_1  & \alpha \leq 1, J = [k+1]^d\\
		c \mu^2 (n/r^d)^{\alpha - 1} & \alpha > 1, J = \{ i \in  [N]^d : \| 
		(i - k - 2)_+ \|_2 < r\}.
	\end{cases}
\end{equation}
where \smash{$\gamma_p = \log^{1/p} (n)$} if \smash{$p\alpha=1$} and 
\smash{$1$} otherwise.

%\attn{delete XXX}
%With \smash{$A_n, B_n$} written out, and using \autoref{lem:kron_tf_evals} for
%$L_{\ell,1}, L_{\ell,2}$, the bound in \autoref{thm:lsq_rates} reads: 
%\begin{align}
%	& \frac{1}{n}\snorm{ \hat\beta - \beta^* }_2^2 
%	\leq \frac{16 |J| t^2 \mu^2}{n^2} \big( \|\nu\|_2^2 \vee \|b\|_\infty^2 
%	\big) \\
%	&\quad + \frac{16 t}{n} \Big(  
%	\min\left\{\|\nu\|_\infty 
%	\big(\frac n\ell \big)^{(\alpha-\frac 12)_+}\gamma_2, 
%	\|\nu\|_2 \big(\frac n\ell \big)^{(\alpha-1)_+}\gamma_1 
%	\right\}
%	\lor \|b\|_\infty \big(\frac n\ell \big)^{(\alpha-1)_+} \gamma_1 \Big)
%	\norm{ D \beta^*}_1,
%	\label{eq:ell_mse}
%\end{align}
%\attn{end delete}

\begin{proof}[Proof of \autoref{cor:lsq_homosked}]
\textbf{Case $\alpha \leq 1/2$:}
	Set $\alpha \leq 1/2, J = [k+1]^d$ in \eqref{eq:LJ1}, \eqref{eq:LJ2},
	plugin the resulting bounds for $L_{J,1}, L_{J,2}$ in equation \eqref{eq:Bn} :
	\begin{equation}
		\label{eq:B_with_L_substituted}
		B_n =  O(\min\{\|\nu\|_\infty \gamma_2, \| \nu\|_2 \gamma_1 \} \vee 
		\|b\|_\infty \gamma_1) t.
	\end{equation}
	
	Then use 
	the assumptions 
	$\| \nu \|_\infty, \| b \|_\infty \leq \omega$,
	to write
	\smash{
$
B_n = O(t \omega \gamma_2)
$
}
where \smash{$t = \log n$}.
Plug this expression for \smash{$B_n$} in \eqref{eq:lsq_rates_result}, again 
use the 
assumption that \smash{$\| \nu \|_\infty, \| b \|_\infty \leq \omega$}, to write
	\begin{equation}
	\frac{1}{n}\snorm{ \hat\beta - \beta^* }_2^2 
	= O_{\P} \left( \frac{t^2 \omega^2}{n}
			+ \frac{t \omega \gamma_2}{n^\alpha}   \right).
\end{equation}
	
\noindent
\textbf{Case $\alpha > 1/2$:}
We can write
\[
B_n 
= 2t\left( 
\min\left\{ \|\nu\|_\infty  L_{J,2}, 
\|\nu\|_2 L_{J,1} \right\}
\vee  \| b \|_\infty L_{J,1}\right) 
\leq 2t \big( \| \nu \|_\infty L_{J,2} + \| b \|_\infty L_{J,1} \big)
\leq 2t\omega (L_{J,2} + L_{J,1}).
\]
Let $J = \{ i \in [N]^d : \| (i-k-2)_+\|_2 < r \}$ for an $r$ to be 
chosen later from $[1, \sqrt{d} N]$.
Plugging in the bounds for $L_{J,1}, L_{J,2}$ from \eqref{eq:LJ1}, 
\eqref{eq:LJ2} with $\alpha > 1/2$, and then using 
\eqref{eq:lsq_rates_result},
	\begin{equation}
		\label{eq:mse_bd_simplified}
		\frac{1}{n}\snorm{ \hat\beta - \beta^* }_2^2 = O_\P \left( \frac{(r+k+2)^d 
		t^2}{n} 
		\omega^2 +\frac{t}{n^\alpha} \left( \omega 
		(n/r^d)^{\alpha-1/2}\gamma_2 + 
		\omega (n/r^d)^{(\alpha-1)_+} \gamma_1 \right) \right)
	\end{equation}
	where \smash{$t = \log n$}.
 Select \smash{$r$} such that
	\[
	\frac{r^d t^2}{n} \omega^2 \asymp \frac{t \omega}{n^\alpha} 
	(n/r^d)^{\alpha-1/2}.
	\]
	Then the following is sufficient,
	\[
	r^d = \left\lfloor n (n^\alpha t \omega)^{-2/(2\alpha + 1)} \right \rfloor.
	\]
and the following condition ensures that this choice of $r$ is in $[1, \sqrt{d} 
N]$:
	\[
	n^{-\alpha} \le t \omega \le \sqrt n.
	\]
	Plugging this choice of $r$, the first two terms in 
	\eqref{eq:mse_bd_simplified} are bounded by
	\[
	c_1 \frac{r^d t^2}{n} \omega^2 = c_2 (t \omega)^2 (n^\alpha t 
	\omega)^{-2/(2\alpha + 1)} \le c_2 \left(\frac{t^2 \omega^2}{n} 
	\right)^{2\alpha/(2\alpha+1)}
	\]
	where $c_1, c_2$ are universal constants.
	Furthermore, the remaining term is bounded by
	\[
	\frac{t}{n^\alpha} \omega \gamma_1 \textrm{ if }  \alpha \le 1 \quad \textrm{ 
		and } \quad
	n^{-\frac{3 \alpha}{2 \alpha + 1}} (\omega t)^{\frac{4 \alpha - 1}{2 \alpha 
			+1}} \textrm{ if } \alpha > 1.
	\]
	When $t\omega \geq n^{-\alpha}$, 
	$n^{-\frac{3 \alpha}{2 \alpha + 1}} 
		(\omega t)^{\frac{4 \alpha - 1}{2 \alpha +1}} 
	\leq (t^2\omega^2/n)^\frac{2\alpha}{2\alpha+1}$ 
	and so the desired bound holds. \hfill\qedhere
\end{proof}

\begin{proof}[Proof of \autoref{cor:mean_tf_weak_hetero}]
  In both the Poisson and Exponential cases $\| \nu \|_\infty, \| b \|_\infty = O(1)$.
  For $d = 1,2,3$ we have that $\alpha > 1/2$ and 
  \[
    \Big(\frac{\omega^2  \log^2 n}{n} \Big)^{\frac{2\alpha}{2\alpha +1}} + \frac{\omega\gamma_1 \log n}{n^\alpha} = O\left( \left( \frac{\log^2 n}{n} \right)^{\frac{2 \alpha}{2 \alpha + d}} \right).
  \]
  For $d = 4$, $\alpha = 1/2$,
  \[
    \frac{\omega^2 \log^2 n}{n} + \frac{\omega\gamma_2 \log n}{n^\alpha} = O\left( \frac{\log^{3/2} n}{n^\alpha} \right).
  \]
  For $d \ge 5$, $\alpha < 1/2$,
  \[
    \frac{\omega^2 \log^2 n}{n} + \frac{\omega\gamma_2 \log n}{n^\alpha} = O\left( 
      \frac{\log n}{n^\alpha} \right).
  \]

  To show that the example signal satisfies the conditions, consider the Poisson
  and Exponential families where the mean parameter is  
  constrained. 
  Consider a grid graph with width $N$ and dimension $d$, so that $n = N^d$.
  For $i = (i_1, \dots, i_d) \in [N]^d$, let 
  \[
    \beta_i^* = \frac dN + \frac 2N \sum_{j=1}^d |i_j - N/2|.
  \] 
  For the Poisson distribution $\nu^2_i \asymp \beta^*_i$ hence $\| \nu\|_\infty 
  = O(1)$.
  Similarly, for the Exponential distribution $\|\nu\|_\infty, \| b\|_\infty = 
  O(1)$.\qedhere
  
\end{proof}

\begin{corollary}
	\label{cor:mtf_grids_hetero}
	Let \smash{$\sigma = \max\{ \| \nu \|_2,\ \| b \|_\infty \} / \sqrt{n}$}, and
	\smash{$\sigma_\infty = \max\{ \| \nu \|_\infty,\ \| b \|_\infty \}$}.
	Suppose \\ \smash{$\|D \beta^* \|_1 \lesssim n^{1-\alpha}$}.
  If $\alpha \leq 1/2$, then the estimator $\hat \beta$ in 
		\autoref{thm:lsq_rates} satisfies
		\begin{equation}
		\label{eq:beta_cases_alpha_leq_half}
			\frac{1}{n}\| \hat \beta - \beta^* \|_2^2 = O_\P \left( 
			\frac{\sigma^2 \log^2 n }{n} + \frac{\sigma_\infty\gamma_2\log n}{n^\alpha}
			\right).
		\end{equation}
    If \smash{$\alpha > 1/2$} and \smash{$ \sigma^2 / \sigma_\infty \lesssim 
    \sqrt{n} / \log n$}, then
    \begin{equation}
      \label{eq:beta_cases_alpha_gtr_half1}
      \frac{1}{n}\| \hat \beta - \beta^* \|_2^2 = O_\P \left( 
        \Big[ \frac{\sigma^2 \log^2 n }{n} \Big]^{\frac{2\alpha}{2\alpha + 1}} 
        \Big[\frac{\sigma_\infty}{\sigma} \Big]^{\frac{2}{2\alpha+1}} + 
        \frac{\sigma_\infty\gamma_1\log n}{n^\alpha}
      \right).
	\end{equation} 
	Simultaneously, if \smash{$\alpha > 1/2$},
		\begin{equation}
		\label{eq:beta_cases_alpha_gtr_half2}
		\frac{1}{n}\| \hat \beta - \beta^* \|_2^2 = 
		\begin{cases}
			O_\P \Big( \frac{\sigma^2 \log^2 n }{n} + 
				 n^{-\nicefrac{1}{2}} 
				 \big( \sigma_\infty \wedge \sigma\gamma_1 n^{1-\alpha}  \big) 
				 \log n
				\Big) 
				& \text{ if } \alpha \leq 1\\
		 O_\P \Big( 
			\Big[ \frac{\sigma^2 \log^2 n }{n} \Big]^{1 - \frac{1}{2\alpha}} 
			+ 
			\frac{\sigma_\infty\log n}{n^\alpha}
			\Big)
			& \text{ if } \alpha > 1, \sigma^2 \lesssim n/\log^2 n.
		\end{cases}
	\end{equation}
\end{corollary}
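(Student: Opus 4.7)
The plan is to deduce this corollary by specializing Theorem \ref{thm:lsq_rates} to the grid setting, plugging in the eigenvalue-sum bounds of Lemma \ref{lem:kron_tf_evals} (through the formulas \eqref{eq:LJ1}--\eqref{eq:LJ2} recorded above for \autoref{cor:lsq_homosked}), and then optimizing over the choice of the index set $J \supset [k+1]^d$. I take $t = c \log n$ so that $O_\P(\cdot)$ absorbs probabilities of the form $1-4nd e^{-t}$. Recall that the definitions of $\sigma$ and $\sigma_\infty$ give $\|\nu\|_2 \vee \|b\|_\infty \leq \sqrt{n}\,\sigma$ and $\|\nu\|_\infty \vee \|b\|_\infty \leq \sigma_\infty$, so that in \autoref{thm:lsq_rates}
\[
  A_n^2/n \ \lesssim \ t^2 |J|\,\sigma^2/n, \qquad B_n \ \lesssim \ t\big( \min\{\sigma_\infty L_{J,2},\ \sqrt{n}\,\sigma L_{J,1}\}\ \vee\ \sigma_\infty L_{J,1}\big).
\]

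For \eqref{eq:beta_cases_alpha_leq_half} ($\alpha \leq 1/2$), I take $J = [k+1]^d$, so $|J|$ is a constant and by Lemma \ref{lem:kron_tf_evals} both $L_{J,1} = O(\gamma_1)$ and $L_{J,2} = O(\gamma_2)$. In this regime $\sigma_\infty \leq \sqrt{n}\,\sigma$ may or may not hold, but in either case the $\sigma_\infty L_{J,2}$ and $\sigma_\infty L_{J,1}$ branches together produce $B_n = O(t\,\sigma_\infty\,\gamma_2)$, and combining with $\|D\beta^*\|_1 \lesssim n^{1-\alpha}$ yields the claim directly. For \eqref{eq:beta_cases_alpha_gtr_half1} ($\alpha > 1/2$), I take the ball $J = \{i \in [N]^d : \|(i-k-2)_+\|_2 < r\}$ so that $|J| \asymp r^d$ and $L_{J,2}^2 = O(\mu^2 (n/r^d)^{2\alpha-1})$. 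Using the $\sigma_\infty L_{J,2}$ branch, the two dominant terms $r^d t^2 \sigma^2/n$ and $t\,\sigma_\infty (n/r^d)^{\alpha-1/2}\,n^{-\alpha}$ balance when $r^d \asymp \big[\sigma_\infty n^{1/2}/(t\sigma^2)\big]^{2/(2\alpha+1)}$; substituting yields the first term of \eqref{eq:beta_cases_alpha_gtr_half1}, and the $\sigma_\infty L_{J,1}$ branch in $B_n$ contributes the residual $\sigma_\infty \gamma_1 \log n/n^\alpha$. The assumption $\sigma^2/\sigma_\infty \lesssim \sqrt{n}/\log n$ is precisely what ensures this optimal $r$ lies in $[1,\sqrt{d}N]$.

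For \eqref{eq:beta_cases_alpha_gtr_half2} I run the same program but route through the alternative $\sqrt{n}\,\sigma L_{J,1}$ branch of the inner $\min$ in $B_n$. When $\alpha \leq 1$, taking $J = [k+1]^d$ yields $L_{J,1} = O(\gamma_1)$, so $B_n \lesssim t\sqrt{n}\sigma \gamma_1$ and $B_n\|D\beta^*\|_1/n \lesssim t\,\sigma \gamma_1\, n^{1/2-\alpha}$; since $B_n$ is actually the minimum of the two branches, this competes with the $r$-optimized $\sigma_\infty L_{J,2}$ computation (specialized here to $n^{-1/2}\sigma_\infty \log n$), producing the displayed $\min\{\sigma_\infty,\ \sigma\gamma_1 n^{1-\alpha}\}$. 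When $\alpha > 1$, the bound $L_{J,1} = O((n/r^d)^{\alpha-1})$ from \eqref{eq:LJ1} is used and $r$ is balanced against $A_n^2/n$, giving the $(\sigma^2\log^2 n/n)^{1-1/(2\alpha)}$ rate; the extra condition $\sigma^2 \lesssim n/\log^2 n$ keeps this $r$ admissible.

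The main obstacle is not any single calculation but the bookkeeping: I must track which of the two branches in the inner $\min$ of $B_n$ is active, which of the two factors in the outer $\vee$ dominates, and then verify that the chosen radius $r$ lies in the valid range $[1,\sqrt{d}N]$ under the hypothesis on $\sigma,\sigma_\infty$. Since the corollary asserts two bounds holding \emph{simultaneously} for $\alpha > 1/2$, two separate choices of $J$ (and hence two separate applications of Theorem \ref{thm:lsq_rates}) must be combined, each already in the high-probability event from that theorem, so a union bound absorbs the resulting factor of $2$ into the $O_\P$.
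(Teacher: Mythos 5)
Your proposal is correct and follows essentially the same route as the paper's proof: specialize \autoref{thm:lsq_rates} with $t\asymp\log n$, use the eigenvalue bounds \eqref{eq:LJ1}--\eqref{eq:LJ2}, take $J=[k+1]^d$ when $\alpha\le 1/2$ (and for the $\alpha\le 1$ branch of \eqref{eq:beta_cases_alpha_gtr_half2}), and otherwise choose the ball $J$ with $r^d\asymp(\sqrt{n}\sigma_\infty/(t\sigma^2))^{2/(2\alpha+1)}$ or $r^d\asymp(n/(\sigma^2t^2))^{1/(2\alpha)}$ to balance the $|J|t^2\sigma^2/n$ term against the appropriate branch of $B_n$, with the stated hypotheses guaranteeing $r\in[1,\sqrt{d}N]$. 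The branch selections, balancing equations, and admissibility checks all match the paper's argument.
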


In some situations we can get improved results using \eqref{eq:beta_cases_alpha_gtr_half2}, particularly in situations when $\sigma \lesssim \sigma_\infty$.
This can happen for the Poisson family when the signal $\beta^*$ is dominated by
a few components.

\begin{proof}[Proof of \autoref{cor:mtf_grids_hetero}]

	Throughout let $t = \log n$.
	Start from the bound \eqref{eq:lsq_rates_result}:
	\begin{equation}
		\frac{1}{n}\snorm{ \hat\beta - \beta^* }_2^2 
		= O_\P \left( \frac{|J| t^2 \sigma^2 }{n} + \frac{t B_n}{n^\alpha}\right)
	\end{equation}
	 In the case $\alpha \leq 
	1/2$, set $J=[k+1]^d$ and recall the bound \eqref{eq:B_with_L_substituted} 
	for $B_n$. This gives the desired result in this case. 	In the other case of 
	\smash{$\alpha > \frac 12$}, we prove the bounds 
	\eqref{eq:beta_cases_alpha_gtr_half1} and 
	\eqref{eq:beta_cases_alpha_gtr_half2} 
	now. Set $J = \{ i : \| (i-k-2)_+\|_2 < r\}$ for an $r$ that we choose later.

%	From \eqref{eq:ell_mse} using
%	\smash{$\|D \beta^* \| \lesssim n^{1-\alpha}$},
%	\begin{equation}
%		\label{eq:mean_tf_canonical}
%		\frac{1}{n}\snorm{ \hat\beta - \beta^* }_2^2 
%		= O_\P \Big( \frac{\ell t^2  \sigma^2}{n} 
%		+ 
%		\frac{t}{n^{\alpha}} \big(  \{\|\nu\|_\infty 
%		\big(\frac n\ell \big)^{(\alpha-\frac 12)_+}\gamma_2 \wedge 
%		\|\nu\|_2 \big(\frac n\ell \big)^{(\alpha-1)_+}
%		\gamma_1 \}
%		\lor \| b \|_\infty \big(\frac n\ell \big)^{(\alpha-1)_+} \gamma_1 \big) 
%		\Big).
%	\end{equation}
	
	\paragraph{Bound \eqref{eq:beta_cases_alpha_gtr_half1}.}
	Recall from \eqref{eq:Bn} that
	\begin{align*}
		B_n &= 2t\left( 
		\min\left\{ \|\nu\|_\infty  L_{J,2}, 
		\|\nu\|_2 L_{J,1} \right\}
		\vee  \| b \|_\infty L_{J,1}\right)\\
		&\leq 
		2t\left( \|\nu\|_\infty  L_{J,2} \vee \| b \|_\infty L_{J,1} \right) 
	\end{align*}
	where we get the inequality by taking only the 
	first 
	term of the inner minimum.
	Plugin the bounds for $L$ terms from \eqref{eq:LJ2}, \eqref{eq:LJ1} to write
	\begin{equation}
		B_n = O\left( \| \nu \|_\infty \left(\frac{n}{r^d}\right)^{\alpha - \frac 
		12} + \left(\frac{n}{r^d}\right)^{(\alpha - 1)_+} \gamma_1 \right) t.
	\end{equation}
Plug this back in \eqref{eq:lsq_rates_result} to get
	\begin{equation}
		\label{eq:mse_simp3}
		\frac{1}{n}\snorm{ \hat\beta - \beta^* }_2^2 
		= O_\P \left( \frac{ (r+k+2)^d t^2  \sigma^2}{n} 
		+ 
		\frac{t}{n^\alpha}\left \{  \|\nu\|_\infty 
		\left(\frac{n}{r^d} \right)^{\alpha-\frac 12} 
		\lor \| b \|_\infty \left(\frac{n}{r^d} \right)^{(\alpha-1)_+} \gamma_1 
		\right\} 
		\right) 
	\end{equation}
	For $\alpha \neq 1$, when possible we will choose $r \in [1, N\sqrt{d}]$ 
	such that
	\[
	\frac{r^d t^2  \sigma^2}{n} \asymp \frac{t}{n^\alpha} \sigma_\infty 
	\left(\frac{n}{r^d} 
	\right)^{\alpha-\frac 12}.
	\]
	which is equivalent to
	\[
	r^d \asymp \Big( \frac{\sqrt{n} \sigma_\infty}{t \sigma^2}  
	\Big)^{\frac{2}{2\alpha+1}}.
	\]
	Selecting this $r$ when possible gives the bound in 
	\eqref{eq:beta_cases_alpha_gtr_half1} and the assumption 
	\smash{$\frac{\sqrt{n} \sigma_\infty}{t \sigma^2} \gtrsim 1$} ensures that we 
	are not choosing an impossibly small $r$. When $\alpha = 1$, we can 
	retrace 
	the argument with the additional $\gamma_1$ factor in \eqref{eq:mse_simp3} to 
	get the bound.

	\paragraph{Bound \eqref{eq:beta_cases_alpha_gtr_half2}.}
	When \smash{$\alpha \leq 1$}, set $J=[k+1]^d$ to get the stated bound.
	Now consider \smash{$\alpha > 1$}.
	Simplify \eqref{eq:Bn} by taking only the 
	second 
	term of the minimum, plug the bound for $B_n$ in \eqref{eq:lsq_rates_result} 
	to get
	\begin{equation}
		\label{eq:mse_simp4}
		\frac{1}{n}\snorm{ \hat\beta - \beta^* }_2^2 
		= O_\P \left( \frac{(r+k+2)^d t^2  \sigma^2}{n} 
		+ 
		t n^{-\alpha + \frac{1}{2}} \sigma \left(\frac{n}{r^d} \right)^{\alpha-1} 
		\right) 
	\end{equation}
	When possible we will choose $r \in [1, 
	N\sqrt{d}]$ 
	to balance the two terms above, that is,
	\[
	\frac{r^d t^2  \sigma^2}{n} \asymp t n^{-\alpha + \frac{1}{2}} \sigma 
	\big(\frac{n}{r^d} \big)^{\alpha-1}
	\]
which means,
	\[
	r^d \asymp \Big(\frac{n}{\sigma^2 t^2}  \Big)^{\frac{1}{2\alpha}}.
	\]
	This choice of $r$ gives the desired bound. Our assumption  that 
	\smash{$\frac{n}{\sigma^2 t^2} \gtrsim 1$} makes sure that this 
	choice of $r$ is not impossibly small.
	This completes the proof.
\end{proof}

\begin{proof}[Proof of \autoref{cor:mtf_grids_hetero2}]
This is a direct result of \autoref{cor:mtf_grids_hetero}, simplifying the 
cases.
\end{proof}

\subsection{Error rates assuming that the estimate is bounded}
\label{sec:app_mle_constrained}

Consider the penalized maximum likelihood estimator (MLE)
\begin{equation}
	\label{eq:mle}
	\hat\theta = \argmin_\theta \frac{1}{n} \sum_{i=1}^n (\psi(\theta_i) - y_i 
	\theta_i) + \lambda \|D \theta \|_1.
\end{equation}
The minimum may not be achieved at an interior point of the domain. In that 
case, we set $\hat \theta$ to a limit point of a sequence on which the 
objective converges to the infimum.

If we assume that $\hat \theta$ in \eqref{eq:mle} is constrained in such a way 
that $\psi''(\hat \theta)$ is bounded away from $0$, then the error 
bounding analysis essentially reduces to that in the Gaussian family case.
Consider the constrained estimator 
\begin{equation}
	\label{eq:mle_constrained}
	\hat\theta = \argmin_{\theta \in \Theta(K)^n } \; \sum_{i=1}^n -y_i \theta_i 
	+ 
	\psi(\theta_i) + \lambda \|D\theta\|_1
\end{equation}
where $\Theta(K) = \{ \theta \in \R : \psi''(\theta) \geq \frac{1}{K} \}$ for 
some $K>0$. Assume that $\Theta(K)$ is a convex set for any $K>0$.
This can be verified for Poisson, exponential and logistic families.
Suppose 
\begin{equation}
	\label{eq:theta_best_approx}
	\tilde\theta = \argmin_{\theta \in \Theta(K)^n } \; \sum_{i=1}^n -\E [Y_i] 
	\theta_i + \psi(\theta_i)
\end{equation}
is the best approximation of $\theta^*$ within $\Theta(K)^n$. Also define 
$\tilde\beta = \nabla \psi(\tilde\theta)$.
Then the constrained estimator in \eqref{eq:mle_constrained} satisfies the 
following error bound.
\begin{proposition}
	\label{prop:mle_constrained}
	Let $y_i = \beta^*_i + \epsilon_i$ where $\epsilon_i$ is zero mean
	sub-exponential with parameters $(\nu_i^2, b_i)$ for $i\in [n]$.  
	Let $L_{ J, p}$ be as defined in \eqref{eq:Lell} for 
	\smash{$J \subset [N]^d, p\geq 1$}.
	Abbreviate
	$A_n = \mu \sqrt{\frac{|J|}{n}} \big( \| \nu \|_2 \vee 
	\|b\|_\infty\big) \log n ,$
	$B_n = \left( 
	\min\left\{ \|\nu\|_\infty  L_{J,2}, 
	\|\nu\|_2 L_{J, 1} \right\}
	\vee  \| b \|_\infty L_{J, 1}\right) \log n.
	$
	Then the estimator 
	\eqref{eq:mle_constrained} 
	with $\lambda =  \frac{B_n}{n}$, satisfies
	\begin{equation}
		\KLbar{\tilde\theta}{\hat\theta}
		%R(\hat\theta) - R(\tilde\theta)
		= \frac{1}{n} O_\P \big( K A_n^2 +
		B_n \| D \tilde \theta\|_1 + K \| \tilde \beta - 
		\beta^*\|_2^2 \big).
	\end{equation}
\end{proposition}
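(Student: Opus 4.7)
The plan is to combine the basic inequality from optimality of $\hat\theta$ with the restricted strong convexity of $\psi$ on the convex set $\Theta(K)^n$. Since $\psi''\geq 1/K$ uniformly on $\Theta(K)$, and both $\hat\theta$ and $\tilde\theta$ (along with every convex combination) lie in $\Theta(K)^n$, the Bregman-divergence form of the KL yields $\tfrac{1}{n}\KL{\tilde\theta}{\hat\theta}\geq \tfrac{1}{2Kn}\|\hat\theta-\tilde\theta\|_2^2$. This restored strong convexity is precisely what was missing in \autoref{thm:ex_risk_bd} and makes the argument parallel to the sub-Gaussian trend filtering analysis.

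By optimality of $\hat\theta$ and feasibility of $\tilde\theta$, substituting $y=\beta^*+\epsilon$ and rearranging using \eqref{eq:exp-fam-kl} (after adding and subtracting $\tilde\beta^\top(\hat\theta-\tilde\theta)/n$) yields the basic inequality
\begin{equation}
\tfrac{1}{n}\KL{\tilde\theta}{\hat\theta} \leq \tfrac{1}{n}\epsilon^\top(\hat\theta-\tilde\theta) + \tfrac{1}{n}(\beta^*-\tilde\beta)^\top(\hat\theta-\tilde\theta) + \lambda\bigl(\|D\tilde\theta\|_1-\|D\hat\theta\|_1\bigr).
\end{equation}
The middle term, absent in the unconstrained analysis, appears because $\tilde\beta=\psi'(\tilde\theta)$ generally differs from $\beta^*$. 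I would bound the empirical process using the same Lemma \ref{lem:emp_proc_combined} invoked in the proof of \autoref{thm:ex_risk_bd} (with the same index set $J$ and $t$ of order $\log n$, so as to reproduce the $\log n$ factor baked into $A_n$ and $B_n$), obtaining $\tfrac{A_n}{n}\|P_{J}(\hat\theta-\tilde\theta)\|_2 + \tfrac{B_n}{n}\|D(\hat\theta-\tilde\theta)\|_1$ with high probability. The approximation-error term is controlled by Cauchy-Schwarz as $\tfrac{1}{n}\|\beta^*-\tilde\beta\|_2\,\|\hat\theta-\tilde\theta\|_2$.

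The remaining work is bookkeeping. Setting $\lambda=B_n/n$ and applying triangle inequality to $\|D(\hat\theta-\tilde\theta)\|_1$ telescopes the $\ell_1$ contributions to at most $2(B_n/n)\|D\tilde\theta\|_1$. Both the $P_J$ component of the empirical process and the approximation-error term are linear in $\|\hat\theta-\tilde\theta\|_2$, so Young's inequality with a small enough parameter absorbs the quadratic residuals into $\tfrac{1}{4Kn}\|\hat\theta-\tilde\theta\|_2^2$, which is at most $\tfrac{1}{2n}\KL{\tilde\theta}{\hat\theta}$ by strong convexity. Subtracting this half from both sides produces residuals of order $KA_n^2/n$ and $K\|\beta^*-\tilde\beta\|_2^2/n$, which together with the $\ell_1$ remainder deliver the claimed bound; the high-probability statement converts to $O_\P$ via the $1-4nd\,e^{-t}$ guarantee of the empirical process lemma.

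The only nontrivial obstacle is the empirical process inequality \autoref{lem:emp_proc_combined} itself, which requires Bernstein-type tail bounds on sub-exponential maxima indexed by columns of a pseudoinverse of $D$, together with incoherence of its singular vectors. Given that lemma, this proof reduces to a careful accounting of how restricted strong convexity on $\Theta(K)^n$ rescues the convex-analytic arguments that fail for the unrestricted estimator; no new probabilistic machinery is needed beyond that shared with the proof of \autoref{thm:ex_risk_bd}.
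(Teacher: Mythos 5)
Your proposal is correct and follows essentially the same route as the paper's proof: both exploit that $\Theta(K)^n$ is convex so that $\psi''\geq 1/K$ along the segment between $\hat\theta$ and $\tilde\theta$, restore strong convexity of the Bregman/KL term, introduce the cross term $(\beta^*-\tilde\beta)^\top(\hat\theta-\tilde\theta)$, bound the empirical process with Lemma~\ref{lem:emp_proc_combined}, and absorb the linear-in-$\|\hat\theta-\tilde\theta\|_2$ pieces via Young's inequality into a fraction of the quadratic, leaving residuals $KA_n^2$ and $K\|\tilde\beta-\beta^*\|_2^2$ plus the $\ell_1$ remainder. The only cosmetic difference is that the paper lower-bounds $R(\hat\theta)-R(\tilde\theta)$ and keeps ``half'' of it for absorption, whereas you phrase the same algebra directly in terms of $\KLbar{\tilde\theta}{\hat\theta}$.
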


\noindent
The proof is below.
We choose $J\subset [N]^d$ to minimize the bound.
%The technique is similar to the one used for proving \autoref{thm:lsq_rates}. 
If we set $K = 1/v_{\min}$ where 
$v_{\min} = \min_{i\in[n]} \psi''(\theta^*_i)$, 
then $\tilde\theta = \theta^*, \tilde\beta = \beta^*$
and the above bound reads
\begin{equation}
	\KLbar{\theta^*}{\hat\theta}
	%R(\hat\theta) - R(\theta^*)
	= \frac{1}{n} O_\P \Big(\frac{A_n^2}{ v_{\min}} +
	B_n \| D \theta^*\|_1 \Big).
\end{equation}

\begin{proof}[Proof of \autoref{prop:mle_constrained}]
	Similar to the argument in \autoref{thm:lsq_rates}, from the optimality of 
	\smash{$\hat\theta$}, we have the basic inequality,
	\begin{equation}
		\label{eq:basic_ineq_mle_constrained}
		R(\hat\theta) - R(\tilde\theta) \leq \frac{1}{n} \epsilon^\top (\hat\theta 
		- 
		\tilde\theta) + \lambda \|D\tilde\theta\|_1 - \lambda \|D\hat\theta\|_1
	\end{equation}
	To lower bound the left hand side, we see that
	\begin{align*}
		n R(\hat\theta) - n R(\tilde\theta) 
		&= \one^\top \psi(\hat\theta) - \beta^*\hat\theta - \one^\top 
		\psi(\tilde\theta) + \beta^* \tilde \theta\\
		&= \one^\top  \psi(\hat\theta) - \one^\top \psi(\tilde\theta) - \tilde 
		\beta 
		(\hat\theta - \tilde\theta) + (\tilde\beta -\beta^*)^\top (\hat\theta - 
		\tilde\theta) \\
		&\geq \frac{1}{2K} \| \hat \theta - \tilde \theta \|_2^2 + (\tilde\beta 
		- \beta^*)^\top (\hat\theta - \tilde\theta)\\
		&\geq \frac{1}{2K} \| \hat \theta - \tilde \theta \|_2^2 - K 
		\|\tilde\beta - \beta^*\|_2^2 - \frac{1}{4K}\| \hat\theta - 
		\tilde\theta\|_2^2\\
		&= \frac{1}{4K} \| \hat \theta - \tilde \theta \|_2^2 - K \|\tilde\beta 
		- \beta^*\|_2^2 
	\end{align*} 
	In the above display, the first inequality holds because both 
	\smash{$\hat\theta, 
		\tilde \theta \in \Theta(K)^n$} and \smash{$\Theta(K)^n$} is convex. 
	(For \smash{$i\in[n]$}, write 
	\smash{$\psi(\hat\theta_i) - \psi(\tilde\theta_i) - \tilde \beta_i 
		(\hat\theta_i - 
		\tilde\theta_i) 
		= \psi''(u_i) (\hat\theta_i - \tilde\theta_i)^2$}
	for some \smash{$u_i$} between 
	\smash{$\hat\theta_i$} and 
	\smash{$\tilde\theta_i$}. As \smash{$\Theta(K)$} is convex and \smash{$u_i$} 
	lies 
	between \smash{$\hat\theta_i$} and \smash{$\tilde\theta_i$}, we should have 
	\smash{$u_i \in \Theta(K)$} 
	and so \smash{$\psi''(u_i)$} should be at least \smash{$1/K$}.)
	The second inequality follows from the fact that 
	\smash{$2ab \geq - c a^2 - \frac{1}{c}b^2$}, for any 
	\smash{$a,b,c\in \R$} with \smash{$c>0$}. Applying this to half of 
	the 
	left hand side of  \eqref{eq:basic_ineq_mle_constrained},
	\begin{equation}
		\frac{1}{2} \big( R(\hat\theta) - R(\tilde\theta) \big) +  \frac{1}{8nK} \| 
		\hat \theta - \tilde \theta \|_2^2 - 
		\frac{K}{2n} \| \tilde \beta -\beta^*\|_2^2
		\leq \frac{1}{n} \epsilon^\top (\hat\theta - \tilde\theta) + \lambda 
		\|D\tilde\theta\|_1 - \lambda \|D\hat\theta\|_1
	\end{equation}
	Rearranging,
	\begin{equation}
		\frac{1}{2} \big( R(\hat\theta) - R(\tilde\theta) \big) - \frac{K}{2n} \| 
		\tilde\beta - \beta^* \|_2^2 
		\leq -  \frac{1}{8nK} \| \hat \theta - \tilde \theta \|_2^2 + \frac{1}{n} 
		\epsilon^\top (\hat\theta - \tilde\theta) + \lambda \|D\tilde\theta\|_1 - 
		\lambda 
		\|D\hat\theta\|_1
	\end{equation}
	By \autoref{lem:emp_proc_combined}, for $t\geq 1$ and $J \subset [N]^d$, 
	the following holds with probability at least $1-2(m+|J|)e^{-t}$,
	\begin{align*}
		\frac{1}{2} \big( R(\hat\theta) - R(\tilde\theta) \big) - \frac{K}{2n} \| 
		\tilde\beta - \beta^* \|_2^2 
		&\leq 
		-  \frac{1}{8nK} \| \hat \theta - \tilde \theta \|_2^2 + \frac{A}{n}  \| 
		P_{[\ell]} (\hat\theta-\tilde\theta) \|_2 \\
		&+ \frac{B}{n}  \| D (\hat\theta-\tilde\theta) \|_1
		+ \lambda \| D\tilde \theta\|_1 - \lambda \| D\hat\theta\|_1
	\end{align*}
	where
	$A_n = 2t\mu \sqrt{\frac{|J|}{n}} \big( \| \nu \|_2 \vee \|b\|_\infty\big),$
	$B_n = 2t\left( 
	\min\left\{ \|\nu\|_\infty  L_{J,2}, 
	\|\nu\|_2 L_{J,1} \right\}
	\vee  \| b \|_\infty L_{J,1}\right).
	$
	The sum of the first two terms on the right hand side can be bound by 
	completing squares:
	\begin{align}
		-  \frac{1}{8nK} \| \hat \theta - \tilde \theta \|_2^2 + \frac{A}{n}  \| 
		P_{[\ell]} (\hat\theta-\tilde\theta) \|_2
		&\leq 
		-  \frac{1}{8nK} \| \hat \theta - \tilde \theta \|_2^2 + \frac{A}{n} \| 
		\hat\theta-\tilde\theta \|_2\\
		&\leq 
		\frac{2 K A^2}{n}.
	\end{align}
	Plug this into the bound in the previous display to get
	\begin{equation*}
		\frac{1}{2} \big( R(\hat\theta) - R(\tilde\theta) \big) - \frac{K}{2n} \| 
		\tilde\beta - \beta^* \|_2^2 
		\leq 
		\frac{2KA^2}{n} + \frac{B}{n} \| D (\hat\theta-\tilde\theta) \|_1
		+ \lambda \| D\tilde \theta\|_1 - \lambda \| D\hat\theta\|_1 
	\end{equation*}
	The argument from here is similar to that in the proof of 
	\autoref{thm:lsq_rates}.
\end{proof}

\subsection{Empirical process bound}
Let $D = D^{(k+1)}_{n,d} = U \Sigma V^\top$ be the singular value decomposition of $D$.
For $j\in [N]^d$, let $V_j$ denote $\tilde V_{j_1} \otimes  \dots \otimes \tilde V_{j_d}$ where
$\tilde V_{\ell}$ is the eigenvector of $ \big( D^{(k+1)}_{N,1} )^\top D^{(k+1)}_{N,1}$ 
corresponding to its $\ell$th smallest eigenvalue.
For $J \in [N]^d$, let $V_J$ denote a $|J| \times n$ matrix formed by picking the columns of 
$V$ corresponding to $J$. 
Let $P_J = V_J V_J^\top$ be the projection matrix onto those columns.
\begin{lemma}
\label{lem:emp_proc_combined}
Let $y_i = \beta^*_i + \epsilon_i$ where $\epsilon_i$ is zero mean
sub-exponential with parameters $(\nu_i^2, b_i)$ for $i\in [n]$.  
Let $J \subset [N]^d$ and $L$ be as defined in \eqref{eq:Lell}.
Let $m$ be the number of rows in $D$.
For any $J \subset [N^d]$ containing $[k+1]^d$,
and $t\geq 1$,
with probability at least $1 - 2(m+|J|) e^{-t}$, the following holds uniformly 
for all $\theta \in \R^n:$
\begin{equation}
 |\epsilon^\top \theta| \leq  A \|P_{J} \theta\|_2 + B \| D \theta\|_1
\label{eq:emp_proc_bd_1}
\end{equation}
where 
$A = 2t\mu \sqrt{\frac{|J|}{n}} \big( \| \nu \|_2 \vee \|b\|_\infty\big),$
$B = 2t\left( 
	\min\left\{ \|\nu\|_\infty  L_{J,2}, 
					  \|\nu\|_2 L_{J,1} \right\}
	\vee  \| b \|_\infty L_{J,1}\right).
 $
\end{lemma}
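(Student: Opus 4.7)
The plan is to split $\theta$ using the SVD $D=U\Sigma V^\top$, isolating the coordinates indexed by $J$ (which by assumption contains the null-space indices $[k+1]^d$) from the rest, and handle the two resulting inner products with different devices. Decompose
\[
\epsilon^\top \theta \;=\; \epsilon^\top P_J \theta \;+\; \epsilon^\top (I - P_J)\theta,
\]
so the goal reduces to bounding the first term by $A\snorm{P_J\theta}_2$ and the second by $B\snorm{D\theta}_1$, each with suitable probability, and taking a union bound at the end.

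For the first term, Cauchy--Schwarz gives $|\epsilon^\top P_J \theta| \le \snorm{V_J^\top \epsilon}_2 \snorm{P_J\theta}_2$ (using $V_J^\top V_J = I_{|J|}$), so what I need is $\snorm{V_J^\top \epsilon}_2 \le A$. Passing to $\snorm{V_J^\top \epsilon}_2 \le \sqrt{|J|}\snorm{V_J^\top \epsilon}_\infty$ and applying \autoref{lem:linear-comb-sub-exp-vec} to each coordinate $V_j^\top \epsilon$ with the incoherence $\snorm{V_j}_\infty \le \mu/\sqrt{n}$ produces per-coordinate sub-exponential parameters $(\mu^2\snorm{\nu}_2^2/n,\,\mu\snorm{b}_\infty/\sqrt{n})$. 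A union bound over $j\in J$ then gives $\snorm{V_J^\top \epsilon}_\infty \le 2t(\mu/\sqrt{n})(\snorm{\nu}_2 \vee \snorm{b}_\infty)$ with probability at least $1-2|J|e^{-t}$, which supplies exactly the factor $A$.

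For the second term, since $J\supseteq[k+1]^d$ every $j\notin J$ satisfies $\xi_j>0$, and the SVD identity $V_j = D^\top U_j/\xi_j$ yields $V_j^\top\theta = U_j^\top D\theta/\xi_j$. Hence
\[
\epsilon^\top(I-P_J)\theta \;=\; \sum_{j\notin J}(\epsilon^\top V_j)(U_j^\top D\theta)/\xi_j \;=\; (D\theta)^\top w,
\qquad w_i = \epsilon^\top a_i,\; a_i = \sum_{j\notin J} V_j U_{j,i}/\xi_j,
\]
so H\"older reduces the task to showing $\snorm{w}_\infty \le B$. For each row $i$ of $D$, orthonormality of $\{V_j\}$ combined with the incoherence $U_{j,i}^2 \le \mu^2/n$ yields $\snorm{a_i}_2^2 \le (\mu^2/n)\sum_{j\notin J}\xi_j^{-2} = L_{J,2}^2$, while triangle inequality together with $\snorm{V_j}_\infty \le \mu/\sqrt{n}$ and $|U_{j,i}|\le \mu/\sqrt{n}$ yields $\snorm{a_i}_\infty \le (\mu^2/n)\sum_{j\notin J}\xi_j^{-1} = L_{J,1}$. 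Feeding these into \autoref{lem:linear-comb-sub-exp-vec} through the two H\"older estimates $\sum_k a_{i,k}^2\nu_k^2 \le \snorm{\nu}_\infty^2\snorm{a_i}_2^2$ and $\sum_k a_{i,k}^2 \nu_k^2 \le \snorm{\nu}_2^2\snorm{a_i}_\infty^2$, together with $\max_k|a_{i,k}|b_k \le \snorm{a_i}_\infty\snorm{b}_\infty$, gives the two sub-exponential parameterizations that appear inside $B$. A sub-exponential tail bound per $w_i$ and a union bound over the $m$ rows then produces $\snorm{w}_\infty \le B$ with probability at least $1-2me^{-t}$, and intersecting with the earlier event completes the proof with total failure probability $2(m+|J|)e^{-t}$.

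The main obstacle is essentially bookkeeping: the precise shape of $B$, namely $\min\{\snorm{\nu}_\infty L_{J,2},\,\snorm{\nu}_2 L_{J,1}\}\vee \snorm{b}_\infty L_{J,1}$, arises from applying the ``$\nu\vee b$''-form sub-exponential tail twice (once for each of the two alternative $\nu$-parameterizations) and then taking the minimum, which uses the identity $(a\vee c)\wedge(b\vee c) = (a\wedge b)\vee c$. Everything else is routine: the conceptual hinge is only the SVD identity $V_j^\top\theta = \xi_j^{-1} U_j^\top D\theta$, which is what converts the $(I-P_J)$-projection into an inner product against $D\theta$ and makes the TV-norm penalty appear.
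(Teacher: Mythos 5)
Your proposal is correct and follows essentially the same route as the paper: the same split into $\epsilon^\top P_J\theta + \epsilon^\top(I-P_J)\theta$, the same Cauchy--Schwarz/H\"older reductions, and the same incoherence-based bounds $\snorm{a_i}_2\le L_{J,2}$, $\snorm{a_i}_\infty\le L_{J,1}$ followed by sub-exponential tail bounds and a union bound over $J$ and the $m$ rows. Your vectors $a_i=\sum_{j\notin J}V_jU_{j,i}/\xi_j$ are exactly the paper's $g_i=(I-P_J)D^\dagger e_i$, so the only difference is that you write the second term in SVD coordinates rather than via the pseudoinverse identity $(I-P_J)\theta=(I-P_J)D^\dagger D\theta$.
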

\begin{proof}[Proof of \autoref{lem:emp_proc_combined}]
Decompose
\begin{align}
|\epsilon^\top \theta|
&= 
|\epsilon^\top P_{J} \theta +
\epsilon^\top (I-P_{J}) \theta |\\
&= 
|\epsilon^\top P_{J} \theta +
\epsilon^\top (I-P_{J}) D^\dagger D \theta |\\
&\leq 
\| P_{J} \epsilon \|_2 \| P_{J} \theta \|_2 +
\| (D^\dagger)^\top (I-P_{I}) \epsilon\|_\infty \|D\theta\|_1
\end{align}
where we applied H\"older's inequality on each of the two terms separately.
We give high probability bounds for $\| P_{J} \epsilon \|_2$ and 
$\| (D^\dagger)^\top (I-P_{J}) \epsilon\|_\infty$ separately. A union bound 
will yield the stated  result.

\paragraph{Bounding $\| P_{J} \epsilon \|_2.$}
For $j\in J$, 
$V_j^\top \epsilon$ is SE$(\| \nu \odot V_j \|_2^2, \| b \odot V_j \|_\infty)$.
Therefore, from \eqref{eq:subexp_tail_highprob_bd},
\begin{equation}
|V_j^\top \epsilon| \leq 2t (\| \nu \odot V_j \|_2 \vee \| b \odot V_j 
\|_\infty)
\end{equation}
should hold with probability at least $1-2e^{-t}$ for any $t\geq 1$.
From the incoherence property ($\|V_j \|_\infty \leq \frac{\mu}{\sqrt{n}}$), 
we get
$\| \nu \odot V_j \|_2 \leq \frac{\mu}{\sqrt{n}} \| \nu \|_2$ 
and 
$\| b \odot V_j \|_\infty \leq \frac{\mu}{\sqrt{n}} \|b\|_\infty$.
Therefore,
\begin{equation}
|V_j^\top \epsilon| \leq 2 t \frac{\mu}{\sqrt{n}} \big( \| \nu \|_2 \vee 
\|b\|_\infty \big).
\end{equation}
By union bound over $j\in J$, for any $t\geq 1$,
\begin{equation}
\label{eq:eps_null_proj_bd}
\| P_{J} \epsilon\|_2^2 = \sum_{j\in J} (V_j^\top \epsilon)^2 
\leq 
|J|
\big(2 t \frac{\mu}{\sqrt{n}} \big( \| \nu \|_2 \vee \|b\|_\infty \big)  \big) 
^2
\end{equation}
should hold with probability at least $1-2|J| e^{-t}$.

\paragraph{Bounding $\| (D^\dagger)^\top (I-P_{J}) \epsilon\|_\infty.$}
Rewrite this term as
$$
\| (D^\dagger)^\top (I-P_{J}) \epsilon\|_\infty = \max_{j\in[m]} | g_j^\top 
\epsilon |
$$
where $g_j = (I-P_{J}) D^\dagger e_j$ for $j\in[m]$ 
and where $m$ is the number of rows in $D.$
From \autoref{lem:linear-comb-sub-exp-vec}, one can deduce that
\begin{equation}
\max_{j\in[m]} |g_j^\top \epsilon|  \leq 2t \big( \max_{j\in[m]} \| \nu \odot 
g_j 
\|_2 \vee  \| b \odot g_j \|_\infty \big).
\end{equation}
holds with probability at least $1-2m e^{-t}$ for $t\geq 1$. 
Observe that 
$\| b \odot g_j \|_\infty \leq \|b\|_\infty \|g_j\|_\infty$
and 
\begin{equation}
\| \nu \odot g_j \|_2 \leq \min\left\{ \|\nu\|_\infty \|g_j\|_2, \|\nu\|_2 
\|g_j\|_\infty\right\}.
\end{equation}
Therefore, substituting the bounds on $ \|g_j \|_2, \|g_j\|_\infty$ from 
\autoref{lem:gj_bounds}, we get 
\begin{equation}
\max_{j\in[m]} |g_j^\top \epsilon|  \leq 2t  \left( 
	\min\left\{ \|\nu\|_\infty  L_{J,2}, 
					  \|\nu\|_2 L_{J,1} \right\}
	\vee  \| b \|_\infty L_{J,1}\right).
\end{equation}
with probability at least $1-2me^{-t}.$
\end{proof}

\begin{lemma}
\label{lem:gj_bounds}
Define $g_j = (I-P_{J}) D^\dagger e_j$ for $j\in[m]$ 
and where $m$ is the number of rows in $D.$ Then for all $j\in [m],$
\begin{align}
\|g_j \|_2 &\leq L_{J,2}, \\
\|g_j \|_\infty &\leq L_{J,1}.
\end{align}
\end{lemma}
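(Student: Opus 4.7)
The plan is to diagonalize $g_j$ in the SVD basis of $D$ and then read the bounds off the definition of $L_{J,p}$. Write $D = U \Sigma V^\top$ and expand
\[
D^\dagger e_j \;=\; \sum_{i \in [N]^d,\; \xi_i > 0} \frac{(U_i)_j}{\xi_i}\, V_i,
\]
where $(U_i)_j$ denotes the $j$-th coordinate of the $i$-th left singular vector. Since $P_J = V_J V_J^\top$ is the orthogonal projection onto $\mathrm{span}\{V_i : i \in J\}$, applying $I - P_J$ kills precisely the terms indexed by $J$. Crucially, the hypothesis $J \supset [k+1]^d$ means that $J$ contains every index with $\xi_i = 0$, so $J^c := [N]^d \setminus J$ consists only of indices with $\xi_i > 0$, and no term $1/\xi_i$ is ill-defined. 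The upshot is
\[
g_j \;=\; \sum_{i \in J^c} \frac{(U_i)_j}{\xi_i}\, V_i.
\]

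For the $\ell_2$ bound, orthonormality of $\{V_i\}_{i \in [N]^d}$ and incoherence of the left singular vectors (which gives $|(U_i)_j| \leq \mu/\sqrt{n}$) yield
\[
\snorm{g_j}_2^2 \;=\; \sum_{i \in J^c} \frac{(U_i)_j^2}{\xi_i^2} \;\leq\; \frac{\mu^2}{n} \sum_{i \in J^c} \frac{1}{\xi_i^2} \;=\; L_{J,2}^2.
\]
For the $\ell_\infty$ bound, fix any coordinate $k \in [n]$, expand in the standard basis, and invoke incoherence of \emph{both} singular-vector systems:
\[
|g_j(k)| \;\leq\; \sum_{i \in J^c} \frac{|(U_i)_j|\,|(V_i)_k|}{\xi_i} \;\leq\; \frac{\mu^2}{n} \sum_{i \in J^c} \frac{1}{\xi_i} \;=\; L_{J,1}.
\]
Incoherence of $V$ is inherited from the Kronecker structure $V_i = \tilde V_{i_1} \otimes \cdots \otimes \tilde V_{i_d}$ together with incoherence of the 1-D eigenvectors $\tilde V_\ell$ of $(D_{N,1}^{(k+1)})^\top D_{N,1}^{(k+1)}$.

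The only step that requires genuine care is the first display: one has to recognize that $P_J$ acts diagonally in the SVD basis, i.e.\ that $(I - P_J)V_i$ equals $V_i$ for $i \in J^c$ and $0$ for $i \in J$, and that the containment $[k+1]^d \subset J$ is exactly what is needed to eliminate the null-space terms from $D^\dagger$ before the singular-value inversion. Once that bookkeeping is in place, both inequalities reduce to one application each of Cauchy–Schwarz / the triangle inequality combined with the incoherence bound $\|U_i\|_\infty \vee \|V_i\|_\infty \leq \mu/\sqrt n$, and match the definition of $L_{J,p}$ term-for-term.
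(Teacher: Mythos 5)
Your proof is correct and follows essentially the same route as the paper: expand $g_j$ in the SVD basis (the paper writes this as $g_j = V\dot\Sigma^\dagger U^\top e_j$ with the singular values in $J$ zeroed out, which is identical to your sum over $J^c$), then apply incoherence of $U$ for the $\ell_2$ bound and incoherence of both $U$ and $V$ for the $\ell_\infty$ bound. The only cosmetic difference is that the paper obtains the $\ell_\infty$ bound via the dual formulation $\snorm{g_j}_\infty = \max_{\snorm{z}_1=1} z^\top g_j$ and H\"older's inequality rather than your direct coordinate-wise triangle inequality; the two are interchangeable.
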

\begin{proof}[Proof of \autoref{lem:gj_bounds}]
Let $\tilde \Sigma \in \R^{m\times n}$ denote the diagonal matrix such that 
$\tilde \Sigma _{i,i} = \xi_i$ for $i \in J$ and $0$ otherwise. 
Let $\dot \Sigma = \Sigma - \tilde \Sigma,$ which is also diagonal $m\times n$. 
Then 
\[
g_j = V \dot \Sigma^\dagger U^\top e_j.
\]
Therefore, we can write 
\begin{align}
\|g_j\|_2^2
= \| V \dot \Sigma^\dagger U^\top e_j \|_2^2
= \| \dot \Sigma^\dagger U^\top e_j \|_2^2
= \sum_{i \in [N]^d \setminus J} U_{ij}^2 \frac{1}{\xi_i^2} 
\leq \frac{\mu^2}{n} \sum_{i \in [N]^d \setminus J} \frac{1}{\xi_i^2} 
=  L_{J,2}^2.
\end{align}
The sole inequality in the above display follows from the incoherence property 
of $U$. This shows the upper bound on the \smash{$\ell_2$} norms of 
\smash{$g_j, j\in[m]$}.

For the \smash{$\ell_\infty$}-norm bound, we write,
\begin{equation}
\|g_j\|_\infty = \max_{\|z\|_1 = 1} z^\top g_j
= \max_{\|z\|_1 = 1} z^\top V \dot \Sigma^\dagger U^\top e_j
\leq  \max_{\|z\|_1 = 1} \| V^\top z \|_\infty \| \dot \Sigma^\dagger U^\top 
e_j \|_1
\end{equation}
using H\"older's inequality. Because every entry of \smash{$V$} is at most 
\smash{$\mu/\sqrt{n}$}, we have
\begin{equation}
\max_{\|z\|_1 = 1} \| V^\top z \|_\infty \leq \frac{\mu}{\sqrt{n}}.
\end{equation}
From the incoherence property of \smash{$U$},
\begin{equation*}
\| \dot \Sigma^\dagger U^\top e_j \|_1
\leq \frac{\mu}{\sqrt{n}} \sum_{i=\ell+1}^n \frac{1}{\xi_i}.
\end{equation*}
Therefore
\begin{equation*}
\|g_j\|_\infty 
\leq 
\frac{\mu^2}{n} \sum_{i \in [N]^d \setminus J} \frac{1}{\xi_i}  =
L_{J,1}.\hfill\qedhere
\end{equation*}
\end{proof}

\subsection{Eigenvalue bounds}
\label{sec:eigenvalue-bounds}

\begin{lemma}
	\label{lem:kron_tf_evals}
	Let \smash{$\{\xi_i^2: i = (i_1,\dots,i_d) \in [N]^d \}$} be the eigenvalues
	of 
	\smash{$D^\top D$} where \smash{$D = D_{n,d}^{(k+1)}$} and let \smash{$p \ge 
	1$}, \smash{$\alpha = (k+1)/d$}. Then
	\begin{equation*}
			\sum_{i \in [N]^d \setminus [k+1]^d} \frac{1}{\xi_{i}^p} 
			\leq c
			\begin{cases}
					n & \text{if } p \alpha < 1\\
					n \log n & \text{if } p\alpha = 1
				\end{cases}
		\end{equation*}
		for large enough $n$, where $c>0$ is a constant depending only on  
	$k, d$. 
In the case \smash{$p\alpha > 1$}, for any \smash{$r_0 \in [1, \sqrt{d}N]$},
	\begin{equation*}
		\sum_{i \in [N]^d : \| (i-k-2)_+ \|_2 \ge r_0 } \frac{1}{\xi_{i}^p} \leq 
		c n (n/r_0^d)^{p\alpha - 1}.
	\end{equation*}
%	From the definition of $L_{\ell,p}$ in \eqref{eq:Lell}, it follows 
%	that
%	\begin{equation}
%			\label{eq:Lellp}
%			L_{\ell,p}^p \leq c \mu^2 \big(n/\ell\big)^{(p\alpha-1)_+} \cdot 
%			(\log n )^{ \mathbf{1} \{p\alpha=1\} }.
%		\end{equation}
\end{lemma}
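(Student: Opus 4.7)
The plan is to reduce the eigenvalue sum to a weighted lattice sum and evaluate it in the three regimes via standard volume estimates. First I would obtain the sharp one-dimensional lower bound
\[
\rho_\ell \;\gtrsim\; \frac{\bigl((\ell-k-1)_+\bigr)^{2(k+1)}}{N^{2(k+1)}}, \qquad \ell \in [N],
\]
which follows from a standard spectral analysis of $D_{N,1}^{(k+1)}$: up to a universal constant its squared nonzero singular values are $\bigl(2\sin(\pi(\ell-k-1)/(2N))\bigr)^{2(k+1)}$, with the first $k+1$ eigenvalues equal to zero because the null space consists of polynomials of degree at most $k$. Combined with the Kronecker-sum identity $\xi_i^2 = \sum_{j=1}^d \rho_{i_j}$ noted earlier and the power-mean inequality $\sum_{j=1}^d x_j^{2(k+1)} \ge d^{-k}\|x\|_2^{2(k+1)}$ for $x \in \R_{\ge 0}^d$, this yields
\[
\xi_i^2 \;\gtrsim\; N^{-2(k+1)}\, \|(i-k-1)_+\|_2^{2(k+1)} \quad\text{for all } i \in [N]^d.
\]

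Next I would change variables to $m = (i-k-1)_+ \in \{0,\dots,N-k-1\}^d$. Since at most $(k+1)^d$ values of $i \in [N]^d \setminus [k+1]^d$ map to any given $m \ne 0$, the sum of interest is dominated by
\[
N^{p(k+1)} \sum_{\substack{m \in \{0,\dots,N-k-1\}^d \\ m \ne 0}} \|m\|_2^{-q}, \qquad q := p(k+1) = p\alpha d.
\]
Grouping lattice points by the shell $\|m\|_2 \in [r, r+1)$, which contains $\Theta(r^{d-1})$ points, the inner sum is comparable to $\sum_{r=1}^{O(N)} r^{d-1-q}$. The three regimes fall out of elementary analysis of this series: for $q < d$ (i.e., $p\alpha < 1$) it is $\asymp N^{d-q}$, giving $N^{p(k+1)} \cdot N^{d-p(k+1)} = N^d = n$; for $q = d$ (i.e., $p\alpha = 1$) a logarithmic factor appears, giving $n \log n$; and for $q > d$ (i.e., $p\alpha > 1$), restricting the sum to $\|m\|_2 \ge r_0$ produces the tail estimate $\asymp r_0^{d-q}$, hence the bound $N^{p(k+1)}\, r_0^{d-p(k+1)} = n\,(n/r_0^d)^{p\alpha - 1}$. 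Since $(i-k-2)_+ \le (i-k-1)_+ = m$ componentwise, the condition $\|(i-k-2)_+\|_2 \ge r_0$ of the statement implies $\|m\|_2 \ge r_0$, so the restricted sum is handled directly by the tail computation.

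The main obstacle is justifying the sharp 1D eigenvalue lower bound; after that the remainder is routine lattice-sum bookkeeping. Rather than invoking an explicit trigonometric diagonalization (cleanest only in the periodic setting), one can promote the $k = 0$ case --- the classical path Laplacian spectrum --- to general $k$ via the recurrence $D_{N,1}^{(k+1)} = D_{N-k,1}^{(1)} D_{N,1}^{(k)}$ combined with Courant--Fischer interlacing, or simply import the analogous bound used in the Gaussian trend-filter analyses already cited in the excerpt.
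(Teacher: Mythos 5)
Your reduction of the lemma to a one--dimensional spectral lower bound followed by a lattice--shell computation is the right skeleton, and the bookkeeping in the second half is correct: the Kronecker--sum identity $\xi_i^2=\sum_j\rho_{i_j}$, the power--mean step $\sum_j x_j^{2(k+1)}\ge d^{-k}\|x\|_2^{2(k+1)}$, the bounded multiplicity of the map $i\mapsto (i-k-1)_+$, and the three regimes $q<d$, $q=d$, $q>d$ of $\sum_{m\neq 0}\|m\|_2^{-q}$ all check out and reproduce the stated rates, including the tail bound $N^{p(k+1)}r_0^{\,d-p(k+1)}=n(n/r_0^d)^{p\alpha-1}$. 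This is essentially the same computation that underlies the cited Lemma~6 of \citet{sadhanala2021multivariate}, which the paper simply invokes for the bulk of the sum.

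The gap is in the step you flag as ``standard'': the claim that $\rho_\ell\gtrsim \bigl((\ell-k-1)_+\bigr)^{2(k+1)}/N^{2(k+1)}$ holds for \emph{all} $\ell$, in particular for $\ell=k+2$. The trigonometric formula $\bigl(2\sin(\pi(\ell-k-1)/(2N))\bigr)^{2(k+1)}$ is exact only for the periodic operator; for the non-periodic $D^{(k+1)}_{N,1}$ the available bounds in the prior literature control $\rho_\ell$ only away from the bottom of the spectrum, which is precisely why the imported Lemma~6 excludes the index set $[k+2]^d$ rather than $[k+1]^d$. The entire new content of this lemma in the paper is closing that gap: one must show $\rho_{k+2}\gtrsim N^{-2(k+1)}$, i.e.\ a lower bound on the \emph{smallest nonzero} eigenvalue of $(D^{(k+1)}_{N,1})^\top D^{(k+1)}_{N,1}$, and the paper devotes three lemmas to it (a Cauchy--interlacing argument via the graph trend filtering operator $G=D^{(1)}_{N,1}L^{(k-1)/2}$ for odd order, and a delicate periodic-extension construction relating the path operator to powers of the cycle Laplacian for even order). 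Your two proposed shortcuts do not obviously supply this: Courant--Fischer interlacing applied to the recurrence $D^{(k+1)}_{N,1}=D^{(1)}_{N-k,1}D^{(k)}_{N,1}$ gives upper bounds on singular values of the product but no useful lower bound on its smallest nonzero singular value, and ``importing the analogous bound'' from the Gaussian analyses imports exactly the weaker statement with $[k+2]^d$ excluded. Without an argument for $\rho_{k+2}\gtrsim N^{-2(k+1)}$, the $(k+2)^d-(k+1)^d$ boundary indices contribute terms that your proof does not control, so the first display of the lemma (over $[N]^d\setminus[k+1]^d$) is not established.
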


\begin{proof}[Proof of \autoref{lem:kron_tf_evals}]
This is a generalization of Lemma 6 in \cite{sadhanala2021multivariate}, 
which
states the bound for only $p=2$. In their proof,
if we change the power
applied to the singular values in the summation to a general $p\geq 1$
we get (a) the bound in the second display and (b) a bound slightly weaker than 
the first display:
	\begin{equation}
		\label{eq:lem6_latticetf}
	\sum_{i \in [N]^d \setminus [k+2]^d} \frac{1}{\xi_{i}^p} 
	\leq c
	\begin{cases}
		n & p \alpha < 1\\
		n \log n & p\alpha = 1
	\end{cases}
\end{equation}
for large enough $n$, where $c>0$ is a constant depending only on  
$k, d$. Notice that the summation excludes indices in $[k+2]^d$ whereas the 
statement in \autoref{lem:kron_tf_evals} requires only those in $[k+1]^d$ to be 
excluded. We claim that the additional terms from indices $[k+2]^d \setminus 
[k+1]^d$ do not change the rates in the bound.
Thanks to the Kronecker-sum structure of $D^\top D$, we can write $\xi_i^2 = 
\sum_{j=1}^d \rho_{i_j}$ where $\rho_1, \dots, \rho_{N}$ 
are the eigenvalues of $ \big(D^{(k+1)}_{N,1}\big)^\top D^{(k+1)}_{N,1}$.
Note that for $i\in [N]^d \setminus [k+1]^d$, we can write $\xi_i^2 \geq 
\rho_{k+2}$. Therefore,
\begin{align*}
	\sum_{i \in [k+2]^d \setminus [k+1]^d} \frac{1}{\xi_i^p}
\leq \sum_{i \in [k+2]^d \setminus [k+1]^d} 
	\frac{1}{\rho_{k+2}^{\nicefrac{p}{2}}}
\leq \sum_{i \in [k+2]^d \setminus [k+1]^d} 	N^{p(k+1)}
\leq ((k+2)^d - (k+1)^d) c n^{p\alpha}
\end{align*}
where we used \autoref{lem:eig_lowerbd_kron_DDt} for the second inequality.
In the case $p\alpha \leq 1$, this and \eqref{eq:lem6_latticetf} are sufficient 
to prove the lemma.
\end{proof}

\begin{lemma}
	\label{lem:eig_lowerbd_kron_DDt}
	For $k\ge 1, N > 2k+2,$ the smallest eigenvalue of $ D^{(k)}_{N,1} 
	\big(D^{(k)}_{N,1} \big)^\top$ 
is at least $c/N^{2k}$ for some constant $c>0$ depending only on $k$.
\end{lemma}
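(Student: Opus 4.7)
The plan is to reduce the question to an operator-norm bound on an explicit right-inverse of $D := D^{(k)}_{N,1}$. Since $D$ has more columns than rows, $\lambda_{\min}(DD^\top) = (\sigma_{\min}^+(D))^2$, and $\sigma_{\min}^+(D) = 1/\|D^\dagger\|_{\mathrm{op}}$. Moreover, for \emph{any} right-inverse $R$ of $D$ (i.e.\ $DR = I$), we have $\|D^\dagger\|_{\mathrm{op}} \le \|R\|_{\mathrm{op}}$, because $Rw$ is a valid preimage of $w$ and so the minimum-norm preimage has norm at most $\|Rw\|_2$. Consequently, exhibiting a concrete right-inverse with $\|R\|_{\mathrm{op}} \le N^k / 2^{k/2}$ will immediately yield $\lambda_{\min}(DD^\top) \ge 2^k/N^{2k}$, settling the lemma with $c = 2^k$.

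To build such a right-inverse, I would first define, for each $m \ge 2$, the cumulative-sum operator $S_m : \mathbb{R}^{m-1} \to \mathbb{R}^m$ by $(S_m w)_i = \sum_{j<i} w_j$. A direct telescoping check gives $D^{(1)}_{m,1} S_m = I_{\mathbb{R}^{m-1}}$. Iterating the paper's recurrence $D^{(k)}_{N,1} = D^{(1)}_{N-k+1,1}\, D^{(k-1)}_{N,1}$ unfolds $D$ as the composition $D^{(1)}_{N-k+1,1} \circ D^{(1)}_{N-k+2,1} \circ \cdots \circ D^{(1)}_{N,1}$, and I would take
\[
R \;=\; S_N \circ S_{N-1} \circ \cdots \circ S_{N-k+1} \,:\, \mathbb{R}^{N-k} \to \mathbb{R}^N.
\]
Then $DR = \mathrm{id}_{\mathbb{R}^{N-k}}$ by inductively peeling off each matched innermost pair $D^{(1)}_m \circ S_m = I$ as it surfaces (one pair per round, with both sides shrinking their domain by one).

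The bound on $\|R\|_{\mathrm{op}}$ then follows from one Cauchy--Schwarz per level: $(S_m w)_i^2 \le (i-1)\|w\|_2^2$, hence $\|S_m w\|_2^2 \le \|w\|_2^2 \sum_{i=1}^m (i-1) = m(m-1)\|w\|_2^2/2$, giving $\|S_m\|_{\mathrm{op}} \le m/\sqrt{2}$. Submultiplicativity across the composition then yields
\[
\|R\|_{\mathrm{op}} \;\le\; \prod_{m=N-k+1}^{N} \frac{m}{\sqrt{2}} \;\le\; \frac{N^k}{2^{k/2}},
\]
and the opening paragraph converts this into $\lambda_{\min}(DD^\top) \ge 2^k/N^{2k}$, i.e.\ $c = 2^k$.

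I do not anticipate a serious obstacle: the only delicate point is verifying that the matched pairs $D^{(1)}_m \circ S_m$ truly telescope through the composition --- the indices shift as each inner reduction unfolds into the next outer difference --- but this is a tidy induction on $k$ once the domains are tracked. The resulting constant is of course far from sharp (the leading nonzero eigenvalue actually scales like $(\pi/N)^{2k}$), but sharpness is not needed for the use in \autoref{lem:kron_tf_evals}; in fact the argument only requires $N \ge k+1$, which is stronger than the stated hypothesis $N > 2k+2$.
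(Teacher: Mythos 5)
Your proof is correct, but it takes a genuinely different and considerably more elementary route than the paper's. The paper splits on the parity of $k$: for odd $k$ it invokes a Cauchy interlacing lemma from \cite{sadhanala2021multivariate} to compare $D^{(k)}_{N,1}\big(D^{(k)}_{N,1}\big)^\top$ with $GG^\top$ for the graph trend filtering operator $G = D^{(1)}_{N,1}L^{(k-1)/2}$, identifies the nonzero spectrum of $GG^\top$ with that of $L^{k}$, and plugs in $\lambda_2(L)=4\sin^2(\pi/(2N))$; for even $k$ it requires a further, rather delicate construction (\autoref{lem:tf_second_smallest_eig_lowerbd_oddk} and its two supporting lemmas) that anti-symmetrically extends an eigenvector to a cycle of length $2N-2$. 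You instead exhibit an explicit right inverse $R = S_N\circ\cdots\circ S_{N-k+1}$ built from cumulative sums, use that $\lambda_{\min}(DD^\top)^{-1/2} = \snorm{D^\dagger}_{\mathrm{op}} \le \snorm{R}_{\mathrm{op}}$ because $Rw$ is a feasible preimage of $w$, and bound $\snorm{S_m}_{\mathrm{op}}\le m/\sqrt{2}$ by Cauchy--Schwarz. All steps check out: the unfolding $D^{(k)}_{N,1}=D^{(1)}_{N-k+1,1}\cdots D^{(1)}_{N,1}$ matches the paper's recurrence, the telescoping $D^{(1)}_{m,1}S_m = I$ is immediate, and the full row rank of $D$ (needed to identify $\lambda_{\min}(DD^\top)$ with the smallest squared singular value) follows from the existence of $R$ itself. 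What you give up is sharpness of the constant---$c=2^k$ versus the essentially tight $\asymp \pi^{2k}$ available from the Laplacian spectrum---but the lemma only asks for some $c(k)>0$ and \autoref{lem:kron_tf_evals} only uses the $N^{-2k}$ scaling, so nothing downstream is affected; in exchange you avoid the parity case split and the external interlacing lemma entirely, and your argument works under the milder requirement $N\ge k+1$. One wording quibble: you call $N\ge k+1$ ``stronger'' than the stated $N>2k+2$, but it is the \emph{weaker} hypothesis (hence yields the stronger statement).
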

\begin{proof}

	For the purpose of this lemma, let $\lambda_i(A)$ denote the $i$th smallest 
	eigenvalue of $A$.
	
	\textbf{Case: $k$ is odd.} By Cauchy interlacing argument in Lemma 7 of 
	\cite{sadhanala2021multivariate}, we have $\lambda_1(D^{(k)}_{N,1} 
	\big(D^{(k)}_{N,1} \big)^\top) \ge \lambda_1(GG^\top)$ 
	where $G$ is the 
	graph trend filtering operator of order $k$ on a chain of length $N$.
	Recall that $G = D^{(1)}_{N,1} L^{(k-1)/2}$  where $L$ is the graph 
	Laplacian of a chain of length $N$.
	Note that, for odd $k$, $G^\top G = L^{k}$.
	The set of 
	\textit{nonzero} eigenvalues of $GG^\top$ and $G^\top G$ should be the same. 
	We know that
	$\lambda_1(L) = 0, \lambda_2(L) > 0$ and so 
	$\lambda_1(G^\top G) = 0, \lambda_2(G^\top G) > 0$.
	$GG^\top$ has full rank. 
	Therefore,
	$$\lambda_1(GG^\top) = \lambda_2(G^\top G) = \lambda_2(L^{k}) = 
	\big(\lambda_2(L) \big)^{k}.$$
	Plugging in $\lambda_2(L) = 4 \sin^2 \nicefrac{\pi}{2N}$ and using the 
	inequality $\sin x \geq \nicefrac{x}{2}$ for $x\in [0, \nicefrac{\pi}{2}]$, 
	we have
	$\lambda_1(GG^\top) \geq c/N^{2k}$. 
	As $\lambda_1(D^{(k)}_{N,1} 
	\big(D^{(k)}_{N,1} \big)^\top) \ge \lambda_1(GG^\top)$, we get 
	$\lambda_1(D^{(k)}_{N,1} \big(D^{(k)}_{N,1} \big)^\top)	\geq c / N^{2k}$.

  \textbf{Case: $k$ is even.}
  Apply \autoref{lem:tf_second_smallest_eig_lowerbd_oddk} to get the bound in 
  this case.

\end{proof}

\begin{lemma}
	\label{lem:tf_second_smallest_eig_lowerbd_oddk}
	let $\lambda_i(A)$ denote the $i$th smallest 	eigenvalue of $A$.
	For $k\ge 1$, and $N > 2k+2$,
	\[
	\lambda_{2k+1} \big( (D^{(2k)}_{N,1})^\top  D^{(2k)}_{N,1} \big) \ge 
	\big( 4 \sin^2 \frac{\pi}{2N-2} \big)^{2k}.
	\]
\end{lemma}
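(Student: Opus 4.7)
The plan is to factor the even-order operator using the difference-matrix recurrence
\[
D^{(2k)}_{N,1} = D^{(2k-1)}_{N-1,1} \cdot D^{(1)}_{N,1} =: A B,
\]
so that the inner factor $A = D^{(2k-1)}_{N-1,1}$ has odd order $2k-1$ and acts on a chain of $N-1$ vertices, while $B = D^{(1)}_{N,1}$ is the first-difference operator on $N$ points. I can then invoke the already-established odd case of \autoref{lem:eig_lowerbd_kron_DDt} on $A$: since $2k-1$ is odd,
\[
A^\top A \;\succeq\; \Bigl( 4\sin^2 \tfrac{\pi}{2(N-1)} \Bigr)^{2k-1} P_{\row(A)},
\]
where $P_{\row(A)}$ is the orthogonal projection onto the row space of $A$ in $\R^{N-1}$. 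Consequently,
\[
(D^{(2k)}_{N,1})^\top D^{(2k)}_{N,1} \;=\; B^\top A^\top A B \;\succeq\; \Bigl( 4\sin^2 \tfrac{\pi}{2(N-1)} \Bigr)^{2k-1} B^\top P_{\row(A)} B.
\]

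The remaining task is to lower-bound $\lambda_{2k+1}(B^\top P_{\row(A)} B)$. I will first check that its null space agrees with $\nul\bigl((D^{(2k)}_{N,1})^\top D^{(2k)}_{N,1}\bigr)$ — both equal the $2k$-dimensional space of polynomial grid vectors of degree at most $2k-1$ — so $\lambda_{2k+1}$ really is the smallest nonzero eigenvalue. Next, I decompose
\[
B^\top P_{\row(A)} B \;=\; L_N - R,
\]
with $L_N = B^\top B$ the path Laplacian on $N$ vertices and $R = B^\top (I - P_{\row(A)}) B \succeq 0$ of rank at most $\dim \nul(A) = 2k-1$. Weyl's interlacing for rank-$r$ PSD perturbations, $\lambda_i(X - R) \geq \lambda_{i-r}(X)$, then gives
\[
\lambda_{2k+1}(L_N - R) \;\geq\; \lambda_2(L_N) \;=\; 4\sin^2\tfrac{\pi}{2N}.
\]

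The main obstacle will be sharpening this final step to produce the factor $4\sin^2\frac{\pi}{2(N-1)}$ rather than $4\sin^2\frac{\pi}{2N}$, so as to match the clean form $\bigl(4\sin^2\frac{\pi}{2(N-1)}\bigr)^{2k}$ in the statement. The generic Weyl bound above loses a factor of order $((N-1)/N)^2$, giving only the slightly weaker $\bigl(4\sin^2\frac{\pi}{2(N-1)}\bigr)^{2k-1} \cdot 4\sin^2\frac{\pi}{2N}$ — which nonetheless has the correct $N^{-4k}$ scaling needed to conclude \autoref{lem:eig_lowerbd_kron_DDt}. To close this remaining gap I would exploit the explicit polynomial structure of $\nul(A)$ to control $R$ more tightly than a generic rank-$(2k-1)$ PSD matrix, or symmetrize by combining the two equivalent factorizations $D^{(2k)}_{N,1} = AB = D^{(1)}_{N-2k+1,1} D^{(2k-1)}_{N,1}$.
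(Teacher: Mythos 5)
Your factorization-plus-Weyl route is genuinely different from the paper's argument and is sound as far as it goes. The paper proves this lemma by a reflection argument: it takes the eigenvector $u$ attaining $\lambda_{2k+1}$, corrects it by a polynomial so that it vanishes at the endpoints (\autoref{lem:tilde_u_construction}), extends it antisymmetrically to $v\in\R^{2N-2}$ on a cycle, and shows $\|D^{(2k)}_{N,1}u\|_2^2\ge\tfrac12\|L^kv\|_2^2\ge\tfrac12\lambda_2(L^{2k})\|v\|_2^2\ge\lambda_2(L)^{2k}$ with $L$ the cycle Laplacian on $2N-2$ vertices; the constant $4\sin^2\frac{\pi}{2N-2}$ is precisely $\lambda_2$ of that cycle. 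You instead reduce the even order to the already-established odd case of \autoref{lem:eig_lowerbd_kron_DDt} (no circularity: that case is proved independently via $G^\top G=L^k$) through the factorization $D^{(2k)}_{N,1}=D^{(2k-1)}_{N-1,1}D^{(1)}_{N,1}$ --- note this is the mirror image of the paper's recurrence $D^{(k+1)}_{n,1}=D^{(1)}_{n-k,1}D^{(k)}_{n,1}$, so it deserves a one-line induction --- and then absorb the rank-$(2k-1)$ perturbation of the path Laplacian by Weyl. Both the domination $A^\top A\succeq\bigl(4\sin^2\frac{\pi}{2N-2}\bigr)^{2k-1}P_{\row(A)}$ and the interlacing step $\lambda_{2k+1}(L_N-R)\ge\lambda_2(L_N)$ check out, and the approach has the virtue of avoiding the somewhat delicate extension construction of \autoref{lem:tilde_u_construction}.

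The one caveat, which you flag yourself, is that what you actually obtain is $\bigl(4\sin^2\frac{\pi}{2N-2}\bigr)^{2k-1}\cdot 4\sin^2\frac{\pi}{2N}$, strictly smaller than the stated $\bigl(4\sin^2\frac{\pi}{2N-2}\bigr)^{2k}$ because the final factor is the spectral gap of the path on $N$ rather than $N-1$ vertices. Your proposed repairs (exploiting the polynomial structure of $\nul(A)$, or symmetrizing over the two factorizations) are not carried out, so the lemma as literally stated is not proved. This shortfall is cosmetic: the ratio of the two bounds is $\sin^2(\pi/2N)/\sin^2(\pi/(2N-2))\to 1$, and the only downstream use of this lemma --- the even case of \autoref{lem:eig_lowerbd_kron_DDt} --- requires only a bound of order $N^{-4k}$, which your version delivers. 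If you restate the conclusion with your constant, the proof is complete; matching the exact constant in the statement genuinely seems to require the cycle-extension device the paper uses.
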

\begin{proof}
	Let $L_m$ denote the Laplacian of cycle graph with $m$ vertices.
It's smallest nonzero eigenvalue is $ 4 \sin^2 \nicefrac{\pi}{m}$. Its 
eigenvectors are given $ 
	(v_\ell)_j = e^{2\pi i \ell j / m}.$
	
	Let $u \in \R^N$ be the eigenvector of $(D^{(2k)}_{N,1})^\top  
	D^{(2k)}_{N,1}$ 
	corresponding to its $(2k+1)$th eigenvalue. By 
	\autoref{lem:tilde_u_construction}, there 
	exists a $v \in \R^{2N-2}$ 
	satisfying the following properties:
	\begin{align}
		\| L^{k} v \|_2^2 &\leq 2 \| (D^{(2k)}_{N,1} u \|_2^2,\\
		\langle v, \one \rangle &= 0,\\
		\| v \|_2^2 &\geq 2.
	\end{align}
	With such a $v$,
	\begin{align*}
		\lambda_{2k+1} \big( \big(D^{(2k)}_N \big)^\top D^{(2k)}_N \big) 
		= 
		\| D^{(2k)}_N u \|_2^2 
		\ge 
		\frac{1}{2} \| L^{k} v \|_2^2 
		\ge 
		\frac{1}{2} \lambda_2( L^{2k}) \| v \|_2^2 
		\ge 
		\lambda_2^{2k}(L).
	\end{align*}
	The equality holds by definition of $u$. The three inequalities follow in 
	order from the three properties satisfied by $v$ above.
	This is sufficient to complete the proof because we know that $\lambda_2(L) = 
	4 \sin^2 \frac{\pi}{2N-2}$.
\end{proof}

\begin{lemma}
	\label{lem:tilde_u_construction}
	Let $u \in \R^N$ be the eigenvector of $(D^{(2k)}_{N,1})^\top  
	D^{(2k)}_{N,1}$ 
	corresponding to its $(2k+1)$th eigenvalue. There exists a $v \in 	
	\R^{2N-2}$ 	satisfying the following properties:
	\begin{align}
	\| L^{k} v \|_2^2 &\leq 2 \| (D^{(2k)}_{N,1} u \|_2^2,\\
	\langle v, \one \rangle &= 0,\\
	\| v \|_2^2 &\geq 2.
\end{align}
\end{lemma}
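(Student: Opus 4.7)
My plan is to construct $v$ by even reflection of $u$ across the two chain endpoints onto a cycle of length $2N-2$, followed by mean-centering, a boundary correction, and (if needed) a rescaling. Set $\tilde u_j = u_j$ for $j = 1,\ldots,N$ and $\tilde u_j = u_{2N-j}$ for $j = N+1,\ldots,2N-2$. Since $u$ is the $(2k+1)$-th eigenvector of $(D^{(2k)}_{N,1})^\top D^{(2k)}_{N,1}$, it is orthogonal to the null space of $D^{(2k)}_{N,1}$, which contains all polynomials of degree less than $2k$; in particular $\langle u, \one\rangle = 0$, and a direct calculation gives $\|\tilde u\|_2^2 = 2\|u\|_2^2 - u_1^2 - u_N^2$ together with $\langle \tilde u, \one\rangle = -(u_1 + u_N)$.

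Taking $v_0 = \tilde u - \bar u\, \one$ with $\bar u = -(u_1+u_N)/(2N-2)$ secures the second property, $\langle v_0, \one\rangle = 0$. Since both sides of the first property's inequality are homogeneous of degree two in $u$ (and hence in the construction $v$), I may rescale $u$ by any positive constant $c$ without loss; choosing $c$ so that the resulting $v$ satisfies $\|v\|_2^2 \geq 2$ discharges the third property.

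The main work is verifying the first property, $\|L^k v\|_2^2 \leq 2\|D^{(2k)}_{N,1} u\|_2^2$. The key computation is that, at chain-interior positions $j \in \{2,\ldots,N-1\}$, $L$ applied to the reflection $\tilde u$ reproduces (with a sign) the central second-order difference $u_{j-1} - 2u_j + u_{j+1}$, which is (after a shift) an entry of $D^{(2)}u$. Iterating the Laplacian $k$ times yields entries of $L^k v$ on the chain interior that match entries of $D^{(2k)}u$; by the reflection symmetry each such entry is counted twice on the cycle, producing the main term $2\|D^{(2k)}u\|_2^2$.

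\textbf{Main obstacle.} The difficulty is controlling the boundary contributions to $\|L^k v\|_2^2$ coming from positions near the chain endpoints ($j = 1,\, N,\, N+1,\, 2N-2$ on the cycle). Naive reflection produces $(Lv_0)_1 = 2(u_1 - u_2)$ and $(Lv_0)_N = 2(u_N - u_{N-1})$, which are \emph{first-order} differences and thus are not directly dominated by the $(2k)$-th order norm $\|D^{(2k)}u\|_2$; iterating $L$ can propagate these boundary artifacts and inflate the bound. To absorb them, I introduce a small adjustment $\eta \in \R^{2N-2}$ supported near the four boundary sites and orthogonal to $\one$, chosen so that the iterates $L^j(v_0 + \eta)$ have vanishing components at the boundary for each $j < k$, and then set $v = v_0 + \eta$. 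The orthogonality of $u$ to polynomials of degree less than $2k$ supplies the $2k$ moment conditions needed to bound the residual boundary contribution by a universal constant multiple of $\|D^{(2k)}u\|_2^2$, which I would sharpen via summation-by-parts to fit the factor of $2$ in the stated inequality. Carrying this boundary accounting through the $k$-fold iteration is the delicate technical step of the proof.
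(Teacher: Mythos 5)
Your overall strategy (reflect $u$ onto a cycle of length $2N-2$ and compare powers of the cycle Laplacian to $D^{(2k)}_{N,1}$) is the right one, but there are two genuine gaps. First, you reflect \emph{evenly}, which is exactly what creates the boundary obstacle you describe: $(Lv_0)_1 = 2(u_1-u_2)$ is a first-order difference that cannot be absorbed into $\|D^{(2k)}_{N,1}u\|_2^2$ with the stated constant. The paper instead uses the \emph{odd} (antisymmetric) periodic extension, $v_{N+i} = -v_{N-i}$, applied to a vector that has first been corrected to vanish at the endpoints: one subtracts the linear interpolant (so $\tilde u_1 = \tilde u_N = 0$) and then a further null-space element $p = \Delta^{-k}w$ (built from a truncated second-difference operator and its explicit cumulative-sum inverse) chosen to kill the first and last $k$ entries of $\Delta^k\tilde u$. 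With antisymmetry about indices $1$ and $N$, every application of the cycle Laplacian automatically vanishes at those two positions, so $(L^kv)_{1:N} = [0_{k};\,D^{(2k)}_{N,1}u;\,0_{k}]$ exactly, the factor $2$ in $\|L^kv\|_2^2 = 2\|D^{(2k)}_{N,1}u\|_2^2$ comes out as an identity, and $\langle v,\one\rangle = 0$ is automatic. Your correction $\eta$ is only asserted: you claim it can be chosen so that all iterates $L^j(v_0+\eta)$ vanish at the boundary and that the residual is ``a universal constant multiple'' of $\|D^{(2k)}_{N,1}u\|_2^2$, but a universal constant is not the constant $2$ the lemma demands, and neither the existence of such an $\eta$ nor a bound on the extra norm it contributes is established.

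Second, the rescaling argument for $\|v\|_2^2 \geq 2$ is circular. The lemma fixes $u$ as the (unit) eigenvector; if you rescale $u$ by $c$ to inflate $\|v\|_2$, the right-hand side of the first property becomes $2\|D^{(2k)}_{N,1}(cu)\|_2^2 = 2c^2\|D^{(2k)}_{N,1}u\|_2^2$, so you have proved the three properties for $cu$ rather than for $u$ --- and it is precisely the relation between $\|v\|_2^2$ and $\|D^{(2k)}_{N,1}u\|_2^2$ for the unit eigenvector that the downstream eigenvalue bound requires. The paper obtains the third property without rescaling: both corrections ($\tilde u - u$, which is linear, and $p$) lie in $\cN(D^{(2k)}_{N,1})$ and are therefore orthogonal to $u$, so by Pythagoras $\|\tilde u - p\|_2^2 \geq \|u\|_2^2 = 1$ and $\|v\|_2^2 = 2\|\tilde u - p\|_2^2 \geq 2$.
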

\begin{proof}
	Define $\cU = \{ u \in \R^N : u_1 = u_N = 0\}$.

\noindent	
	\paragraph{$\Delta$ and $\Delta^{-1}$:}
	Define the following truncated discrete difference operator,
	\[
	\Delta u = (0, (D^{(2)}_{N,1} u)_1, D^{(2)}_{N,1} u)_2, \ldots, D^{(2)}_{N,1} 
	u)_{N-2}, 0)
	\]
	for $u \in \cU$ so that $\Delta : \cU \to \cU$.
	We can write
\begin{align}
	\label{eq:Delta}
	\Delta = 
	\left( \begin{array}{cccccc}
		0 & 0 & 0 & \ldots & 0 & 0 \\
		-1 & 2 & -1 & 0 &\ldots &0 \\
		0 & -1 & 2 & -1 & \ldots & 0\\
		& & \ldots & & & \\
		0 & \ldots & 0 & -1 & 2 & -1 \\
		0 & 0 & 0 & 0 & 0 & 0\\
	\end{array} \right)
	\end{align}
	Then we can construct the inverse as the following truncated discrete 
	integral using the following:
	Let $u \in \cU$, and define the cumulative sum operator,
	\[
	(I u)_i := \sum_{j=1}^{i-1} j u_{i-j}, \quad \text{ and }
	a := \frac{1}{N-1} (Iu)_N.
	\]
	Define
	\[
	z_i := (i-1) a - (I u)_i, \quad i=1,\ldots, N,
	\]
and note that $z_1 = z_N = 0.$
	Then we have that $\Delta z = u$ for $u \in \cU$.
	To see this let $i = 2, \ldots, N-1$,
	\begin{align}
		- (\Delta z)_i &= - (2i a - (i-1) a - (i+1) a) + 2 (Iu)_i - (Iu)_{i-1} - 
		(Iu)_{i+1}\\
		&= 2 \sum_{j=1}^{i - 1} j u_{i - j} - \sum_{j=1}^{i-2} j 
		u_{i-1-j} - \sum_{j=1}^{i} j u_{i-j+1}\\
		&= 2 \sum_{j=1}^{i - 1} j u_{i - j} - \sum_{j=2}^{i-1} (j-1) u_{i-j} - 
		\sum_{j=0}^{i-1} (j+1) u_{i-j} = 2 u_{i-1} - u_i - 2 u_{i-1} = -u_i.
	\end{align}
	Also, $(\Delta z)_{1} = (\Delta z)_{N}=0 = u_1 = u_N.$

	\paragraph{Constructing $v$:}
	Construct $\tilde u \in \R^N$ such that
	\[
	 \tilde{u}_i = u_i - u_1 - \frac{u_N - u_1}{N-1} \dot (i-1), \quad i = 
	 1,\dots, N
	\]
	Define $ w \in \R^N$ such that 
	$w_i = (\Delta^k \tilde u)_i$ 
	for $i=1,\dots, k$ and $i=N, N-1, N-k+1$; and $w_i = 0$ for other $i \in [N]$.
	Define $p = \Delta^{-k} w$ and note that $w, p \in \cU$.
		Let $\mathrm{ext}(x)$ denote the periodic extension of $x \in \R^N$,
	defined by $y \in \R^{2N-2}$ where $ y_{1:N} = x, y_{N+i} = -x_{N-i}$ for 
	$i=1,\dots,N-2$.
	Set
	$$v = \mathrm{ext}(\tilde u - p).$$ 
	\paragraph{Verifying the three properties:}
	As $\tilde u - p \in \cU$, by 
	\autoref{lem:L_Delta_on_U}, 
	 \[
	 (L^k v)_{1:N}  = \Delta^k (\tilde u - p) = [0_{k\times 1}; D^{(2k)}_{N,1} u; 
	 0_{k\times 1}].
	 \]
	 By construction of $v$ via $\mathrm{ext}$, 
	 $(L^k v)_{2:N} = - (L^k v)_{2N-2:N}$. So
	 $\| (L^k v)_{N+1:2N-2} \|_2^2 = \| (L^k v)_{1:N} \|_2^2$.
	 Therefore $v$ satisfies the first desired property in the statement of the 
	 lemma:
	 \[
		\| L^k v \|_2^2 = 2 \| D^{(2k)}_{N,1} u \|_2^2.
	 \]
	 As $v = \mathrm{ext}(\tilde u - p)$ and $\tilde u - p \in \cU$, we get 
	 $\langle v, \one \rangle = 0$ from the definition of $\mathrm{ext}$.
	 Again due to the definition of $\mathrm{ext}$, $\| v \|_2^2 = 2 \| \tilde u 
	 - p \|_2^2$. 
	 Write $\tilde u - p = u + 
	 (\tilde u - u - p)$ and note that $u \perp \cN(D^{(2k)}_{N,1})$, 
	 $\tilde u - u$ is linear and hence in $\cN(D^{(2k)}_{N,1})$ and further 
	 $p \in \cN(D^{(2k)}_{N,1})$ by construction. (Note that if strip out the top 
	 and bottom $k$ rows from $\Delta^k$, we get $D^{(2k)}_{N,1}$. So 
	 $D^{(2k)}_{N,1} p = (\Delta^k p)_{k+1:N-k} = w_{k+1:N-k} = 0$.)
	 Therefore we get the third desired property for $v$:
	 \[
	 \| v \|_2^2 \geq 2 \| u \|_2^2 + 2 \| 	 \tilde u - u - p \|_2^2 \geq 2.
	 \]	 
	 Therefore $v$ satisfies all the three properties stated in the lemma.
\end{proof}

\begin{lemma}
	\label{lem:L_Delta_on_U}
	Let $\cU = \{ u \in \R^N: u_1 = u_N = 0 \}$.
	Let $\mathrm{ext}(u)$ denote the periodic extension of $u \in \R^N$,
	defined by $v \in \R^{2N-2}$ where $ v_{1:N} = u, v_{N+i} = -u_{N-i}$ for 
	$i=1,\dots,N-2$.
	Let $L, \Delta$ be as defined in \autoref{lem:tilde_u_construction} and 
	\eqref{eq:Delta} respectively.
	Then $(L^k \mathrm{ext}(u))_{1:N} = \Delta^k u$ for $u \in \cU$.
\end{lemma}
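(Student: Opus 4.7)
The plan is to prove the stronger statement that $L\,\mathrm{ext}(u) = \mathrm{ext}(\Delta u)$ for every $u \in \cU$, and then iterate. Since the result we want is just the first-$N$-coordinate projection of $L^k \mathrm{ext}(u)$, once we know $L$ and $\mathrm{ext}$ intertwine via $\Delta$ on $\cU$, the lemma follows by induction.

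The first step is to record that $\Delta$ preserves $\cU$. From the definition in \eqref{eq:Delta}, the first and last rows of $\Delta$ are identically zero, so $(\Delta u)_1 = (\Delta u)_N = 0$, i.e.\ $\Delta u \in \cU$. This lets us apply the intertwining identity repeatedly.

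Next, we verify $L\,\mathrm{ext}(u) = \mathrm{ext}(\Delta u)$ by a direct coordinate check, using that $(Lv)_i = 2v_i - v_{i-1} - v_{i+1}$ with cyclic indices in $[2N-2]$. Write $v = \mathrm{ext}(u)$, so $v_{1:N} = u$, $v_{N+j} = -u_{N-j}$ for $j = 1,\ldots,N-2$, and note that $v_1 = u_1 = 0$ and $v_N = u_N = 0$. There are three kinds of coordinates to check. (i) For interior indices $i = 2,\ldots,N-1$, we get $(Lv)_i = 2u_i - u_{i-1} - u_{i+1} = (\Delta u)_i$ immediately from the definition of $\Delta$. (ii) For $i = 1$, the cyclic neighbor is $v_{2N-2} = -u_2$, so $(Lv)_1 = 0 - (-u_2) - u_2 = 0 = (\Delta u)_1$; similarly for $i = N$, the neighbors are $v_{N-1} = u_{N-1}$ and $v_{N+1} = -u_{N-1}$, giving $(Lv)_N = 0 = (\Delta u)_N$. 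These two cancellations are the whole point of the odd extension. (iii) For the ``reflected half'' $i = N+1,\ldots,2N-2$, an identical computation, together with the cyclic wrap $v_{2N-1} = v_1 = 0$, shows $(Lv)_{N+j} = -(\Delta u)_{N-j}$ for $j = 1,\ldots,N-2$, which matches the prescription for $\mathrm{ext}(\Delta u)$ on that block.

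Combining (i)--(iii) yields $L\,\mathrm{ext}(u) = \mathrm{ext}(\Delta u)$ for all $u \in \cU$. Since $\Delta u \in \cU$, we can iterate:
\[
L^k\,\mathrm{ext}(u) = L^{k-1}\,\mathrm{ext}(\Delta u) = \cdots = \mathrm{ext}(\Delta^k u).
\]
Restricting to the first $N$ coordinates gives $(L^k\,\mathrm{ext}(u))_{1:N} = \Delta^k u$, as required. There is no serious obstacle; the only thing to be careful about is the bookkeeping at the two ``boundary'' indices $1$ and $N$ and the cyclic wrap at $2N-1 \equiv 1$, where the oddness of $\mathrm{ext}$ around both $v_1 = 0$ and $v_N = 0$ is what reduces the cycle Laplacian to the truncated second difference $\Delta$.
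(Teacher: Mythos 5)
Your proof is correct, and the underlying computation (the cancellations at the boundary indices $1$ and $N$ coming from the odd extension) is the same one the paper uses. The organization differs in a way worth noting: the paper inducts directly on the restricted identity $(L^{k}v)_{1:N}=\Delta^{k}u$, handling interior indices via the three-term recurrence and asserting $(L^{k}v)_1=(L^{k}v)_N=0$ ``because of anti-symmetry of $v$'' --- a step that, for $k>1$, really requires that $L^{k-1}v$ is still anti-symmetric about indices $1$ and $N$, which the paper leaves implicit. You instead prove the full-vector intertwining identity $L\,\mathrm{ext}(u)=\mathrm{ext}(\Delta u)$ on $\cU$, observe that $\Delta$ preserves $\cU$, and iterate to get $L^{k}\mathrm{ext}(u)=\mathrm{ext}(\Delta^{k}u)$ before projecting onto the first $N$ coordinates. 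This buys you the preservation of anti-symmetry for free (it is built into the conclusion that each iterate is again of the form $\mathrm{ext}(\cdot)$), so your write-up is self-contained where the paper's is slightly terse; the cost is only the extra coordinate check on the reflected block $i=N+1,\dots,2N-2$, which you carry out correctly, including the cyclic wrap $v_{2N-1}=v_1=0$.
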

\begin{proof}
	Let $v := \mathrm{ext}(u)$ and let $\cS = \{ \mathrm{ext}(u) : u \in \cU\}$.
	We need to show that $(L^k v)_{1:N} = \Delta^k u$ 
	for $k \ge 1$.
	First notice that $(\Delta u)_i = 2u_i - u_{i-1} 
	- u_{i+1}$, $i=2,\ldots, N-1$.
	Furthermore, $(\Delta u)_1 = (\Delta u)_{N} = 0$ because the first and last 
	rows of $\Delta$ are zeros
	and $(Lv)_1 = (Lv)_N = 0$ because $v \in \cS$.
	(As $v \in \cS$, $v$ is anti-symmetric around index $1$, that is:
		$v_1 = 0$, $v_{i} = -v_{2N-i}$ for $i=2,3,\dots, N$ and so $(Lv)_1 = 0$.
		Similarly 
		$v_{N-i} = -v_{N+i}$ for $i=0, 1,\dots, N-2$ and so $(Lv)_{N} = 0$.
	)
	So we have shown it for $k=1$.
	Suppose the inductive hypothesis $\Delta^{k-1}  u = (L^{k-1} v)_{1:N}$.
	We have for $i=2,\ldots, N-1$,
	\[
	(\Delta^{k}  u)_i 
	= 2(\Delta^{k-1} u)_i - (\Delta^{k-1} u)_{i-1} - (\Delta^{k-1} u)_{i+1} 
	= 2 (L^{k-1} v)_i - (L^{k-1} v)_{i-1} - (L^{k-1} v)_{i+1}  
	= (L^k v)_i.
	\]
	Furthermore, $(\Delta^k u)_1 = (\Delta^k u)_N = 0$ by construction
	and $(L^k v)_1 = (L^k v)_N = 0$ because of anti-symmetry of $v$ around 
	indices $1$ and $N$.
	Thus, $(L^k v)_{1:N} = \Delta^k u$.
\end{proof}
\section{Proofs for lower bounds}

\subsection{Proof of \autoref{prop:lowerbd_homosked}}
\label{sec:lowerbd_homosked_proof}
Denote the $\ell_p$ balls
\begin{equation}
	\label{eq:lp_ball}
	B_p(r; \R^n) = \{ x \in \R^n : \| x \|_p \leq r\}
\end{equation}
for $p \ge 1, r \ge 0, n \ge 1$. We simply refer to this $B_p(r)$ when the
dimension $n$ is clear from the context.
Consider the set 
\begin{equation}
\label{eq:B_r_m}
B(r,m) = \big\{ \beta \in \R^n : \| \beta \|_\infty \leq r, \|\beta\|_0 \leq m 
\big\}
\end{equation}
which consists of signals with at most $m$ non-zero components
and with all entries at most $r$ in magnitude.

For $\beta\in \R$ and $\sigma>0,$ 
	let $\mathrm{Lap}(\beta,\sigma)$ denote the Laplace distribution 
centered at $\beta$ with scale $\sigma.$
	For $\beta \in \R^n,$ let $\mathrm{Lap}(\beta,\sigma)$ denote the 
product distribution of $\mathrm{Lap}(\beta_1,\sigma), \dots,  
\mathrm{Lap}(\beta_n,\sigma).$
	
\begin{proof}[Proof of \autoref{prop:lowerbd_homosked}] 
The null space of $D$ has a dimension of $\kappa$. Using Fano's lemma, similar 
to the way it is applied in Example 15.8 in \citet{wainwright2019high}, we can 
show that 
\begin{equation}
\label{eq:lb_null_space}
n \cdot R_M \big( \ktvset_{n,d}^k (C_n)  \big) \geq \frac{\kappa \sigma^2}{128}
\end{equation}
The main difference is in upper bounding for KL divergence, but from 
\autoref{lem:kl_lap_upper_bd} we can show that
\begin{equation}
\label{eq:kl_bd_lap}
\kl \left(\lap(a, \sigma), \lap(b, \sigma) \right) 
\leq \| a - b \|_2^2 / 2\sigma^2
\end{equation}
for $a,b \in \R^n.$
This is sufficient to apply the argument in Example 15.8 in 
\cite{wainwright2019high}.

Now we show the second lower bound.
Note that 
$$
	B_1(C_n/c_k) \subseteq \ktvset_{n,d}^k (C_n)
$$
where $c_k$ is the maximum $\ell_1$ norm of columns of $D.$ 
$c_k$ depends only on $k,d.$ 
Denote $r_1 = C_n / c_k.$
For $q\in Q := \{1\} \cup \{2m : 2m \leq n/3\} $, set $r=C_n/(q c_k)$ so that 
$B(r,q)$  is contained in 
$B_1(C_n/c_k)$.
From \autoref{lem:lowerbd_infball}, 
\begin{equation}
n \cdot R_M( B(r,q) ) \geq \frac{1}{12} q a^2
\end{equation}
where $a = r \wedge \sigma g^{-1}(\tau/6)$ where $\tau = \log (en/8q).$
Therefore, from the containment $B(r, q) \subset \ktvset_{n,d}^k (C_n),$
\begin{align}
n \cdot R_M \big( \ktvset_{n,d}^k (C_n)  \big) 
	&\geq \frac{1}{12} \sup_{q \in Q} \;q \min\left\{ r^2, 
\frac{\sigma^2}{3} \log \frac{en}{8q} \vee  \frac{\sigma^2}{36} \log^2 
\frac{en}{8q}
						\right\}\\
	&= \frac{1}{12} \sup_{q \in Q} \;q \min\left\{ \frac{r_1^2}{q^2}, 
\frac{\sigma^2}{3} \log \frac{en}{8q} \vee  \frac{\sigma^2}{36} \log^2 
\frac{en}{8q}
						\right\}
\end{align}
Choose $q\in Q$ that maximizes this bound. 
Set $q$ to the closest number in $Q$ to
\begin{equation}
q^* = \frac{r_1}{\sigma} \left( 
	\sqrt{3} \log^{-1/2} \frac{\sigma n}{\sqrt{3} r_1}  
	\vee 
	6 \log^{-1} \frac{\sigma n}{6 r_1}
	\right)
\end{equation}
where $r_1 = C_n / c_k.$ 
This gives a lower bound of
\begin{equation}
\label{eq:lb_part1}
c_0 \sigma r_1  \left( \sqrt{\log \frac{c_1\sigma n}{r_1}} \vee \log \frac{c_2 
\sigma n}{r_1} \right)
\end{equation}
provided $q^*$ is within the range $[1, n/3]$. 
Two alternate bounds can be obtained by plugging in $q=1$ and $q=2\lfloor n/6 
\rfloor $. With $q=1$, the bound is
$c \min\left\{ r_1^2, \sigma^2 \left(\log \frac{en}{8} \vee  \log^2 
\frac{en}{8} \right)
						\right\}$
and with $q=2\lfloor n/6 \rfloor$, the bound is 
$ c\min\left\{ \frac{r_1^2}{n}, \sigma^2
						\right\}.
$

Finally, we derive the third term in the lower bound by embedding a H\"older ball.
We follow the proof of
Theorem 2.5 
in \cite{tsybakov2009introduction}. 
For $k\ge 0$ and $L>0$, let $H(k+1, L; [0,1]^d)$ denote the H\"older class of 
functions on $[0,1]^d$
whose $k$th order partial derivatives 
$\partial^k f / \partial x_1^{\alpha_1} \dots \partial x_d^{\alpha_d}$
with $\alpha_1 + \dots + \alpha_d = k$ are $L$-Lipschitz.
Define the discrete H\"older set using evaluations of H\"older functions on the 
grid:
\begin{equation}
	\label{eq:holder_set_discrete}
\cH_{n,d}^k(L) = \{ 
	\theta \in \R^n : \theta_i = f(i_1/N, \dots, i_d/n) , 
	f\in H(k+1, L; [0,1]^d)\}.
\end{equation}
\cite{SadhanalaWang2017} shows that
$$
\cH_{n,d}^k (c C_n n^{\alpha -1}) \subset \ktvset_{n,d}^k (C_n)
$$
for a constant $c$ depending only $k.$
Therefore, the minimax risk over \smash{$\ktvset_{n,d}^k (C_n)$} is at least the 
minimax risk over \smash{$\cH_{n,d}^k(C_n)$}. 
\autoref{lem:lower_bd_holder} gives a lower bound on this risk:
\begin{equation}
R_M( \ktvset_{n,d}^k (C_n)) = \Omega\bigg(  \left( \frac{\sigma^2}{n} 
\right)^{\frac{2\alpha}{2\alpha+1}} 
							(C_n n^{\alpha-1}) 
^{\frac{2}{2\alpha+1}}
				\bigg).
\end{equation}
This equation, together with \eqref{eq:lb_null_space}, \eqref{eq:lb_part1} 
gives the desired lower bound.
\end{proof}

\begin{lemma}
\label{lem:lower_bd_holder}
On the $d$-dimensional grid, consider the observation model $y_i = f(x_i) + 
\epsilon_i$ for $i\in [N]^d$ where $f \in H(k+1, L; [0,1]^d)$ and $\epsilon_i$ 
are i.i.d. $\mathrm{Lap}(0,\sigma)$. Then 
\begin{equation}
\label{eq:holder_lower_bd_gauss_L2}
\inf_{\hat f} \sup_{f_0 \in H(k+1, L; [0,1]^d)} \E \| \hat f - f_0 \|_2^2 = 
	\Omega \left(  \left( \frac{\sigma^2}{n} 
\right)^{\frac{2\alpha}{2\alpha+1}} 
							L^{\frac{2}{2\alpha+1}}
				\right).
\end{equation}
Suppose there exists an $h_0 \ge 0$ such that, for any $h\ge h_0,$
any ball of radius $ch/2$ in $[0, 1]^d$ contains at least $c_1 n (ch/2)^d$ grid 
points, where
$c=\sqrt{\log_{2e}{2}}$ and $c_1 > 0$ is a constant  may depend on $d.$ Then
the following lower bound in terms of the empirical norm holds:
\begin{equation}
	\label{eq:holder_lower_bd_gauss_emp}
	\inf_{\hat f} \sup_{f_0 \in H(k+1, L; [0,1]^d)} \E \| \hat f - f_0 
\|_n^2 = 
	\Omega \left(  \left( \frac{\sigma^2}{n} 
\right)^{\frac{2\alpha}{2\alpha+1}} 
	L^{\frac{2}{2\alpha+1}}
	\right).
\end{equation}
\end{lemma}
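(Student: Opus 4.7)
The plan is to adapt the classical Hölder minimax lower bound (e.g., Theorem 2.5 in \cite{tsybakov2009introduction}) to the Laplace noise setting, using the KL divergence bound $\kl(\lap(a,\sigma), \lap(b,\sigma)) \le \|a-b\|_2^2/(2\sigma^2)$ that was already established in \eqref{eq:kl_bd_lap}. Because this bound has the same quadratic form as the Gaussian KL up to a constant, the standard bump-construction argument goes through essentially unchanged, and only the constants are affected.

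First I would fix a smooth, compactly supported $C^\infty$ bump $K : \R^d \to \R$ with $\mathrm{supp}(K) \subset [-1/2, 1/2]^d$, $\|K\|_2 > 0$, and bounded Hölder norm. For a bandwidth $h \in (0, 1)$ to be chosen, partition $[0,1]^d$ into $M \asymp h^{-d}$ disjoint cubes with centers $x_1, \ldots, x_M$, and define bumps $\phi_j(x) = L h^{k+1} K((x - x_j)/h)$. By construction, for every $\omega \in \{0,1\}^M$, the function $f_\omega = \sum_{j=1}^M \omega_j \phi_j$ lies in $H(k+1, cL; [0,1]^d)$ for a constant $c$ depending only on $K$, so, after rescaling, we may assume $f_\omega \in H(k+1, L; [0,1]^d)$. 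A routine computation gives $\|f_\omega - f_{\omega'}\|_2^2 = c_1 L^2 h^{2(k+1) + d} \, \mathrm{Ham}(\omega, \omega')$ for a constant $c_1$. Applying the Varshamov--Gilbert lemma produces a subset $\Omega \subset \{0,1\}^M$ of size $|\Omega| \geq 2^{M/8}$ whose pairwise Hamming distances are at least $M/8$, so pairwise separations satisfy $\|f_\omega - f_{\omega'}\|_2^2 \gtrsim L^2 h^{2(k+1)}$ (since $M h^d \asymp 1$).

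Next I would bound the KL divergence between the laws of $y$ under $f_\omega$ and $f_{\omega'}$. Since the observations are independent $\lap(f(x_i), \sigma)$, \eqref{eq:kl_bd_lap} gives
\begin{equation}
\kl\big(P_{f_\omega} \,\|\, P_{f_{\omega'}}\big) \;\leq\; \frac{1}{2\sigma^2}\sum_{i=1}^{n}(f_\omega(x_i) - f_{\omega'}(x_i))^2 \;\lesssim\; \frac{n}{\sigma^2}\, L^2 h^{2(k+1)},
\end{equation}
where I used that each bump contains $\asymp n h^d$ grid points. Balancing $L^2 h^{2(k+1)} n / \sigma^2$ against $\log |\Omega| \asymp h^{-d}$ (up to an appropriate small constant $c_0$ to apply Fano in the form stated in Theorem~2.5 of \cite{tsybakov2009introduction}) gives the choice $h \asymp (\sigma^2/(n L^2))^{1/(d(2\alpha+1))}$, so that the separation becomes $L^2 h^{2(k+1)} \asymp L^{2/(2\alpha+1)} (\sigma^2/n)^{2\alpha/(2\alpha+1)}$, matching the stated rate. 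Fano's inequality then yields \eqref{eq:holder_lower_bd_gauss_L2}.

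For the empirical norm statement \eqref{eq:holder_lower_bd_gauss_emp}, the only extra ingredient is to show that $\|f_\omega - f_{\omega'}\|_n^2 \gtrsim \|f_\omega - f_{\omega'}\|_2^2$ for the constructed hypotheses. Under the assumption that, for every $h \geq h_0$, each ball of radius $ch/2$ in $[0,1]^d$ contains at least $c_1 n (ch/2)^d$ grid points, the number of grid points inside each bump's support is at least $c_2 n h^d$. Since $\phi_j$ is bounded below by a constant multiple of $L h^{k+1}$ on a sub-cube of $\mathrm{supp}(\phi_j)$, a short calculation gives $\|f_\omega - f_{\omega'}\|_n^2 \gtrsim L^2 h^{2(k+1)+d} \, \mathrm{Ham}(\omega, \omega')/1 = c_3 \|f_\omega - f_{\omega'}\|_2^2$ whenever $h \geq h_0$. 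The resulting lower bound holds for all $h$ above that threshold, which includes the minimax-optimal choice for large enough $n$; combining with the Fano argument above completes the proof. The main technical obstacle is simply keeping track of constants so that the Fano threshold $\kl \leq c_0 \log|\Omega|$ is satisfied --- this determines the numerical constant hidden in the $\Omega(\cdot)$ notation, but does not affect the rate.
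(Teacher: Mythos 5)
Your proposal is correct and follows essentially the same route as the paper's proof: disjointly supported bumps of height $Lh^{k+1}$ on a partition of $[0,1]^d$, the Varshamov--Gilbert lemma, the Laplace KL bound $\kl(\lap(a,\sigma)\,\Vert\,\lap(b,\sigma))\le (a-b)^2/(2\sigma^2)$, and Fano via Theorem~2.5 of Tsybakov, with the ball-counting hypothesis used only to convert the $L_2$ separation into an empirical-norm separation. The only cosmetic difference is that the paper bounds the KL sum by $n$ times the squared sup-norm of the bumps rather than counting grid points per bump, which yields the same rate.
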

%\attn{TODO(veeru): argue that this assumption is satisfied in most situations. 
%Make sure that $h\ge 
%h_0$, that is, the number of partitions of $[0,1]^d = r\leq h_0^{-d}$.}
\begin{proof}[Proof of \autoref{lem:lower_bd_holder}]
We adapt the proof of the univariate case in Section 2.6 of 
\cite{tsybakov2009introduction}.
Partition $[0,1]^d$ into $r = \lceil c_0 n^{1/(2\alpha+1)} \rceil$ hypercubes 
of equal size, where $c_0$ is to be determined later.
The side length of each hypercube $h=(1/r)^{1/d}.$
Let $z_i, i\in [r]$ be the centers of these hypercubes. 
Define the bump function
\begin{equation*}
\varphi(x) = L  h^{k+1} K \bigg( \frac{\|x\|_2}{h}\bigg)
\text{ for } x \in [0,1]^d
\quad
\text{ where }
K(u) = a e^{\frac{-1}{1-4u^2}} 1 \big\{ |u| < \frac 12 \big\}
\end{equation*}
for a constant $a$ such that $\varphi \in H(k+1, 1).$ 
Note that $\varphi(x) = 0$ if $\|x\|_2 \ge h/2.$
Define the bump functions  $\varphi_i(x) = \varphi(x - z_i),$ centered around 
$z_i$ for $i\in[r].$
These functions have disjoint support and so, they are orthogonal to each other 
with respect to the $L_2$ inner product and also the empirical inner product.
Note that 
\begin{equation}
\label{eq:bump_l2_norm}
\| \varphi \|_2^2 = L^2 h^{2k+2+d} \| K \|_2^2
\end{equation}
%As argued in Theorem 4 of \cite{SadhanalaWang2017}, \attn{TODO(veeru): for $n, 
%h$ 
%such that each of the $r$ hypercubes contains at least some grid points}
%\begin{align}
%\label{eq:bump_emp_norm_bd}
%\| \varphi \|_n^2 \ge  c_1 \| \varphi \|_2^2
%\end{align}
%for a universal constant $c_1 > 0.$

By Varshamov-Gilbert lemma \citep[see Lemma 2.9 in][]{tsybakov2009introduction}, 
we can get  $
\omega^{(0)}, \dots, \omega^{(M)}  \in \{0,1\}^r$
such that
$\omega^{(0)} = \mathbb{0}_r$,
$M \geq 2^{r/8}$
and
for $i \neq  j \in \{ 0, \dots, M \},$
$d_H( \omega^{(i)}, \omega^{(j)} ) \geq r/8$
where $d_H$ calculates the Hamming distance between two binary vectors of same 
size.
Let 
	$$ f_i = \sum_{j=1}^r \omega^{(i)}_j \varphi_j $$
for $i=0,\dots,M.$
For $i \neq j$, 
\begin{align}
\| f_i - f_j \|_2^2
&= \sum_{\ell=1}^r 1\{ \omega^{(i)}_\ell \neq \omega^{(j)}_\ell\} \| 
\varphi_\ell \|_2^2 \\
&= d_H( \omega^{(i)}, \omega^{(j)})  \| \varphi\|_2^2 \\
\label{eq:fi_fj_lower_bd}
&\ge \frac{r}{8} \cdot L^2 h^{2k+2+d} \| K \|_2^2
\end{align}
The last line is true because  (a) $d_H( \omega^{(i)}, \omega^{(j)}) \ge r/8$ 
by construction of the 
bump functions and
(b)  \eqref{eq:bump_l2_norm}.

%For a vector $\mu \in \R^n$, let $\lap(\mu, \sigma)$ denote the product 
distribution $\Pi_{i=1}^n \lap(\mu_i, \sigma).$
Let $x_1, \dots, x_n \in [0,1]^d$ denote the grid locations.
For $j \in \{ 0, \dots, M\},$ let $P_j$ denote the joint distribution of $y_1, 
\dots, y_n$ given by $y_i = f_j(x_i) + \epsilon_i$ with $\epsilon_i$ i.i.d. 
$\lap(0,\sigma).$ 
Then
\begin{align}
\kl (P_j, P_0)
&= \sum_{i = 1}^n \kl \big(\lap(f_j(x_i), \sigma), \lap(0, \sigma)\big)\\
&\leq 
\sum_{i = 1}^n \frac{1}{2\sigma^2} f_j^2(x_i)\\
&\leq 
\sum_{i = 1}^n \frac{1}{2\sigma^2} L^2 a^2 h^{2k+2}\\
&=\frac{n}{2\sigma^2} L^2 a^2 h^{2k+2}\\
&= \frac{n}{2\sigma^2} L^2 a^2 r^{-2\alpha}\\
\label{eq:kl_Pj_P0_bd1}
&= \frac{1}{2\sigma^2} L^2 a^2 r c_0^{-(2\alpha+1)}
\end{align}
The second line is from Lemma~\ref{lem:kl_lap_upper_bd} and the third line is 
from the fact that $f_j$ is a summation of  bump functions with (a) disjoint 
supports and (b) a maximum value of 
$aL h^{k+1}.$
The last two lines follow from the relations $h = r^{-1/d}, r = \lceil c_0 
n^{1/(2\alpha+1)}\rceil.$

Now we choose a $c_0$ (recall  $r = \lceil c_0 n^{1/(2\alpha+1)}\rceil$) such 
that
\begin{equation}
\frac 1M \sum_{j=1}^r \kl(P_j, P_0) \leq \frac{1}{8 \log 4} \log M.
\end{equation}
From \eqref{eq:kl_Pj_P0_bd1} and the fact that $M\ge 2^{r/8}$, it is sufficient 
to choose $c_0$ such that
$\frac{1}{2\sigma^2} L^2 a^2 r c_0^{-(2\alpha+1)} \leq \frac{r}{64}.$ So we 
choose
\begin{equation}
c_0 = \big( 32 a^2 L^2 \sigma^{-2} \big)^{1/(2\alpha+1)}.
\end{equation}
With this choice of $c_0$, and the lower bound in \eqref{eq:fi_fj_lower_bd} we 
can apply Theorem 
2.5 in \cite{tsybakov2009introduction} to get the bound in 
\eqref{eq:holder_lower_bd_gauss_L2}.

\paragraph{Lower bound in empirical norm.}
We follow the same approach to show the lower bound in 
\eqref{eq:holder_lower_bd_gauss_emp} in 
terms of the empirical norm.
It is sufficient to show a bound analogous to \eqref{eq:fi_fj_lower_bd} in 
terms of the empirical 
norm.
Let $B(z, s)$ denote an $\ell_2$ ball of radius $s$ centered at $z.$ 

For any $\ell \in [r]$, by hypothesis, there are at least $c_1 n (ch/2)^d$ grid 
points in $B(z_\ell, 
ch/2)$. For $x\in B(z_\ell, ch/2)$, $\varphi(x) = L h^{k+1} K(\|x-z_\ell\|_2/h) 
\ge L h^{k+1} K(c/2).$
For our choice $c = \sqrt{\log_{2e}{2}}$, $K(c/2) \ge K(0)/2e = a/2e.$ 
Therefore, for all $x 
\in B(z_\ell, ch/2)$, $\varphi_\ell(x) \ge a/2e \cdot L h^{k+1}$. Consequently,
\begin{equation}
	\| \varphi_\ell\|_n^2 \ge \frac{1}{n} \cdot c_1 n (ch/2)^d \cdot (a/2e 
L h^{k+1})^2 = c_2 L^2 
	h^{2k+2+d}.
\end{equation}
Recall that 
\[
\| \varphi_\ell \|_2^2 = L^2 h^{2k+2+d} \| K \|_2^2
\]
and therefore
\begin{equation}
	\label{eq:bump_emp_norm_bd}
		\| \varphi_\ell\|_n^2 \geq c_3 	\| \varphi_\ell\|_2^2
\end{equation}
for a constant $c_3$ that may depend on $d$.
\begin{align}
\| f_i - f_j \|_n^2 
&= \sum_{\ell = 1}^r 1 \big\{ \omega_\ell^{(i)} \neq \omega_\ell^{(j)}  \big\} 
\| \varphi_\ell\|_n^2\\
&\ge \sum_{\ell = 1}^r 1 \big\{ \omega_\ell^{(i)} \neq \omega_\ell^{(j)}  
\big\} c_3 \| \varphi_\ell\|_2^2\\
&= \sum_{\ell = 1}^r 1 \big\{ \omega_\ell^{(i)} \neq \omega_\ell^{(j)}  \big\} 
c_3 \| \varphi\|_2^2\\
&= d_H( \omega^{(i)}, \omega^{(j)})  c_3 \| \varphi\|_2^2\\
&= c_3  \frac{r}{8} \cdot L^2 h^{2k+2+d} \| K \|_2^2
\end{align}
Second line follows from \eqref{eq:bump_emp_norm_bd}.
Now \eqref{eq:holder_lower_bd_gauss_emp} can be derived similar to
\eqref{eq:holder_lower_bd_gauss_L2}, by applying Theorem 2.5 in 
\cite{tsybakov2009introduction}.
\end{proof}

\begin{lemma}
\label{lem:kl_lap_upper_bd}
For $\mu_1, \mu_2 \in \R$, and $\sigma > 0,$
$$
\kl( \lap(\mu_1, \sigma), \lap(\mu_2, \sigma)) =  e^{-\delta} + \delta - 1 \leq 
\frac 12 \delta^2
$$
where $\delta = |\mu_1 - \mu_2|/\sigma.$
Let $g(x) = e^{-x} + x - 1$ for $x\ge 0.$ Then for $y\geq 0$, 
$$
g^{-1}(y) \geq \max\{ \sqrt{2y}, y\}.
$$

\end{lemma}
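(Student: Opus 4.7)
The plan is to handle the three assertions separately. By translation invariance of the KL divergence I may assume $\mu_1 = 0$ and $\mu_2 = \mu \ge 0$, so that $\delta = \mu/\sigma$. Since the two densities share the same scale, the log-density ratio simplifies to
\begin{equation}
\log \frac{p_1(x)}{p_2(x)} = \frac{|x - \mu| - |x|}{\sigma},
\end{equation}
and therefore $\kl(\lap(0,\sigma),\lap(\mu,\sigma)) = \sigma^{-1} \bigl(\E|X-\mu| - \E|X|\bigr)$ where $X \sim \lap(0,\sigma)$. I would compute $\E|X-\mu|$ by splitting the integral at the kinks $x = 0$ and $x = \mu$, using standard one-dimensional integrals of $e^{\pm x/\sigma}$ together with integration by parts for the $x e^{-x/\sigma}$ pieces. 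The three contributions collapse to $\E|X-\mu| = \mu + \sigma e^{-\mu/\sigma}$, and combining this with $\E|X| = \sigma$ gives $\kl = \delta + e^{-\delta} - 1$ as claimed. The bookkeeping is routine but this is the only non-obstacle-free step.

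For the quadratic upper bound $e^{-\delta} + \delta - 1 \le \tfrac{1}{2}\delta^2$, I would introduce $f(\delta) = \tfrac{1}{2}\delta^2 - (e^{-\delta} + \delta - 1)$ and argue by repeated differentiation: one has $f(0) = 0$, $f'(\delta) = \delta - 1 + e^{-\delta}$ with $f'(0) = 0$, and $f''(\delta) = 1 - e^{-\delta} \ge 0$ for $\delta \ge 0$. Hence $f'$ is non-decreasing from $0$, so $f' \ge 0$, and therefore $f$ is non-decreasing from $0$, giving $f(\delta) \ge 0$ for all $\delta \ge 0$. This also handles $\delta \le 0$ because $g(\delta) = e^{-\delta} + \delta - 1$ is not relevant there for the KL application, but if needed the same convexity argument extends by symmetry of $\delta^2$ and the inequality $e^{-\delta} + \delta - 1 \ge 0$ everywhere.

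For the inverse bound $g^{-1}(y) \ge \max\{\sqrt{2y}, y\}$, note first that $g'(x) = 1 - e^{-x} \ge 0$ on $[0,\infty)$ with $g(0) = 0$, so $g$ is strictly increasing on $[0,\infty)$ and $g^{-1}$ is well defined and monotone on $[0,\infty)$. It therefore suffices to verify the two inequalities $g(y) \le y$ and $g(\sqrt{2y}) \le y$. The first is immediate since $g(y) = y + (e^{-y} - 1) \le y$ for $y \ge 0$. The second is exactly the quadratic bound of the previous paragraph applied at $\delta = \sqrt{2y}$: it rearranges to $e^{-\sqrt{2y}} + \sqrt{2y} - 1 \le y$. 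Applying $g^{-1}$, which is monotone, to both inequalities yields the stated bound. The main obstacle here is purely bookkeeping in the KL integral; the two inequalities reduce cleanly to the single convexity argument.
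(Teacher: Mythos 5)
Your proposal is correct and follows essentially the same route as the paper: the paper simply cites the closed-form KL computation for two Laplace densities to an external reference and declares the inequalities $g(y)\le y$ and $g(y)\le y^2/2$ to be elementary calculus, whereas you carry out the integration (via $\E|X-\mu|=\mu+\sigma e^{-\mu/\sigma}$) and the second-derivative argument explicitly. The reduction of the inverse bound to $g(y)\le y$ and $g(\sqrt{2y})\le y$ plus monotonicity of $g$ is exactly the paper's argument.
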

\begin{proof}[Proof of Lemma~\ref{lem:kl_lap_upper_bd}]
From a direction integration, as shown in Appendix A in 
\cite{meyer2021alternative}, 
%https://openaccess.thecvf.com/content/CVPR2021/supplemental/Meyer_An_Alternative_Probabilistic_CVPR_2021_supplemental.pdf
$$
\kl( \lap(\mu_1, \sigma), \lap(\mu_2, \sigma)) = e^{-\delta} + \delta - 1 = 
g(\delta)
$$
where $\delta = |\mu_1-\mu_2|/\sigma$.
We can verify with elementary calculus that, for all $y\ge 0$, 
$$g(y) < y \text{ and } g(y) \leq \frac{y^2}{2}.$$ 
Therefore for all $y\ge 0$, 
$$
g(y) < y \text{ and } g(\sqrt{2y}) \leq y.
$$
$g$ is a strictly increasing function on $[0,\infty).$
Therefore,
\[
y < g^{-1}(y), \sqrt{2y} \leq g^{-1}(y) \text{ for all } y\ge 0. \qedhere
\]
\end{proof}

\begin{lemma}
\label{lem:lowerbd_infball}
Suppose $n\ge 6.$ Suppose  $q=1$ or  $q$ is even with $q \leq n/3$. Then for 
$r>0$, the minimax risk of $B(r, q)$ defined in \eqref{eq:B_r_m} satisfies
\begin{equation}
n \cdot R_M \big( B(r, q) \big) \geq
 \frac{1}{12} q \min\left\{ r^2, \frac{\sigma^2}{3} \log \frac{en}{8q} \vee  
\frac{\sigma^2}{36} \log^2 \frac{en}{8q}
						\right\}
\end{equation}
\end{lemma}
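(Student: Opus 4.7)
The plan is to establish the lower bound via Fano's inequality applied to a carefully chosen packing of $B(r,q)$ consisting of scaled sparse binary vectors, exploiting the Laplace KL bounds from \autoref{lem:kl_lap_upper_bd}. The overall shape of the bound---with the $\min\{r^2, \cdot\}$ structure and both $\log$ and $\log^2$ terms---comes from optimizing the packing radius against two competing constraints: membership in $B(r,q)$, and Fano's KL budget.

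For the case $q \geq 2$ (even), I would invoke a Gilbert--Varshamov-style construction for constant-weight binary codes to produce $\Omega \subset \{0,1\}^n$ with $|\omega|_0 = q$ for every $\omega \in \Omega$, $\log|\Omega| \gtrsim q \log(en/(8q))$, and pairwise Hamming distance $d_H(\omega,\omega') \geq q/2$ for distinct $\omega,\omega' \in \Omega$. (This is a standard adaptation of Lemma 2.9 in \citet{tsybakov2009introduction} and the argument used in the Laplace-null-space bound \eqref{eq:lb_null_space}.) I would then consider the scaled hypotheses $\beta^{(\omega)} = \alpha\omega$ for a scalar $\alpha \in (0,r]$ to be chosen. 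By construction $\beta^{(\omega)} \in B(r,q)$, pairwise separation satisfies $\|\beta^{(\omega)} - \beta^{(\omega')}\|_2^2 \geq \alpha^2 q/2$, and each coordinate of $\beta^{(\omega)} - \beta^{(\omega')}$ lies in $\{0, \pm\alpha\}$ with at most $2q$ nonzero entries. Applied coordinatewise, \autoref{lem:kl_lap_upper_bd} gives
\begin{equation*}
\kl\bigl(\lap(\beta^{(\omega)},\sigma),\lap(\beta^{(\omega')},\sigma)\bigr) \leq 2q \cdot g(\alpha/\sigma),
\end{equation*}
where $g(y)=e^{-y}+y-1$.

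To apply Fano, I would choose $\alpha$ as large as possible subject to $\alpha \leq r$ and $2q \cdot g(\alpha/\sigma) \leq \tfrac{1}{8}\log|\Omega|$, so that the standard Fano reduction (in the form of Theorem~2.5 of \citet{tsybakov2009introduction}) yields a minimax lower bound of order $\alpha^2 q$. Using the bound $g^{-1}(y) \geq \max\{\sqrt{2y}, y\}$ from \autoref{lem:kl_lap_upper_bd}, the KL constraint allows $\alpha/\sigma$ to be as large as a constant multiple of $\max\bigl\{\sqrt{\log(en/8q)},\ \log(en/8q)\bigr\}$; tracking constants carefully (the $\tfrac{1}{3}$ and $\tfrac{1}{36}$ in the claimed bound arise directly from this step) produces
\begin{equation*}
\alpha^2 \;\gtrsim\; \min\Bigl\{r^2,\ \tfrac{\sigma^2}{3}\log(en/8q)\ \vee\ \tfrac{\sigma^2}{36}\log^2(en/8q)\Bigr\},
\end{equation*}
and multiplying by $q$ gives the claimed lower bound (after dividing by $n$ to convert from sum-of-squares to the normalized empirical norm, which accounts for the $\frac{1}{12n}$ prefactor modulo absolute constants).

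The case $q=1$ does not fit the Gilbert--Varshamov framework and I would handle it directly: take the packing $\{\pm \alpha e_i : i \in [n]\}$ of cardinality $2n$, which has minimum pairwise separation $\alpha\sqrt{2}$ and whose pairwise KL is at most $2g(\alpha/\sigma)$ (at most $4g(\alpha/\sigma)$ between antipodal pairs), so the same Fano calculation goes through with $\log M = \log(2n)$. The main obstacle I anticipate is bookkeeping: ensuring that the KL budget, packing cardinality, and separation constants combine to give precisely the stated $\tfrac{1}{12}$ prefactor and $\log(en/8q)$ argument (rather than $\log(n/q)$), which requires the Gilbert--Varshamov constants in a slightly sharper form than usual and careful use of both branches $g(y)\le y$ and $g(y)\le y^2/2$ depending on whether we are in the $\sigma^2\log$ or $\sigma^2\log^2$ regime.
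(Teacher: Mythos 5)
Your proposal follows essentially the same route as the paper: a Fano/Birg\'e--Massart argument over scaled indicator vectors supported on a constant-weight binary code (the paper invokes Lemma 4 and Proposition 9 of \citealt{birge2001gaussian}, which is exactly the Gilbert--Varshamov-type packing you describe), with the amplitude set to $a = r \wedge \sigma g^{-1}(\tau/6)$ and the Laplace KL controlled via \autoref{lem:kl_lap_upper_bd}. The only cosmetic difference is that the paper handles $q=1$ by taking all $n$ singletons as the packing rather than your $\{\pm\alpha e_i\}$ construction; otherwise the argument and constants match.
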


\begin{proof}[Proof of \autoref{lem:lowerbd_infball}]
We will show a slightly stronger bound:
\begin{equation}
		n \cdot R_M \big( B(r, q) \big) \geq
		\frac{1}{12} q \left( r\wedge \sigma g^{-1} \left( \frac{1}{6} \log 
		\frac{en}{8q}  \right) 
		\right)^2
	\end{equation}
	where \smash{$g(x) = e^{-x} + x - 1$} for \smash{$x\ge 0.$}
From this and \autoref{lem:kl_lap_upper_bd}, we get the  bound in 
\autoref{lem:lowerbd_infball}.

	The proof is adapted from that of 
	Theorem 5 in \cite{birge2001gaussian} for Gaussian error model.
	We use Fano's lemma from information theory.
	
	Abbreviate \smash{$\tau = \log \frac{en}{8q}.$}
	\begin{itemize}
		\item Let $$\cM_{q} = \{ S \subseteq [n] : |S| = q\}$$ 
		Here $|S|$ denotes the cardinality of a set $S.$
		Consider signals $\beta_S \in \R^n$
		$$ (\beta_S)_i = \one \{i\in S\} a $$
		where $a = r \wedge \sigma g^{-1}(\tau /6).$
		As $q\leq n/3$, $\tau = \log \frac{en}{8q}$ should be positive. $g$ is 
		strictly 
		increasing over $x\geq 0,$ $\lim_{x\rightarrow \infty} g(x) = \infty$ and 
		so 
		$g^{-1}(\tau /6)$ is well-defined.
		
		We will pick sufficiently separated elements from $\cM_q$ to construct 
		signals 
		for  Fano's lemma.
		
		\item Suppose $q$ is even with $q\leq n/3$. From Lemma 4 
		\cite{birge2001gaussian} we can find a subset $\cS$ of $\cM_q$ such that  
		\begin{itemize}
			\item for any distinct $S,S' \in \cS$, $|S\cap S'| < q/2$
			\item  \begin{equation}
				\label{eq:packing_set_size}
				\log | \cS | >  \frac{q\tau }{2}
			\end{equation}
		\end{itemize}
		Note that when $q=1$, $\cS = \cM_q$ satisfies these two requirements.
		
		Denote $\delta(S,S') = |S\cup S'| - |S\cap S'| = |S|+|S'| - 2 |S\cap S'|.$ 
		For 
		$S,S' \in \cS$ we have $\delta(S,S')= 2q - 2 |S\cap S'|.$
		Therefore for distinct $S,S' \in \cS$, as $|S\cap S'| < q/2,$
		\begin{align} 
			\label{eq:delta_bd}
			q < \delta(S,S') \leq 2q.
		\end{align}
		\item Consider the signals $\{ \beta_S : S \in \cS \}.$ 
		For any distinct $S,S'\in \cS$
		\begin{itemize}
			\item 
			
			%For $a>0$ we can explicitly compute and show that
			%\begin{align}
			%	\mathrm{KL} (\mathrm{Lap}(0, \sigma), \mathrm{Lap}(a,\sigma))
			%	= \mathrm{KL} (\mathrm{Lap}(a, \sigma), \mathrm{Lap}(0,\sigma))
			%	= g\left(a/\sigma\right) 
			%\end{align}
			
			From \autoref{lem:kl_lap_upper_bd},
			\begin{align}
				\kl (\lap(\beta_S, \sigma), \lap(\beta_{S'},\sigma)) 
				&= \delta(S, S') \kl (\lap(0, \sigma), \lap(a,\sigma))\\ 
				\label{eq:fano_kl_bd}
				&\leq 2q \cdot g(a/\sigma)
			\end{align}
			where \smash{$g(x) = e^{-x} + x - 1$} for \smash{$x\ge 0$}.

			\item $\| \beta_S - \beta_{S'} \|_2^2 = \delta(S,S') r^2 > q a^2$
			
		\end{itemize}
		
		\item 
		From Proposition 9 of \cite{birge2001gaussian} and the KL divergence bound 
		in 
		\eqref{eq:fano_kl_bd},
		\begin{align}
			n \cdot R_M \big( B(r, q) \big) \geq \frac{1}{4} q a^2 
			\left[ 1 - \left( \frac{2}{3} \vee \frac{2q g(a/\sigma)}{\log |\cS|} 
			\right)
			\right].
		\end{align}
		Applying the bound on \smash{$\log |\cS|$} from \eqref{eq:packing_set_size},
		\begin{equation}
			n \cdot R_M \big( B(r, q) \big) \geq \frac{1}{4} q a^2 
			\left[ 1 - \left( \frac{2}{3} \vee \frac{4 g(a/\sigma)}{\tau} \right)
			\right]
		\end{equation}
		By definition of $a$, $\frac{4 g(a/\sigma)}{\tau} \leq \frac{2}{3}.$ 
		Therefore
		\begin{equation}
			n \cdot R_M \big( B(r, q) \big) 
			\geq \frac{1}{12} q a^2
		\end{equation}
		Plugin the expression for $a$ and then for $\tau$ to arrive at the desired 
		bound.$\hfill\qedhere$
	\end{itemize}
\end{proof}

\subsection{Proof of \autoref{prop:lower_bd_2dgrids}}
\label{sec:lower_bd_2dgrids_proof}

\begin{proof}[Proof of \autoref{prop:lower_bd_2dgrids}]
We apply Le Cam's method to derive the lower bound.
Define $\beta^{(1)}, \beta^{(2)} \in \R^n$ as follows. 
$\beta^{(1)}_i = \beta^{(2)}_i = 1$ for all $i\in[n-1]$ and 
$\beta^{(1)}_n = 1+C_n/4, \beta^{(2)}_n = 1+C_n/2.$
Observe that 
\[
\frac{1}{n}\| \beta^{(1)} - \beta^{(2)} \|_2^2 = \frac{C_n^2}{16n}.
\]
Verify that $\beta^{(1)}, \beta^{(2)} \in \Theta(C_n).$ From equation (15.14) 
in 
\cite{wainwright2019high}, we can write
\begin{equation}
\label{eq:wainwright_15.14}
\inf_{\hat\beta} \sup_{\beta \in \Theta(C_n) } \E \| \hat\beta - \beta \|_n^2 
\;\geq 
\frac{C_n^2}{64n} \left( 1 - \| \P_1 - \P_2 \|_{\mathrm{TV}} \right)
\end{equation}
where $\P_j$ is the product distribution of $y_1,\dots,y_n$ with $y_i \sim 
\mathrm{Exp}(\mathrm{mean} = \beta^{(j)}_i)$ for $i\in[n]$.
We can calculate $\| \P_1 - \P_2 \|_{\mathrm{TV}}$ as follows.
\begin{align*}
\| \P_1 - \P_2 \|_{\mathrm{TV}} 
&= \frac{1}{2} \int \bigg| 
	p^{(1)}_1(x_1) p^{(1)}_2(x_2) \dots p^{(1)}_n (x_n) - 
	p^{(2)}_1(x_1) p^{(2)}_2(x_2) \dots p^{(2)}_n (x_n)  \bigg| \; dx \\
&= \frac{1}{2} \int 
	p^{(1)}_1(x_1) p^{(1)}_2(x_2) \dots p^{(1)}_{n-1} (x_{n-1})
	\big| p^{(1)}_n (x_n) - p^{(2)}_n (x_n) \big| \; dx_1 \dots dx_n \\
&= \frac{1}{2} \int \big| p^{(1)}_n (x_n) - p^{(2)}_n (x_n) \big| \; dx_n \\
&= \frac{1}{4}
\end{align*}
Here $p^{(j)}_i$ is the density of the exponential distribution with mean 
$\beta^{(j)}_i$ for $i\in[n].$
The second line above is true because $p^{(1)}_i = p^{(2)}_i$ for $i\in 
[n-1]$. 
The calculation for the last line is given in \autoref{lem:exp_dist_tv}. 
Plugging this back into \eqref{eq:wainwright_15.14}, we get the lower bound
\[
\inf_{\hat\beta} \sup_{\beta \in \Theta(C_n) } \E \| \hat\beta - \beta \|_n^2 
\;\geq 
\frac{3C_n^2}{256n}. \qedhere
\]
\end{proof}

\begin{lemma}
\label{lem:exp_dist_tv}
The total variation distance between two exponential distributions with means 
\smash{$\beta$} and \smash{$2\beta$} is \smash{$\frac{1}{4}$}, for any 
\smash{$\beta >0.$}
\end{lemma}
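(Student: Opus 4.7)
The plan is to reduce to the unit-mean case by rescaling, then exploit the standard identity $\|P-Q\|_{\TV} = \sup_{A} |P(A) - Q(A)|$, which is attained on the set where one density dominates the other.

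First, I would observe that if $X \sim \mathrm{Exp}(\mathrm{mean} = \beta)$ then $X/\beta \sim \mathrm{Exp}(\mathrm{mean}=1)$, so the TV distance is invariant under the common rescaling $x \mapsto x/\beta$. Hence it suffices to compute the TV distance between $p(x) = e^{-x}$ and $q(x) = \tfrac{1}{2}e^{-x/2}$ on $[0,\infty)$.

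Next, I would locate the unique crossing point of the two densities. Setting $p(x) = q(x)$ gives $e^{-x/2} = \tfrac{1}{2}$, i.e.\ $x^* = 2\log 2$. For $x < x^*$ we have $p(x) > q(x)$, and for $x > x^*$ we have $q(x) > p(x)$. Take $A = [0, x^*]$, which is the likelihood-ratio superlevel set and therefore achieves the supremum in the variational formula for TV. Computing with the exponential CDFs,
\begin{equation}
P(A) = 1 - e^{-x^*} = 1 - \tfrac{1}{4} = \tfrac{3}{4}, \qquad Q(A) = 1 - e^{-x^*/2} = 1 - \tfrac{1}{2} = \tfrac{1}{2},
\end{equation}
so $\|P-Q\|_{\TV} = P(A) - Q(A) = \tfrac{1}{4}$, as claimed.

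This computation is entirely elementary, so I don't anticipate a real obstacle; the only thing to be careful about is confirming that the integrals of $(p-q)_+$ and $(q-p)_+$ agree (they must, since both $p$ and $q$ integrate to $1$), which serves as a sanity check for the arithmetic. One can equivalently write out $\tfrac{1}{2}\int_0^\infty |p(x) - q(x)|\,dx$ directly as $\tfrac{1}{2}\bigl[(e^{-x^*/2} - e^{-x^*}) + (e^{-x^*/2} - e^{-x^*})\bigr] = \tfrac{1}{4}$, confirming the value.
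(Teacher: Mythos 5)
Your proof is correct and takes essentially the same approach as the paper's: both reduce to a single-parameter problem by rescaling, locate the unique crossing point of the two densities, and compute the probability mass difference on either side of it. The only cosmetic difference is that you evaluate via the CDFs and the variational formula $\sup_A |P(A)-Q(A)|$, whereas the paper substitutes $x\mapsto 2\beta y$ and integrates $\frac{1}{2}\int_0^\infty |p-q|$ directly, splitting at $y=\log 2$; the two computations coincide.
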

\begin{proof}[Proof of \autoref{lem:exp_dist_tv}]
The stated total variation distance is 
\begin{align*}
	\frac{1}{2}\int_0^\infty \big| \frac{1}{\beta} e^{-x/\beta} - 
\frac{1}{2\beta} e^{-x/2\beta} \big| \; dx 
	&= \frac{1}{2} \int_0^\infty | 2 e^{-2y} - e^{-y} | \; dy\\
	&= \frac{1}{2}\int_0^{\log 2} (2e^{-2y} - e^{-y}) \; dy 
			+ \frac{1}{2}\int_{\log 2}^\infty (e^{-y} - 2e^{-2y} ) 
\; dy\\
	&= \frac{1}{4}.
\end{align*}
In the first line, the variable is changed (\smash{$x\rightarrow 2\beta y$}).
\end{proof}

\section{Algorithmic details}
\label{sec:app-algor-deets}

This section expands on the algorithmic implementation for the MLE trend filter
described in \autoref{sec:algor-impl}. First, rewrite Equation \eqref{eq:mle1} 
(substituting $x$ for $\theta$) as
\begin{equation}
	\min_{Dx=z} \frac{1}{n}\sum \psi(x_i) - y_i x_i + \lambda \left\lVert z 
	\right\rVert_1.
\end{equation}
This is equivalent to \eqref{eq:mle1} but with additional variables.
The Lagrangian for this constrained minimization is given by
\begin{equation}
	\label{eq:app-admm-lagrangian}
	L(x,z,w) = \frac{1}{n}\sum \psi(x_i) - y_i x_i + \lambda \left\lVert z 
	\right\rVert_1
	+ w^{\top}(Dx-z),
\end{equation}
and the augmented Lagrangian is
\begin{equation}
	L_{\rho}(x,z,w) = \frac{1}{n}\sum \psi(x_i) - y_i x_i + \lambda \left\lVert z 
	\right\rVert_1
	+ w^{\top}(Dx-z) + \frac{\rho}{2}\left\lVert Dx-z\right\rVert_2^2.
\end{equation}
The augmented Lagrangian effectively adds a quadratic term that penalizes
infeasibility. So for any feasible solution with $Dx=z$, the augmented
Lagrangian will be equal to \eqref{eq:app-admm-lagrangian}.
Rather than this form, we instead use the ``scaled'' form for the augmented
Lagrangian, as it makes the update steps a little simpler. Defining $u=w/\rho$, 
then
the augmented Lagrangian becomes
\begin{equation}
	L_{\rho}(x,z,u) = \frac{1}{n}\sum \psi(x_i) - y_i x_i +
	\lambda \left\lVert z \right\rVert_1 + \frac{\rho}{2}\left\lVert Dx - z + 
	u\right\rVert_2^2
	- \frac{\rho}{2}\left\lVert u \right\rVert_2^2.
\end{equation}
\noindent
 The scaled ADMM algorithm iteratively solves this problem by minimizing over 
 $x$
then $z$ then a dual ascent update on $u$: 
\begin{align}
	x &\leftarrow \argmin_x \frac{1}{n}\sum \psi(x_i) - y_i x_i +
	\frac{\rho}{2}\left\lVert Dx-z+u\right\rVert_2^2,
	\label{eq:app-admm-x-update}\\ 
	z &\leftarrow \argmin_z \lambda \left\lVert z \right\rVert_1 +
	\frac{\rho}{2}\left\lVert Dx-z+u\right\rVert_2^2,\\ 
	u &\leftarrow u + Dx - z.
\end{align}
The $x$ update involves a matrix inversion which is best avoided when
$n$ is large. So we linearize that 
problem (the $x$ update 
only) around the current value $x^o$  
\begin{equation}
	\label{eq:app-linearized-x-update}
	x \leftarrow \argmin_x \frac{1}{n}\sum \psi(x_i) - y_i x_i +
	\rho \left(D^\top D x^o - D^\top z + D^\top u\right)^\top x +
	\frac{\mu}{2}\left\lVert x-x^o \right\rVert_2^2.
\end{equation}
To include the null space penalty,
the changes only impact the $x$ update. Therefore, \eqref{eq:app-admm-x-update} 
becomes
\begin{equation}
	x \leftarrow \argmin_x \frac{1}{n}\sum \psi(x_i) - y_i x_i +
	\frac{\rho}{2}\left\lVert Dx-z+u\right\rVert_2^2 + \lambda_2 \|
	P_\cN x \|_2, 
\end{equation}
and \eqref{eq:app-linearized-x-update} 
becomes
$$
x \leftarrow \argmin_x\frac{1}{n}\sum \psi(x_i) - y_i x_i + \rho \left(D^\top D 
x^o -
D^\top z + D^\top u\right)^\top x +\lambda_2 (g(x^o))^\top x +
\frac{\mu}{2}\left\lVert x-x^o \right\rVert_2^2. 
$$
where $g(v)$ is a subgradient of the function $v \mapsto \|P_\cN v\|_2$ given by
$ g(v) = \frac{P_\cN v}{\| P_\cN v\|_2}$ when $P_\cN v \neq 0$ and 
$g(v) = 0$ when $P_\cN v = 0$.

The $z$-update is easily shown to be given by elementwise soft-thresholding,
$$z_i\leftarrow \textrm{sign}(z_i)\left(|z_i| - (Dx-u)_i\right)_+;$$ 
and the $u$-update is 
simply vector addition. The $x$-update is potentially more challenging.
Note first that the $x$-update is the same for each $i$, so we can solve $n$
1-dimensional problems.  
The KKT stationarity condition requires
\begin{align}
	0 & =\left(\psi'(x_i) - y_i\right)  + \rho \left( D^\top \left(D x^o -
	z+u\right)\right)_i + \mu( x_i-x_i^o).\\
  \Longrightarrow  &\quad\psi'(x_i) + \mu x_i =  y_i  - \rho\left( D^\top D x^o - D^\top
  z+u\right)_i +  \mu x_i^o. 
\end{align}
Therefore, for any loss function as given by $\psi$, we want to solve
$
\psi'(x_i) + \mu x_i = b_i,
$
for each $i\in[n]$. For many functions $\psi$, the solution has a closed form. The
Binomial distribution with $\psi(x) = \log(1+e^x)$ is a family without a
simple solution, though standard root finding methods implemented in low-level
languages have no difficulties.
To include the nullspace penalty, the $x$ update changes slightly, but the
logic is the same.

\section{Degrees of freedom and tuning parameter selection}
\label{sec:app-comp-deets}

Here, we provide further details of the tuning parameter selection procedure
described in \autoref{sec:tuning-param-select}. 
If \smash{$Y \sim \mbox{N}(\theta^*, \sigma^2)$}, a now common method of risk
estimation makes use of Stein's Lemma.
\begin{lemma}[Stein's Lemma]
	  Assume $f(Y)$ is weakly differentiable with essentially
	  bounded weak partial derivatives on $\R^n$, then
	  \begin{equation}
		    \label{eq:8}
		    \trace \Cov(Y,f(Y)) = \Expect{\left\langle Y,\ f(Y)\right\rangle} = 
		\sigma^2\Expect{\trace Df(Y) \bigg\vert_y }.
		  \end{equation}
	\end{lemma}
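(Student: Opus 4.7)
The plan is to prove the lemma by reducing to the classical univariate Stein identity via independence of the coordinates, and then to obtain the univariate identity by integration by parts against the Gaussian density, with a mollification step to accommodate merely weak differentiability.

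First, since $Y_1,\ldots,Y_n$ are independent $N(\theta_i^*, \sigma^2)$, we have $\Cov(Y_i, f_j(Y)) = E[(Y_i - \theta_i^*) f_j(Y)]$, so taking the trace gives
\begin{equation*}
  \trace \Cov(Y, f(Y)) = \sum_{i=1}^n E\bigl[(Y_i - \theta_i^*)\, f_i(Y)\bigr],
\end{equation*}
which is the middle expression in the lemma (reading $\langle Y, f(Y)\rangle$ as centered, which is what the covariance equality requires). Fixing $i$ and conditioning on $Y_{-i} := (Y_j)_{j\neq i}$, it suffices to prove the univariate identity $E[(Z-\mu)g(Z)] = \sigma^2 E[g'(Z)]$ for $Z \sim N(\mu, \sigma^2)$ and $g(z) := f_i(Y_1,\ldots,z,\ldots,Y_n)$ (which inherits weak differentiability and essentially bounded derivative from $f_i$), then reassemble by Fubini and sum over $i$.

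For the univariate identity, I would use the fact that the Gaussian density $\phi$ satisfies $\phi'(z) = -\sigma^{-2}(z-\mu)\phi(z)$, so integration by parts gives
\begin{equation*}
  E[(Z - \mu)g(Z)] \;=\; -\sigma^2 \int g(z)\, \phi'(z)\, dz \;=\; \sigma^2 \int g'(z)\, \phi(z)\, dz \;=\; \sigma^2 E[g'(Z)].
\end{equation*}
The boundary contributions at $\pm\infty$ vanish because the essential bound on $g'$ forces $g$ to grow at most linearly, which is negligible against the Gaussian tail of $\phi$.

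The main obstacle is justifying the integration by parts when $g$ is only weakly differentiable rather than classically $C^1$. I would handle this by a standard mollification: set $g_\varepsilon := g \ast \rho_\varepsilon$ for a smooth compactly supported mollifier $\rho_\varepsilon$, so that $g_\varepsilon$ is smooth, $g_\varepsilon \to g$ pointwise a.e., and $\|g_\varepsilon'\|_\infty \le \|g'\|_\infty$. The identity holds for each $g_\varepsilon$ by the classical argument above, and dominated convergence (using the Gaussian-integrable dominator $(|g(0)| + \|g'\|_\infty (1 + |z-\mu|))\phi(z)$) passes the identity to $g$ itself. The essential boundedness of the weak partial derivatives in the hypothesis is precisely what is needed to produce a uniform-in-$\varepsilon$ dominating function on both sides.
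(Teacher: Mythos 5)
The paper does not prove this lemma at all: it is quoted as the classical Stein (1981) identity, used as a known tool, so there is no in-paper argument to compare against. Your proof is the standard one and is correct: the coordinatewise reduction via independence and conditioning on $Y_{-i}$, the univariate integration by parts using $\phi'(z) = -\sigma^{-2}(z-\mu)\phi(z)$ with the boundary terms killed by linear growth of $g$ against Gaussian decay, and the mollification step to upgrade from weak to classical differentiability all go through. You also correctly flag that the middle expression $\Expect{\langle Y, f(Y)\rangle}$ must be read in centered form for the first equality to hold, which is a genuine (minor) imprecision in the statement as written. The only step you pass over lightly is the claim that the slice $z \mapsto f_i(Y_1,\ldots,z,\ldots,Y_n)$ inherits weak differentiability with the same essential bound for almost every $Y_{-i}$; this is true, but it rests on the absolutely-continuous-on-lines characterization of Sobolev functions (a Fubini-type argument), and is worth a sentence rather than the word ``inherits.'' With that caveat the argument is complete.
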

The utility of this result comes from examining the decomposition of
the mean squared error of \smash{$\hat\theta(Y)$} as an estimator of 
\smash{$\theta^*$}.
\begin{align}
%	\label{eq:9}
	\Expect{\snorm{\theta^*-\hat\theta(Y)}_2^2}
	&= \Expect{\snorm{Y-\hat\theta(Y)}_2^2} -n\sigma^2 + 2
	\trace\Cov(Y,\hat\theta(Y))\\
	&= \Expect{\snorm{Y-\hat\theta(Y)}_2^2} -n\sigma^2 + 2\sigma^2
	\Expect{\trace J\hat\theta(z) \big\vert_Y} .
\end{align}
This characterization motivates the definition of degrees-of-freedom
for linear predictors ($\textrm{df} :=\frac{1}{\sigma^2} \trace
J\hat\theta(z)\big\vert_y$)~\citep{Efron1986}, where 
\smash{$\hat\theta(y)=Hy$}. Using
Stein's Lemma, assuming \smash{$\sigma^2$} is known, we have Stein's Unbiased
Risk Estimator
\begin{equation}
%	\label{eq:10}
	\mathrm{SURE}(\hat\theta) = \snorm{y-\hat\theta}_2^2 -n\sigma^2 + 
	2\sigma^2\trace\left( J\hat\theta(z) \big\vert_y\right), 
\end{equation}
which satisfies \smash{$\Expect{\textrm{SURE}(\hat\theta)} =
\Expect{\snorm{\theta^*-\hat\theta(Y)}_2^2}$}. 
Note that this is the risk for estimating the \smash{$n$}-dimensional parameter
\smash{$\theta^*$}.  
The following result generalizes this idea to certain continuous
exponential families.
\begin{lemma}[Generalized Stein Lemma; \citealp{Eldar2009}]
	\label{lem:g-stein-lemma}
	Assume \smash{$\hat\theta(y)$} is weakly differentiable in \smash{$y$} with 
	essentially
	bounded weak partial derivatives on \smash{$\R^n$}. Let \smash{$Y$} be 
	distributed
	according to a natural exponential family
	and assume that the base measure \smash{$h$} is weakly differentiable. Then,
	\begin{equation}
		\label{eq:12}
		\Expect{\theta^{*\top} \hat\theta(Y)} = - \Expect{
			\left\langle\frac{\nabla h(Y)}{h(Y)},\ \hat\theta(Y)\right\rangle
			+ \trace J\hat\theta(y) \big\vert_Y}.
	\end{equation}
	Note that \smash{$\nabla h(Y)$} here means the vector \smash{$[d/dy\ h(y)
	\vert_{y_i}]$} and \smash{$h(Y)$} means the vector \smash{$[h(y_i)]$}.
\end{lemma}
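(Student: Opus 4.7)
My plan is to derive the identity by combining the score function of the natural exponential family with multidimensional integration by parts. For a product natural exponential family with density $p(y\given \theta^*)=\prod_i h(y_i)\exp\{y_i\theta^*_i-\psi(\theta^*_i)\}$, a direct computation gives the score representation
\begin{equation}
\nabla_y \log p(y\given \theta^*) = \frac{\nabla h(y)}{h(y)} + \theta^*,
\end{equation}
which I would rearrange as $\theta^* \, p(y\given\theta^*) = \nabla_y p(y\given\theta^*) - (\nabla h(y)/h(y))\,p(y\given\theta^*)$. Taking the inner product with $\hat\theta(y)$ and integrating over $\R^n$ then decomposes $\Expect{\theta^{*\top}\hat\theta(Y)}$ into two pieces: the expectation of $-\langle \nabla h(Y)/h(Y),\hat\theta(Y)\rangle$, which is already in the right form, and $\int \langle \nabla_y p(y\given\theta^*), \hat\theta(y)\rangle\, dy$.

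Next I would handle the second piece with integration by parts, coordinate by coordinate. For each $i$,
\begin{equation}
\int \partial_{y_i} p(y\given\theta^*)\, \hat\theta_i(y)\, dy = -\int p(y\given\theta^*)\, \partial_{y_i}\hat\theta_i(y)\, dy + \text{boundary terms},
\end{equation}
and summing over $i$ recovers $-\Expect{\trace J\hat\theta(Y)}$ once the boundary terms are shown to vanish. Combining the two pieces yields the claimed identity.

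The main obstacle is justifying the integration by parts step rigorously, because $\hat\theta$ is only weakly differentiable and the integrals run over all of $\R^n$. The standard route is to approximate $\hat\theta$ by compactly supported smooth functions (via mollification) and pass to the limit using dominated convergence; here the hypothesis that the weak partial derivatives of $\hat\theta$ are essentially bounded is critical, since it dominates $p(y\given\theta^*)\,\partial_{y_i}\hat\theta_i(y)$ by an integrable multiple of $p(y\given\theta^*)$. For the vanishing of the boundary terms, I would observe that a natural exponential family density is rapidly decaying along any coordinate direction in the interior of $\Theta$, so $|\hat\theta_i(y)|\,p(y\given\theta^*)\to 0$ at infinity along $y_i$ for almost every choice of the remaining coordinates (using the linear growth bound on $\hat\theta_i$ implied by essentially bounded weak derivatives, together with Fubini).

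Finally, I would verify the identity for the standard examples of interest (Gaussian, Poisson on the continuous relaxation, Gamma/Exponential) as a sanity check by computing $\nabla h/h$ explicitly and confirming that the identity reduces to the classical Stein formula in the Gaussian case, where $h(y)\propto\exp(-y^2/2\sigma^2)$ gives $\nabla h(y)/h(y)=-y/\sigma^2$, so that the stated identity recovers $\Expect{\theta^{*\top}\hat\theta(Y)}=\sigma^{-2}\Expect{Y^\top\hat\theta(Y)}-\Expect{\trace J\hat\theta(Y)}$, matching Stein's Lemma after rescaling.
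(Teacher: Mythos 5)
Your argument is correct: the score identity $\nabla_y \log p(y\given\theta^*) = \nabla h(y)/h(y) + \theta^*$ followed by coordinatewise integration by parts (with mollification to handle weak differentiability and the essential boundedness of the partials to kill the boundary terms) is the standard derivation of this identity, and your Gaussian sanity check correctly recovers the classical Stein lemma. Note that the paper itself supplies no proof here — the lemma is imported verbatim from \citet{Eldar2009} — so there is nothing to compare against, but your route is essentially the one used in that reference.
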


Therefore we define the Generalized SURE~\citep{Eldar2009} along the lines of
the multivariate Gaussian case. 
\begin{lemma}
	  \label{lem:gsure}
	  Assume $h$ is weakly differentiable, 
	  $\hat\theta(y)$ is weakly differentiable with essentially bounded partial
	  derivatives. Then
	  \begin{equation}
		    \label{eq:13}
		    \mathrm{SURE}(\hat\theta) = \norm{\hat\theta(y)}^2_2 +
        2 \left\langle\frac{\nabla
            h(y)}{h(y)},\ \hat\theta(y)\right\rangle + 2\trace\left( 
          J\hat\theta(z) \bigg\vert_y\right) + 
		    \frac{1}{h(y)} \trace \frac{\partial^2 h(z)}{\partial z^2}\bigg\vert_y
		  \end{equation}
	  is an unbiased estimator for the MSE of an estimator $\hat\theta(Y)$ of
	  $\theta$: $\Expect{\norm{\hat\theta(Y) - \theta}_2^2}$.
	\end{lemma}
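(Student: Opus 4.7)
The plan is to produce unbiased estimators for each piece of the expansion
\[
\Expect{\snorm{\hat\theta(Y)-\theta^*}_2^2} = \Expect{\snorm{\hat\theta(Y)}_2^2} - 2\,\Expect{\theta^{*\top}\hat\theta(Y)} + \snorm{\theta^*}_2^2.
\]
The first term is directly observable from the data; the cross term will be handled by the Generalized Stein Lemma (\autoref{lem:g-stein-lemma}) already established above; the only novel ingredient is an observable unbiased estimator for the unknown $\snorm{\theta^*}_2^2$, which is what the final $\frac{1}{h(y)}\trace\tfrac{\partial^2 h}{\partial z^2}\big\vert_y$ term in \eqref{eq:13} must supply.

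For the cross term, a direct application of \autoref{lem:g-stein-lemma} gives
\[
-2\,\Expect{\theta^{*\top}\hat\theta(Y)} = 2\,\Expect{\Big\langle \tfrac{\nabla h(Y)}{h(Y)},\ \hat\theta(Y)\Big\rangle} + 2\,\Expect{\trace J\hat\theta(z)\big\vert_Y},
\]
which is exactly the middle two terms of \eqref{eq:13}. The weak differentiability of $h$ and essentially bounded weak partial derivatives of $\hat\theta$ are precisely the hypotheses needed to invoke that lemma.

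For $\snorm{\theta^*}_2^2$, I would start from $p(y\given\theta^*) = h(y)\exp\{y^\top\theta^* - \psi(\theta^*)\}$ and compute coordinatewise second partial derivatives in $y$. A direct calculation gives
\[
\partial_i^2\bigl[h(y)\,e^{y^\top\theta^*-\psi(\theta^*)}\bigr] = \bigl[\partial_i^2 h(y) + 2\theta_i^*\,\partial_i h(y) + h(y)(\theta_i^*)^2\bigr]\,e^{y^\top\theta^*-\psi(\theta^*)}.
\]
Integrating $\partial_i^2 h(y)\,e^{y^\top\theta^*-\psi(\theta^*)}$ by parts twice in $y_i$ (boundary terms vanish under the stated regularity on $h$ combined with the exponential tail of the family) transfers both derivatives onto the exponential factor and yields $\int \partial_i^2 h(y)\,e^{y^\top\theta^*-\psi(\theta^*)}\,dy = (\theta_i^*)^2$. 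Rewriting the left side as $\int \tfrac{\partial_i^2 h(y)}{h(y)}\,p(y\given\theta^*)\,dy$ gives $\Expect{\partial_i^2 h(Y)/h(Y)} = (\theta_i^*)^2$; summing over $i\in[n]$ then produces $\Expect{\tfrac{1}{h(Y)}\trace\tfrac{\partial^2 h(z)}{\partial z^2}\bigl\vert_Y} = \snorm{\theta^*}_2^2$, matching the last term of \eqref{eq:13}. Assembling the three identities yields $\Expect{\mathrm{SURE}(\hat\theta)} = \Expect{\snorm{\hat\theta(Y)-\theta^*}_2^2}$.

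The principal obstacle is justifying the integration by parts and interchange of differentiation with integration --- specifically, verifying that the boundary terms vanish and that the manipulations are valid in the weak-derivative sense. These are standard consequences of the regularity hypotheses ($h$ weakly differentiable, $\hat\theta$ with essentially bounded weak partial derivatives, and the natural exponential family having sufficiently well-behaved tails over the support of $h$), paralleling the arguments underlying both Stein's original identity and its generalization in \cite{Eldar2009}. Once these technicalities are handled, the three estimators combine additively to give the claimed unbiased expression.
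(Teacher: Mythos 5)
Your proof is correct and follows essentially the same route as the paper's: the identical three-term expansion of the MSE, with the cross term handled by a single application of \autoref{lem:g-stein-lemma}. The only difference is cosmetic: for $\E\snorm{\theta^*}_2^2$ you carry out the double integration by parts directly to obtain $\E\left[\frac{1}{h(Y)}\trace \frac{\partial^2 h(z)}{\partial z^2}\big\vert_Y\right] = \snorm{\theta^*}_2^2$, whereas the paper reaches the same identity by applying \autoref{lem:g-stein-lemma} twice together with the quotient rule---these are the same computation unpacked to different depths, since that lemma is itself an integration-by-parts statement.
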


\begin{proof}
	  We have
	  \begin{align}
		    \label{eq:14}
		    \Expect{\norm{f(Y) - \theta(\beta)}_2^2}
		    &= \Expect{\norm{f(Y)}_2^2} + \Expect{\norm{\theta}_2^2} -
		      2\Expect{\left\langle \theta(\beta),\ f(Y)\right\rangle}.
		  \end{align}
      Now, the first term is a function of the data only, and to the last term, 
      we simply apply
	  \autoref{lem:g-stein-lemma}. For the second term,
	  \begin{align}
		    \Expect{\norm{\theta}_2^2}
		    &=\Expect{\langle \theta,\ \theta \rangle}
		     = -\Expect{\left\langle\frac{\nabla h(Y)}{h(Y)},\ 
		\theta\right\rangle}\\
		    &= \Expect{\left\langle\frac{\nabla h(Y)}{h(Y)},\
			      \frac{\nabla h(Y)}{h(Y)}\right\rangle} + \Expect{\trace
			      \frac{\partial}{\partial y} \frac{\nabla h(y)}{h(y)}
			      \bigg\vert_Y}\\
		    &= \Expect{\frac{\norm{\nabla h(Y)}_2^2}{h(Y)^2}} + \Expect{\trace
			      \frac{\norm{\nabla h(Y)}_2^2 +h(Y) \partial^2/\partial y^2 
				h(y)\big\vert_Y}
			      {h(Y)^2}}\\
		    &=\Expect{ \frac{1}{h(Y)} \trace \frac{\partial^2 h(y)}{\partial 
				y^2}\bigg\vert_Y},
		  \end{align}
	  by applying \autoref{lem:g-stein-lemma} twice along with the
	  quotient rule.
	\end{proof}

However, we would prefer to estimate the
%An alternate model selection technique is to examine the
Kullback-Leibler Divergence between the density under
$\theta=\hat\theta(y)$ and that under $\theta=\theta^*$.
For
exponential families,
\begin{equation}
	\Expect{\KL{\hat\theta(Y)}{\theta^*}}
	%&= \Expect{KL(p(Y\given \hat\theta(Y)) \Vert p(Y\given \theta))}\\
	= \Expect{\left\langle \hat\theta(Y)-\theta^*,\ \hat\beta(Y)\right\rangle +
		\psi(\theta^*) -\psi\left( \hat\theta(Y) \right) },
\end{equation}
and, an application of \autoref{lem:g-stein-lemma} provides an
unbiased estimator of this quantity. The result is given in \autoref{lem:sukls}
in the main body.

Finally, we conclude this section with the proof of \autoref{thm:simple-divergence}.

\begin{proof}[Proof of \autoref{thm:simple-divergence}]
  The proof follows from \citet[Theorem 2]{VaiterDeledalle2017}. We have
  \begin{align}
    X_T &= \Pnd\\
    \nabla^2 F_0(\hat\mu(y),y) &= \diag\left( \psi''(\hat\theta) \right)\\
    \mathfrak{A}_\beta &= 0\\
    \nabla^2_{\mathcal{M}} J\left( \hat\beta(y) \right) &= \lambda_2P_\cN\\
    D(\nabla F_0)(\hat\mu(y),y) &= \diag\left( \psi''(\hat\theta) 
    \right).\qedhere
  \end{align}
\end{proof}

\end{document}